\newcommand{\quash}[1]{}  
\newtheorem{thm}{Theorem}[section]
\newtheorem{prop}[thm]{Proposition}
\newtheorem{lem}[thm]{Lemma}
\newtheorem{lem-def}[thm]{Lemma-Definition}
\newtheorem{thm-def}[thm]{Theorem-Definition}
\newtheorem{cor}[thm]{Corollary}
\theoremstyle{remark}
\newtheorem{rmk}{Remark}[section]
\theoremstyle{definition}
\newtheorem{dfn}{Definition}[section]
\numberwithin{equation}{section}
\newcommand{\frakm}{{\mathfrak m}}
\newcommand{\frakp}{{\mathfrak p}}
\newcommand{\frakq}{{\mathfrak q}}
\newcommand{\fraku}{{\mathfrak u}}
\newcommand{\bbG}{{\mathbb G}}
\newcommand{\bbH}{{\mathbb H}}
\newcommand{\bbL}{{\mathbb L}}
\newcommand{\bbP}{{\mathbb P}}
\newcommand{\bbV}{{\mathbb V}}
\newcommand{\bbZ}{{\mathbb Z}}
\newcommand{\bbM}{{\mathbb M}}
\newcommand{\bbW}{{\mathbb W}}
\newcommand{\bbS}{{\mathbb S}}
\newcommand{\bbT}{{\mathbb T}}
\newcommand{\bbU}{{\mathbb U}}
\newcommand{\da}{{\mathbb A}}
\newcommand{\calD}{{\mathcal D}}
\newcommand{\calE}{{\mathcal E}}
\newcommand{\calF}{{\mathcal F}}
\newcommand{\calL}{{\mathcal L}}
\newcommand{\calO}{{\mathcal O}}
\newcommand{\calP}{{\mathcal P}}
\newcommand{\calT}{{\mathcal T}}
\newcommand{\ff}{{\mathcal F}}
\newcommand{\g}{{\mathcal G}}
\newcommand{\h}{{\mathcal H}}
\newcommand{\nc}{\newcommand}
\nc{\ve}{\varepsilon}
\newcommand{\oo}{{\mathcal O}}
\newcommand{\ho}{\hat{{\mathcal O}}}
\newcommand{\ad}{{\mathcal A}}
\newcommand{\B}{\mathcal B}
\nc{\on}{\operatorname}
\nc{\Aff}{{\mathbf{Aff}}}
\newcommand{\Aut}{{\on{Aut}}}
\newcommand{\Hom}{{\mathrm{Hom}}}
\newcommand{\Gal}{{\mathrm{Gal}}}
\nc{\id}{{\on{id}}}
\newcommand{\spec}{{\on{Spec} \,}}
\newcommand{\Frac}{{\on{Frac}}}
\nc{\Tate}{{\on{Tate}}} \nc{\pT}{{\on{preTate}}}
\newcommand{\Nm}{{\mathrm{Nm}}}
\newcommand{\Tr}{{\mathrm{Tr}}}
\newcommand{\GL}{{\on{GL}}}
\newcommand{\gl}{{\mathfrak g\mathfrak l}}
\nc{\Comm}{{\on{Comm}}} \nc{\Det}{{\calD et}}
\newcommand{\n}{{\mathcal N}}
\newcommand{\lrto}{\longrightarrow}
\nc{\res}{{\on{res}}} \nc{\Res}{{\on{Res}}} \nc{\Pic}{{\calP ic}}
\newcommand{\Ker}{\mathop{{\mathrm{Ker}}}}
\newcommand{\df}{{\mathbf F}}
\newcommand{\cha}{{\mathrm{char}}}
\begin{document}

\author{Denis Osipov, Xinwen
Zhu}\thanks{D.~O. partially  supported by
Russian Foundation for Basic Research (grants no.~14-01-00178-a, \linebreak no.~13-01-12420 ofi\_m2, and no.~12-01-33024 mol\_a\_ved)
and by the Programme for the Support of Leading Scientific Schools of the Russian Federation (grant no.~NSh-2998.2014.1). }
\thanks{X.~Z. partially supported by NSF grant DMS-1001280/1313894 and DMS-1303296 and by AMS Centennial Fellowship.}

\title{The two-dimensional Contou-Carr\`{e}re symbol and reciprocity laws}

\maketitle

\begin{abstract}
We define a two-dimensional Contou-Carr\`{e}re symbol, which is a deformation of the two-dimensional tame symbol
and is a natural generalization
of the (usual) one-dimensional  Contou-Carr\`{e}re symbol. We give several constructions of this
symbol and investigate its properties. Using higher categorical methods, we prove reciprocity laws on
algebraic surfaces for this symbol. We also relate the two-dimensional Contou-Carr\`{e}re  symbol to
the two-dimensional class field theory.
\end{abstract}

\section{Introduction}
This paper is a continuation of our previous paper~\cite{OsZh} and
we refer to the introduction of that paper for the general
background. In that paper, we developed some categorical
constructions such as categorical central extensions and generalized
commutators of these central extensions and applied them to the
construction of the two-dimensional tame symbol and to the proof of
reciprocity laws on algebraic surfaces. We also systematically used
the adeles on algebraic surfaces and  the categories of $1$-Tate and
$2$-Tate vector spaces and
their graded-determinantal theories  introduced by
M.~Kapranov in~\cite{Kap}.

The main goal of this paper is to extend constructions and theorems from a ground field $k$ (as in~\cite{OsZh}) to a
ground commutative ring $R$, by applying the categorical constructions developed there to the category
of Tate $R$-modules, which was introduced and studied by V.~Drinfeld in~\cite{Dr}. The generalized
commutator now gives us some new tri-multiplicative anti-symmetric map:
$$
R((u))((t))^* \times R((u))((t))^* \times R((u))((t))^*  \to R^* \mbox{,}
$$
which we call the two-dimensional Contou-Carr\`{e}re symbol. This
symbol coincides with the two-dimensional tame symbol when $R=k$ is
a field. Using adelic complexes on an algebraic surface we prove
reciprocity laws along a curve and around a point  for this  symbol
when $R $ is an artinian ring, see Theorem~\ref{res-law}.

An analogous deformation of the (usual) one-dimensional tame symbol
is known as the Contou-Carr\`{e}re symbol, see~\cite{Del2}
and~\cite{CC}. The reciprocity laws for the one-dimensional
Contou-Carr\`{e}re symbol on an algebraic curve were proved by using
the (usual) commutators of central extensions of groups,
see~\cite{BBE} and~\cite{AP}. The one-dimensional Contou-Carr\`{e}re
symbol can also be explicitly expressed by formulas. When ${\mathbf
Q} \subset R$  and $f$ and $g$ are appropriate elements from
$R((t))^*$, then (see formula~\eqref{cexp})
$$ (f,g)= \exp \res (\log f \frac{d g}{g})   \mbox{.}  $$
Another formula~\eqref{multform} applicable to any ring $R$ expresses
$(f,g)$ as certain finite product.

For the applications of our reciprocity laws (e.g. see
Section~\ref{cft}), we need various explicit formulas for the
two-dimensional Contou-Carr\`{e}re symbol. When ${\mathbf Q} \subset
R$,  we introduce in Section~\ref{tdt} an obvious generalization of
the previous one-dimensional formula. Namely, for appropriate elements $f$,
$g$ and $h$ from $R((u))((t))^*$ our formula looks as
$$
(f,g,h) = \exp \Res (\log f \frac{d g}{g}  \wedge \frac{d h}{h})  \mbox{,}
$$
where $\Res $ is the two-dimensional residue. Then we prove in
Theorem~\ref{th-main} that this explicit formula coincides with the
two-dimensional Contou-Carr\`{e}re symbol defined by the generalized
commutator as above. We remark that the proof of this fact in the
two-dimensional situation is much more difficult than in the
one-dimensional situation, because we need to deal with some
infinite sums (and products) here, while in the one-dimensional
situation only finite sums (and products) are involved.

Basically because of the same reason, we do not have an explicit
formula applicable to general commutative rings. In other words, unlike the
one-dimensional situation, we do not know how to define the
two-dimensional Contou-Carr\`{e}re symbol for general commutative
rings in an elementary way. (Our definition uses categorical central
extensions! Another approach via algebraic $K$-theory, is outlined in Section~\ref{Kapp}.) But if $R$ is Noetherian (or more generally, if the nil-radical of $R$ is an
nilpotent ideal), we obtain in
Section~\ref{explicform} an explicit formula for the two-dimensional
Contou-Carr\`{e}re symbol as certain finite product, which
generalizes the similar formula in the one-dimensional case.

We also investigate in Section~\ref{propert} various properties of the  two-dimensional  Contou-Carr\`{e}re symbol by elementary methods.
When the nil-radical of $R$ is a nilpotent ideal, we prove that this symbol satisfies the Steinberg property (and later in Corollary  \ref{Steinb} we remove this assumption). We also show that if $R = k[\epsilon]/\epsilon^4$ where $k$ is a field, then for any elements $f$,
$g$ and $h$ from $k((u))((t))$ there is an identity:
$$
(1+ \epsilon f, 1+ \epsilon g, 1+ \epsilon h) = 1+ \epsilon^3 \Res f dg \wedge dh \mbox{.}
$$
Thus, we obtain the two-dimensional residue (for two-dimensional
local fields). From Section~\ref{main}   it follows that the
two-dimensional  Contou-Carr\`{e}re symbol is invariant under the
change of local parameters $u $ and $t$ in $R((u))((t))$.

In Section~\ref{Kapp}, we outline how to obtain the Contou-Carr\`{e}re symbols via algebraic $K$-theory, developing some ideas suggested to us by one of the editors. It is widely believed that the Contou-Carr\'{e}re symbols can be obtained from certain boundary maps in algebraic $K$-theory (e.g. \cite[Remark 4.3.7]{KV}). However, it seems a detailed comparison did not exist in literature before.

Finally, in Section~\ref{cft} we relate the  two-dimensional  Contou-Carr\`{e}re symbol to the two-dimensional
class field theory.
For $R=\df_{q}[s]/s^{n+1}$, where $\df_q$ is a finite field, we derive from the two-dimensional Contou-Carr\`{e}re
symbol the two-dimensional generalization of the Witt symbol introduced and studied by A.~N.~Parshin.
Our reciprocity laws for  the  two-dimensional  Contou-Carr\`{e}re symbol imply the reciprocity laws for
the generalization of the Witt symbol.   We interpret the reciprocity laws for the two-dimensional tame
symbol (studied in~\cite{OsZh})
and for the Witt symbol as indication on some subgroup in the   kernel of the global  reciprocity map in
two-dimensional class field theory.
We also relate the reciprocity laws for the generalization of the Witt symbol with the reciprocity laws
obtained earlier by  K.~Kato and S.~Saito.

\bigskip

\noindent\bf Notation. \rm  By $\Aff$ we denote the category of
affine schemes, i.e. the category opposite to the category of
commutative rings. We equip $\Aff$ with the flat topology.

By an ind-scheme we always mean an object $ \mbox{``}
\mathop{\underrightarrow{\lim}}\limits_{i \in I} \mbox{''} \, X_i
\in \on{Ind} \Aff$, where all the structure morphisms $X_i  \to X_j$
are closed embeddings of affine schemes, and $I$ is a directed set.

For any functor $\mathcal F$ from the category $\Aff$ to the
category of sets we denote by $L\mathcal F$ the loop space functor
which assigns the set $L {\mathcal F } (R) = {\mathcal F}(R((t))) $
to every ring $R$.

We introduce the formal scheme $\n=\on{Spf} {\mathbf Z}[[T]]$. In
other words, for $R\in\Aff$, $\n(R)$ is the set of all nilpotent elements of $R$, i.e. the nil-radical of $R$. (Sometimes
$\n(R)$ is denoted by $\n R$ for simplicity.)

\section{One-dimensional Contou-Carr\`{e}re symbol}  \label{first}
In this section we recall some known facts about the one-dimensional
Contou-Carr\`{e}re symbol.

First recall the following statement, see, e.g.,~\cite[Lemme(1.3)]{CC}, \cite[\S0]{CC2}.
Let $R$ be a commutative ring. Then for any invertible element $s =
\sum\limits_{i > -\infty}^{\infty} a_i t^i \in R((t))^*$ there is
a decomposition of $R$ into finite product
of rings $R_i$:
\begin{equation} \label{de}
R = \bigoplus_{i=1}^{N} R_i
\end{equation}
such that if $s = \mathop{\oplus}\limits_i s_i$ under the induced
decomposition
\begin{equation} \label{de2}
R((t)) = \bigoplus_{i=1 }^N R_i((t))  \mbox{,}
\end{equation}
then every $r= s_i$ can be uniquely
decomposed into
the following product in $R_i((t))$:
\begin{equation} \label{decomp}
r=r_{-1} \cdot r_0 \cdot  t^{\nu(r)} \cdot r_1 \mbox{,}
\end{equation}
where $r_{-1} \in 1+ t^{-1} \cdot \n R_i[t^{-1}]$, $r_0 \in R_i^*$,
$r_1 \in 1 + t \cdot R_i[[t]]$. In addition, such a
decomposition~\eqref{de} is unique if we require $\nu(s_i) \ne
\nu(s_j) $ for any $i \ne j$.

Let us rephrase
decompositions~\eqref{de}--\eqref{decomp}. Let $L\bbG_m$ be the loop
group of the multiplicative group $\bbG_m$, where  $\bbG_m(R)=R^*$ for any commutative ring
$R$. Let $\bbW$, $ \widehat{\bbW}$ be  contravariant functors from
the category $\Aff$ to the category of abelian groups
 (or covariant functors from the category of commutative rings) defined in the following way: for any commutative
ring $R$
$$
\bbW(R) = \left\{ 1 + \sum_{i=1}^\infty b_i t^i \mid \quad b_i \in R \right\}
\mbox{,}
$$
$$
\widehat{\bbW}(R) = \left\{ 1+\sum_{i=-1}^{-n}c_i t^i \mid\quad n\in
{\mathbf Z}_{\geq 0}, c_i \in \n R \right\} \mbox{.}
$$
Note that the functor $\bbW$ with its group structure is usually called the
additive (big) Witt vectors. Then $\bbW$ is represented by an affine
scheme
$$
\bbW = \spec {\mathbf Z}[b_1,b_2,\ldots] \mbox{,}
$$
while the functor $\widehat{\bbW}$ is represented by an ind-scheme
$$
\widehat{\bbW}=\mathop{\underrightarrow{\lim}}\limits_{
\{\epsilon_i\}}\ \spec {\mathbf Z}[c_{-1},c_{-2},\ldots]/
I_{\{\epsilon_i\}}  \mbox{,}
$$ where
the limit is taken over all the sequences
$\{\epsilon_i\}=(\epsilon_{-1},\epsilon_{-2},\ldots)$ of
non-negative integers such that all but finite many $\epsilon_{i}$
equal $0$, and the ideal $I_{\{\epsilon_i\}}$ is generated by
elements $c_{i}^{\epsilon_{i}+1}$ for all $i <0$.

Denote by $\bbZ$ the constant group scheme over $\spec {\mathbf Z}$
with the fiber equal to the additive group of integers ${\mathbf
Z}$. In other words, the group scheme $\bbZ = \coprod\limits_{i \in
{\mathbf Z}} (\spec {\mathbf Z})_i$, and $\bbZ(R)$ is the group of
locally constant functions on $\spec R$ with values in ${\mathbf
Z}$. The group $\bbZ(R)$
naturally embeds into the group $R((t))^*$ in the following way.
Any $f \in \bbZ(R)$ determines decomposition~\eqref{de} and the set
of integers $n_i$, where $1 \le i \le N$. Then $f \mapsto
\mathop{\oplus}\limits_i t^{n_i} \in R((t))^*$ under
decomposition~\eqref{de2}. We can now summarize
decompositions~\eqref{de}--\eqref{decomp}  and above reasonings in
the following lemma.

\begin{lem} \label{decom}
There is a canonical isomorphism of group ind-schemes
$$
L\bbG_m \simeq \widehat{\bbW} \times \bbZ \times \bbG_m \times \bbW
\mbox{.}
$$
\end{lem}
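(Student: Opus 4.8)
The plan is to realize the claimed isomorphism as a multiplication map and to deduce it from the pointwise factorization recalled in \eqref{de}--\eqref{decomp}. First I would define a morphism of functors
$$
\mu \colon \widehat{\bbW} \times \bbZ \times \bbG_m \times \bbW \lrto L\bbG_m \mbox{,}
$$
sending on $R$-points a quadruple $(r_{-1}, f, r_0, r_1)$ to the product $r_{-1} \cdot \iota(f) \cdot r_0 \cdot r_1 \in R((t))^*$. Here $\widehat{\bbW}(R)$, $\bbG_m(R) = R^*$ and $\bbW(R)$ are viewed as subgroups of $R((t))^*$ in the evident way (corresponding to the factors $r_{-1}$, $r_0$, $r_1$), and $\iota \colon \bbZ(R) \to R((t))^*$ is the embedding described above, which on an $f$ giving a decomposition $R = \bigoplus_i R_i$ with integers $n_i$ returns $\bigoplus_i t^{n_i}$. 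Since $R((t))$ is commutative, $R((t))^*$ is abelian and the order of the factors is irrelevant, so $\mu$ is the product of the four structure homomorphisms and is therefore itself a homomorphism of group functors.

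Next I would check that $\mu$ is bijective on $R$-points for every commutative ring $R$, which is exactly the content of \eqref{de}--\eqref{decomp}. Surjectivity is the existence of the factorization: given $s \in R((t))^*$, the cited statement produces a decomposition $R = \bigoplus_i R_i$ and on each factor a product $r_{-1} \cdot r_0 \cdot t^{\nu(s_i)} \cdot r_1$; assembling these over $i$ exhibits $s$ in the image of $\mu$, the collection of exponents $\nu(s_i)$ being precisely the element of $\bbZ(R)$. Injectivity is the uniqueness clause: two quadruples with the same image give, after passing to the decomposition of $R$ (which is unique once one imposes $\nu(s_i) \neq \nu(s_j)$ for $i \neq j$), the same factorization on each $R_i$, hence the same four pieces of data. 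Thus $\mu$ is a natural isomorphism of group functors.

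Finally I would upgrade this natural isomorphism to one of group ind-schemes. The source is an ind-scheme, being the product of the ind-scheme $\widehat{\bbW}$ with the schemes $\bbZ$, $\bbG_m$ and $\bbW$. Since ind-schemes form a full subcategory of the category of functors on $\Aff$ and $\mu$ is a bijection on all $R$-points, the inverse natural transformation exists automatically, so $\mu$ is an isomorphism of functors, hence of group ind-schemes; in particular this transports an ind-scheme structure to $L\bbG_m$. Canonicity follows from the uniqueness in \eqref{de}--\eqref{decomp}.

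Because the factorization statement is taken as input, the argument is essentially formal, and I expect the only points needing care to be the verification that $\mu$ is genuinely a morphism of ind-schemes --- that is, that the group structure on $\widehat{\bbW}$ and the embedding $\iota$ of $\bbZ$ are well defined and functorial --- together with the tacit representability of $L\bbG_m$, which is resolved precisely by transporting the structure across $\mu$ rather than establishing it beforehand.
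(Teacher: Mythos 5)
Your proposal is correct and follows essentially the same route as the paper, which simply observes that the lemma summarizes the Contou-Carr\`{e}re factorization \eqref{de}--\eqref{decomp} together with the representability of each factor, the isomorphism being the multiplication map with inverse given by the (unique) decomposition. The points you flag for care --- functoriality of the pieces and transporting the ind-scheme structure to $L\bbG_m$ --- are exactly what the paper leaves implicit.
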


Let $\nu: L\bbG_m\to\bbZ$ be the projection to the $\bbZ$ factor
under the above decomposition.

\medskip

Now we recall the one-dimensional Contou-Carr\`{e}re symbol (or
simply Contou-Carr\`{e}re symbol) (see~\cite[\S 2.9]{Del2},
\cite{CC}).
\begin{lem-def} \label{ld}
The Contou-Carr\`{e}re symbol is the unique bimultiplicative,
anti-symmetric map
$$
(\cdot ,\cdot) \: : \: L\bbG_m \times L\bbG_m  \lrto \bbG_m
$$
such that if $ \mathbf{Q} \subset R$ and $f,g \in
L\bbG_m(R)=R((t))^*$, then
\begin{equation} \label{cexp}
(f,g) =  \exp \res ( \log f \cdot \frac{dg}{g}) \quad \mbox{when}
\quad f\in \widehat{\bbW}(R)\times \bbW(R)  \quad (\mbox{see Lemma}
~\ref{decom}) \mbox{,}
\end{equation} \label{ag}
\begin{equation}  \label{mm}
(a,g)= a^{\nu(g)}  \quad \mbox{when } \quad a \in R^* \mbox{,}
\end{equation}
\begin{equation} \label{tt}
(t,t)=(-1,t)=-1  \mbox{.}
\end{equation}
We note that formula~\eqref{cexp} is well-defined, since  $\res (
\log f \cdot \frac{dg}{g}) \in \n R$.
\end{lem-def}
\begin{rmk} \label{pow} The expression $a^{\nu(g)}$ in~\eqref{mm} is defined in the following way. The element
 $\nu(g) \in \bbZ(R)$ determines  decomposition~\eqref{de}
and the set of integers $n_i$, where $1 \le i \le N$. Let $a =
\mathop{\oplus}\limits_i a_i$ with respect to this decomposition,
then $a^{\nu(g)} = \mathop{\oplus}\limits_i a_i^{n_i}$. Further we
will also use the following expression $t^{\nu(g)}=
\mathop{\oplus}\limits_i t^{n_i} \in R((t))^*$.
\end{rmk}
\begin{rmk} Our expression for $(f,g)$ is inverse to the corresponding expression from~\cite{Del2}.
\end{rmk}

We recall the uniqueness. First, note that that if $ \mathbf{Q} \subset R$, then the
symbol is uniquely defined by the conditions given above. In
general, if $X,Y$ are two flat ${\mathbf Z}$-schemes and
$f_{{\mathbf Q}}:X_{{\mathbf Q}}\to Y_{{\mathbf Q}}$ is a morphism,
then there is at most one morphism $f:X\to Y$ that extends
$f_{{\mathbf Q}}$. Now the uniqueness follows from the fact that
$L\bbG_m$ is represented by an inductive limit of schemes which are
flat over ${\mathbf Z}$.

Next we recall the existence. Note that the symbol
$(f,g)$ defined by the formula~\eqref{cexp} can be expressed as a formal series on
coefficients of $f,g \in R((t))^*$. To
extend this definition to arbitrary ring $R$ amounts to show that this formal series is defined
over~${{\mathbf Z}}$. In formula~\eqref{decomp} the element $r_{-1}$
can be uniquely decomposed as
$$
r_{-1} = \prod_{i < 0}^{i > -\infty} (1 - d_i t^i) \mbox{, where}
\quad d_i \in \n R \mbox{,}
$$
and the element $r_1$ can be uniquely decomposed as
$$
r_1 = \prod_{i >0}^{\infty} (1 - c_i t^i ) \mbox{, where} \quad c_i
\in R \mbox{.}
$$
Using the bimultiplicativity of $(f,g)$ and the continuity of
expression~\eqref{cexp}  with respect to the natural topology on $\bbW$
given there by the congruent subgroups $1+t^n R[[t]]$  it is enough to show that $(1+g_1t^{i_1}, 1+ g_2t^{i_2})$ is a formal series on $g_1,g_2$ defined over~$\mathbf Z$,
where $1+g_kt^k \in R((t))^*$ and $i_1, i_2\in ({{\mathbf
Z}} \setminus 0)^2$. This leads us to the following explicit formula
for the Contou-Carr\`{e}re symbol. Let $f,g \in
R((t))^*$, and decompose
$$
f = \prod_{i < 0}^{i > -\infty} (1-a_it^i) \cdot a_0 \cdot
t^{\nu(f)} \cdot \prod_{i > 0}^{\infty} (1-a_it^i) \mbox{,}
$$
$$
g = \prod_{i < 0}^{i > -\infty} (1-b_it^i) \cdot b_0 \cdot
t^{\nu(f)} \cdot \prod_{i > 0}^{\infty} (1-b_it^i)  \mbox{,}
$$
then
\begin{equation} \label{multform}
(f,g)= (-1)^{\nu(f) \nu(g)} \frac {a_0^{\nu(g)} \prod_{i>0}^{\infty}
\prod_{j>0}^{j < \infty} (1-a_i^{j/(i,j)} b_{-j}^{i/(i,j)})^{(i,j)}}
{b_0^{\nu(f)}    \prod_{i>0}^{i < \infty} \prod_{j>0}^{\infty}
(1-a_{-i}^{j/(i,j)} b_{j}^{i/(i,j)})^{(i,j)} } \mbox{.}
\end{equation}
This proves the existence.

Now we see that the Contou-Carr\`{e}re symbol is bimultiplicative.
This is clear from expression~\eqref{cexp} if $ \mathbf{Q} \subset R$.  We consider two
maps from $X = L\bbG_m \times L\bbG_m  \times L\bbG_m$ to $\bbG_m$.
Let $x,y,z \in L\bbG_m(R)$, then the first map is given as $(xy,z)$,
and the second map is given as $(x,z)(y,z)$. As these two maps coincide when $ \mathbf{Q} \subset
R$ and $X$ is
represented by ind-flat schemes over
${\mathbf Z}$, these two maps coincide for any ring $R$. Therefore $(\cdot, \cdot)$ is bimultiplicative with respect to the first
argument. By the same argument, it is also bimultipicative with respect to the second argument.

\begin{prop}[Steinberg property] Let $f, 1-f \in L\bbG_m(R)= R((t))^*$. Then $(f,1-f)=1$.
\end{prop}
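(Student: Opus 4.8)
The plan is to reduce, exactly as in the proof of bimultiplicativity just given, to the case $\mathbf{Q}\subset R$ by a flatness argument, and then to establish the relation over $\mathbf{Q}$-algebras by an explicit residue computation based on \eqref{cexp}, \eqref{mm} and \eqref{tt}. For the reduction, observe that both $f\mapsto(f,1-f)$ and the constant map $f\mapsto 1$ are morphisms to $\bbG_m$ defined on the sub-ind-scheme $U\subset L\bbG_m$ consisting of those $f$ for which $1-f$ is again invertible. Since invertibility in $R((t))$ is an open condition, $U$ is an open sub-ind-scheme of $L\bbG_m$, hence ind-flat over $\mathbf{Z}$ because $L\bbG_m$ is: each factor in Lemma~\ref{decom} is flat over $\mathbf{Z}$, the ideals defining $\widehat{\bbW}$ being generated by monomials. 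Therefore the two morphisms agree over $\mathbf{Z}$ as soon as they agree over $\mathbf{Q}$, and it remains to treat $\mathbf{Q}\subset R$.

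So assume $\mathbf{Q}\subset R$. Using \eqref{de} we may decompose $R$ so that $\nu(f)=m$ and $\nu(1-f)=m'$ are constant, and by Lemma~\ref{decom} write $f=\alpha\,a\,t^{m}\,\beta$ and $1-f=\alpha'\,a'\,t^{m'}\,\beta'$ with $\alpha,\alpha'\in\widehat{\bbW}(R)$, $a,a'\in R^{*}$, $\beta,\beta'\in\bbW(R)$. Expanding $(f,1-f)$ by bimultiplicativity into the product of the symbols of the individual factors and evaluating each by \eqref{mm}, \eqref{tt} and \eqref{cexp}, the terms pairing a unipotent factor $w$ against a constant unit or a power of $t$ vanish (one has $\tfrac{da'}{a'}=0$, and $\log w$ has vanishing $t^{0}$-coefficient so that $\res(\log w\,\tfrac{dt}{t})=0$), and the symbol collapses to
\begin{equation*}
(f,1-f)=(-1)^{mm'}\,a^{m'}\,a'^{-m}\cdot(\alpha\beta,\alpha'\beta').
\end{equation*}
The first factor is exactly the tame symbol of the leading data of $f$ and $1-f$, hence equals $1$ by the classical Steinberg relation for the tame symbol. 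For the second factor, \eqref{cexp} together with the same degree bookkeeping kills the $\widehat{\bbW}$--$\widehat{\bbW}$ and $\bbW$--$\bbW$ contributions, leaving $(\alpha\beta,\alpha'\beta')=(\alpha,\beta')(\beta,\alpha')$, i.e. only the two ``mixed'' residues $\res(\log\alpha\,\tfrac{d\beta'}{\beta'})$ and $\res(\log\beta\,\tfrac{d\alpha'}{\alpha'})$.

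The crux, and the step I expect to be the main obstacle, is to show that these mixed residues cancel. Here one must feed in the defining relation $\alpha'a't^{m'}\beta'=1-\alpha a t^{m}\beta$, i.e. express $\log\alpha'$ and $\log\beta'$ through $\log(1-f)$, and then verify the vanishing of the resulting residue by integration by parts (the residue of a total derivative is zero) together with the antisymmetry $(f,1-f)=(1-f,f)^{-1}$; concretely this amounts to the residue identity $\res\!\big(\log w\,\tfrac{d(1-f)}{1-f}\big)=0$ for unipotent $f$. Over a field this step is vacuous, since $\widehat{\bbW}(k)=1$ forces $\alpha=\alpha'=1$ and the whole correction disappears, leaving only the tame symbol; the genuine difficulty lies over non-reduced $\mathbf{Q}$-algebras, where $\widehat{\bbW}(R)\neq 1$ and the mixed residues must be controlled honestly. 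Finally one runs the parallel, easier cases $m>0$ and $m<0$ (the former reduced to $m'>0$ by antisymmetry, the latter handled by the same nine-term expansion, in which the sign contributions from \eqref{tt} cancel), and in each case the identity follows from the same two mechanisms: the classical tame Steinberg relation for the leading part and the vanishing of the mixed residues.
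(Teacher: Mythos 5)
Your overall strategy (reduce to $\mathbf{Q}\subset R$ by an ind-flatness argument, then compute residues) is the same as the paper's, but both halves of your argument have genuine gaps.

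First, the reduction. You justify ind-flatness of $U=\{f: 1-f\in R((t))^*\}$ by asserting that invertibility of $1-f$ is an open condition, so that $U$ is an open sub-ind-scheme of the ind-flat $L\bbG_m$. This is false: by \eqref{decomp}, invertibility of $1-f$ in $R((t))$ requires the coefficients of $1-f$ below its order to be \emph{nilpotent}, and nilpotency is not an open condition. Concretely, restrict to the locus $f=a_0(1+b_1t)$, i.e.\ to $\spec{\mathbf Z}[a_0^{\pm 1},b_1]\subset L\bbG_m$; then $1-f=(1-a_0)-a_0b_1t$ is invertible precisely when (after decomposing $R$) either $1-a_0$ is a unit or $1-a_0$ is nilpotent and $b_1$ a unit. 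Every field-valued point with $(1-a_0,b_1)\neq(0,0)$ satisfies this, but the tautological point over the local ring $k[x]_{(x)}$ with $a_0=1-x$, $b_1=1$ does not (there $1-a_0=x$ is neither a unit nor nilpotent), so $U$ cannot be represented by an open subscheme. The paper has to prove ind-flatness of the locus $x+y=1$ by hand, coefficient by coefficient, checking that each $V_{1,n}\times V_{2,n}/(e_n+g_n=\delta_{n,0})$ with factors among $\n,\bbG_a,\bbG_m$ is ind-flat; that case analysis is exactly the content your openness claim tries to bypass, and it cannot be bypassed.

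Second, the computation over $\mathbf{Q}$. Your factorization is miscalibrated: the ``leading'' factor $(-1)^{mm'}a^{m'}a'^{-m}$ is \emph{not} equal to $1$ by the classical Steinberg relation, because $a,a'\in R^*$ are honest units of a non-reduced ring, not residues in a field. For example, with $R=\mathbf{Q}[\epsilon]/\epsilon^2$ and $f=t+\epsilon$ one has $m=1$, $m'=0$, $a=1$, $a'=1-\epsilon$, so the leading factor is $(1-\epsilon)^{-1}=1+\epsilon\neq 1$; it is cancelled by the mixed term $(\alpha,\beta')=\exp\res\bigl(\log(1+\epsilon t^{-1})\,\tfrac{d\beta'}{\beta'}\bigr)=1-\epsilon$. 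So the leading part and the mixed residues do not vanish separately --- they cancel against each other --- and the step you yourself flag as ``the main obstacle'' (controlling the mixed residues) is precisely where the identity lives; it is left as a sketch rather than proved. The paper's route over $\mathbf{Q}$ avoids this bookkeeping by splitting into the three cases $\nu(f)>0$, $\nu(f)=0$, $\nu(f)<0$ and in each case exhibiting $\log(1-f)\tfrac{df}{f}$ (after peeling off the unit part via \eqref{mm}) as an exact form, whose residue vanishes; you would need to carry out an argument of that kind to close the gap.
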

\begin{proof}
Using Lemma~\eqref{decom} we will prove that condition $x + y =1$
for $x \times y \in R((t))^* \times R((t))^*$ defines an ind-scheme,
which is an inductive limit of flat schemes over ${\mathbf Z}$. Indeed,
it is clear that it is enough to prove it  by restriction to every
connected component of $L\bbG_m  \times L\bbG_m$ which is uniquely
defined by the pair of integers $(\nu(x), \nu(y))$. If we fix such a
pair, then we can write $x= \sum_{i > -\infty}^{\infty} e_i t^i$ and $y= \sum_{i > -\infty}^{\infty} g_i t^i$, where
$e_{\nu(x)}, g_{\nu(y)} \in R^*$, and $e_{i}, g_{j} \in \n R$ for any $i < \nu(x), j<\nu(y)$.
The condition $x+y =1$ is equivalent to: $e_0 +g_0 =1$ and $e_i + g_i =0$ for every
integer $i \ne 0$.
It is enough to study the subscheme defined by these equations in the ind-scheme $V_1 \times V_2$,
where
$$
V_1= \mathop{\underrightarrow{\lim}}\limits_{i \to -\infty} \left( \prod_{n=i}^{\nu(x)}  V_{1,n} \right ) \times \spec {\mathbf Z}[e_{\nu(x)+1},e_{\nu(x)+2},\ldots] \mbox{,}
$$
and $V_{1,{\nu(x)}} =\spec {\mathbf Z}[e_{\nu(x)}, e_{\nu(x)}^{-1}]=\bbG_m$, $V_{1, m} = \on{Spf} {\mathbf Z}[[e_m]]=\n$ for $m < \nu(x)$;
$$
V_2= \mathop{\underrightarrow{\lim}}\limits_{i \to -\infty} \left( \prod_{n=i}^{\nu(y)}   V_{2,n} \right) \times \spec {\mathbf Z}[g_{\nu(y)+1},g_{\nu(y)+2},\ldots] \mbox{,}
$$
and $V_{1,{\nu(y)}} =\spec {\mathbf Z}[g_{\nu(y)}, g_{\nu(y)}^{-1}]=\bbG_m$, $V_{2, m} =\on{Spf} {\mathbf Z}[[g_m]]= \n$ for $m < \nu(y)$.
Fix some integer $k > \max(\nu(x), \nu(y), 0) +1 $, and write
$$\spec {\mathbf Z}[e_{\nu(x)+1},e_{\nu(x)+2},\ldots] = \left( \prod_{n=\nu(x)+1}^{k-1} V_{1,n} \right) \times  \spec {\mathbf Z}[e_k,e_{k+1},\ldots] \mbox{,}$$
$$\spec {\mathbf Z}[g_{\nu(y)+1},e_{\nu(y)+2},\ldots] = \left( \prod_{n=\nu(y)+1}^{k-1} V_{2,n} \right) \times  \spec {\mathbf Z}[g_k, g_{k+1},\ldots] \mbox{,}$$
where  $V_{1,n}=  \spec {\mathbf Z}[e_n] = \bbG_a$ and $V_{2,n}=  \spec {\mathbf Z}[g_n] = \bbG_a$.
Hence it follows
\begin{eqnarray} \nonumber
V_1 \times V_2 /(x+y =1) = \mathop{\underrightarrow{\lim}}\limits_{i \to -\infty}
\left(  \prod_{n=i}^{k-1} V_{1,n} \times V_{2,n } / (e_n +g_n = \delta_{n,0}) \right)
\times \\ \label{sch} \times \spec {\mathbf Z}[e_k,e_{k+1}, \ldots, g_k, g_{k+1},\ldots]/ (e_k=g_k, e_{k+1} = g_{k+1}, \ldots) \mbox{,}
\end{eqnarray}
where $V_{j,n}$ (for $j=0$ or $j=1$) is one of the following schemes: $\n$,  $\bbG_a$, $\bbG_m$.
The  scheme in  expression~\eqref{sch} is an affine space.
By analyzing all possible cases for $V_{j,n}$, it is easy to see that $V_{1,n} \times V_{2,n } / (e_n +g_n = \delta_{n,0})$ is also (ind)-flat over ${\mathbf Z}$ (or possibly empty). Therefore, $V_1 \times V_2 /(x+y =1)$ is ind-flat over ${\mathbf Z}$ (or possibly empty).

Therefore it is enough to check the Steinberg property only for the
case $ \mathbf{Q} \subset R$. In this case this property follows from Lemma-Definition~\eqref{ld} by explicit
calculations
with series in formula~\eqref{cexp}. Indeed, we just need to consider
three cases: $\nu(f) > 0$, $\nu(f) =0$, $\nu(f) <0$, where we can
for simplicity assume that $N=1$ in decomposition~\eqref{de}. See
the analogous but more difficult analysis  for  the  two-dimensional
Contou-Carr\`{e}re symbol in Proposition~\ref{st}.
\end{proof}

\medskip

The following reciprocity law for the Contou-Carr\`{e}re symbol was
proved in~\cite{AP}, in~\cite[\S 3.4]{BBE}\footnote{The statement in~\cite[\S 3.4]{BBE} is more general than we formulate here. In {\em loc.cit} the authors proved the reciprocity law for a smooth projective family of curves over any base ring $R$.} and later by another methods in~\cite{Pal}. A proof via $K$-theory, as suggested by one of the editors, is outlined in Remark~\ref{K-th-res} later.
\begin{thm} \label{thres1}
Let $C$ be a smooth projective algebraic curve over an algebraically
closed field $k$. Let $R$ be a finite local $k$-algebra.
 Let $K = k(C) \otimes_k R$.
Let $f,g \in K^*$. Then the following product in $R^*$ contains only
finitely many non-equal to $1$ terms and
$$
\prod_{p \in C} (f,g)_p =1  \mbox{,}
$$
where $( \cdot , \cdot )_p$ is the Contou-Carr\`{e}re symbol on $K_p
\otimes_k R$, $K_p = k((t_p))$ is the completion field of the point
$p$ on $C$, and $K \hookrightarrow K_p \otimes_k R$ for any $p \in
C$.
\end{thm}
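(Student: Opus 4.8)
The plan is to deduce the reciprocity law from the vanishing of a single global commutator, in the spirit of the Tate-style arguments of \cite{AP} and \cite{BBE} and of the categorical commutator formalism of \cite{OsZh}. Before that, I would dispose of the finiteness assertion, which is essentially formal. Since $k$ is algebraically closed, $R = k \oplus \frakm$ with $\frakm = \n R$ nilpotent, so $K = k(C)\otimes_k R$ is Artinian local with residue field $k(C)$; an element $f \in K^*$ reduces to a nonzero $\bar f \in k(C)^*$. Writing $f$ as a finite sum $\sum_\alpha f_\alpha\otimes r_\alpha$, let $S$ be the finite set of zeros and poles of $\bar f,\bar g$ together with the poles of the finitely many $f_\alpha, g_\beta$. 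For $p\notin S$ we have $f,g\in(\oo_{C,p}\otimes_k R)^*$, hence $\nu(f)=\nu(g)=0$ and the negative factors $r_{-1}$ in the local expansions \eqref{decomp} are trivial; formula \eqref{multform} then collapses to $(f,g)_p = 1$. Thus the product is finite and it remains to prove $\prod_{p\in S}(f,g)_p = 1$.

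Next I would assemble the local symbols into one adelic commutator. Let $\bbA_R$ be the restricted product of the completions $K_p\otimes_k R$ with respect to the subrings $\oo_{C,p}\otimes_k R$, and let $\oo_{\bbA}=\prod_p \oo_{C,p}\otimes_k R$ be the standard lattice. The construction of \cite{OsZh} attaches to the Tate $R$-module $\bbA_R$ with this lattice a central extension of the adelic loop group $L\bbG_m(\bbA_R)$ by $\bbG_m$ whose generalized commutator equals, on commuting adelic elements, the product $\prod_p(\cdot,\cdot)_p$ of local Contou-Carr\`ere symbols. By the finiteness just established, the commutator of the diagonal images of $f$ and $g$ is exactly $\prod_{p}(f,g)_p$.

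The crux is then to show that this adelic central extension splits, as a central extension, over the diagonally embedded abelian group $K^*\hookrightarrow L\bbG_m(\bbA_R)$. The splitting should come from the global lattice $K\subset\bbA_R$: because $C$ is projective, $K\cap\oo_{\bbA}$ and $\bbA_R/(K+\oo_{\bbA})$ are the finite $R$-modules $H^0(C,\oo_C)\otimes_k R$ and $H^1(C,\oo_C)\otimes_k R$, so the pair of lattices $(\oo_{\bbA},K)$ is commensurable in Tate's sense and the associated graded-determinant theory acquires a canonical $K^*$-equivariant trivialization. A central extension admitting a homomorphic section over an abelian subgroup has trivial commutator pairing there, whence $[\tilde f,\tilde g]=\prod_p(f,g)_p = 1$ for $f,g\in K^*$.

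The main obstacle is precisely this last step: producing the global trivialization in the deformed, $R$-linear setting, i.e. verifying that the comparison of $K$ with $\oo_{\bbA}$ is ``finite-potent'' so that Tate's vanishing survives in the presence of the nilpotents $\frakm$. In characteristic zero one can shortcut the whole argument: using bimultiplicativity and the decomposition $K^*=k(C)^*\cdot(1+\frakm_K)$, reciprocity for the $k(C)^*$-part is classical Weil reciprocity for the tame symbol, while on the remaining factors formula \eqref{cexp} rewrites the product as $\exp\bigl(\sum_p\res_p\omega\bigr)$ for suitable global meromorphic one-forms $\omega$ with coefficients in $\frakm$ (such as $\omega=\log f\cdot dg/g$), which vanishes by the residue theorem $\sum_p\res_p\omega=0$ applied to each $k$-coordinate of $\frakm$. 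The residue theorem holds in all characteristics, but the $\exp/\log$ reformulation does not; bridging that gap --- either through the central extension above or through the flatness-rigidity principle already exploited in this section --- is exactly where the real work lies.
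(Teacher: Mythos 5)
Your proposal stops exactly where the proof actually lives, and you say so yourself: the $K^*$-equivariant trivialization of the adelic determinantal central extension (equivalently, the vanishing of the commutator pairing on the diagonal $K^*\times K^*$) is asserted to ``come from the global lattice $K$'' but is not established. In the presence of the nilpotents of $R$ this is not a formality: one must know that for $f\in K^*$ the quotients such as $(f\cdot\oo_{C,p}\otimes_k R)/(h\cdot\oo_{C,p}\otimes_k R)$ are finitely generated \emph{projective} $R$-modules (an analogue of Proposition~\ref{Mod}), choose a common sublattice $h$ to make $\calD et(\oo_{\bbA}\mid f\oo_{\bbA})$ well defined, identify it with $\calD et$ of the (finite, but not obviously $R$-projective) cohomology of a coherent sheaf on $C_R$, and check multiplicativity of that identification in $f$. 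None of this is routine, and your fallback options do not close it either: the $\exp$/$\log$ residue-theorem computation genuinely needs $\mathbf{Q}\subset R$, and the flatness-rigidity principle of Section~\ref{first} applies to identities of morphisms between ind-flat $\mathbf{Z}$-schemes such as $L\bbG_m\times L\bbG_m\to\bbG_m$, not to a statement attached to a fixed curve over a fixed field of characteristic $p$ (one would have to lift $C$, $f$ and $g$ to characteristic zero and compare symbols along the lift, which is an additional argument you have not made). So there is a real gap at the decisive step, though you have correctly located it.

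For comparison: the paper does not reprove this theorem by the adelic-commutator route you sketch (it attributes that proof to \cite{AP} and \cite[\S 3.4]{BBE}); the proof it actually outlines is the $K$-theoretic one of Remark~\ref{K-th-res}. There one takes $\{f,g\}\in K_2(U\otimes_k R)$ for an affine open $U\subset C$ with $R$ replaced by a Noetherian subring, uses the localization exact sequence for singular varieties $K_2(U\otimes_k R)\stackrel{\partial}{\to}K_1(\calH)\stackrel{\alpha}{\to}K_1(C\otimes_k R)$ together with $\alpha\circ\partial=0$ and the pushforward $f_*$ to get $\sum_p s_p\{f,g\}=0$, and then invokes Theorem~\ref{K-th} to identify the boundary map $\partial_{p,2}$ with the local Contou-Carr\`ere symbol. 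That route replaces the global determinantal trivialization you need by the exactness of the localization sequence, which is where your missing step gets absorbed. Your finiteness argument and the reduction to $\prod_{p\in S}$ are fine, and your characteristic-zero computation is correct as far as it goes; but as a proof of the theorem over an arbitrary algebraically closed field the proposal is incomplete.
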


The authors of~\cite{AP} and~\cite{BBE} realized the
Contou-Carr\`{e}re symbol as the commutator of some central
extension. A consequence is that this symbol is invariant under the
change of the local parameter $t$ in the following sense. Any $t'
\in R((t))^*$ with $\nu(t')=1$ defines a well-defined continuous
automorphism $\phi_{t'}$ of the ring $R((t))$ by the rule: $\sum a_i
t^i \mapsto \sum a_i {t'}^i$, see e.g.~\cite[\S~1]{Mo} . Then for
any $f , g \in R((t))^*$ we have that $(f,g) = (\phi_{t'}(f),
\phi_{t'}(g))$.

\begin{rmk}
One can regard the Contou-Carr\`{e}re symbol as some
deformation of the usual tame symbol. Indeed, if $R=k$ is a field,
then $(\cdot, \cdot)$ is the tame symbol. If $R= k[\epsilon]/
\epsilon^3$, then for any $f,g \in k((t))$ we have that $(1+
\epsilon f, 1 + \epsilon g )= 1 + \epsilon^2 \res f dg $.
\end{rmk}

\section{Definition of the two-dimensional  Contou-Carr\`{e}re symbol} \label{2cc}
\subsection{Double loop group of $\bbG_m$} \label{dlg}
Let us define the double loop group of $\bbG_m$ as
$L^2\bbG_m=L(L\bbG_m)$, i.e. $L^2\bbG_m(R)=R((u))((t))^*$ for
every commutative ring $R$. Let us show that
\begin{prop} \label{2-loop}
$L^2\bbG_m$ is represented by an ind-(affine) scheme  over ${\mathbf
Z}$.
\end{prop}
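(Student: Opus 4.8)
The plan is to show that $L^2\bbG_m(R) = R((u))((t))^*$ is a filtered colimit of affine schemes with closed-embedding transition maps, i.e.\ an ind-affine scheme over $\mathbf{Z}$. The essential structural fact is that an invertible element of $R((u))((t))$ is a Laurent series in $t$ whose coefficients lie in $R((u))$, and whose invertibility forces all sufficiently negative $t$-coefficients to be \emph{nilpotent} (topologically nilpotent in the $u$-adic sense) while the leading coefficient is a unit. This is precisely the one-variable decomposition of Lemma~\ref{decom} applied to the \emph{ground ring} $R((u))$ in place of $R$. So my first step would be to understand $L\bbG_m$ over an arbitrary coefficient ring and then iterate.

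First I would recall, via Lemma~\ref{decom}, that $L\bbG_m \simeq \widehat{\bbW}\times\bbZ\times\bbG_m\times\bbW$, where $\bbW = \spec\mathbf{Z}[b_1,b_2,\ldots]$ is affine and $\widehat{\bbW}$ is the ind-affine scheme $\mathop{\underrightarrow{\lim}}\spec\mathbf{Z}[c_{-1},c_{-2},\ldots]/I_{\{\epsilon_i\}}$. Each factor here is manifestly an ind-affine scheme over $\mathbf{Z}$, and a finite product of ind-affine schemes is again ind-affine (the index set is the product of the directed index sets, and products of closed embeddings are closed embeddings). Hence $L\bbG_m$ is ind-affine. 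The point is then that $L^2\bbG_m = L(L\bbG_m)$ is obtained by applying the loop functor $L$ to this ind-affine scheme, and the task reduces to showing that $L$ sends ind-affine schemes to ind-affine schemes.

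The key step is to analyze how $L$ interacts with the colimit and with each building block. Applying $L$ to $\widehat{\bbW}\times\bbZ\times\bbG_m\times\bbW$, I would argue that $L$ commutes with finite products (since $\mathcal{F}_1\times\mathcal{F}_2$ evaluated on $R((t))$ is the product of the evaluations) and, more delicately, with the relevant filtered colimits. For $\bbW$, we have $L\bbW(R) = \bbW(R((t)))$, the group of series $1+\sum_{i\geq 1}b_i t^i$ with $b_i\in R((t))$; writing out the $t$-expansions of the $b_i$ exhibits this as an ind-affine scheme (one must bound below the $t$-degrees, which produces the directed colimit). Similarly $L\bbG_m$, $L\bbZ$, and $L\widehat{\bbW}$ are each ind-affine by direct inspection. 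I would then assemble these into a description of $L^2\bbG_m$ as a colimit over a directed index set built from the levels of the individual pieces, checking that the transition maps remain closed embeddings.

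The hard part, and the main obstacle, is the $\widehat{\bbW}$ factor: applying $L$ to $\widehat{\bbW}$ requires understanding $\widehat{\bbW}(R((t)))$, i.e.\ the condition that the coefficients $c_i$ lie in the nil-radical $\n(R((t)))$. Nilpotence over the Laurent ring $R((t))$ is subtler than over $R$ — a nilpotent element of $R((t))$ need not have coefficients that are individually nilpotent in $R$ in any naive uniform way, and one must present $\n(R((t)))$ as a filtered colimit of affine schemes compatibly with the ind-structure of $\widehat{\bbW}$. I would handle this by observing that $\n = \on{Spf}\mathbf{Z}[[T]]$ and that $L\n$ is itself an ind-(formal-)affine scheme, then verifying that the nilpotency ideals $I_{\{\epsilon_i\}}$ pull back to an admissible cofinal system under $L$. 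Once this compatibility is established, the finite-product and filtered-colimit manipulations are routine, and one concludes that $L^2\bbG_m$ is represented by an ind-affine scheme over $\mathbf{Z}$.
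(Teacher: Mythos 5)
Your proposal is correct and follows essentially the same route as the paper: apply $L$ to the decomposition $L\bbG_m\simeq\widehat{\bbW}\times\bbZ\times\bbG_m\times\bbW$ of Lemma~\ref{decom}, use that $L$ commutes with finite products, and reduce the delicate $L\widehat{\bbW}$ factor to the ind-representability of $L\n$ (the paper writes $L\widehat{\bbW}=\mathop{\underrightarrow{\lim}}_n(L\n)^n$ and $L\n=\mathop{\underrightarrow{\lim}}_{m,n}L_m\n_n$ explicitly). The only point treated more lightly than in the paper is the factor $L\bbZ$, which is not quite ``by direct inspection'': the paper devotes Lemma~\ref{conn} to showing $\bbZ\to L\bbZ$ is an isomorphism, via the analysis of idempotents in $R((t))$.
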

\begin{proof} We have from Lemma~\ref{decom} that
\begin{equation} \label{deccon2}
L^2\bbG_m\simeq L\widehat{\bbW}\times L\bbZ\times L\bbG_m\times
L\bbW \mbox{.}
\end{equation}
The proposition then is the consequence of the following Lemmas
\ref{conn}-\ref{loop of hatW}.
\end{proof}

\begin{lem} \label{conn}
The natural map $\bbZ \to L \bbZ$ is an isomorphism of functors.

Concretely, let $R$ be a commutative ring, then any decomposition of
the ring $ R((t))$ into the product of two rings:
$$
R((t)) = L_1 \oplus L_2
$$
is induced by the decomposition of the ring $R$ into the product of
two rings:
$$
R =  R_1 \oplus R_2 \mbox{,}
$$
such that $L_1= R_i((t))$, $L_2=R_{j}((t))$, where $i,j \in \{1,2 \}$,
$i \ne j$.
\end{lem}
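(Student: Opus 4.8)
The plan is to reduce everything to a single assertion about idempotents: for every commutative ring $R$, the homomorphism $R \to R((t))$ induces a bijection on idempotents. Indeed, $\bbZ(R)$ is the set of locally constant $\mathbf Z$-valued functions on $\spec R$; such a function has finite image (as $\spec R$ is quasi-compact) and is determined by the induced partition of $\spec R$ into open-and-closed pieces, i.e. by the Boolean algebra of open-and-closed subsets, which is identified with the set $\on{Idem}(R)$ of idempotents via $e \mapsto V(1-e)$. The natural transformation $\bbZ \to L\bbZ$ is the one induced by $R \to R((t))$, and the latter is a homomorphism of Boolean algebras ($e\mapsto e$, $1\mapsto 1$, products to products); so once $\on{Idem}(R) \to \on{Idem}(R((t)))$ is known to be bijective, the statement for $\bbZ \to L\bbZ$ follows formally, and the concrete reformulation is simply the two-factor case, where a decomposition $R((t)) = L_1 \oplus L_2$ corresponds to a single idempotent $e\in R((t))$ with $L_1 = eR((t))$, $L_2 = (1-e)R((t))$ (finite decompositions then follow by iteration).

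Injectivity of $\on{Idem}(R) \to \on{Idem}(R((t)))$ is immediate from $R \hookrightarrow R((t))$, so the content is surjectivity: every idempotent $\epsilon \in R((t))$ already lies in $R$. First I would record the fiberwise picture. For a prime $\mathfrak p \subset R$ the reduction $R((t)) \to \kappa(\mathfrak p)((t))$ lands in a field (Laurent series over the field $\kappa(\mathfrak p)$), so the image $\epsilon(\mathfrak p)$ of $\epsilon$ is $0$ or $1$. Writing $\epsilon = \sum_i a_i t^i$ and $1-\epsilon = \sum_i c_i t^i$, and letting $I_\epsilon$, $I_{1-\epsilon}$ be the ideals of $R$ generated by the $a_i$, resp. the $c_i$, one has $\epsilon(\mathfrak p)=0$ exactly when $\mathfrak p \in V(I_\epsilon)$ and $\epsilon(\mathfrak p)=1$ exactly when $\mathfrak p \in V(I_{1-\epsilon})$. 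Since $\epsilon(\mathfrak p)\in\{0,1\}$ for all $\mathfrak p$, these two closed sets cover $\spec R$; since $\epsilon + (1-\epsilon)=1$, no prime lies in both (that would force $0=1$ in a nonzero field). Hence $\spec R = V(I_\epsilon) \sqcup V(I_{1-\epsilon})$ is a disjoint union of two closed sets, so both are open-and-closed and determine an idempotent $e \in R$.

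It then remains to identify $\epsilon$ with $e$. I would pass to the decomposition $R = R_1 \times R_2$ with $R_1$ supported on $V(I_{1-\epsilon})$ (where $\epsilon \equiv 1$) and $R_2$ on $V(I_\epsilon)$ (where $\epsilon \equiv 0$), giving $R((t)) = R_1((t)) \times R_2((t))$. The image $\epsilon_1$ of $\epsilon$ in $R_1((t))$ is an idempotent whose reduction at any prime $\mathfrak q$ of $R_1((t))$ depends only on the underlying prime of $R_1$, where it equals $1$; thus $1 - \epsilon_1$ lies in every prime of $R_1((t))$, i.e. in the nilradical, and being idempotent it must vanish, so $\epsilon_1 = 1$. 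Symmetrically $\epsilon_2 = 0$, whence $\epsilon = (1,0) = e \in R$.

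The step I expect to be the main obstacle is exactly this surjectivity, i.e. ruling out any genuine negative (or positive) terms in $\epsilon$. The naive routes are misleading: reducing to the reduced case fails, because the kernel of $R((t)) \to R_{\mathrm{red}}((t))$, the series with nilpotent coefficients, need not be a nil ideal for non-Noetherian $R$; and a direct coefficient-by-coefficient analysis of $\epsilon^2 = \epsilon$ stalls, since the lowest coefficient $a_N$ is only seen to satisfy $a_N^2 = 0$, not $a_N = 0$, when $R$ is not reduced. The fiberwise argument above is designed precisely to sidestep non-reducedness: it tests $\epsilon$ against all residue fields $\kappa(\mathfrak p)$ simultaneously and packages the result as the disjoint closed cover $\spec R = V(I_\epsilon) \sqcup V(I_{1-\epsilon})$, after which the identification $\epsilon = e$ is forced without any further estimates.
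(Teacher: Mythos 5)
Your reduction to idempotents and the fiberwise construction of the candidate idempotent $e\in R$ (via the disjoint closed cover $\spec R = V(I_\epsilon)\sqcup V(I_{1-\epsilon})$) are correct, and this packaging is a reasonable alternative to the paper's passage to $R_{\mathrm{red}}$. The gap is in the last step. After restricting to the factor $R_1$, what you actually know is that every coefficient of $\eta:=1-\epsilon_1$ is nilpotent, i.e. $\eta\in(\n R_1)((t))$. From this you assert that $\eta$ lies in every prime of $R_1((t))$, justified by the claim that the reduction of $\epsilon_1$ at a prime $\frakq\subset R_1((t))$ ``depends only on'' $\frakp=\frakq\cap R_1$. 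That claim is neither proved nor formal: the quotient map $R_1((t))\to\kappa(\frakq)$ does not factor through the coefficientwise reduction $R_1((t))\to\kappa(\frakp)((t))$ (a prime $\frakq$ contains $\frakp\cdot R_1((t))$ but need not contain the larger ideal of all series with coefficients in $\frakp$ when $\frakp$ is not finitely generated), so there is no map comparing the two reductions, and no reason a priori that they assign the same value in $\{0,1\}$ to the idempotent. In effect you are assuming that the fibers of $\spec R_1((t))\to\spec R_1$ are connected, which is essentially the statement being proved; equivalently, you are assuming that the idempotent $\eta\in(\n R_1)((t))$ lies in $\n(R_1((t)))$ --- exactly the non-Noetherian pitfall you yourself flag, since $(\n R_1)((t))\not\subseteq\n(R_1((t)))$ in general.

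The missing ingredient, which is how the paper closes this step, is to prove directly that $\epsilon_1=1-\eta$ is a \emph{unit} of $R_1((t))$ whenever all coefficients of $\eta$ are nilpotent: the finitely many negative-degree coefficients of $\eta$ generate an ideal $I'$ that is nilpotent (finitely many nilpotent generators), the ideal of series with coefficients in $I'$ is then a nilpotent ideal of $R_1((t))$, and modulo it $1-\eta$ becomes $(1-c_0)$ times an element of $1+tR_1[[t]]$, hence a unit; a unit modulo a nilpotent ideal is a unit. Then $\eta(1-\eta)=0$ forces $\eta=0$, so $\epsilon_1=1$, and symmetrically $\epsilon_2=0$. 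With this inserted your proof is complete, and is otherwise a mild repackaging of the paper's argument, which reduces modulo $\n R$ instead of working fiberwise but confronts and resolves exactly the same invertibility issue.
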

\begin{proof}
Every non-trivial idempotent $e $ in a commutative ring $A$, i.e,
$e^2=e$, $e \ne 0$, $e \ne 1$, defines the non-trivial
decomposition: $A = Ae \oplus A(1-e)$. Therefore the study of
decompositions of rings  is the same as the study of idempotents in
this ring.

Let $R_{{\rm red}} = R / \n R$. As the
topological spaces of $\spec  R$ and $\spec  R_{{\rm red}}$
coincide. Therefore idempotents of the ring $R$ are in one-to-one
correspondence with idempotents of the ring $R_{{\rm red}}$ under
the natural map $R \to R_{{\rm red}}$.

 Suppose that there is an element  $f \in R((t))$ such that
 $f^2=f$ and $f \ne 1, f\ne 0$.
Let $\tilde{f}$ be the image of the element $f$ in $R_{{\rm
red}}((t))$. Then it is easy to see that $\tilde{f}^2 =\tilde {f}$
implies $\tilde{f} = \sum_{n \ge 0} a_n t^n$,  where $a_0^2=a_0$,
$a_0 \in R_{{\rm red}}$. If $a_0 =0$, then $\tilde {f}=0$. If
$a_0=1$, then $\tilde {f}=1$, since $\tilde{f}$ is invertible in
this case. If
 $a_0 \ne 0 $, $a_0 \ne 1$, then consider
$R_{{\rm red}} = B_1 \oplus B_2$, where $B_1= R_{{\rm red}} a_0$,
$B_2= R_{{\rm red}} (1-a_0)$. It is clear that the image of
$\tilde{f}$ in $B_1((t))$ is equal $1$, and the image of $\tilde{f}$
in $B_2((t))$ is equal $0$. Hence we have $\tilde{f} = a_0$.

 If
 $\tilde{f} =a_0 =0$, then $f \in (\n R) ((t))$ and $f^2 =f$. Hence we have  $f (1-f) =0$ and $1-f \in R((t))^*$. The latter
 is possible only if $f =0$.

 Suppose that  $\tilde{f} = a_0=1$. Then we have  $f = 1 + f'$, where $f' \in (\n R)((t))$. We will prove that $f$ is
 invertible
 in $R((t))$. Together with $f^2=f$ it would imply that $f=1$.  So, we have to prove that $f = 1+f'$ is invertible in
 $R((t))$. Let $f' = \sum_{n > -\infty}^{\infty} f_n t^n$, where $f_n \in \n R$ for any $n \in {{\mathbf Z}}$.
 The elements $f$
  and $(1+f_0)^{-1}f$ are both invertible or not invertible in $R((t))$. Therefore,  after replacement $f$
  by $(1+f_0)^{-1}f$, we can assume that $f_0=0$. Let $I'$ be the ideal in $R((t))$ which is  generated by all elements
  $f_n$ with $ n< 0$. Then $I'$ is a nilpotent ideal. After replacement $f$ by $(1+ \sum_{n>0}^{\infty} f_n t^n)^{-1}f$
  we can
  assume that $f = 1+ f'$, where $f' \in I'((t))$.
 Let $f' = \sum_{n > \infty}^{\infty} f_n t^n$. Again, as above, we can assume that $f_0 =0$. After replacement $f$ by
 $(1 + \sum_{n < 0}^{n > -\infty} f_n t^n)^{-1}(1 + \sum_{n>0}^{\infty} f_n t^n)^{-1}f$ we can assume that
  $f \in I'^2((t))$.
  Repeating this process a sufficient number of times we will obtain that the element $f$ is invertible.

  Thus we have $\tilde{f} = a_0$, where $a_0 \ne 0$, $a_0 \ne 1$, as $f \ne 0$, $f \ne 1$.
  Let $b_0 \in R$ be the only idempotent in the ring $R$ such that the image of $b_0$ in the ring $R_{{\rm red}}$ is $a_0$.
  Consider
$R = C_1 \oplus C_2$, where $C_1= R b_0$, $C_2= R (1-b_0)$. By the
cases considered above we know that the image of $f$ in $C_2((t))$
is  $0$, and the image of $f$ in $C_1((t))$ is $1$. Therefore, $f = b_0$.
\end{proof}

\begin{lem}\label{loop of W}
The loop group of $\bbW$, denoted by $L\bbW$ is represented by an
ind-(affine) scheme, which is an "inductive limit" of
infinite-dimensional affine spaces over ${\mathbf Z}$.
\end{lem}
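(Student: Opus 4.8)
The plan is to reduce the statement to the loop space of the affine line and then reassemble an infinite product. First I would forget the (Witt) group structure and regard $\bbW$ merely as the infinite-dimensional affine space $\bbW=\spec{\mathbf Z}[b_1,b_2,\dots]$; equivalently, as a functor to sets, $\bbW\simeq\prod_{i\ge 1}\bbG_a$, the $i$-th factor recording the coefficient $b_i$. Since a product of functors is evaluated factorwise, the loop functor commutes with (even infinite) products, so
\[
L\bbW\;\simeq\;\prod_{i\ge 1}L\bbG_a,\qquad L\bbG_a(R)=R((u)),
\]
and it suffices to exhibit the right-hand side as an ind-(affine) scheme that is an inductive limit of infinite-dimensional affine spaces. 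Here the outer loop variable is taken to be $u$, to avoid clashing with the internal variable $t$ of $\bbW$.

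Next I would recall the ind-structure on a single loop factor. Every element of $R((u))$ has only finitely many terms of negative $u$-degree, so $R((u))=\varinjlim_{N\ge 0}u^{-N}R[[u]]$, and the functor $R\mapsto u^{-N}R[[u]]$ is represented by the infinite-dimensional affine space $\spec{\mathbf Z}[a_{-N},a_{-N+1},\dots]$. The transition map from level $N$ to level $N+1$ corresponds to the surjection of coordinate rings sending the new variable $a_{-N-1}$ to $0$, hence is a closed embedding. Thus $L\bbG_a$ is already an ind-affine scheme, an inductive limit of infinite-dimensional affine spaces over ${\mathbf Z}$.

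The essential point — and the place where some care is needed — is the passage from one factor to the infinite product: the pole orders of the coefficients $b_i\in R((u))$ are not uniformly bounded in $i$, so no single direct limit over an integer $N$ can describe $\prod_{i\ge 1}R((u))$. To remedy this I would index the ind-structure by level functions. Let $I$ be the set of all maps $N\colon {\mathbf Z}_{\ge 1}\to{\mathbf Z}_{\ge 0}$, directed by the pointwise partial order (for $N,N'$ take their pointwise maximum). For $N\in I$ set
\[
X_N\;=\;\prod_{i\ge 1}u^{-N(i)}R[[u]]\;=\;\spec{\mathbf Z}\big[b_{i,j}:i\ge 1,\ j\ge -N(i)\big],
\]
an infinite-dimensional affine space, and for $N\le N'$ let $X_N\hookrightarrow X_{N'}$ be the closed embedding setting the coordinates $b_{i,j}$ with $-N'(i)\le j<-N(i)$ equal to $0$. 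I would then verify that $L\bbW=\varinjlim_{N\in I}X_N$: any family $(b_i)_{i\ge 1}\in\prod_i R((u))$ lies in $X_N(R)$ once $N(i)$ bounds the pole order of $b_i$, so $\bigcup_{N}X_N(R)$ exhausts the product, while conversely each $X_N(R)$ sits inside it; since the transition maps are injective on $R$-points, the colimit of $R$-points is exactly this union. As $\spec R$ is an honest affine scheme, $\Hom(\spec R,\varinjlim_N X_N)=\varinjlim_N X_N(R)=\bigcup_N X_N(R)=L\bbW(R)$, identifying the functor of points and proving representability by the desired ind-affine scheme. The only subtlety worth flagging is that $I$ admits no countable cofinal subset (a diagonal argument defeats any countable family), so the ind-scheme is genuinely indexed by an uncountable directed set; this is permitted by the notion of ind-scheme used here and causes no difficulty.
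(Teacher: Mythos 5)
Your proof is correct and follows essentially the same route as the paper: the paper likewise stratifies $L\bbW$ by the tuple of pole orders of the coefficients, indexing by $\underline{k}\in\prod_{i>0}{\mathbf Z}$ with the pointwise partial order and writing each stratum $(L\bbW)_{\underline{k}}=\spec{\mathbf Z}[f_{ij}\mid j\geq k_i]$ as an infinite-dimensional affine space. Your additional remarks (commutation of $L$ with products, closed embeddings via killing coordinates, uncountability of the index set) are all consistent with the paper's argument.
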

\begin{proof}
Let $I=\prod\limits_{i>0}{\mathbf Z}$ be the index set with a
partial order given as $(k_1,k_2,\ldots)\leq (l_1,l_2,\ldots)$ if
$k_i\leq l_i$ for all $i$.

Fix $\underline{k}=(k_1,\ldots)\in I$, consider the functor
$(L\bbW)_{\underline{k}}$ which represents
$$
(L\bbW)_{\underline{k}}(R) = \left\{ 1 + f \mid f = \sum_{i > 0,j \geq
k_i} f_{ij} u^j t^i, \ f_{ij} \in R \right\} \mbox{.}
$$
Clearly, $(L\bbW)_{\underline{k}}=\spec {\mathbf Z}[f_{ij}\mid
i\geq 0, j\geq k_i]$ is an affine space, and
$L\bbW=\mathop{\underrightarrow{\lim}}\limits_{\underline{n}\in
I}(L\bbW)_{\underline{n}}$.
\end{proof}

\begin{lem}\label{loop of hatW}
The loop group of $\widehat{\bbW}$, denoted by $L\widehat{\bbW}$ is
represented by an ind-(affine) scheme.
\end{lem}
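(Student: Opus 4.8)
The plan is to mimic the structure of the proof of Lemma~\ref{loop of W}, but to account for the fact that $\widehat{\bbW}$ is itself only an ind-scheme (a limit over the nilpotency-bound sequences $\{\epsilon_i\}$), so that $L\widehat{\bbW}$ will involve a \emph{double} inductive limit. First I would unwind the definition: for a commutative ring $R$, an element of $(L\widehat{\bbW})(R) = \widehat{\bbW}(R((t)))$ is an expression
$$
1 + \sum_{i=-1}^{-n} c_i(t)\, u^i, \qquad n \in {\mathbf Z}_{\geq 0},\ c_i(t) \in \n\bigl(R((t))\bigr),
$$
where now the nilpotent coefficients $c_i$ live in the nil-radical of $R((t))$ rather than of $R$ itself. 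The crucial technical point is to understand $\n(R((t)))$ in terms of $R$: if $c(t) = \sum_j a_j t^j \in R((t))$ is nilpotent, say $c(t)^N = 0$, then every coefficient $a_j$ is nilpotent (each $a_j \in \n R$), but the \emph{bounds} on the nilpotency orders of the $a_j$ and on how many of them can be simultaneously nonzero are controlled by $N$. This is exactly the kind of bookkeeping that forces a direct limit over auxiliary integer parameters.

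The key steps, in order, are as follows. First I would record the description of $\n(R((t)))$: an element is nilpotent of order dividing $N$ if and only if it lies in $\bigl(\n R \cdot R((t))\bigr)$ and, writing it as a Laurent series, its coefficients generate a nilpotent ideal with an $N$-bounded nilpotency. Concretely, for each fixed exponent bound one obtains a closed condition cut out by the vanishing of the relevant products of coefficients, which is represented by an affine scheme (a \texttt{Spf} of a power-series ring modulo an explicit ideal, exactly as in the description of $\widehat{\bbW}$ and of $\n = \on{Spf}{\mathbf Z}[[T]]$ in the Notation). Second, I would fix the outer bound $n$ (the number of negative $u$-powers) and, for each of the finitely many indices $i = -1,\ldots,-n$, fix a nilpotency-bound datum for the coefficient series $c_i(t)$; the corresponding functor is represented by an affine scheme, being a finite product over $i$ of the schemes from the first step. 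Third, I would exhibit $L\widehat{\bbW}$ as the inductive limit of these affine schemes as $n \to \infty$ and as the nilpotency bounds increase, checking that the transition maps are closed embeddings (each relaxation of a bound or increase of $n$ realizes a smaller affine scheme as a closed subscheme of a larger one, via adjoining coordinates and weakening the defining relations). This presents $L\widehat{\bbW}$ as an object of $\on{Ind}\Aff$ with closed-embedding transition maps, i.e.\ an ind-(affine) scheme.

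The main obstacle I expect is the second step, namely pinning down precisely which affine scheme represents the functor ``$c(t) \in R((t))$ with $c(t)^{N}=0$'' uniformly in $R$, and then organizing the nilpotency data across the finitely many coefficients $c_{-1},\ldots,c_{-n}$ so that the whole collection is genuinely directed and the transition maps are honestly closed embeddings rather than merely monomorphisms. The difficulty is purely one of indexing and of verifying flatness of the transition maps is not needed here (unlike in the Steinberg-property argument, we only need representability, not flatness over ${\mathbf Z}$), so once the description of $\n(R((t)))$ is in hand the remaining verifications are routine limit manipulations entirely parallel to Lemma~\ref{loop of W}. In particular I would not grind through the explicit generators of the defining ideals, but simply note that they are the evident analogues of the ideals $I_{\{\epsilon_i\}}$ appearing in the representability statement for $\widehat{\bbW}$, now with the scalar nilpotents replaced by nilpotent Laurent series and the single index family replaced by a family indexed jointly by $n$ and by the coefficient-wise nilpotency bounds.
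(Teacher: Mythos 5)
Your proof is correct and is essentially the paper's argument: the paper presents $L\widehat{\bbW}$ as $\mathop{\underrightarrow{\lim}}\limits_n(L\n)^n$, with $L\n(R)=\n(R((u)))$ written (see~\eqref{nilind}) as the colimit, over the Laurent tail bound $m$ and the nilpotency order $n$, of the affine schemes $L_m\n_n=\spec {\mathbf Z}[a_m,a_{m+1},\ldots]/(a_m^n,na_m^{n-1}a_{m+1},\ldots)$ --- which is exactly your joint limit over the number of polar terms and the per-coefficient nilpotency/tail data, with the same closed-embedding transition maps. Two asides in your write-up are inaccurate but harmless: the number of nonzero coefficients of a nilpotent $c(t)$ is not controlled by $N$ (e.g.\ $c=\epsilon\sum_{j\ge 0}t^j$ with $\epsilon^2=0$ has infinitely many), and each piece is $\spec$ of a polynomial ring modulo the ideal generated by the coefficients of $c^N$ rather than an $\on{Spf}$ of a power-series ring; neither point is used in the actual construction.
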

\begin{proof}
Let us consider the loop space $L\n$ of the formal scheme $\n$, i.e.
$L\n(R)=\n (R((t)))$. We have
$\n=\mathop{\underrightarrow{\lim}}\limits_n \n_n$, where
 $ \n_n = \spec {\mathbf Z}[T]/ T^n$. Since   $L \n_n$ is an ind-affine scheme over ${\mathbf Z}$, we obtain that
$L\n=\mathop{\underrightarrow{\lim}}\limits_n L \n_n $ is also an
ind-affine scheme.

Observe that for any commutative ring $R$, elements in
$L\widehat{\bbW}(R)$ can by written as
$f=\sum_{i<0}^{>-\infty}f_it^i$, with $f_i\in \n (R((u)))$.
Therefore,
\begin{equation} \label{presen}
L\widehat{\bbW}=\mathop{\underrightarrow{\lim}}\limits_n(L\n)^n.
\end{equation}
\end{proof}

Observe that unlike the case of $L\bbG_m$, the functor $L^2\bbG_m$
is not represented as an inductive limit of flat schemes over
$\mathbf Z$ when we use decomposition~\eqref{deccon2} and
formula~\eqref{presen}. Indeed, the ind-scheme $L\n_n$ is not flat.
Observe that we can write
\begin{equation}  \label{nilind}
L\n_n=\mathop{\underrightarrow{\lim}}\limits_m L_m\n_n, \quad L_m\n_n(R) = \left\{ f = a_mt^m+a_{m+1}t^{m+1}+\cdots \mid a_i \in R \; \mbox{,}
\; f^n =0 \right\}\mbox{.}
\end{equation}
Therefore,
\[L_m\n_n =\spec A_{m,n}, \quad \mbox{where } A_{m,n}={\mathbf Z}[a_m, a_{m+1},
\ldots]/(a_m^n, na_{m}^{n-1}a_{m+1},\ldots).\] In particular,
$L_m\n_n$ is not flat over ${\mathbf Z}$, since $a_m^{n-1}a_{m+1}$
is a torsion element. However, we observe that
$I_{m,n}=(a_m^n,na_m^{n-1}a_{m+1},\ldots)$ is a homogeneous ideal
generated by elements of degree $n$.  In particular, we observe
that, for any nonzero element $a\in A_{m,n}$, there exists some $n'\gg n$ such that
for any $m'<m$, all preimages of $a$ under the surjective map
$A_{m',n'}\to A_{m,n}$ are not torsion.

We consider the following class $\calE\calF$ of ind-affine
schemes over ${\mathbf Z}$.
\begin{dfn}
 An affine ind-scheme $X/{\mathbf Z}$
belongs to $\calE\calF$ if we can write
$X=\mathop{\underrightarrow{\lim}}\limits_{i\in J} \spec R_i$ (where $J$ is a directed set) such
that for any $i\in J$, and any nonzero element $a\in R_i$, there exists some $j>i$
such that all preimages  of $a$ under the map $R_j\to R_i$ are not torsion
elements.
\end{dfn}
It is clear that for any integer $m$ we have
$\mathop{\underrightarrow{\lim}}\limits_{n} L_m\n_n \in\calE\calF$.
Hence, $L\n=\mathop{\underrightarrow{\lim}}\limits_{m,n} L_m\n_n
\in\calE\calF$.

 We have the
following easy lemma.
\begin{lem}\label{essen flat}
For any positive integer $k$, $(L^2\bbG_m)^k\in\calE\calF$.
\end{lem}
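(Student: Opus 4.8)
The plan is to isolate the two structural inputs singled out just above — the ind-flat loop groups and the non-flat ind-scheme $L\n$ — and to check that $\calE\calF$ is preserved by the two operations through which $(L^2\bbG_m)^k$ is assembled from them. Write $T(A)$ for the $\mathbf Z$-torsion submodule of an abelian group $A$. Unwinding the definition, $X=\varinjlim_i\spec R_i$ lies in $\calE\calF$ exactly when, for every $i$, the descending family $\phi_{ji}(T(R_j))\subseteq T(R_i)$ (over $j\ge i$, with $\phi_{ji}\colon R_j\to R_i$ the transition maps) has vanishing stable image $\bigcap_{j\ge i}\phi_{ji}(T(R_j))=0$; the point is that a nonzero $a\in R_i$ has a torsion preimage in $R_j$ if and only if $a\in\phi_{ji}(T(R_j))$. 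Two consequences are immediate: every ind-flat ind-affine scheme is in $\calE\calF$ (all $T(R_j)=0$), and $\calE\calF$ is stable under filtered colimits along closed immersions, since a witness found in a sub-ind-system is a witness in the whole. Using \eqref{deccon2}, \eqref{presen} and Lemmas~\ref{decom},~\ref{conn},~\ref{loop of W},~\ref{loop of hatW}, the only non-flat factor of $L^2\bbG_m$ is $L\widehat{\bbW}=\varinjlim_n(L\n)^n$, so
\[
(L^2\bbG_m)^k\;\simeq\;\varinjlim_{N}\big((L\n)^{N}\times\calQ\big),
\]
with $\calQ$ ind-flat. By the two closure properties above it suffices to show $(L\n)^N\times\calQ\in\calE\calF$ for each $N$; recall that $L\n\in\calE\calF$ is already established.

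The arithmetic tool is the identity, valid over the principal ideal domain $\mathbf Z$,
\[
T(M\otimes_{\mathbf Z}N)=\operatorname{im}\big(T(M)\otimes N\big)+\operatorname{im}\big(M\otimes T(N)\big),
\]
which holds because $(M/T(M))\otimes(N/T(N))$ is a tensor product of torsion-free — hence flat — $\mathbf Z$-modules and so is torsion-free, while the right-hand side is manifestly torsion. Its iterate expresses the torsion of a finite tensor product $\bigotimes_l R^{(l)}$ as the sum of the images of the subgroups obtained by replacing a single factor $R^{(l)}$ by $T(R^{(l)})$. Thus torsion is controlled factor by factor; for an ind-flat factor (such as those forming $\calQ$) the contribution vanishes, so only the $L\n$-factors can create torsion.

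To convert this into vanishing of stable images I linearize by degree. Each coordinate ring $A_{m,n}$ of $L_m\n_n$ is bigraded by total degree in the $a_i$ and by $u$-weight (the sum of $u$-exponents), with bihomogeneous defining ideal; since every variable has weight $\ge m$, each bidegree $(r,w)$ contains only finitely many monomials, so every bigraded piece is finitely generated, and the transition maps and the tensor products $\bigotimes_l A_{m_l,n_l}$ stay bigraded with finitely generated pieces. Hence in each bidegree $\gamma$ the torsion is a finite group and only finitely many splittings $\gamma=\gamma_1+\cdots+\gamma_s$ contribute. Fixing a stage, a bidegree $\gamma$ and a nonzero homogeneous $z$ of bidegree $\gamma$ in $(L\n)^N$, I use for each factor $l$ and each contributing $\gamma_l$ that $L\n\in\calE\calF$: by the reformulation and the finiteness of $T\big((A_{m_l,n_l})_{\gamma_l}\big)$, a descending chain of finite groups stabilizes at $0$, so at some later stage $\phi^{(l)}\big(T((A_{m_l,n_l})_{\gamma_l})\big)=0$. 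Choosing one later stage uniformly for the finitely many pairs $(l,\gamma_l)$ involved, the tensor-torsion identity makes every generator of the transported torsion in bidegree $\gamma$ vanish, so $z$ is not in the image of the torsion; ranging over the finitely many bidegrees of a general element gives $(L\n)^N\in\calE\calF$. The further product with the ind-flat $\calQ$ is handled by the same mechanism, more cheaply: writing a flat coordinate ring of $\calQ$ as a filtered colimit of finite free modules (Lazard), an element of $A\otimes(\text{flat})$ is supported on the finite torsion subgroup of $A$ generated by its finitely many coordinates, and there the descending chain condition again forces escape from $\phi(T)$.

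The main obstacle is precisely the failure of $\otimes_{\mathbf Z}$ to commute with the infinite intersections defining stable images: the naive interchange $\bigcap_i(A_i\otimes S)=\big(\bigcap_i A_i\big)\otimes S$ is false in general (already for $A_i=2^i\mathbf Z$ and $S=\mathbf Z[1/2]$), so one cannot simply push the $\calE\calF$ condition of the factors through the tensor product. Both devices above — the bigrading on the $A_{m,n}$ and Lazard's presentation of the flat factors — serve only to replace these infinite intersections by descending chains of finite abelian groups, where the descending chain condition guarantees the stable image is attained and equal to $0$. The one genuinely technical point to verify with care is that finiteness of the bigraded pieces, which rests on the bound $\ge m$ on $u$-weights, survives in the iterated tensor products $\bigotimes_l A_{m_l,n_l}$; it does, because once the per-factor lower bounds are imposed, fixing the total $u$-weight bounds each factor's weight from above, leaving only finitely many splittings.
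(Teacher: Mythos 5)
Your proof is correct, but it reaches the conclusion by a genuinely different mechanism than the paper. The paper's (very terse) argument leans on the specific structure of the torsion: since $I_{m,n}$ is homogeneous and generated in degree $n$, the torsion of $A_{m,n}$ is concentrated in degrees $\geq n$, so its image in any tensor product $\bigotimes_l A_{m_l,n_l}$ sits in multidegrees with some component $\geq n_l$; a fixed nonzero element has bounded multidegree and escapes uniformly once all $n_l$ are large. The product with the flat part is then dispatched by the one-line observation that $X\times \spec B\in\calE\calF$ whenever $X\in\calE\calF$ and $B$ is a \emph{free} $\mathbf Z$-module (which all the coordinate rings of $\widehat{\bbW}$, $\bbW$, $L\bbW$, $\bbZ$, $\bbG_m$ are, so Lazard is not needed). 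You instead treat $L\n\in\calE\calF$ as a black box and upgrade it to finite products via two general devices: the identity $T(M\otimes N)=\operatorname{im}(T(M)\otimes N)+\operatorname{im}(M\otimes T(N))$ over $\mathbf Z$, and the bigrading by (degree, weight) whose pieces are finitely generated, so that the descending chain condition forces the entire torsion subgroup $\phi^{(l)}\bigl(T((A_{m_l,n_l})_{\gamma_l})\bigr)$ — not just the preimages of a single element — to vanish at a finite stage. This is exactly the right fix for the real obstruction you identify (tensor product does not commute with the infinite intersections defining stable images), which the paper's ``by reasonings similar to the above'' silently sidesteps by knowing where the torsion lives. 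Your route is longer but more robust: it would prove that a finite product of members of $\calE\calF$ is again in $\calE\calF$ whenever the coordinate rings carry a compatible grading with finitely generated pieces. Two small points of polish: the correctness of the reformulation via $\bigcap_j\phi_{ji}(T(R_j))=0$ uses that the transition ring maps are surjective (true here, as the structure morphisms are closed embeddings), and the Lazard detour for the flat factor can be replaced by the observation that the relevant rings are actually free over $\mathbf Z$, which makes the coordinate argument immediate.
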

\begin{proof}
By reasonings similar to the above reasonings, for any positive
integer $l$ we have $(L\n)^l\in\calE\calF$. Observe that if $X\in
\calE\calF$ and a ring $B$ is a free $\mathbf Z$-module, then $X
\times \spec B \in\calE\calF$. Therefore for an affine ind-scheme $Y
= \mathop{\underrightarrow{\lim}}\limits_{s\in S} \spec B_s$ such
that for any $s \in S$ the ring $B_s$ is a free $\mathbf Z$-module
we have  $X\times Y\in\calE\calF$. Hence,
$(L^2\bbG_m)^k\in\calE\calF$.
\end{proof}

\medskip

It will be convenient to introduce the following group
ind-schemes
\begin{equation} \label{nnd}
\bbM=\bbW\times L\bbW,\quad \bbP=\widehat{\bbW}\times
L\widehat{\bbW}.
\end{equation}
Therefore, from the proof of Proposition~\ref{2-loop} and
Lemma~\ref{decom}, we have
\begin{equation}  \label{schdec}
L^2\bbG_m\simeq \bbP\times\bbZ\times\bbZ\times\bbG_m\times \bbM \mbox{.}
\end{equation}
Let us distinguish these two $\bbZ$s. Recall that we have the group
homomorphism $\nu:L\bbG_m\to\bbZ$, which induces $L\nu:L^2\bbG_m\to
L\bbZ\simeq\bbZ$. We denote this map by $\nu_1$. On the other hand,
we have the group homomorphism
\[L^2\bbG_m\simeq L\widehat{\bbW}\times L\bbZ\times L\bbG_m \times L\bbW\to L\bbG_m\stackrel{\nu}{\to}\bbZ,\]
denoted by $\nu_2$. Therefore, for any commutative ring $R$ and any
$f\in R((u))((t))^*$, we have the following unique
decomposition
\begin{equation} \label{decomp2}
f = f_{-1} \cdot f_0 \cdot u^{\nu_2(f)}  t^{\nu_1(f)} \cdot f_1
\mbox{,}
\end{equation}
where $f_{-1} \in (\widehat{\bbW}\times L\widehat{\bbW})(R)$, $f_0
\in R^*$, $f_1 \in (\bbW\times L\bbW)(R)$, $\nu_2(f), \nu_1(f) \in
\bbZ(R)$\footnote{See Remark~\ref{pow} about the meaning of
expressions~$u^{\nu_2(f)}$ and $t^{\nu_1(f)}$.}.

\medskip

For the later purpose, let us also introduce the following group functors.
\begin{dfn}  \label{nd} Let $\frakm$ and $\frakp$ be a group functors from the category $\Aff$  given by
\[\frakm(R)=\left\{ f \mid f \in u \cdot R[[u]] + t \cdot R((u))[[t]]
\right\}\]
\[\frakp(R)=\left\{ f \mid f \in u^{-1} \cdot R[u^{-1}] + t^{-1} \cdot
R((u))[t^{-1}] \mbox{ is nilpotent } \right\}\] with the usual addition,
where $R$ is any commutative ring.
\end{dfn}

By the same proofs of the ind-representability of $\bbM$ and $\bbP$, one can show  that $\frakm$ and $\frakp$
are also represented by ind-schemes. (For more details, see the proof of Propositions~\ref{lemma-main} and~\ref{weak version}.)

\subsection{Natural topologies} \label{nattop}

Recall that there is a natural linear topology on \linebreak $R((u))((t))$ which
comes from the topology of inductive and projective limits: a basis
of open neighborhoods of $0\in R((u))((t))$ consists of the $R$
submodules
\begin{equation} \label{top}
W_{m, \{ m_i \} }= t^m R((u))[[t]]+\sum\limits_{i \in {\mathbf Z}}
u^{m_i}t^i R[[u]] \mbox{,}
\end{equation}
for some $m \in {\mathbf Z}$, and the set of integers $\{ m_i \}$.  In other words, the $R$-module $R((u))((t))$ is a topological
$R$-module when we put  the discrete topology on the ring $R$.

We consider the induced topologies on the $R$-modules $\frakm(R)$ and $\frakp(R)$.
The $R$-module $\frakm(R)$  is a topological $R$-module such that
the base of neighborhoods of $0 \in \frakm(R)$ consists of
$R$-submodules
\begin{equation}\label{uij}
\fraku_{i,j}(R) = u^j R[[u,t]]  + t^i R((u))[[t]]
\end{equation}
with $i \in \mathbf{N}$, $j \in \mathbf{N}$. Likewise, $\frakp(R)$
has a topology with the base of neighborhoods of $0$ given by
\begin{equation}\label{uni}
\fraku_{\{n_i\}}(R)=\left\{ f \mid  f\in \sum t^{-i}u^{n_i}R[[u]] \mbox{ is
nilpotent} \right\},
\end{equation}
with $i\in \mathbf{N}$, $n_i\in  \mathbf{Z}$.

\begin{dfn} \label{catB}
We denote by $\mathcal B$  the full subcategory of the category of
commutative rings consisting of those $R$ such that
$$
\n R \quad \mbox{is a nilpotent
ideal in} \quad R \mbox{.}
$$
\end{dfn}
Note that Noetherian rings belong to $\B$.

Later on we will need the following proposition.
\begin{prop} \label{lemma-main}
Let $R$ be any commutative ring. We consider the discrete topology on the group $R^*$.
\begin{enumerate}
\item \label{pa1} Let  $\phi: \frakm_R \to {\bbG_m}_R$ be any  homomorphism of group ind-$R$-schemes.
Then  $\phi$ restricted to $R$-points is a continuous map from
$\frakm(R)$ to $R^*$.
\item \label{pa2} Let $R \in \B$ and $\psi: \frakp_R \to {\bbG_m}_R$ be any  homomorphism of group ind-$R$-schemes.
Then  $\psi$ restricted to $R$-points is a continuous map from
$\frakp(R)$ to $R^*$.
\end{enumerate}
\end{prop}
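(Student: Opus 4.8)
The plan is to show that a homomorphism of group ind-$R$-schemes $\frakm_R\to{\bbG_m}_R$ automatically kills a basic open neighborhood of $0$, by exploiting that the coordinate rings are polynomial algebras (so the target unit $1+\text{nilpotent}$ forces the involved coefficients into the nil-radical) together with the ind-structure of the source. Since $\frakm(R)$ is a topological group under addition and $\phi$ restricts to a homomorphism into the discrete group $R^*$, it suffices to produce one open submodule $\fraku_{i,j}(R)$ on which $\phi$ is trivial; then $\phi$ is continuous because the preimage of $1$ contains an open subgroup and the remaining fibers are its cosets.

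First I would analyze the restriction of $\phi$ to the sub-ind-scheme spanned by a single coordinate direction. Concretely, for each pair $(i,j)$ consider the additive one-parameter subgroup $a\mapsto a\,u^j t^i$ (for $i\ge 1$, or $i=0,\,j\ge 1$) giving a homomorphism $\bbG_a\to{\bbG_m}_R$ of $R$-group schemes; since $\Hom(\bbG_a,\bbG_m)$ is governed by the primitive elements of the Hopf algebra and is trivial over any reduced base, such a homomorphism lands in $1+\n R$-valued characters and, crucially, must vanish on the subgroup defined by setting that coordinate into a sufficiently high power of the maximal ideal once we pass far enough in the inductive limit. The key computation is that the pullback of the coordinate on $\bbG_m$ under $\phi$ is an invertible element of the polynomial ring $\oo(\frakm_R)=R[\ldots,x_{ij},\ldots]$, hence of the form (unit in $R$) $\cdot(1+\text{nilpotent})$; multiplicativity then pins down its value on the additive generators, and invertibility forces all but finitely many coordinate monomials to contribute trivially.

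The main obstacle will be controlling infinitely many coordinate directions at once: $\phi$ could a priori depend on infinitely many $x_{ij}$, so I cannot simply invoke flatness as in the one-dimensional Steinberg argument. The way around this is to use that $\phi$, being a morphism of ind-schemes, factors through a finite stage $X_N$ of the presentation $\frakm_R=\mathop{\underrightarrow{\lim}}X_N$, and $\phi$ as a single element of $\oo(X_N)$ involves only finitely many of the $x_{ij}$. A unit in this finitely generated polynomial algebra over $R$ is a unit times $(1+\text{nilpotent})$, and since a homomorphism to $\bbG_m$ sends $0$ to $1$, the unit part is $1$; the nilpotent part is a polynomial whose nonconstant monomials all lie in directions $x_{ij}$ with $(i,j)$ ranging over a finite set. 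Choosing $i_0,j_0$ larger than every index appearing, the open submodule $\fraku_{i_0,j_0}(R)$ maps to the locus where all these coordinates vanish, i.e.\ into the fiber over $1$, establishing continuity.

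For part \eqref{pa2}, the argument for $\frakp_R$ is parallel but requires the hypothesis $R\in\B$ exactly to guarantee that the relevant ind-scheme behaves like a finite union of nilpotent thickenings: the defining condition ``is nilpotent'' in $\fraku_{\{n_i\}}(R)$ is, thanks to $\n R$ being a nilpotent ideal, controlled by a single nilpotency exponent, so that $\frakp_R$ is again an inductive limit of finitely-presented affine $R$-schemes and $\psi$ factors through a finite stage involving finitely many coordinates $c_{-i}$. The same invertible-element analysis then shows $\psi$ is trivial on an open submodule $\fraku_{\{n_i\}}(R)$ for a suitable sequence $\{n_i\}$, giving continuity. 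Without the $\B$ hypothesis the nilpotency exponents are unbounded and the factorization through a finite stage fails, which is precisely why part \eqref{pa2} is stated only for $R\in\B$.
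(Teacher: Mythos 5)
Your overall strategy is the paper's own: restrict to the affine pieces of the ind-presentation, observe that the pullback $\phi^*(T)$ of the coordinate on $\bbG_m$ is a polynomial in finitely many of the coordinates, and conclude that the homomorphism kills an open subgroup. But there is a genuine gap at the pivotal step where you assert that $\phi$, ``being a morphism of ind-schemes, factors through a finite stage $X_N$'' and hence involves only finitely many coordinates globally. A morphism \emph{out of} an ind-scheme does not factor through a single term of the inductive system; it is a compatible family $\phi_{\underline k}\colon \frakm_{\underline k,R}\to {\bbG_m}_R$, one for each stage, and while each $\phi_{\underline k}^*(T)$ involves only finitely many variables, that finite set may grow as $\underline k$ varies (the index set $\prod_{i>0}{\mathbf Z}$ has no maximal element, so $\frakm_R$ is a genuine ind-scheme). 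Consequently your choice of $(i_0,j_0)$ ``larger than every index appearing'' only yields $\phi\bigl(\fraku_{i_0,j_0}(R)\cap\frakm_{\underline k}(R)\bigr)=1$ for the one stage you fixed, whereas continuity requires a single $\fraku_{i,j}(R)$ --- which is contained in no single stage --- to be killed. The paper closes exactly this uniformity gap by a contradiction argument: if for every $l$ there were $f_l\in\fraku_{l,l}(R)$ with $\phi(f_l)\ne 1$, then \emph{all} the $f_l$ lie in one common stage $\frakm_{\underline k}$ (membership in $\fraku_{l,l}(R)$ forces the coefficient of $u^jt^i$ with $i<l$ to vanish unless $j\ge l$, so for each fixed $i$ only finitely many $f_l$ contribute small $j$), and the finitely-many-variables observation applied to that single stage gives the contradiction.

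The same issue recurs, more seriously, in your part~(2): $\psi$ may genuinely depend on infinitely many of the coordinates attached to the powers $t^{-i}$, $i\ge 1$, so ``factors through a finite stage involving finitely many coordinates'' is not available even after invoking $R\in\B$. The hypothesis $R\in\B$ serves only to collapse the nilpotency-degree direction of the ind-structure, so that $\frakp_{k,n}(R)$ coincides with the $R$-points of the single affine scheme $\frakp_{k,n,q}$ for one fixed $q$ with $(\n R)^q=0$; the direction indexed by $k$ (the order of the pole in $t$) must be handled separately, by proving for \emph{each} $k$ that $\psi$ is trivial on $\frakp_{k,n_k}(R)$ for some $n_k$ and then noting that a basic open $\fraku_{\{m_i\}}(R)$ with $m_i\ge\max(n_1,\dots,n_i)$ is covered by these sets. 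Both repairs are needed before your conclusion follows; with them, your argument becomes the paper's.
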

\begin{proof}
\eqref{pa1} We prove the continuous property for the map $\phi$. It is
enough to find an open subgroup $\fraku_{i,j}(R) \subset \frakm(R)$
such that $\phi (\fraku_{i,j}(R)) =1$. We assume the opposite, i.e.,
that for any $l \in \mathbf{N}$ there is an element $f_l \in
\fraku_{l,l}(R)$ such that $\phi(f_l) \ne 1$. It is clear that there
is an index $\underline{k} \in I$ (where $I$ is defined in the proof of Lemma~\ref{loop of W}) such that  for any $l \in \mathbf{N}$, $f_l \in
\frakm_{\underline{k}}(R)$, where
\[\frakm_{\underline{k}}(R)=\left\{ f=\sum_{j>0} f_{0j}u^j+\sum_{i>0,j>k_i} f_{ij}u^jt^i  \mid f_{ij}\in R \right\} \mbox{,}\] is a subgroup of $\frakm$ represented by an infinite
dimensional affine space, and $\frakm=\mathop{\underrightarrow{\lim}}\limits_{\underline{k}\in
I} \frakm_{\underline{k}}$.

We consider
the restriction $\phi_{\underline{k}}$ of the morphism $\phi$ to ${\frakm_{\underline{k}}}_R$, which in turn is
given by a map of $R$-algebras:
 $$\phi_{\underline{k}}^* \; : \;  R[T, T^{-1}]  \longrightarrow R[\{f_{ij}\}] \mbox{.}$$
 Since only finite many of variables $f_{ij}$ appear in the polynomial $\phi_{\underline{k}}^*(T)$, the morphism $\phi_{\underline{k}}$ factors through a finite-dimensional quotient
of the group scheme ${\frakm_{\underline{k}}}_R$. Therefore there is $p \in \mathbf{N}$ such that
$ \phi(\frakm_{\underline{k}}(R) \cap \fraku_{p,p}(R)) =1$. It contradicts to the assumption $\phi(f_p) \ne 1$. Thus, the map $\phi$ is continuous.

\eqref{pa2}
We prove the continuous property for the map $\psi$.
Let
\[\frakp_0=\{f\in u^{-1}R[u^{-1}]\mid f \mbox{ is nilpotent }\} \]
and
\[\frakp_-=\{f\in t^{-1}R((u))[t^{-1}]\mid f \mbox{ is nilpotent }\} \mbox{.}\]
Then the natural map $\frakp_0\times \frakp_-\to \frakp$ is an isomorphism of groups. Let us fix $k\in \mathbf{N}$ and $n\in \mathbf{Z}$, and let
\[\frakp_k=\{f\in \sum_{i=1}^k R((u))t^{-i} \mid f \mbox{ is nilpotent }\} \mbox{,}\]
\[\frakp_{k,n}=\{f\in \sum_{i=1}^k R[[u]]t^{-i}u^n \mid f \mbox{ is nilpotent }\} \mbox{.}\]
Then $\frakp_-=\mathop{\underrightarrow{\lim}}\limits_{k\in
\mathbf{N}} \frakp_{k}$.
In addition, it is enough to show that if $R\in \mathcal B$, then for every $k$, there is $n_k$ such that
$\psi(\frakp_{k,n_k}(R)) =1$.

Since $R \in \B$, there is $q \in \mathbf{N}$ such that
$(\n R)^q=0$.  Hence for any $k \in \mathbf{N}$ we have an equality
$$
\frakp_{k,n}(R) = \frakp_{k,n,q}(R)  \mbox{,}
$$
where
$$
\frakp_{k,n,q}=\{ f \in \frakp_{k,n}\mid f^q=0 \} \mbox{.}
$$
Therefore, it is enough to construct for every $k$, an integer $n_k$ such that $\psi|_{\frakp_{k,n_k,q}}=1$.
We consider the restriction $\psi_{k, 0, q}$ of the morphism $\psi$ on the affine $R$-scheme ${\frakp_{k, 0,q}}_R$.
This restriction is given by a map of $R$-algebras:
 $$\psi_{k, 0, q}^* \; : \;  R[T, T^{-1}]  \longrightarrow  A_{k, 0, q}  \mbox{,}$$
  where $A_{k, 0, q}=  R[a_{ij}  \mid 1 \le i \le k, j \ge 0]/I_q $ such that $\frakp_{k, 0,q} =
  \spec A_{k, 0, q}$. It is clear that there is $n_k \in \mathbf{N}$ such that the image of the element
  $\psi_{k, 0, q}^*(T)$  in the quotient-ring
  $$ \frac {R[\{ a_{ij} \} \mid 1 \le i \le k, j \ge 0]}{I_q +\{a_{ij}\}_{j \le n_k-1}}$$
  is equal to some $c \in R$. Then
  $$
  \psi(\frakp_{k,n_k,q}) = c \mbox{.}
  $$
Since $\psi(0)=1$, we obtain that $c=1$.
\end{proof}

If we do not assume that $R\in \mathcal B$, we can obtain a weaker statement which is sufficient for many purposes.

\begin{prop}\label{weak version}
 Let $R$ be a commutative ring.
\begin{enumerate}
\item \label{par1} Let  $\phi: \frakm_R \to {\bbG_m}_R$ be a  homomorphism of group ind-$R$-schemes.
If for every commutative $R$-algebra $R'$, and every $r\in R'$, $i>
0, j\in \mathbf{Z}$ or $i=0, j > 0$, $\phi(ru^jt^i)=1$, then
$\phi=1$ is the trivial homomorphism.
\item  \label{par2} Let
$\psi: \frakp_R\to {\bbG_{m}}_R$ be a homomorphism of  group
ind-$R$-schemes. If for every commutative $R$-algebra $R'$, and
every $r\in \n R'$, $i< 0, j\in \mathbf{Z}$ or $i=0, j < 0$,
$\psi(ru^jt^i)=1$, then $\psi=1$ is the trivial homomorphism.
\end{enumerate}
\end{prop}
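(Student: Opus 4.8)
The plan is to prove both parts by the device already used in the proof of Proposition~\ref{lemma-main}: a homomorphism from an affine piece into $\bbG_m$ is recorded by the single element $\phi^*(T)$ of its coordinate ring, and since $\frakm$ and $\frakp$ are directed limits of affine pieces, such a homomorphism, restricted to each piece, factors through a \emph{finite-dimensional} quotient. It suffices to prove $\phi_{R'}=1$ (resp.\ $\psi_{R'}=1$) on $R'$-points for every commutative $R$-algebra $R'$, since these functors of points determine the morphisms. Writing $\frakm=\mathop{\underrightarrow{\lim}}_{\underline{k}}\frakm_{\underline{k}}$ as in Lemma~\ref{loop of W} and the proof of Proposition~\ref{lemma-main}, the restriction $\phi|_{\frakm_{\underline{k}}}$ is given by $\phi^*(T)\in\mathcal{O}(\frakm_{\underline{k}})$, which involves only finitely many of the coordinate functions $f_{ij}$, say those indexed by a finite set $S$. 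Hence for $f\in\frakm(R')$ the value $\phi_{R'}(f)\in R'^{*}$ depends only on the coefficients $f_{ij}$ with $(i,j)\in S$.

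For part~\eqref{par1}, every $f\in\frakm(R')$ is an additive sum of monomials $r\,u^{j}t^{i}$ with $i>0,\,j\in\mathbf{Z}$ or $i=0,\,j>0$, each of which again lies in $\frakm(R')$. By the previous paragraph $\phi_{R'}(f)=\phi_{R'}(f_S)$, where $f_S=\sum_{(i,j)\in S}f_{ij}u^{j}t^{i}$ is the \emph{finite} truncation keeping only the coordinates in $S$ (and $f_S$ lies in the same affine space $\frakm_{\underline{k}}$). Since $f_S$ is a finite sum of monomials of $\frakm(R')$, the homomorphism property (additive group to multiplicative group) gives $\phi_{R'}(f_S)=\prod_{(i,j)\in S}\phi_{R'}(f_{ij}u^{j}t^{i})=1$ by hypothesis. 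Thus $\phi_{R'}(f)=1$ for all $f$ and all $R'$, i.e.\ $\phi=1$.

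For part~\eqref{par2} I would run the same argument for $\frakp\cong\frakp_0\times\frakp_-$, using the presentation of $\frakp$ by the non-reduced $L_m\n_n$-type pieces recalled before Lemma~\ref{essen flat}; again $\psi|_{\text{piece}}$ factors through finitely many coordinates $S$, so $\psi_{R'}(\xi)=\psi_{R'}(\xi_S)$ with $\xi_S=\sum_{(i,j)\in S}a_{ij}u^{j}t^{i}$. The extra point to verify is that each monomial $a_{ij}u^{j}t^{i}$ really lies in $\frakp(R')$, i.e.\ that each $a_{ij}$ is nilpotent; only then may one invoke the hypothesis (stated for $r\in\n R'$ precisely because $r\,u^{j}t^{i}\in\frakp(R')$ iff $r$ is nilpotent). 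This follows from the elementary fact that a nilpotent element of $R'((u))((t))$ has all coefficients nilpotent: the coefficient of the lowest power of $t$ is nilpotent in $R'((u))$, and inductively so are all the $t$-coefficients; each such Laurent series in $u$ then, by the same leading-coefficient argument, has all its $u$-coefficients nilpotent. The allowed exponents $i<0,\,j\in\mathbf{Z}$ or $i=0,\,j<0$ are exactly the monomials occurring in $\frakp(R')$. Hence $\xi_S$ is a finite sum of monomials of $\frakp(R')$, multiplicativity gives $\psi_{R'}(\xi_S)=\prod_{(i,j)\in S}\psi_{R'}(a_{ij}u^{j}t^{i})=1$, and therefore $\psi=1$.

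The main obstacle is that the generic element of $\frakm$ (or $\frakp$) is an \emph{infinite} sum of monomials --- the coefficients $c_i(u)\in R'((u))$ carry infinitely many positive powers of $u$ --- so one cannot simply distribute $\phi$ (or $\psi$) over it into an infinite product; this is exactly the analytic difficulty flagged in the introduction. The finite-dimensional factorization of a homomorphism into $\bbG_m$ is what removes it, reducing everything to a finite product of monomials. In part~\eqref{par2} there is the additional trap that the representing schemes $L_m\n_n$ are non-reduced and their coordinate functions $a_{ij}$ are \emph{not} individually nilpotent, so one cannot split a universal element over such a ring; the clean way around this is to argue pointwise, where nilpotency of a concrete $\xi$ forces all its coefficients to be nilpotent and hence each monomial to lie in $\frakp$.
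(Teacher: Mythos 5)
Your argument for part~\eqref{par1} is correct and is essentially the paper's (the paper deduces it from the continuity statement of Proposition~\ref{lemma-main}, which rests on the same finite--dimensional factorization you use): the pieces $\frakm_{\underline{k}}$ are honest affine spaces, so the truncation $f_S$ stays in the same piece and has the same $S$-coordinates as $f$, whence $\phi(f)=\phi(f_S)=\prod\phi(f_{ij}u^jt^i)=1$.

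For part~\eqref{par2} there is a genuine gap at the step ``$\psi_{R'}(\xi)=\psi_{R'}(\xi_S)$''. The inference ``$\psi^*(T)$ involves only the coordinates in $S$, hence the value at $\xi$ equals the value at the truncation $\xi_S$'' is valid only if $\xi$ and $\xi_S$ are points of the \emph{same} affine piece on which the finite set $S$ was determined. For the non-reduced pieces $\frakq_s=\spec R[a_0,a_1,\ldots]/I_s$, with $\frakq_s(R')=\{\xi\in R'[[u]]\mid \xi^s=0\}$, this fails: $\xi^s=0$ does not imply $\xi_S^s=0$ (already for $s=2$ one only gets $a_1^2=-2a_0a_2$, so the truncation $a_1u$ need not square to zero). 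The truncation only lands in some larger piece $\frakq_{s'}$, where $\psi^*(T)$ may involve a larger set of coordinates, and the argument becomes circular. Nor can you repair this by writing $\xi=\xi_{\le k}+(\xi-\xi_{\le k})$ and using multiplicativity: that only yields $\psi(\xi)=\psi(\xi-\xi_{\le k})$ for every $k$, and passing to the limit requires continuity of $\psi$ --- which is exactly what part~\eqref{pa2} of Proposition~\ref{lemma-main} provides only under the hypothesis $R\in\B$ that this ``weak version'' is designed to avoid. The missing ingredient, which is the actual content of the paper's proof, is a commutative-algebra fact about the defining ideals: $I_s$ is homogeneous for the grading $\deg a_i=i$, and therefore $I_s=\bigcap_{k}\bigl(I_s+(a_k,a_{k+1},\ldots)\bigr)$. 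This converts the information you do correctly establish --- that $\psi$ is trivial on every subgroup of polynomials of bounded degree, i.e.\ $\psi_s\equiv 1 \bmod (a_{k+1},a_{k+2},\ldots)$ for every $k$ --- into $\psi_s=1$ in $R[a_i]/I_s$, with no continuity assumption. Without this (or an equivalent) argument the proof of part~\eqref{par2} is incomplete.
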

\begin{proof}
Part~\eqref{par1} clearly follows from part~\eqref{pa1} of
Proposition~\ref{lemma-main}. We consider part~\eqref{par2}. Similar to the proof of Proposition~\ref{lemma-main}, it is enough to show that $\psi=1$ on
$\frakp_{k,n}$.
Let $\frakq=\{f\in R[[u]] \mid f \mbox{ is nilpotent }\}$. Then
\[\frakq^k\to \frakp_{k, n}\quad (f_1,....f_k)\mapsto u^n\sum f_it^i\]
is an isomorphism. Therefore, it is enough to prove a similar statement for homomorphisms $\psi:\frakq_R\to\bbG_{mR}$.

Let $\frakq_s=\{ f\in\frakq \mid f^s=0\}$ for $s \in \mathbf{N}$.
Then  the affine $R$-scheme ${\frakq_s}_R=\spec A_s$, where $A_s=R[a_i]/I_s$, and $I_s$ is the
ideal generated by $(a_0^s, sa_0^{s-1}a_1,\ldots)$. We equip the
ring $R[a_i]$ with a grading such that $\deg a_i=i$. Then $I_s$ is
a homogeneous ideal generated by elements
$(g_0=a_0^s,g_1=sa_0^{s-1}a_1,\ldots)$ with $\deg g_i=i$. We claim
that
$$I_s=\bigcap_{k \ge 0} (I_s+(a_k,a_{k+1},\ldots)) \mbox{.}$$
Indeed, let $f\in \cap_{k} (I_s+(a_k,a_{k+1},\ldots))$. We can
assume that each monomial appearing in $f$  has the degree less than
$l$ for some $l \in \mathbf{N}$. We write $f=f_1 + f_2$, where $f_1
\in I_s$ and $f_2 \in (a_{l},a_{l+1},\ldots)$. Then $f_1=f- f_2 \in
I_s$. Since $I_s$ is a homogeneous ideal, each homogeneous component
of $f_1$ belongs to the ideal $I_s$. Since all the monomials
appearing in $f_2$ have the degree greater than $l-1$, we must have
$f\in I_s$.

Now, let $\psi: \frakq_R \to {\bbG_{m}}_R$ be as in the assumption
of part~\eqref{par2} of this proposition. If we write
${\bbG_{m}}_R=\spec R[T,T^{-1}]$, then $\psi$ induces for every $s
\in \mathbf{N}$ the function $\psi_s=\psi^*(T)\in R[a_i]/I_s$. By
the assumption, $\psi=1$ on the ind-subgroup $\{f=
a_0+a_1u+\cdots+a_ku^k\}\subset\frakq$ for every $k \ge 0 $.
Therefore, $\psi_s=1 \mod (a_{k+1},a_{k+2},\ldots)$ for every $k$
and $s$. Therefore, $\psi_s=1$ by the above claim. This implies that
$\psi=1$.
\end{proof}

\medskip

If $\mathbf{Q} \subset R$, then it is clear that the formal series
$\exp$ and $\log$ applied to elements from $\frakp(R)$ and
from $\bbP(R)$ are well-defined\footnote{These series will contain only finite number non-zero terms.} and induce mutually inverse isomorphisms of group ind-schemes:
\begin{equation}\label{explog}
\exp \, : \, \frakp_{\mathbf{Q}} \lrto \bbP_{\mathbf{Q}}   \qquad \mbox{and}     \qquad  \log \, : \,  \bbP_{\mathbf{Q}} \lrto \frakp_{\mathbf{Q}}   \mbox{.}
\end{equation}

To have a similar statement for the group ind-schemes  $\frakm_{\mathbf{Q}}$ and  $\bbM_{\mathbf{Q}}$ we introduce a topology on the group $\bbM(R)$.

Let $R$ be any ring. The group $\bbM(R)$ is
a topological group such that the base of neighborhoods of $1 \in
\bbM(R)$ consists of  subgroups
\begin{equation}\label{Uij}
U_{i,j}(R)=1 + \fraku_{i,j}(R)
\end{equation}
with $i \in \mathbf{N}$, $j \in \mathbf{N}$.

If $\mathbf{Q} \subset R$, then the maps $\exp$ and $\log$ given by formal series are continuous maps on the topological groups $\frakm(R)$ and
$\bbM(R)$. Moreover,  we have equalities
\begin{equation}\label{explogtop}
\exp(\fraku_{i,j}(R)) = U_{i,j}(R)  \qquad  \mbox{and} \qquad   \log(U_{i,j}(R) ) =\fraku_{i,j}(R)  \mbox{.}
\end{equation}
Hence we obtain mutually inverse isomorphisms of group ind-schemes
(because of the corresponding mutually inverse isomorphisms on $R$-points of these schemes):
\begin{equation} \label{explog1}
\exp \, : \, \frakm_{\mathbf{Q}}  \lrto  \bbM_{\mathbf{Q}} \qquad \mbox{and}  \qquad  \log \, : \, \bbM_{\mathbf{Q}} \lrto \frakm_{\mathbf{Q}} \mbox{.}
\end{equation}

It is possible to introduce topology on the group $\bbP(R)$ (we will need this topology later on.)
Let $R$ be any ring.  The group $\bbP(R)$ is a topological group with
the base of neighborhoods of $1$ given by the following subsets (which are not subgroups in general):
\begin{equation}\label{Uni}
U_{\{n_i\}}(R)=1+\fraku_{\{n_i\}}(R),
\end{equation}
with $i\in\mathbf{N}$, $n_i\in\mathbf{Z}$.

We do not have any formula analogous to~\eqref{explogtop} for the subsets $\fraku_{\{n_i\}}(R)$ and  $U_{\{n_i\}}(R)$. Instead we will prove the following lemma.
\begin{lem} \label{logexp}
If a ring $R \in \B$ and $\mathbf{Q} \subset R$, then the maps $\exp$ and $\log$ are continuous maps on the topological groups $\frakp(R)$ and
$\bbP(R)$.
\end{lem}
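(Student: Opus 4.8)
The plan is to extract from the hypothesis $R\in\mathcal B$ (Definition~\ref{catB}) a \emph{uniform} bound on nilpotency and to use it to turn both $\exp$ and $\log$ into finite, combinatorially controllable sums. First I would fix $q$ with $(\n R)^q=0$ and observe that every $f\in\frakp(R)$ is nilpotent of index at most $q$, \emph{independently of $f$}. Indeed $R_{\mathrm{red}}((u))((t))$ is reduced (Laurent series over a reduced ring stay reduced, by the lowest-coefficient argument, iterated in $u$ then $t$), so the nilpotency of $f$ forces all coefficients of $f$ to lie in $\n R$; and for any monomial $u^at^b$ the coefficient of $u^at^b$ in $f^q$ is a \emph{finite} sum of products of $q$ coefficients of $f$, hence lies in $(\n R)^q=0$. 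Thus $f^q=0$, so on $\frakp(R)$ one has $\exp(f)=\sum_{k=0}^{q-1}f^k/k!$ (using $\mathbf Q\subset R$), and dually $\log(1+g)=\sum_{k=1}^{q-1}(-1)^{k-1}g^k/k$ on $\bbP(R)$. The same reasoning gives $\n(R((u)))=(\n R)((u))$, so that every element of $\bbP(R)$ is $1+g$ with $g^q=0$.

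Since $R((u))((t))$ is commutative, $\exp\colon(\frakp(R),+)\to(\bbP(R),\times)$ and $\log$ are mutually inverse \emph{group homomorphisms} (they are mutually inverse isomorphisms of ind-schemes by~\eqref{explog}). Hence it suffices to check continuity at the identity; continuity everywhere then follows by translation, both sides being topological groups. Moreover the ``pure $t^0$'' directions ($\widehat{\bbW}$, respectively $\frakp_0$) are discrete and contribute nothing at the identity, so all the work is in the $t^{-i}$, $i\ge1$, directions.

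The heart of the matter is then the following estimate for $\exp$ (the one for $\log$ is entirely parallel). Given a basic neighborhood $U_{\{m_i\}}(R)=1+\fraku_{\{m_i\}}(R)$ of $1$ in $\bbP(R)$ (see~\eqref{Uni} and~\eqref{uni}), I must produce a basic neighborhood $\fraku_{\{n_i\}}(R)$ of $0$ in $\frakp(R)$ with $\exp(\fraku_{\{n_i\}}(R))\subseteq U_{\{m_i\}}(R)$. Writing $f=\sum_{i\ge1}t^{-i}\phi_i$ with $\phi_i\in u^{n_i}R[[u]]$, the coefficient of $t^{-s}$ in $f^k$ is a sum of products $\phi_{i_1}\cdots\phi_{i_k}$ over tuples $i_1+\cdots+i_k=s$ with $k\le q-1$ positive parts, and each such product lies in $u^{n_{i_1}+\cdots+n_{i_k}}R[[u]]$. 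Since the largest part $i_{\max}$ of any such tuple satisfies $s\le(q-1)\,i_{\max}$, the choice
$$
n_i:=\max\bigl(0,\,m_1,\,m_2,\,\ldots,\,m_{(q-1)i}\bigr)
$$
guarantees $n_{i_1}+\cdots+n_{i_k}\ge n_{i_{\max}}\ge m_s$, so every such product lies in $u^{m_s}R[[u]]$. Summing over $1\le k\le q-1$ shows the $t^{-s}$-coefficient of $\exp(f)-1$ lies in $u^{m_s}R[[u]]$ for every $s\ge1$; as $\exp(f)\in\bbP(R)$, the difference $\exp(f)-1$ is nilpotent and involves only negative powers of $t$, whence $\exp(f)-1\in\fraku_{\{m_i\}}(R)$. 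For $\log$ I would run the identical combinatorial choice of $\{n_i\}$, now starting from a target $\fraku_{\{m_i\}}(R)$ and a source $U_{\{n_i\}}(R)=1+\fraku_{\{n_i\}}(R)$.

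I expect the main obstacle to be the bookkeeping in the last step: one must track simultaneously the $t$-adic grading (which records how many factors can contribute to a given coefficient) and the $u$-adic filtration (which is what the neighborhoods~\eqref{uni} actually measure), and verify that a \emph{single} choice of $\{n_i\}$ works for all $k$ at once. This is exactly where $R\in\mathcal B$ is essential: the uniform bound $q$ both truncates the $\exp$ and $\log$ series and forces the relevant tuples $i_1+\cdots+i_k=s$ to have at most $q-1$ parts, which is what makes the estimate $s\le(q-1)i_{\max}$ and hence the formula for $n_i$ go through.
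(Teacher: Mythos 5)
Your proof is correct and follows essentially the same route as the paper's: fix $q$ with $(\n R)^q=0$, use it to truncate the $\exp$ and $\log$ series on $\frakp(R)$ and $\bbP(R)$, and rescale the index sequence of the target neighborhood by a factor of roughly $q$ (the paper takes $l_i=n_{iq}$ for an increasing target sequence, where you take $n_i=\max(0,m_1,\dots,m_{(q-1)i})$). The only difference is that you spell out the nilpotency-index bound and the combinatorial inclusion $\exp(\fraku_{\{l_i\}}(R))\subset U_{\{n_i\}}(R)$, which the paper asserts without detail.
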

\begin{proof}
We fix an integer $q$ such that  $(\n R)^q =0$. We will prove that the map $\exp$ is continuous. We consider arbitrary subset $U_{\{n_i\}}(R) \subset \bbP(R)$ where  $i\in \mathbf{N}$, $n_i\in\mathbf{Z}$.
Without loss of generality we can assume that all $n_i\in\mathbf{N}$ and $n_1 \le n_2 \le n_3 \le \ldots$. Let $l_i=n_{iq}$ for any $i\in \mathbf{N}$.
We consider a subset $\fraku_{\{l_i\}}(R) \subset \frakp(R)$ where  $i\in \mathbf{N}$, $l_i\in\mathbf{Z}$. We have $\exp (\fraku_{\{l_i\}}(R))
\subset  U_{\{n_i\}}(R)$. Therefore the map $\exp$ is continuous. The proof for the map $\log$ is analogous.
\end{proof}

\subsection{Two-dimensional symbol on $({L^2\bbG_m}_{\mathbf{Q}})^3$} \label{tdt}
We define the following map \linebreak $\nu \; : \; L^2\bbG_m \times L^2\bbG_m
\lrto \mathbb{Z}$ as
$$
\nu(f,g) =
\begin{vmatrix}
\nu_2(f) & \nu_2(g) \\
\nu_1(f) & \nu_1(g)
\end{vmatrix}   \mbox{.}
$$

From this definition we have the following proposition.
\begin{prop}
The map $\nu( \cdot, \cdot )$ is a bimultiplicative (with respect to
the additive structure on $\mathbb{Z}$) map and satisfies the
Steinberg relation: $\nu(f,1-f) =1$ for any $f, 1-f \in
L^2\bbG_m(R)$ where $R$ is a commutative ring.
\end{prop}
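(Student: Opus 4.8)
The plan is to separate the two claims: bimultiplicativity is formal, coming directly from the homomorphism property of $\nu_1,\nu_2$, while the Steinberg relation is reduced to a valuation-theoretic computation over fields.

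\emph{Bimultiplicativity and antisymmetry.} Recall that $\nu_1,\nu_2\colon L^2\bbG_m\to\bbZ$ are group homomorphisms from the multiplicative group $L^2\bbG_m$ to the additive group $\bbZ$, so $\nu_i(fg)=\nu_i(f)+\nu_i(g)$. Since by definition $\nu(f,g)$ is the determinant of the matrix whose columns are $(\nu_2(f),\nu_1(f))$ and $(\nu_2(g),\nu_1(g))$, and the determinant is additive in each column, substituting the homomorphism relations gives $\nu(fg,h)=\nu(f,h)+\nu(g,h)$ at once; the antisymmetry $\nu(g,f)=-\nu(f,g)$ is read off the same determinant and yields additivity in the second argument as well. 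This is the routine half.

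\emph{Reduction of the Steinberg relation to fields.} We must show that $\nu(f,1-f)$ is the neutral element of $\bbZ(R)$. As $\bbZ$ is the constant group scheme, a section over $R$ is a locally constant $\mathbf Z$-valued function on $\spec R$, and such a function is trivial iff its value at each point $x\in\spec R$ is $0$. Because $\nu_1$ and $\nu_2$ are morphisms of functors, they are compatible with the base change $R\to\kappa(x)$ to the residue field; and since $f$ and $1-f$ are both units, their images $\bar f$ and $\overline{1-f}=1-\bar f$ remain units in $\kappa(x)((u))((t))$. Thus it suffices to prove $\nu(\bar f,1-\bar f)=0$ over each field $\kappa=\kappa(x)$.

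\emph{The field case.} Over $\kappa$ I would realize $v=(\nu_1,\nu_2)\colon\kappa((u))((t))^*\to\mathbf Z^2$, with $\mathbf Z^2$ ordered lexicographically and $\nu_1$ primary, as a rank-two valuation: for $f=\sum_i c_i(u)t^i$, $\nu_1(f)$ is the $t$-adic valuation and $\nu_2(f)$ is the $u$-adic valuation of the leading coefficient $c_{\nu_1(f)}$. One verifies $v(fg)=v(f)+v(g)$ and the ultrametric inequality $v(f+g)\ge\min(v(f),v(g))$ — with equality when $v(f)\ne v(g)$ — by the standard trichotomy on $\nu_1(f)$ versus $\nu_1(g)$. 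Applying this to $f+(1-f)=1$, where $v(1)=(0,0)$: if $v(f)\ne v(1-f)$ then $(0,0)=\min(v(f),v(1-f))$ forces the smaller of the two valuation vectors to equal $(0,0)$, so one column of the defining matrix vanishes; if $v(f)=v(1-f)$ the two columns coincide. In either case the determinant is $0$, as required.

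\emph{Main difficulty.} There is no deep obstacle: the only genuine content is the field case, and the points requiring care are that $(\nu_1,\nu_2)$ is indeed a valuation for the lexicographic order (so that the ultrametric inequality is at our disposal), and that the hypothesis that $1-f$ is a unit is exactly what guarantees $1-\bar f\ne 0$ at every point, thereby legitimizing the residue-field reduction.
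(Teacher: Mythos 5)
Your proof is correct. The paper in fact gives no proof at all (the proposition is stated as following directly from the definition), and your argument supplies exactly the details being taken for granted: bimultiplicativity from the additivity of $\nu_1,\nu_2$ and of the determinant in each column, and the Steinberg relation by reducing pointwise to residue fields via the functoriality of $\nu_1,\nu_2$ and then invoking the ultrametric inequality for the rank-two valuation $(\nu_1,\nu_2)$ on $\kappa((u))((t))^*$ with the lexicographic order. (Note only that the paper's ``$\nu(f,1-f)=1$'' means the neutral element of $\bbZ(R)$, i.e.\ $0$, as you correctly interpret.)
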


One of our main definitions is the following.
\begin{dfn} \label{cc2}
{\em The two-dimensional  Contou-Carr\`{e}re symbol} $(\cdot, \cdot,
\cdot )$ is the unique tri-mul\-ti\-pli\-ca\-ti\-ve, anti-symmetric map from
${L^2\bbG_m}_{\mathbf{Q}} \times {L^2\bbG_m}_\mathbf{Q} \times {L^2\bbG_m}_\mathbf{Q}$ to ${\bbG_m}_{\mathbf{Q}}$ such that
for any $\mathbf Q$-algebra $R$, and $f,g,h \in L^2\bbG_m(R)$,
\begin{equation} \label{2cexp}
(f,g,h) = \exp \Res (\log f \cdot \frac{dg}{g} \wedge \frac{dh}{h})
\; \mbox{when} \; (\nu_1, \nu_2)(f) = (0,0) \mbox{,} \; f_0 =1 \;
(\mbox{see}~\eqref{decomp2}) \mbox{,}
\end{equation}

\begin{equation} \label{cc3}
(a,g,h) = a^{\nu(g,h)} \quad \mbox{when} \quad a \in R^* \mbox{,}
\end{equation}

\begin{equation}  \label{cc4}
(f,g,h)= (-1)^A  \quad  \mbox{when} \quad f=u^{j_1}t^{i_1} \mbox{,}
\quad g=u^{j_2}t^{i_2} \mbox{,} \quad  h=u^{j_3}t^{i_3}
 \mbox{,}
 \end{equation}
 where
 \begin{equation} \label{sign}
 A = \nu(f,g)\nu(f,h) + \nu(g,h) \nu(g,f) + \nu(h,f) \nu(h,g) + \nu(f,g)\nu(f,h)\nu(g,h) \mbox{.}
\end{equation}
Here we set $df = \frac{\partial f}{\partial u} du + \frac{\partial
f}{\partial t} dt $, $df \wedge dg = (\frac{\partial f}{\partial u}
\frac{\partial g}{\partial t} - \frac{\partial g}{\partial u }
\frac{\partial g}{\partial t}   ) du \wedge dt$,
 and
$$
\Res \,(\sum a_{i,j} u^j t^i du \wedge dt) = a_{-1,-1}  \mbox{.}
$$
\end{dfn}
\begin{rmk}
To calculate $d f$, where $f$ is an infinite series, we used the
free two-dimensional $R$-module of continuous differentials
 $\tilde{\Omega}^1_{K/R}$, which is a
quotient module of the infinite-dimensional $R$-module
$\Omega^1_{K/R}$. See analogous constructions in~\cite[prop.~2]{O5}.
\end{rmk}
Note that the symbol given in Definition~\ref{cc2} for the
$\mathbf Q$-algebra $R$ is well-defined. First note that in formula~\eqref{2cexp} we have
\begin{equation} \label{resexp}
\Res \, (\log f \cdot \frac{dg}{g} \wedge \frac{dh}{h}) \in \n R
\mbox{.}
\end{equation}
Indeed, using decomposition~\eqref{decomp2} and tri-multiplicativity
of expression from~\eqref{resexp} (with respect to additive
structure on $R$): $\log f = \log f_{-1} + \log f_{1}$ and so on, it
is enough to verify~\eqref{resexp} for elements which appear
from decomposition~\eqref{decomp2}, which is clear.
Besides, expression from~\eqref{2cexp} is anti-symmetric. This is obvious for the permutation of  $g$ and $h$ from the definition of
the wedge product. And, for example, for the permutation of $f$ and
$g$ (when $g$ is also equal $g_{-1}g_1$ by~\eqref{decomp2}) it
follows from the following equalities:
$$
0= \Res \, d (\log f \cdot \log g  \cdot \frac{dh}{h}) = \Res (\log
f \cdot  d \log g  \wedge \frac{dh}{h}) + \Res (\log g \cdot  d \log
f \wedge  \frac{dh}{h}) \mbox{.}
$$
Finally, expressions given by formulas ~\eqref{2cexp}-\eqref{cc4}
are tri-mutltiplicative. (It is obvious for~\eqref{2cexp}
and~\eqref{cc3}. For~\eqref{cc4} it can be proved by using
calculations modulo $2$ with direct definitions of $A$
 and $\nu(\cdot, \cdot)$.)

\begin{rmk}
The formula~\eqref{2cexp} is very similar to an explicit
formula from~\cite[th.~3.6]{BM}, which is an analytic expression for
the two-dimensional tame symbol and coincides with  the holonomy of
some gerbe with connective structure and curving constructed by
three meromorphic functions on a complex algebraic surface.
\end{rmk}

One of the main problems is to extend the definition of the two-dimensional  Contou-Carr\`{e}re symbol to any ring $R$.
If such an extension exists, then it is unique by the following lemma.
\begin{lem}\label{unique} There is at
most one tri-multiplicative, anti-symmetric map  from $(L^2\bbG_m)^3
 $ to $\bbG_m$ such that this map restricted to $ ({L^2 \bbG_{m}}_{
\mathbf Q})^3   $ satisfies properties~\eqref{2cexp}-\eqref{sign}.
\end{lem}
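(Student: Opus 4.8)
The plan is to reduce the statement to the following uniqueness principle for the class $\calE\calF$, which plays the role that flatness over $\mathbf{Z}$ played in the one-dimensional case (cf. the discussion after Lemma-Definition~\ref{ld}): \emph{if $X\in\calE\calF$ and $\Psi\colon X\to\bbG_m$ is a morphism of ind-schemes whose base change $\Psi_{\mathbf Q}$ to $X_{\mathbf Q}$ is the constant morphism $1$, then $\Psi=1$.} Granting this, suppose $\Phi,\Phi'\colon(L^2\bbG_m)^3\to\bbG_m$ are two tri-multiplicative anti-symmetric morphisms whose restrictions to $({L^2\bbG_m}_{\mathbf Q})^3$ both satisfy \eqref{2cexp}--\eqref{sign}. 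By the uniqueness asserted in Definition~\ref{cc2}, these restrictions coincide with the one two-dimensional Contou-Carr\`ere symbol over $\mathbf Q$, so $\Phi_{\mathbf Q}=\Phi'_{\mathbf Q}$. Since the target $\bbG_m$ is a group, we may form the morphism $\Psi=\Phi\cdot(\Phi')^{-1}\colon(L^2\bbG_m)^3\to\bbG_m$ by composing $(\Phi,\Phi')$ with multiplication and inversion on $\bbG_m$; then $\Psi_{\mathbf Q}=1$, and the principle gives $\Psi=1$, i.e. $\Phi=\Phi'$.

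To prove the principle, I would first invoke Lemma~\ref{essen flat} with $k=3$ to get $(L^2\bbG_m)^3\in\calE\calF$, so that we may write $X=(L^2\bbG_m)^3=\varinjlim_{i\in J}\spec R_i$ with the defining property: for every $i$ and every nonzero $a\in R_i$ there is $j>i$ all of whose preimages in $R_j$ under the transition map $R_j\to R_i$ are non-torsion. A morphism $\Psi\colon X\to\bbG_m$ is the same datum as an element of $\varprojlim_{i}R_i^*$, i.e. a compatible family $(\psi_i)_{i\in J}$ with $\psi_i\in R_i^*$ and $\psi_j\mapsto\psi_i$ under $R_j\to R_i$ whenever $j>i$. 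The assumption $\Psi_{\mathbf Q}=1$ means that the image of $\psi_i$ in $R_i\otimes_{\mathbf Z}\mathbf Q$ equals $1$ for every $i$; since the kernel of $R_i\to R_i\otimes_{\mathbf Z}\mathbf Q$ is exactly the $\mathbf Z$-torsion subgroup of $R_i$, this says precisely that $\psi_i-1$ is a torsion element of $R_i$ for every $i$.

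Now I would argue by contradiction. If $\Psi\neq1$, then $a:=\psi_i-1\neq0$ in $R_i$ for some $i$. Applying the defining property of $\calE\calF$ to this $a$, there is $j>i$ such that every preimage of $a$ under $R_j\to R_i$ is non-torsion. But $\psi_j-1$ maps to $\psi_i-1=a$ (because $\psi_j\mapsto\psi_i$), so $\psi_j-1$ is one such preimage and is therefore non-torsion; this contradicts the conclusion of the previous paragraph that $\psi_j-1$ is torsion. Hence $\psi_i=1$ for all $i$, and $\Psi=1$.

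The two points that need care — and where the argument genuinely departs from the one-dimensional flatness argument — are the following. First, $(L^2\bbG_m)^3$ is \emph{not} ind-flat over $\mathbf Z$ (as observed around formula~\eqref{nilind}), so one cannot directly use injectivity of $R_i\hookrightarrow R_i\otimes_{\mathbf Z}\mathbf Q$; the class $\calE\calF$ and Lemma~\ref{essen flat} are designed exactly to supply the weaker input that still forces $\psi_i-1=0$. Second, one must ensure that $\Phi,\Phi'$ are honest morphisms of ind-schemes (so that $\Psi$ is one and restricts to a compatible system of units), which follows from the representability of $L^2\bbG_m$ in Proposition~\ref{2-loop} together with that of $\bbG_m$. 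The main obstacle is thus conceptual rather than computational: it lies in isolating the property $\calE\calF$ and verifying it for $(L^2\bbG_m)^3$ — both of which are already in hand — after which the torsion argument above is routine.
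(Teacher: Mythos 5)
Your proposal is correct and follows essentially the same route as the paper: uniqueness over $\mathbf Q$, then Lemma~\ref{essen flat} to place $(L^2\bbG_m)^3$ in $\calE\calF$, and the observation that a difference of two extensions is pointwise torsion, which the defining property of $\calE\calF$ forces to vanish. The only cosmetic difference is that you use the group structure on $\bbG_m$ to compare the two maps via $\Phi\cdot(\Phi')^{-1}$, while the paper compares the two ring homomorphisms $f_{1,i},f_{2,i}$ directly for an arbitrary affine target; both arguments are the same torsion argument.
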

\begin{proof} Clearly, such map over ${\mathbf Q}$ is unique. We want to extend the uniqueness to ${\mathbf Z}$.
Observe that the argument as in the 1-dimensional case can not be
applied directly. The problem, as we explained, is that $L^2\bbG_m$
is not inductive limit of flat $\mathbf Z$-schemes. However, recall
that from Lemma~\ref{essen flat}, $(L^2\bbG_m)^3\in\calE\calF$. Then
the following general fact will imply the lemma.

Let $X=\mathop{\underrightarrow{\lim}}\limits_{i \in J}\spec
R_i\in\calE\calF$ and $f_{\mathbf Q}: X_{\mathbf Q}\to Y_{\mathbf
Q}$ be any morphism, where $Y=\spec R$ is a ${\mathbf Z}$-scheme. We
prove that there is at most one extension $f:X\to Y$. Let $f_1,f_2$
be two such extensions. Then $f_l$ ($l=1,2$) is determined by a
compatible family $\{ f_{l,i}\mid i\in J \}$ of the ring
homomorphisms $f_{l,i}: R\to R_i$. By assumption, for any $r\in R$,
$f_{1,i}(r)-f_{2,i}(r)$ is a torsion element in $R_i$. This implies
$f_{1,i}=f_{2,i}$ for all $i \in J$, since $X \in \calE\calF$.
\end{proof}

Let us turn to the question of extension of the definition of
two-dimensional  Contou-Carr\`{e}re symbol. Let us indicate here
that the argument as in the $1$-di\-men\-si\-o\-nal case does not
admit an obvious generalization. Namely, it is tempting to calculate
$(1-au^jt^i,1-bu^lt^k,1-cu^nt^m)$ and to show that the result does
make sense for any commutative ring (we will do such calculation in
Section~\ref{explicform} and show that this is indeed the case). However,
unlike the $1$-dimensional case, if we write $f_{-1}$ (as in the
decomposition \eqref{decomp2}) as $\prod (1-a_{i,j}u^jt^i)$, there
will be infinite many terms in the product. Therefore, formula
\eqref{multform} in current setting will be an infinite product a
priori, and it is not obvious at all whether such expression makes
sense (we need to restrict ourself to the rings from the
category~$\B$, see Section~\ref{explicform}). Our strategy in
Section~\ref{main}  will be to construct a map
$(L^2\bbG_m)^3\to\bbG_m$ directly using  some general categorical
formalism  developed in \cite{OsZh}, and show that this map
satisfies properties~\eqref{2cexp}-\eqref{sign}. This categorical
construction will help to prove the reciprocity laws for the symbol
on an algebraic surface and to obtain some new non-trivial
properties of the symbol such as, for example,  the invariance under
the change of local parameters.

\subsection{Explicit formulas} \label{explicform}
In this subsection we will extend the definition of the two-dimensional
Contou-Carr\`{e}re symbol to  commutative rings from the category $\B$  by
means of some explicit functorial (with respect to $R$) formulas.

By Definition~\ref{cc2} we know an explicit formula for the
two-dimensional Contou-Carr\`{e}re symbol when $\mathbf{Q} \subset
R$. We want to extend the definition of  two-dimensional
Contou-Carr\`{e}re symbol to the case $\mathbf{Q} \nsubseteq R$,  $R \in \B$ also
by some explicit formulas. Similarly to the one-dimensional case it
would be  enough to consider  the formal series for $(f,g,h)$. This
series appears from formula~\eqref{2cexp} and depends on
coefficients of $f,g,h \in L^2\bbG_m(R) = R((u))((t))^*$ (see
decomposition~\eqref{canon}). We should prove that the coefficients
of the series, which are a priori from $\mathbf{Q}$, are from
${{\mathbf Z}}$, i.e. they does not contain the denominators. We
will not prove it in a direct way. We will use some infinite product
decomposition of elements from $L^2\bbG_m(R)$. To achieve this goal
we will need some restrictions on the ring $R$ (sometimes we will assume that $R \in \B$). But, first, we have
the following lemmas.
\begin{lem} \label{lll}
Let $\mathbf{Q} \subset R$. Let $f,g, h \in L^2\bbG_m(R)$ such that
$f = 1 - a_{i,j} u^j t^i$, $g = 1- b_{k,l} u^l t^k$, $h = 1 -c_{m,n}
u^n t^m$, where $i,j,k,l,m,n \in {{\mathbf Z}}$ and $a_{i,j},
b_{k,l}, c_{m,n} \in R$. Let
$$
p_0 =
\begin{vmatrix}
l & n \\
k & m
\end{vmatrix}
\mbox{,} \qquad q_0 =
\begin{vmatrix}
n & j \\
m & i
\end{vmatrix}
\mbox{,} \qquad r_0 =
\begin{vmatrix}
j & l \\
i & k
\end{vmatrix}
\mbox{.}
$$
Let $(p_0,q_0, r_0)$ be the greatest common divisor of $p_0, q_0,
r_0$, where we set $(p_0, q_0, r_0) >0$  if $p_0, q_0, r_0 >0$, and
$(p_0,q_0,r_0) <0$ if $p_0,q_0, r_0 < 0$. Then
\begin{equation} \label{expfor}
(f,g,h) = T(a_{i,j},b_{k,l}, c_{m,n})=\left(1 -
a_{i,j}^{\frac{p_0}{(p_0,q_0,r_0)}} \cdot
b_{k,l}^{\frac{q_0}{(p_0,q_0,r_0)}} \cdot
c_{m,n}^{\frac{r_0}{(p_0,q_0,r_0)}}       \right)^{(p_0,q_0,r_0)}
\end{equation}
iff $p_0, q_0, r_0 >0$ or $p_0, q_0, r_0 < 0$. In other cases of
signs of $p_0, q_0, r_0$ we have $(f,g,h) =T(a_{i,j},b_{k,l},
c_{m,n})=1$.
\end{lem}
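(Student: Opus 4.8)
The plan is to compute $(f,g,h)$ directly from the defining formula~\eqref{2cexp}, thereby reducing the problem to counting positive integer solutions of a homogeneous linear system. Writing $a=a_{i,j}$, $b=b_{k,l}$, $c=c_{m,n}$, the first step is to expand the three relevant series: $\log f = -\sum_{p\ge 1}\frac{a^p}{p}u^{jp}t^{ip}$, and, since $\frac{dg}{g}=d\log g$ and $\frac{dh}{h}=d\log h$,
$$\frac{dg}{g}=-\sum_{q\ge 1}b^q\bigl(l\,u^{lq-1}t^{kq}\,du+k\,u^{lq}t^{kq-1}\,dt\bigr),\qquad \frac{dh}{h}=-\sum_{r\ge 1}c^r\bigl(n\,u^{nr-1}t^{mr}\,du+m\,u^{nr}t^{mr-1}\,dt\bigr).$$
By anti-symmetry we place in the first slot whichever argument makes~\eqref{2cexp} applicable (trivial valuations and constant term $1$); moreover~\eqref{resexp} guarantees that the residue computed below lies in $\n R$, so every series that occurs is effectively finite and all manipulations are legitimate.

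The second step is to form the wedge product. Only the mixed $du\wedge dt$ terms survive, and the bracketed coefficient collapses to the determinant $lm-kn=p_0$:
$$\frac{dg}{g}\wedge\frac{dh}{h}=p_0\sum_{q,r\ge 1}b^qc^r\,u^{lq+nr-1}t^{kq+mr-1}\,du\wedge dt.$$
Multiplying by $\log f$ and extracting the coefficient of $u^{-1}t^{-1}\,du\wedge dt$ yields
$$\Res\Bigl(\log f\cdot\frac{dg}{g}\wedge\frac{dh}{h}\Bigr)=-p_0\sum_{\substack{p,q,r\ge 1\\ jp+lq+nr=0,\ ip+kq+mr=0}}\frac{a^pb^qc^r}{p},$$
so everything reduces to describing the positive integer triples $(p,q,r)$ annihilated by both exponent rows $(j,l,n)$ and $(i,k,m)$.

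The decisive step is to identify this solution set. The cross product of the two rows is exactly $(p_0,q_0,r_0)$, which therefore spans the rank-one lattice of integer solutions (when the rows are independent); its primitive generator is $\bigl(\frac{p_0}{(p_0,q_0,r_0)},\frac{q_0}{(p_0,q_0,r_0)},\frac{r_0}{(p_0,q_0,r_0)}\bigr)=(P,Q,R)$, which by the sign convention on $(p_0,q_0,r_0)$ has positive entries precisely when $p_0,q_0,r_0$ are all positive or all negative. In that case the positive solutions are exactly $\{s(P,Q,R):s\ge 1\}$, and otherwise there are none (and if the rows are dependent the prefactor $p_0$ vanishes). Substituting, and using $p_0/P=(p_0,q_0,r_0)$, the residue becomes $-(p_0,q_0,r_0)\sum_{s\ge1}\frac{(a^Pb^Qc^R)^s}{s}=(p_0,q_0,r_0)\log(1-a^Pb^Qc^R)$, whence $(f,g,h)=\exp\Res(\cdots)=(1-a^Pb^Qc^R)^{(p_0,q_0,r_0)}=T(a_{i,j},b_{k,l},c_{m,n})$, and $(f,g,h)=1$ in the remaining sign cases.

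The main obstacle is this last step: correctly pinning down the rank-one solution lattice, its primitive generator, and the greatest common divisor that emerges as the outer exponent, while matching the sign conventions so that the three regimes (all positive, all negative, mixed) come out exactly as stated. A secondary point requiring care is the justification of the formal series manipulations, which I would handle by invoking~\eqref{resexp} to force the relevant residue into $\n R$, rendering every infinite sum effectively finite.
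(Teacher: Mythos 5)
Your proposal is correct and follows essentially the same route as the paper's proof: expand $\log f$ and the two $d\log$ series, observe that the wedge product produces the determinant $p_0$ as a prefactor, reduce the residue to counting positive integer solutions of the system $jp+lq+nr=0$, $ip+kq+mr=0$, identify those solutions as the positive multiples of the primitive vector $(p_0,q_0,r_0)/(p_0,q_0,r_0)$, and resum to obtain $(p_0,q_0,r_0)\log\left(1-a^Pb^Qc^R\right)$. The only cosmetic difference is that the paper disposes of the degenerate cases $p_0=0$, $q_0=0$ or $r_0=0$ by invoking anti-symmetry of $(\cdot,\cdot,\cdot)$ together with the vanishing of the corresponding wedge-product prefactor, whereas you absorb them into the observation that the primitive generator then fails to have all entries positive; both arguments are fine.
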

\begin{proof}
The proof is by direct calculation with formula~\eqref{2cexp} and
explicit series for $\log$ and $\exp$. We have to calculate the
following expression:
\begin{equation} \label{form}
\exp \Res\, \log(1 -a_{i,j} u^j t^i) \cdot d \log (1- b_{k,l}u^l
t^k) \wedge d \log (1-c_{m,n}u^nt^m) \mbox{.}
\end{equation}
We obtain
\begin{equation} \label{form1}
\log (1-a_{i,j} u^j t^i) = - \sum_{p \ge 1} \frac{a_{i,j}^p}{p}
u^{jp} t^{ip} \mbox{.}
\end{equation}
Besides
$$
d \log (1- b_{k,l} u^l t^k) = - \sum_{q \ge 1} (l b_{k,l}^q
u^{lq-1}t^{kq} du + k b_{k,l}^q u^{lq} t^{kq-1} dt  ) \mbox{.}
$$
Now we have
\begin{equation} \label{form2}
d \log (1-b_{k,l} u^l t^k) \wedge \, d \log (1 -c_{m,n} u^n t^m) =
\mathop{\sum_{q \ge 1}}\limits_{r \ge 1}
\begin{vmatrix}
l & n \\
k & m
\end{vmatrix}
\cdot b_{k,l}^q \cdot c_{n,m}^r \cdot u^{lq+nr-1} t^{kq+mr-1} du
\wedge dt \mbox{.}
\end{equation}

Using the anti-symmetric property of $(\cdot, \cdot, \cdot)$ and
formula~\eqref{form2} we obtain from expression~\eqref{form} that
$(f,g,h) =1$ if $p_0 =0$, or $q_0=0$, or $r_0=0$. Therefore further
we assume that $p_0 \cdot q_0 \cdot r_0 \ne 0$.

To calculate $\Res$ from formula~\eqref{form} we can use
formulas~\eqref{form1} and~\eqref{form2} and need to find all integers $p
\ge 1$, $q \ge 1$, $r \ge 1$ which solve the following  system of
equations:
$$\left\{
\begin{array}{rcl}
jp +lq +nr & = & 0 \\
ip + kq + mr & = & 0 \mbox{.}
\end{array}
 \right.
$$
It is clear that all the solutions of this system of equations are
given by the following formula:
$$
\{ p,q,r \} = \{ s \frac{p_0}{(p_0,q_0,r_0)} , \, s
\frac{q_0}{(p_0,q_0,r_0)}, \, s \frac{r_0}{(p_0,q_0,r_0)}  \}
\mbox{,}
$$
where we take any integer $s \ge 1$, and the following condition has
to be satisfied: $p_0, q_0, r_0 >0$ or $p_0, q_0, r_0 < 0$.

Now since
\begin{multline*}
-\sum_{s \ge 1}
\frac{a_{i,j}^{\frac{sp_0}{(p_0,q_0,r_0)}}}{\frac{sp_0}{
(p_0,q_0,r_0)}} \cdot p_0 \cdot b_{k,l}^{\frac{sq_0}{(p_0,q_0,r_0)}}
\cdot c_{m,n}^{\frac{sr_0}{(p_0,q_0,r_0)}}  = \\ = \log \left(
\left( 1- a_{i,j}^{\frac{p_0}{(p_0,q_0,r_0)}} \cdot
b_{k,l}^{\frac{q_0}{(p_0,q_0,r_0)}} \cdot
c_{m,n}^{\frac{r_0}{(p_0,q_0,r_0)}} \right)^{(p_0,q_0,r_0 )} \right)
\mbox{,}
\end{multline*}
we obtain formula~\eqref{expfor}.
\end{proof}

\begin{rmk}
We note that the same expression as expression~\eqref{expfor} appeared recently (after appearence of the first e-print version of our paper in  arXiv) in~\cite{H}.
\end{rmk}

\begin{lem} \label{lemexp1}
Let $\mathbf{Q} \subset R$. Let $f ,g  \in L^2\bbG_m(R)$ such that
$f = 1- a_{i,j}u^j t^i$, $g = 1- b_{k,l}u^l t^k $, where $i,j,k,l
\in {{\mathbf Z}}$ and $a_{i,j}, b_{k,l} \in R$.
\begin{enumerate}
\item If at least one of the following conditions is satisfied:
\begin{equation}  \label{cond}
 \begin{vmatrix}
j & l \\
i & k
\end{vmatrix}  \ne 0 \mbox{,}
\qquad jl+ik \ge 0  \mbox{,} \qquad jl =0   \mbox{,}
\end{equation}
then $(f,g,t)=S(a_{i,j}, b_{k,l})=1$.
\item If none of conditions in~\eqref{cond} is satisfied, then
$$
(f,g,t)= S(a_{i,j}, b_{k,l})=\left(1 - a_{i,j}^{\left|
\frac{l}{(j,l)}  \right|} b_{k,l}^{\left| \frac{j}{(j,l)}  \right|}
 \right)^{-  \frac{l \cdot \left| (j,l)   \right|  }{ \left| l
 \right|}}  \mbox{,}
$$
where $(j,l)$ is the greatest common divisor of two integers with
any sign.
\end{enumerate}
\end{lem}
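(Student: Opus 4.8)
The plan is to evaluate the symbol directly from the analytic formula~\eqref{2cexp}, exactly as in the proof of Lemma~\ref{lll}, since the first argument $f=1-a_{i,j}u^jt^i$ is already in the correct slot and $h=t$ gives $\frac{dh}{h}=\frac{dt}{t}$. I would apply~\eqref{2cexp} through the logarithmic series of Lemma~\ref{lll}, so no sign restriction on $i,j,k,l$ is needed in the computation. The genuinely degenerate cases, where $f$ or $g$ is a constant unit (i.e.\ $(i,j)=(0,0)$ or $(k,l)=(0,0)$), I would dispose of at once using~\eqref{cc3} and anti-symmetry: in each such case the relevant $\nu(\cdot,\cdot)$ is $0$, so $(f,g,t)=1$, and moreover $jl=0$ holds, so these cases already sit inside part~(1). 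Thus I reduce to nonconstant $f,g$.

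First I would expand the three factors using~\eqref{form1} and the $d\log$ computation from Lemma~\ref{lll}:
$$\log f=-\sum_{p\geq1}\frac{a_{i,j}^p}{p}\,u^{jp}t^{ip},\qquad \frac{dg}{g}=-\sum_{q\geq1}\bigl(l\,b_{k,l}^q u^{lq-1}t^{kq}\,du+k\,b_{k,l}^q u^{lq}t^{kq-1}\,dt\bigr).$$
The key simplification is that wedging with $\frac{dt}{t}=t^{-1}dt$ annihilates the $dt$-component of $\frac{dg}{g}$, leaving only its $du$-part, so
$$\frac{dg}{g}\wedge\frac{dt}{t}=-\sum_{q\geq1}l\,b_{k,l}^q\,u^{lq-1}t^{kq-1}\,du\wedge dt.$$
Multiplying by $\log f$ and extracting $\Res$ (the coefficient of $u^{-1}t^{-1}$) then gives
$$\Res\!\left(\log f\cdot\frac{dg}{g}\wedge\frac{dt}{t}\right)=\sum_{\substack{p,q\geq1\\ jp+lq=0,\ ip+kq=0}}\frac{a_{i,j}^p}{p}\,l\,b_{k,l}^q,$$
so the whole symbol is controlled by the positive integer solutions of the linear system $jp+lq=0$, $ip+kq=0$, whose matrix has determinant $\begin{vmatrix}j&l\\ i&k\end{vmatrix}$.

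For part~(1) I would show each of the three conditions in~\eqref{cond} empties this sum. If the determinant is nonzero, the only solution is $(p,q)=(0,0)$, so no admissible term survives and $\Res=0$. If $jl=0$, then either $g$ has no $du$-part or no $q\geq1$ solves $lq=0$, again giving $\Res=0$. Finally, if the determinant vanishes while $jl\neq0$, proportionality of the columns gives $(l,k)=\lambda(j,i)$, whence $jl+ik=\lambda(j^2+i^2)$; the hypothesis $jl+ik\geq0$ forces $\lambda>0$, so $j$ and $l$ share a sign and $jp+lq=0$ has no positive solution. In every case $\Res=0$ and $(f,g,t)=\exp 0=1$.

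For part~(2) none of the conditions holds, so the determinant vanishes, $jl\neq0$, and $jl+ik<0$, forcing $j,l$ to have opposite signs. Writing $\delta=\gcd(|j|,|l|)=|(j,l)|$, the positive solutions of $jp+lq=0$ are exactly $(p,q)=(s|l|/\delta,\ s|j|/\delta)$, $s\geq1$, and (the two equations being proportional) these also solve $ip+kq=0$. Substituting and summing via $\sum_{s\geq1}\frac1s x^s=-\log(1-x)$ yields
$$\Res=\frac{l\delta}{|l|}\sum_{s\geq1}\frac1s\bigl(a_{i,j}^{|l|/\delta}b_{k,l}^{|j|/\delta}\bigr)^s=-\frac{l\delta}{|l|}\log\!\Bigl(1-a_{i,j}^{|l|/\delta}b_{k,l}^{|j|/\delta}\Bigr),$$
and exponentiating gives precisely $\bigl(1-a_{i,j}^{|l/(j,l)|}b_{k,l}^{|j/(j,l)|}\bigr)^{-l|(j,l)|/|l|}$, as claimed. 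The main obstacle here is not conceptual but the bookkeeping in this last step: pinning down the \emph{complete} solution set of the Diophantine system and then matching the gcd-exponents and, above all, the sign of the outer exponent $-l|(j,l)|/|l|$ against the signed convention the lemma fixes for $(j,l)$. I would pin down the conventions on the test case $j=1,\ l=-1,\ i=k=0$, where the formula predicts $(1-au,\,1-b/u,\,t)=1-ab$; a direct residue computation confirms this and fixes all signs.
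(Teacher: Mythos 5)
Your proposal is correct and follows essentially the same route as the paper, which proves this lemma by the identity $(f,g,t)=\exp\Res\bigl(\log f\,\tfrac{\partial\log g}{\partial u}\,du\wedge\tfrac{dt}{t}\bigr)$ together with explicit manipulations of the formal $\log$-series exactly as in the proof of Lemma~\ref{lll}. Your case analysis of the Diophantine system $jp+lq=0$, $ip+kq=0$ and the final gcd/sign bookkeeping carry out in detail the computation the paper leaves to the reader, and they check out.
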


\begin{lem} \label{lemexp2}
Let $\mathbf{Q} \subset R$. Let $f ,g  \in L^2\bbG_m(R)$ such that
$f = 1- a_{i,j}u^j t^i$, $g = 1- b_{k,l}u^l t^k $, where $i,j,k,l
\in {{\mathbf Z}}$ and $a_{i,j}, b_{k,l} \in R$.
\begin{enumerate}
\item If at least one of the following conditions is satisfied:
\begin{equation}  \label{cond2}
 \begin{vmatrix}
j & l \\
i & k
\end{vmatrix}  \ne 0 \mbox{,}
\qquad jl+ik \ge 0  \mbox{,} \qquad ik =0   \mbox{,}
\end{equation}
then $(f,g,u)=Q(a_{i,j}, b_{k,l})=1$.
\item If none of conditions~\eqref{cond2} is satisfied, then
$$
(f,g,u)= Q(a_{i,j}, b_{k,l})= \left(1 - a_{i,j}^{\left|
\frac{k}{(i,k)}  \right|} b_{k,l}^{\left| \frac{i}{(i,k)}  \right|}
 \right)^{ \frac{k \cdot \left| (i,k)   \right|  }{ \left| k
 \right|}}  \mbox{,}
$$
where $(i,k)$ is the greatest common divisor of two integers with
any sign.
\end{enumerate}
\end{lem}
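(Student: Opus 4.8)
The plan is to proceed exactly as in the proofs of Lemma~\ref{lll} and Lemma~\ref{lemexp1}, by a direct computation with the defining formula~\eqref{2cexp}. Since $f = 1 - a_{i,j}u^jt^i$ has $(\nu_1,\nu_2)(f) = (0,0)$ and $f_0 = 1$, it may be placed in the first slot (whereas $u$ cannot, as $\nu_2(u)=1$), so that
\[
(f,g,u) = \exp\Res\left(\log f \cdot \frac{dg}{g}\wedge\frac{du}{u}\right),
\]
with $\frac{du}{u} = u^{-1}\,du$. The only structural difference from Lemma~\ref{lemexp1} (where $h = t$) is that now wedging with $u^{-1}\,du$ annihilates the $du$-component of $\frac{dg}{g}$ and retains its $dt$-component; since that component carries the factor $k$ (compare the expansion of $d\log(1-b_{k,l}u^lt^k)$ used in the proof of Lemma~\ref{lll}), this already explains why the vanishing of $k$ forces the symbol to be trivial.

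First I would substitute the series~\eqref{form1} for $\log f$ together with the expansion of $\frac{dg}{g}$, form the product, and collect the coefficient of $u^{-1}t^{-1}\,du\wedge dt$ extracted by $\Res$. This reduces the entire computation to the linear Diophantine system $jp+lq=0$, $ip+kq=0$ in integers $p,q\ge 1$. If the determinant $jk-il$ is nonzero the system has only the zero solution, so $\Res = 0$ and $(f,g,u)=1$; likewise if $ik=0$ there is no positive solution (and the surviving coefficient $k$ may vanish outright). This disposes of the first and third clauses of~\eqref{cond2}.

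The small combinatorial point, which I expect to be the only place needing care, is to reconcile the middle condition $jl+ik\ge 0$ with the other two. When $jk=il$ and $ik\neq 0$ one computes $jl/(ik) = l^2/k^2 \ge 0$, so $jl$ and $ik$ always share the same sign; hence, in the determinant-zero regime, $jl+ik\ge 0$ is equivalent to $ik\ge 0$, and positive integer solutions of $ip+kq=0$ exist precisely when $ik<0$. Thus the three clauses of~\eqref{cond2} together cut out exactly the locus where $\Res = 0$, which completes part~(1).

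For part~(2) I would parametrize the positive solutions as $p = \frac{|k|}{(i,k)}\,s$ and $q = \frac{|i|}{(i,k)}\,s$ for $s\ge 1$ (these are integers since $(i,k)$ divides both $i$ and $k$), substitute into the sum, and recognize the resulting series as a logarithm:
\[
\Res = -\frac{k\,|(i,k)|}{|k|}\sum_{s\ge 1}\frac{X^s}{s} = \frac{k\,|(i,k)|}{|k|}\,\log(1-X),\qquad X = a_{i,j}^{|k/(i,k)|}\,b_{k,l}^{|i/(i,k)|}.
\]
Exponentiating gives the closed form $Q(a_{i,j},b_{k,l}) = (1-X)^{k|(i,k)|/|k|}$, which is the asserted formula; note that it is insensitive to the sign convention for $(i,k)$, since that quantity enters only through absolute values and through $k/|k|$. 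This mirrors the $h=t$ computation of Lemma~\ref{lemexp1}, the only difference being that the surviving factor is $k$ rather than $l$, whence the exponent carries the opposite overall sign.
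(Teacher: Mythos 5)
Your proposal is correct and follows exactly the route the paper takes: the paper's proof of this lemma consists of the identity $(f,g,u)=\exp\Res\,(\log f\, d\log g\wedge \frac{du}{u})$ together with "explicit calculations with formal $\log$-series similarly to the proof of Lemma~\ref{lll}", which is precisely the computation you carry out (reduction to the system $jp+lq=0$, $ip+kq=0$ with $p,q\ge 1$, sign analysis of $jl+ik$ versus $ik$ in the determinant-zero regime, and resummation of the surviving series as a logarithm). The details check out, including the sign of the exponent relative to Lemma~\ref{lemexp1}.
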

Proof of Lemmas~\ref{lemexp1} and~\ref{lemexp2} follows from the
following formulas (see~\eqref{2cexp}):
$$
(f,g,t)= \exp \Res \log f d \log g \wedge \frac{dt}{t}= \exp \Res
\log f \frac{\partial \log g}{\partial u} du \wedge \frac{dt}{t}
\mbox{,}
$$
$$
(f,g,u) = \exp \Res \log f d \log g \wedge \frac{du}{u}= (\exp \Res
\log f \frac{\partial \log g}{\partial t} \frac{du}{u} \wedge
dt)^{-1}
$$
and explicit calculations with formal $\log$-series similarly to the
proof of Lemma~\ref{lll}.

If $\mathbf{Q} \subset R$, $f= 1- a_{i,j} u^j t^i$
and $g,h$ are any from the set $\{u,t \}$, then  it follows from
formula~\eqref{2cexp} that $(f,g,h)=1$.

\medskip

Recall that we introduced in~\S~\ref{dlg} two subgroups of the group
$L^2\bbG_m(R)= R((u))((t))^*$: $\bbP(R)= 1+ \frakp(R)$ and $\bbM(R)
= 1+ \frakm(R)$. We have considered in \S~\ref{nattop}  the topology on $\bbM(R)$ such
that the base of neighbourhoods of $1$ consists of subgroups $U_{i,j}(R)= 1
+\fraku_{i,j}(R)$, where $i,j \in \mathbf{N}$ (see formulas~\eqref{uij} and \eqref{Uij}). We have considered also    $\bbP(R)$ as
a topological group, where the base of neighbourhoods of $1$
consists of subsets $U_{\{ n_i\}}(R)=1 + \fraku_{\{n_i\}}(R)$, where $i \in \mathbf{N}$,
$n_i \in  \mathbf{N}$ (see formulas\eqref{uni} and \eqref{Uni}).
We will speak about infinite products in $\bbM(R)$ or in $\bbP(R)$, which will converge in these topologies.

Note that every element $f \in R((u))((t))$ can canonically be written as
\begin{equation} \label{canon}
f = \mathop{\sum_{(i,j) \in {{\mathbf Z}}^2}}\limits_{(i,j) \ge
(i_f, j_f)} a_{i,j} u^j t^i \mbox{,}
\end{equation}
where $a_{i,j} \in R$, $a_{i_f, j_f} \ne 0$, and we consider on
${{\mathbf Z}}^2$ the following lexicographical order: $(i_1,j_1)
> (i_2,j_2) $ iff either $i_1 >i_2$ or $i_1 =i_2, j_1 > j_2$.

\begin{prop}
\begin{enumerate}
\item Let $R$ be any ring. Then every $f \in \bbM(R)$ can be uniquely decomposed into the following infinite product
\begin{equation} \label{dd1}
f = \mathop{\prod_{(i,j) \in {{\mathbf Z}}^2}}\limits_{(i,j) \ge
(0,1)} (1 - b_{i,j} u^j t^i)  \mbox{, where} \quad b_{i,j} \in R
\mbox{.}
\end{equation}
\item
Let $R \in \B$.
 Then every $g \in \bbP(R)$ can be uniquely decomposed into the following infinite product
\begin{equation}  \label{dd2}
g = \mathop{\prod_{(i,j) \le (0,-1)}}\limits_{i > n_g , j > m_g} (1
-c_{i,j} u^j t^i) \mbox{, where} \quad  c_{i,j} \in \n R \mbox{,}
\quad
 n_g < 0 \mbox{.}
\end{equation}
\end{enumerate}
\end{prop}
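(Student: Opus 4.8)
The plan is to prove both parts by induction on the degree in $t$, reducing each homogeneous piece to a one-dimensional decomposition that is already available. The key observation is that a single ``peeling'' in the lexicographic order on the monomials $u^jt^i$ does \emph{not} work directly, since that order is not well-founded on the supports that occur; filtering by the power of $t$ repairs this, because the $t$-exponents of the factors are bounded on the relevant side ($i\ge 0$ for $\bbM$, $i\le 0$ for $\bbP$) and add up under multiplication. Throughout I use that each coefficient of a monomial $u^at^b$ in one of these infinite products is a \emph{finite} sum: the exponents of the factors lie in a pointed cone bounded below, so a fixed $(b,a)$ admits only finitely many representations as a sum of them.

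For part (1) I would first isolate the $t^0$-part. Every factor $1-b_{i,j}u^jt^i$ with $i\ge 1$ is $\equiv 1 \bmod t$, so the $t^0$-component of $\prod(1-b_{i,j}u^jt^i)$ is $\prod_{j\ge 1}(1-b_{0,j}u^j)\in\bbW(R)=1+uR[[u]]$. The $b_{0,j}\in R$ are then determined uniquely by the one-dimensional factorization of the $\bbW$-part recalled in Section~\ref{first} (the factor $r_1$), which is defined over $\bbZ$. After dividing $f$ by $\prod_j(1-b_{0,j}u^j)$ I may assume $f\equiv 1\bmod t$ and induct on $i\ge 1$: once the levels $t^0,\dots,t^{i-1}$ have been removed, the remaining product ranges only over factors of $t$-exponent $\ge i$, so its $t^i$-component comes from a single factor and equals $-\sum_j b_{i,j}u^j$. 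Thus $b_{i,j}\in R$ is read off \emph{linearly} as the coefficient of $u^j$; since that component lies in $R((u))$ the indices $j$ are bounded below, and $\prod_j(1-b_{i,j}u^jt^i)$ converges $u$-adically inside each $t$-degree while the whole product converges $t$-adically (the $i$-th group of factors is $\equiv 1\bmod t^i$). Uniqueness is immediate, since each $b_{i,j}$ is forced at its stage.

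For part (2) I would run the same induction \emph{downward} through the (now finitely many) negative $t$-degrees, using that $R\in\B$, so $(\n R)^q=0$ for some $q$. As before the $t^0$-component of $\prod_{(i,j)\le(0,-1)}(1-c_{i,j}u^jt^i)$ is $\prod_j(1-c_{0,j}u^j)\in\widehat{\bbW}(R)$, so the $c_{0,j}\in\n R$ are fixed by the one-dimensional $\widehat{\bbW}$-factorization (a finite product, the factor $r_{-1}$ of Section~\ref{first}). Dividing these out and inducting downward, the $t^{-s}$-component of the remainder is $-\sum_j c_{-s,j}u^j$ with $c_{-s,j}\in\n R$ read off linearly; these vanish for $s$ large (giving $n_g<0$) and for $j$ small (giving $m_g$), since $g-1\in\frakp(R)$ is supported in finitely many negative $t$-degrees with coefficients in $R((u))$. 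The integrality $c_{i,j}\in\n R$ and the fact that the resulting infinite $u$-products stay inside $\bbP(R)$ both rest on $(\n R)^q=0$: this forces the coefficients of any nilpotent element of $R((u))((t))$ to lie in $\n R$, makes each inverse $(1-c\,u^jt^i)^{-1}=\sum_{k<q}c^ku^{kj}t^{ki}$ polynomial, and makes $\n R\cdot R((u))((t))$ a nilpotent ideal, so the limiting products are nilpotent and hence lie in $\bbP(R)$. Uniqueness follows exactly as in part (1).

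The main obstacle is controlling these infinite products, that is, verifying that the factors produced at each level converge in the topologies of $\bbM(R)$, respectively $\bbP(R)$, from \S\ref{nattop}, and that the limit lands in the correct group. For $\bbM$ this is automatic from $t$-adic and $u$-adic completeness. For $\bbP$ the decisive and delicate point is \emph{nilpotency of the limit}, which fails for a general ring; the hypothesis $R\in\B$ is used precisely here (and only here), in the same way and for the same reason as in Lemma~\ref{logexp} and Proposition~\ref{lemma-main}.
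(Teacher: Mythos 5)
Your strategy is the same as the paper's: peel off the $t$-degrees one at a time, handle the $t^0$-level by the one-dimensional factorization of the $\bbW$- (resp.\ $\widehat{\bbW}$-) part, and read off the remaining coefficients $b_{i,j}$, $c_{i,j}$ linearly from the $t^i$-component of the successive remainders, using that a product of two or more factors of $t$-exponent $\ge i$ (resp.\ $\le -s$) contributes only in strictly higher (resp.\ lower) $t$-degree. Part (1) and the uniqueness argument are fine as you state them.

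There is, however, one genuine gap in part (2): your reason for why the process stops after finitely many negative $t$-levels (i.e.\ why $n_g$ exists). You attribute this to $g-1\in\frakp(R)$ being supported in finitely many negative $t$-degrees, but that is not enough. When you divide out the level-$(-s)$ factors $\prod_j(1-c_{-s,j}u^jt^{-s})$, their inverses are polynomials in $t^{-1}$ of degree up to roughly $qs$ (with $(\n R)^q=0$), so the remainder acquires new nonzero components at $t$-degrees far below the original support of $g-1$; the support of the successive remainders grows at each step, and finite support of $g-1$ alone does not force the $c_{-s,j}$ to vanish eventually. The paper closes this by observing that after one full pass through the currently occupied negative $t$-levels the remainder lies in $1+(\n R)^2((u))[t^{-1}]$, after the next pass in a yet higher power of $\n R$, and so on; since $\n R$ is nilpotent the procedure terminates after at most $q$ passes, which simultaneously bounds the range of $t$-exponents (giving $n_g$) and of $u$-exponents (giving $m_g$). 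So $R\in\B$ is needed not only for the nilpotency of the limit and the invertibility of the factors, as you say, but also --- and essentially --- for this termination argument. With that observation inserted, your proof matches the paper's.
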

\begin{proof}
Uniqueness of both decompositions  can be easily verified.

We explain  now how to obtain decomposition~\eqref{dd1}. Let
$$f_0 = f = 1 + \sum\limits_{(i,j) \ge (0,1)} a_{i,j}u^j t^i \mbox{.}$$ We can
define $h_0 = 1 + \sum\limits_{j \ge 1} a_{0,j} u^j=\prod\limits_{j
\ge 1} (1 - b_{0,j} u^j) $. Then it is clear that
$$f_1= f_0 h_0^{-1} \in 1 + t R((u))[[t]] \mbox{.}$$
Let $f_1 = 1 + \sum\limits_{(i,j) \ge (1, j_{f_1})} d_{i,j} u^j t^i
$. We define $h_1 = \prod\limits_{j \ge j_{f_1} } (1 + d_{1,j} u^jt)
$. Then we have $f_2 = f_1 h_1^{-1} \in 1 + t^2 R((u))[[t]]$.
Repeating this procedure we will obtain that $f_n \to 1$ when $n \to
\infty$. Thus we have decomposition~\eqref{dd1}.

We explain now how to obtain decomposition~\eqref{dd2}. Let
$$g_0=g
= 1 + \mathop{\sum\limits_{(i,j) \le (0,-1)}}\limits_{i > k_g, j \ge
l_g} a_{i,j} u^i t^j \mbox{,}$$ where $a_{i,j} \in \n R$, $k_g < 0$.
We can define $e_0= 1 + \sum\limits_{j \ge l_g}^{j \le -1} a_{0,j}
u^j=\prod\limits_{j \le -1} (1- c_{0,j} u^j)$, where the last
product contains only a finite number of multipliers. Then we have
$g_1= g_0 e_0^{-1} \in 1+ t^{-1} R((u))[t^{-1}]$. Let $g_1 = 1+
\sum\limits_{i > k_{g_1}, j \ge l_{g_1}} d_{i,j} u^j t^i$, where
$d_{i,j} \in \n R$.
 We define
$e_1 = \prod\limits_{j \ge l_{g_1}} (1+ d_{-1,j} u^j t^{-1})$. The
element $e_1$ is a well-defined  element from $R((u))((t))^*$, since $\n R$ is
a nilpotent ideal. We have that $g_2 =g_1 e_1^{-1} \in 1 + t^{-2}
R((u))[t^{-1}]$. Repeating this procedure we will obtain that
$g_{-k_{g_1}} \in 1 + t^{-k_{g_1}} \cdot (\n R)^2 ((u))[t^{-1}]$. Since $\n R$
is a nilpotent ideal, we will obtain decomposition~\eqref{dd2} after
repeating some times all this procedure.
\end{proof}

\vspace{0.3cm}

Now using decompositions~\eqref{decomp2}, \eqref{dd1}-\eqref{dd2}
and Lemmas~\ref{lll}-\ref{lemexp2} we obtain the following
definition.
\begin{dfn}[Explicit formula] \label{defcon2}
Let $R \in \B$. Let $f,g,h \in L^2\bbG_m(R)$. Let
\begin{equation} \label{eq1}
f = f_0 \cdot u^{\nu_2(f)}  t^{\nu_1(f)} \cdot \prod_{(i,j) \in
{{\mathbf Z}}^2 \setminus (0,0)} (1 - a_{i,j} u^j t^i) \mbox{,
where} \quad f_0 \in R^* \mbox{,} \quad a_{i,j} \in R \mbox{,}
\end{equation}
\begin{equation} \label{eq2}
g = g_0 \cdot u^{\nu_2(g)}  t^{\nu_1(g)}  \cdot \prod_{(k,l) \in
{{\mathbf Z}}^2 \setminus (0,0)} (1 - b_{k,l} u^l t^k) \mbox{,
where} \quad g_0 \in R^* \mbox{,} \quad b_{k,l} \in R \mbox{,}
\end{equation}
\begin{equation} \label{eq3}
h = h_0 \cdot u^{\nu_2(h)}  t^{\nu_1(h)}  \cdot \prod_{(m,n) \in
{{\mathbf Z}}^2 \setminus (0,0)}  (1 - c_{m,n} u^n t^m) \mbox{,
where} \quad h_0 \in R^* \mbox{,} \quad c_{m,n} \in R \mbox{,}
\end{equation}
then {\em the two-dimensional Contou-Carr\`{e}re symbol} from
$L^2\bbG_m(R) \times L^2\bbG_m(R) \times L^2\bbG_m(R)$ to $ R^*$ is
given as
\begin{multline}  \label{concar}
(f,g,h) = (-1)^A f_0^{\nu(g,h)} g_0^{\nu(h,f)} h_0^{\nu(f,g)} \cdot
\mathop{\mathop{\prod_{(i,j) \in {{\mathbf Z}}^2 \setminus
(0,0)}}\limits_{(k,l) \in {{\mathbf Z}}^2 \setminus
(0,0)}}\limits_{(m,n) \in {{\mathbf Z}}^2 \setminus (0,0)}
 T(a_{i,j},b_{k,l}, c_{m,n}) \ \times \\
 \times
 \mathop{\mathop{\prod_{(i,j) \in {{\mathbf Z}}^2 \setminus
(0,0)}}\limits_{(k,l) \in {{\mathbf Z}}^2 \setminus (0,0)}}
S(a_{i,j}, b_{k,l})^{\nu_1(h)} \ \cdot \mathop{\mathop{\prod_{(i,j)
\in {{\mathbf Z}}^2 \setminus (0,0)}}\limits_{(k,l) \in {{\mathbf
Z}}^2 \setminus (0,0)}} Q(a_{i,j}, b_{k,l})^{\nu_2(h)} \
\times \\
\times \mathop{\mathop{\prod_{(i,j) \in {{\mathbf Z}}^2 \setminus
(0,0)}}\limits_{(m,n) \in {{\mathbf Z}}^2 \setminus (0,0)}}
S(c_{m,n},a_{i,j})^{\nu_1(g)} \ \cdot \mathop{\mathop{\prod_{(i,j)
\in {{\mathbf Z}}^2 \setminus (0,0)}}\limits_{(m,n) \in {{\mathbf
Z}}^2 \setminus (0,0)}} Q(c_{m,n},a_{i,j})^{\nu_2(g)} \
\times \\
\times \mathop{\mathop{\prod_{(k,l) \in {{\mathbf Z}}^2 \setminus
(0,0)}}\limits_{(m,n) \in {{\mathbf Z}}^2 \setminus (0,0)}}
S(b_{k,l}, c_{m,n})^{\nu_1(f)} \ \cdot \mathop{\mathop{\prod_{(k,l)
\in {{\mathbf Z}}^2 \setminus (0,0)}}\limits_{(m,n) \in {{\mathbf
Z}}^2 \setminus
(0,0)}} Q(b_{k,l}, c_{m,n})^{\nu_2(f)} \ \mbox{,} \\
\end{multline}
where $A \in \mathbb{Z}(R)$ is given by formula~\eqref{sign}.
\end{dfn}
To verify that this definition is well-defined, we have to check
that the infinite products in~\eqref{concar} contain only a finite
number non-equal to $1$ terms. We will explain it, first, when
$\mathbf{Q} \subset R$. It follows from the following general
property of continuity  of expression~\eqref{2cexp}. Let $d, \{ d_i
\}_{i \in \mathbf{Z}}$ and $e, \{ e_i \}_{i \in \mathbf{Z}}$ be the
collections of integers. Then
 there is some open subset
$U_{\{ n_i \}}(R) = 1+ \fraku_{\{ n_i\}} \subset \bbP(R)$ and some open subgroup $U_{i,j}(R)=1+\fraku_{i,j}(R) \subset \bbM(R)$
(which depend on collections $d, \{ d_i \}$ and $e, \{ e_i \}$) such
that for any $f_1 \in U_{\{ n_i \}}(R)$, for any $f_2 \in U_{i,j}(R)$, for any $g =\mathop{\mathop{\sum}\limits_{i >d
}}\limits_{(i,j)
> (i,d_i)} g_{i,j} u^jt^i$, for any
$h =\mathop{\mathop{\sum}\limits_{i >e }}\limits_{(i,j)
> (i,e_i)} h_{i,j} u^jt^i$
 we have
\begin{equation}  \label{cont}
 \Res \, (\log f_i \cdot \frac{dg}{g} \wedge \frac{dh}{h}) =0 \mbox{,} \quad i=1,2 \mbox{.}
\end{equation}
(This formula  is equivalent to the fact that the resulting series
does not contain the non-zero coefficient at $u^{-1}t^{-1} du \wedge
dt$, and therefore it is easy to construct the corresponding open
subset $U_{\{ n_i\}}(R) \subset \bbP(R)$ and the open subgroup $U_{i,j}(R) \subset
\bbM(R)$ such that formula~\eqref{cont}
   is satisfied. To construct $f_2$ one can use also formula~\eqref{explogtop}. To construct $f_1$ it is important that $R \in \B$, see Lemma~\ref{logexp}.) Now using this and
Lemmas~\ref{lll}-\ref{lemexp2} we obtain that only a finite number
of $a_{i,j}$ give non-trivial contributions to products which
contain $T(\cdot, \cdot, \cdot)$, $S(\cdot, \cdot)$ and $Q(\cdot,
\cdot)$ in formula~\eqref{concar}. Now using the obvious
anti-symmetric properties of $T(\cdot, \cdot, \cdot)$, $S(\cdot,
\cdot)$ and $Q(\cdot, \cdot)$, we obtain the same for $b_{k,l}$, and
then for $c_{m,n}$.

Now we verify that Definition~\ref{defcon2} is well-defined in the
general case. We reduce this case to the previous one. We will find
rings $S_1, S_2 \in \B$ and elements $\tilde{f}, \tilde{g},
\tilde{h} \in S_1((u))((t))^*$ such that there is a map of rings
$S_1 \to R$, $S_1 \subset S_2$, $\mathbf{Q} \subset S_2$, and the
elements $\tilde{f} , \tilde{g}, \tilde{h}$ go to the elements
$f,g,h$  under the natural map $S_1((u))((t))^* \to R((u))((t))^*$.
For elements $\tilde{f} , \tilde{g}, \tilde{h}$ we have
decompositions~\eqref{decomp2}, \eqref{dd1}-\eqref{dd2}. Since these
decompositions are uniquely defined, they are functorial and go to
decompositions~\eqref{eq1}-\eqref{eq3}  for $f,g,h$  under the
natural map $S_1((u))((t))^* \to R((u))((t))^*$. For  $\tilde{f} ,
\tilde{g}, \tilde{h}$ the corresponding analogous
formula~\eqref{concar} contains only a finite number of multipliers
by the previous case, because $S_1((u))((t))^* \subset
S_2((u))((t))^*$ and $\mathbf{Q} \subset S_2$. Therefore
formula~\eqref{concar} is well-defined for $f,g,h$. Now the
existence of such $S_1, S_2$ and $\tilde{f}, \tilde{g}, \tilde{h}$
follows from the following lemma.
\begin{lem} \label{lift}
Let $R \in \B$. Let $f_1, \ldots, f_n \in R((u))((t))^* $. Then
there are rings $S_1, S_2 \in \B$ and elements $\tilde{f_1}, \ldots,
\tilde{f_n} \in S_1((u))((t))^*$ such that there is a map of rings
$S_1 \to R$, $S_1 \subset S_2$, $\mathbf{Q} \subset S_2$, and the
elements $\tilde{f_1} , \ldots, \tilde{f_n}$ go to the elements $f_1
,\ldots, f_n$  under the natural map $S_1((u))((t))^* \to
R((u))((t))^*$.
\end{lem}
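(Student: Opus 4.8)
The plan is to construct $S_1$ as a ``universal'' coefficient ring built from the data of the $f_i$, chosen so that it is torsion-free over $\mathbf Z$ (so that it embeds into its rationalization) and has a nilpotent nil-radical of a uniformly bounded exponent (so that it, and its rationalization, lie in $\B$). Concretely, I would take $S_2=S_1\otimes_{\mathbf Z}\mathbf Q$: torsion-freeness of $S_1$ gives $S_1\subset S_2$ and of course $\mathbf Q\subset S_2$, while nilpotency of $\n S_1$ forces $S_1,S_2\in\B$. The lifts $\tilde f_i$ will be the tautological elements over $S_1$ whose coefficients are the variables naming the coefficients of the $f_i$.

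First I would reduce to the case of constant valuations. Since there are finitely many $f_i$ and each $\nu_1(f_i),\nu_2(f_i)\in\bbZ(R)$ is a locally constant $\mathbf Z$-valued function on the quasi-compact space $\spec R$, their common refinement yields a finite decomposition $R=\prod_s R^{(s)}$ (each factor again in $\B$, since $(\n R)^q=\prod_s(\n R^{(s)})^q$) on which every $\nu_1(f_i),\nu_2(f_i)$ is a fixed integer; by Lemma~\ref{conn} this is compatible with the induced decomposition of $R((u))((t))$. As a finite product of rings in $\B$ is again in $\B$ and $(\prod_s A_s)((u))((t))=\prod_s A_s((u))((t))$, it suffices to build $S_1^{(s)},S_2^{(s)}$ and lifts over each factor and then take products. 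Thus I may assume each $f_i$ has the decomposition~\eqref{decomp2} with fixed integer exponents $u^{a_i}t^{b_i}$, together with the infinite products~\eqref{dd1}--\eqref{dd2} for its $\bbM$- and $\bbP$-parts.

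Next I would introduce one variable for each coefficient occurring in these decompositions, of three kinds: variables $y_\alpha$ for the coefficients of the $\bbM$-parts $f_{1,i}\in\bbM(R)$ (arbitrary elements of $R$); invertible variables $x_i,x_i^{-1}$ for the units $f_{0,i}\in R^*$; and variables $z_\beta$ for the coefficients of the $\bbP$-parts $f_{-1,i}\in\bbP(R)$ (which lie in $\n R$). Fixing $q$ with $(\n R)^q=0$, I set
$$
S_1=\mathbf Z[\{y_\alpha\},\{x_i,x_i^{-1}\}][\{z_\beta\}]\big/J,
$$
where $J$ is generated by all degree-$q$ monomials in the $z_\beta$. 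The lift $\tilde f_i$ is then defined by~\eqref{decomp2}, using $x_i$ for $f_{0,i}$, the fixed exponents $a_i,b_i$, and the products~\eqref{dd1}--\eqref{dd2} with $y_\alpha,z_\beta$ in place of the original coefficients; the ring map $S_1\to R$ sends each variable to the coefficient it names (well defined because any degree-$q$ monomial in the $z_\beta$ maps into $(\n R)^q=0$), and it carries $\tilde f_i$ to $f_i$.

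The heart of the matter is that the quotient by $J$ reconciles the two competing requirements on $S_1$, and this is the step I expect to be the main obstacle. On the one hand, $(\{z_\beta\})$ is the nil-radical of $S_1$ and $(\{z_\beta\})^q=0$, so $S_1\in\B$; moreover $\tilde f_{-1,i}$ is a genuine invertible element of $S_1((u))((t))$, since its non-unit part generates a nilpotent ideal, which is precisely what makes the products~\eqref{dd2} converge there. On the other hand, $S_1$ is torsion-free over $\mathbf Z$: the quotient $\mathbf Z[\{z_\beta\}]/J$ is $\mathbf Z$-free on the monomials of degree $<q$ in the $z_\beta$, and $S_1$ is its tensor product over $\mathbf Z$ with the torsion-free ring $\mathbf Z[\{y_\alpha\},\{x_i^{\pm1}\}]$, hence is torsion-free. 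Therefore $S_1\hookrightarrow S_2=S_1\otimes_{\mathbf Z}\mathbf Q$, with $S_2\in\B$ and $\mathbf Q\subset S_2$; reassembling the factors $S_1^{(s)},S_2^{(s)}$ completes the proof. The one delicate point is exactly this: the lifted nilpotent coefficients must generate a nilpotent ideal of bounded exponent \emph{while} the ambient ring stays $\mathbf Z$-torsion-free, and using the single bound $q$ together with the quotient by degree-$q$ monomials in the $z_\beta$ achieves both at once.
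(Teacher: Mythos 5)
Your proof is correct and is essentially the paper's argument: after the same reduction to constant $\nu_1,\nu_2$, the paper also builds $S_1$ as a $\mathbf Z$-torsion-free universal coefficient ring --- a localized polynomial ring over $\mathbf Z$ modulo the $m$-th power of the ideal generated by the variables naming the nilpotent coefficients --- and takes $S_2$ to be its rationalization. The only (immaterial) difference is the parametrization: the paper introduces one variable $A_{l,i,j}$ per raw Laurent coefficient of $f_l$, localizes at the leading coefficients and kills $I^m$ with $I$ generated by the sub-leading variables, so that the lift is literally the same Laurent series with variable coefficients, whereas you name the coefficients of the multiplicative decompositions \eqref{decomp2}, \eqref{dd1}--\eqref{dd2} and must (and do, via $S_1\in\B$) check that the lifted infinite products converge over $S_1$.
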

\begin{proof}
Without loss of generality we can assume that $\nu_1(f_l) \in
\mathbf{Z} \subset \bbZ(R)$, $1 \le l \le n$ and  $\nu_2(f_l) \in
\mathbf{Z} \subset \bbZ(R)$. (Otherwise we can find a ring
decomposition $R= R_1 \oplus \ldots \oplus R_m$ such that the
previous condition  is satisfied for every $R_k$ and
 work then separately with every $R_k$.) Let $f_l = \sum\limits_{(i,j) \in \mathbf{Z}^2}  a_{l,i,j} u^j t^i$,
 where $1 \le l \le n$ and $a_{l,i,j} \in R$. According to formula~\eqref{decomp2}, for any $1 \le l \le n $ we have
 that $a_{l,i,j} \in \n R$ when $(i,j) < (\nu_1(f_l), \nu_2(f_l))$,
 and $a_{l, \nu_1(f_l), \nu_2(f_l)} \in R^*$. Let $(\n R)^m =0$. We
 define the ring
 $$
 S_1 = P^{-1}{\mathbf Z} \, [\, \{A_{l,i,j} \} \, ]/I^m  \mbox{,}
 $$
where the set of variables $\{A_{l,i,j} \}$ depends on the indices
$1 \le l \le n$ and $(i,j) \in \mathbf{Z}^2$ such that $a_{l,i,j}
\ne 0$, $P$ is a set which is  multiplicatively generated by
elements $A_{l, \nu_1(f_l), \nu_2(f_l)}$, where $1 \le l \le n$, and
$I$ is an ideal generated by all $A_{l,i,j}$ such that $(i,j) <
(\nu_1(f_l), \nu_2(f_l))$. We define a map $S_1 \to R$ which is
given on variables as $A_{l,i,j} \mapsto a_{l,i,j}$.     Now for any
$1 \le l \le n $ we define $\tilde{f_l}= \sum\limits_{(i,j) \in
\mathbf{Z}^2} A_{l,i,j} u^j t^i$. It is clear that $\tilde{f_l} \in
S_1((u))((t))^*$. We define the ring
$$
S_2=P^{-1}{\mathbf Q} \, [\, \{A_{l,i,j} \} \, ]/I^m  \mbox{,}
$$
where $\{A_{l,i,j} \}$, $P$ and $I$ are as above. The proof is
finished.
\end{proof}

Now we have the following proposition.
\begin{prop} \label{prcom}
Let $R \in \B$. The two-dimensional Contou-Carr\`{e}re symbol
$(\cdot, \cdot, \cdot)$, constructed by the explicit formula from
Definition~\ref{defcon2}, is an anti-symmetric,
 functorial   with respect to $R$ map. If $\mathbf{Q} \subset R$, then values
 of $(\cdot, \cdot, \cdot)$ calculated by Definition~\ref{cc2}  and Definition~\ref{defcon2}  coincide.
\end{prop}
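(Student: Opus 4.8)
The plan is to prove the three assertions in the following order: functoriality of the right-hand side of~\eqref{concar}, the comparison with Definition~\ref{cc2} over $\mathbf Q$-algebras, and finally anti-symmetry for an arbitrary $R\in\B$, which I will deduce from the comparison by means of the lifting Lemma~\ref{lift}.

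Functoriality is the easy part. The decompositions~\eqref{eq1}--\eqref{eq3} are unique, hence functorial in $R$: for a homomorphism $R\to R'$ of rings in $\B$, the image of the decomposition of $f$ is again a decomposition of the image of $f$, so it agrees with the latter. Since the factors $T$, $S$, $Q$, the sign $(-1)^A$ and the scalar powers $f_0^{\nu(g,h)}$, etc., are all given by the explicit expressions of Lemmas~\ref{lll}--\ref{lemexp2} and formula~\eqref{sign} defined over $\mathbf Z$, and since the finiteness of the number of non-trivial factors in~\eqref{concar} is preserved under $R\to R'$, the right-hand side of~\eqref{concar} will be functorial in $R\in\B$.

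For the comparison, I would assume $\mathbf Q\subset R$, so that the symbol of Definition~\ref{cc2} exists and is tri-multiplicative and anti-symmetric. I would then expand $(f,g,h)$ by tri-multiplicativity, writing each argument via~\eqref{decomp2} and~\eqref{dd1}--\eqref{dd2} as a convergent product of a scalar $f_0\in R^*$, a monomial $u^{\nu_2(f)}t^{\nu_1(f)}$, and unipotent factors $1-a_{i,j}u^jt^i$. The continuity property~\eqref{cont} ensures that only finitely many of the resulting terms are non-trivial, which justifies rearranging the infinite products and applying tri-multiplicativity factor by factor. It then remains to evaluate the symbol on the basic factors: three unipotent factors give $T$ by Lemma~\ref{lll}; two unipotent factors together with the monomial $t^{\nu_1(h)}$ (resp.\ $u^{\nu_2(h)}$) give $S^{\nu_1(h)}$ (resp.\ $Q^{\nu_2(h)}$) by Lemmas~\ref{lemexp1}--\ref{lemexp2}, and likewise after permuting the roles of $f,g,h$; the three monomials give $(-1)^A$ by~\eqref{cc4}, with $A$ as in~\eqref{sign}; and a scalar $f_0$ paired with the two monomials of $g$ and $h$ gives $f_0^{\nu(g,h)}$ by~\eqref{cc3}, and similarly for $g_0^{\nu(h,f)}$ and $h_0^{\nu(f,g)}$. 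Every remaining combination is trivial: a scalar paired with any factor of vanishing valuation contributes $f_0^{0}=1$ by~\eqref{cc3}, and a single unipotent factor paired with two monomials from $\{u,t\}$ contributes $1$ by the remark following Lemma~\ref{lemexp2}. Collecting the surviving terms should reproduce exactly~\eqref{concar}, so that the two definitions coincide when $\mathbf Q\subset R$.

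Finally, to obtain anti-symmetry for general $R\in\B$ I would apply Lemma~\ref{lift} to $f,g,h$, producing rings $S_1\subset S_2$ in $\B$ with $\mathbf Q\subset S_2$, a homomorphism $S_1\to R$, and lifts $\tilde f,\tilde g,\tilde h\in S_1((u))((t))^*$ mapping to $f,g,h$. Over $S_2$ the comparison just established identifies~\eqref{concar} with the anti-symmetric symbol of Definition~\ref{cc2}, so the relation $(\tilde f,\tilde g,\tilde h)\,(\tilde g,\tilde f,\tilde h)=1$ holds in $S_2$; since $S_1\hookrightarrow S_2$ is injective and both factors are computed in $S_1$ by~\eqref{concar}, this relation already holds in $S_1$, and functoriality along $S_1\to R$ transports it to $R$ (the same argument handles the remaining transpositions). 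The main obstacle is the middle step: one must justify that the tri-multiplicative expansion of the \emph{infinite} products~\eqref{dd1}--\eqref{dd2} collapses to a finite expression, which rests on~\eqref{cont}, and then carry out the somewhat intricate bookkeeping that matches each type of basic factor, together with its exponent, to the corresponding product in~\eqref{concar}.
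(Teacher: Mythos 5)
Your proposal is correct, and two of its three parts follow the paper's own argument: functoriality from the uniqueness of the decompositions~\eqref{decomp2}, \eqref{dd1}--\eqref{dd2}, and the comparison over a $\mathbf{Q}$-algebra by expanding the symbol of Definition~\ref{cc2} tri-multiplicatively over the infinite product decompositions, invoking the continuity statement~\eqref{cont} to reduce to finitely many basic factors, and matching each surviving combination with $T$, $S$, $Q$, $(-1)^A$ and the scalar powers via Lemmas~\ref{lll}--\ref{lemexp2} and formulas~\eqref{cc3}--\eqref{cc4}. Where you genuinely diverge is anti-symmetry: the paper simply reads it off formula~\eqref{concar}, since $T(\cdot,\cdot,\cdot)$, $S(\cdot,\cdot)$ and $Q(\cdot,\cdot)$ are themselves anti-symmetric and the remaining factors visibly permute correctly (this observation is already needed, before the proposition, to establish finiteness of the products in the variables $b_{k,l}$ and $c_{m,n}$). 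You instead lift $f,g,h$ via Lemma~\ref{lift} to $S_1\subset S_2$ with $\mathbf{Q}\subset S_2$, use the comparison over $S_2$ together with the built-in anti-symmetry of Definition~\ref{cc2}, and descend by injectivity of $S_1\hookrightarrow S_2$ and functoriality. Your route is logically sound and non-circular (you prove functoriality and the comparison first), but it is heavier than necessary; the paper's direct reading of~\eqref{concar} is the more elementary argument, while your lifting technique is exactly the one the paper reserves for the properties that are \emph{not} visible from the formula, namely tri-multiplicativity and the Steinberg relation in Proposition~\ref{stgen}.
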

\begin{proof}
The anti-symmetric property of $(\cdot, \cdot, \cdot)$ easily
follows from
 formula~\eqref{concar}, because  $T(\cdot, \cdot, \cdot)$, $S(\cdot, \cdot)$ and $Q(\cdot, \cdot)$
satisfy the anti-symmetric property.
  The functoriality follows from
the uniqueness properties  of decompositions~\eqref{decomp2} and
\eqref{dd1}-\eqref{dd2}. If $\mathbf{Q} \subset R$, then two
definitions give the same values because of
Lemmas~\eqref{lll}-\eqref{lemexp2}, the anti-symmetric property of
$(\cdot, \cdot, \cdot)$, the tri-multiplicativity of $(\cdot, \cdot,
\cdot)$ given by Definition~\ref{cc2}  and the continuity of
expression~\eqref{2cexp} (see reasonings before
formula~\eqref{cont}).
\end{proof}

\begin{rmk}\label{n-dim CC}
Starting from expression
$$
\exp \Res \, \log ( f_{1} \cdot \frac{df_2}{f_2} \wedge \cdots
\wedge \frac{df_{n+1}}{f_{n+1}}  ) \mbox{,}
$$
 where $f_1, \ldots, f_{n+1} \in R((t_n)) \ldots ((t_1))^*$ and where $\log f_1$ is well-defined,
one could similarly  define and develop the theory of the $n$-dimensional Contou-Carr\`{e}re symbol.
 We restricted ourself to the case $n=2$, since
further we will prove for  the two-dimensional Contou-Carr\`{e}re
symbol the reciprocity laws on algebraic surfaces.
\end{rmk}

\section{Properties of  the two-dimensional Contou-Carr\`{e}re symbol} \label{propert}
\subsection{Case $\mathbf{Q} \subset R$.}
We assume in this subsection that $R$ is a ring such that
$\mathbf{Q} \subset R$. We will need the following lemma.
\begin{lem} \label{lem}
 Let $f,g \in L^2\bbG_m(R)$. Then
\begin{equation} \label{pr}
 \nu(f,g) = \Res  (\frac{df}{f} \wedge \frac{dg}{g}) \mbox{.}
 \end{equation}
\end{lem}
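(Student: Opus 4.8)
The plan is to show that both sides of \eqref{pr} are bimultiplicative in $(f,g)$ and then to compare them on the factors of the decomposition \eqref{schdec}. The left-hand side $\nu(f,g)=\nu_2(f)\nu_1(g)-\nu_2(g)\nu_1(f)$ is bimultiplicative because $\nu_1,\nu_2\colon L^2\bbG_m\to\bbZ$ are homomorphisms. For the right-hand side I would use that the logarithmic differential is additive, $\frac{d(ff')}{ff'}=\frac{df}{f}+\frac{df'}{f'}$, together with the bilinearity of $\wedge$ and the $R$-linearity of $\Res$; hence $\Res(\frac{df}{f}\wedge\frac{dg}{g})$ is bimultiplicative as well. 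Therefore the difference of the two sides is bimultiplicative, and it suffices to verify the identity when $f$ and $g$ each run through the factors $\bbP$, $\bbZ\times\bbZ$, $\bbG_m$ and $\bbM$ of \eqref{schdec}.

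Writing $f=f_{-1}f_0\,u^{\nu_2(f)}t^{\nu_1(f)}f_1$ as in \eqref{decomp2} and using additivity of $\frac{df}{f}$, I would set
$$\omega_-(f)=\frac{df_{-1}}{f_{-1}},\quad \omega_0(f)=\nu_2(f)\frac{du}{u}+\nu_1(f)\frac{dt}{t},\quad \omega_+(f)=\frac{df_1}{f_1},$$
so that $\frac{df}{f}=\omega_-(f)+\omega_0(f)+\omega_+(f)$ (the $f_0$-contribution vanishes, as $f_0\in R^*$). Expanding, $\Res(\frac{df}{f}\wedge\frac{dg}{g})$ becomes the sum of the nine residues $\Res(\omega_\alpha(f)\wedge\omega_\beta(g))$ with $\alpha,\beta\in\{-,0,+\}$. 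A direct computation gives
$$\Res\bigl(\omega_0(f)\wedge\omega_0(g)\bigr)=\bigl(\nu_2(f)\nu_1(g)-\nu_2(g)\nu_1(f)\bigr)\,\Res\Bigl(\frac{du}{u}\wedge\frac{dt}{t}\Bigr)=\nu(f,g),$$
since $\frac{du}{u}\wedge\frac{dt}{t}=u^{-1}t^{-1}\,du\wedge dt$. It then remains to show that the other eight terms vanish.

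The key is the identity $\Res(d\eta)=0$ for every $1$-form $\eta=P\,du+Q\,dt$ with $P,Q\in R((u))((t))$: the coefficient of $u^{-1}t^{-1}$ in $\partial_u Q-\partial_t P$ is zero, because differentiation in $u$ (resp. $t$) cannot produce a $u^{-1}$ (resp. $t^{-1}$) term. Now for $\alpha\in\{-,+\}$ the form $\omega_\alpha(f)=d\phi$ is exact with $\phi=\log f_{\pm1}$ a genuine element of $R((u))((t))$ — here I use $\mathbf{Q}\subset R$ and \eqref{explog}, \eqref{explog1}, noting that $\log f_{-1}$ is a finite sum since the elements of $\frakp$ are nilpotent. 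Since every $\omega_\beta(g)$ is closed (for $\beta\in\{-,+\}$ it is exact, while $d(\frac{du}{u})=d(\frac{dt}{t})=0$ gives $d\omega_0(g)=0$), we get
$$\omega_\alpha(f)\wedge\omega_\beta(g)=d\phi\wedge\omega_\beta(g)=d\bigl(\phi\,\omega_\beta(g)\bigr),$$
and $\phi\,\omega_\beta(g)$ has coefficients in $R((u))((t))$, so its residue is $0$. Any term with $\beta\in\{-,+\}$ is handled symmetrically via $\Res(\omega\wedge\eta)=-\Res(\eta\wedge\omega)$. This kills every term except $\Res(\omega_0(f)\wedge\omega_0(g))$, which completes the proof.

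The main obstacle is this vanishing step. Its delicate point is that the trick $\Res(d\phi\wedge\omega)=\Res(d(\phi\,\omega))=0$ applies precisely when \emph{one} of the two factors is the exact differential of a Laurent series, which is the case for $\omega_\pm$ but not for $\omega_0$ (whose "logarithm" $\nu_2\log u+\nu_1\log t$ is not in $R((u))((t))$). Thus the surviving $(0,0)$ term is exactly the one to which the trick cannot be applied, and it alone yields $\nu(f,g)$. I would also record that all these manipulations take place in the module $\tilde\Omega^1_{K/R}$ of continuous differentials, and that convergence of the infinite series defining $\omega_\pm$ is unproblematic, since each residue computation involves only finitely many monomials.
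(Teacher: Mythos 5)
Your proposal is correct and follows essentially the same route as the paper: both sides are bimultiplicative, so one reduces to the factors of decomposition~\eqref{decomp2}, where the only nonzero contribution comes from the monomial part and the remaining terms vanish via $\Res(\frac{df_{\pm 1}}{f_{\pm 1}}\wedge\frac{dg}{g})=\Res\,d(\log f_{\pm 1}\,\frac{dg}{g})=0$, exactly the observation recorded in the paper's proof. Your write-up merely makes explicit the pieces the paper leaves as "direct calculation" (the nine-term expansion, the computation of the $\omega_0\wedge\omega_0$ term, and the fact that $\Res(d\eta)=0$), so there is nothing to add.
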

\begin{proof}
It follows by direct calculations using
decomposition~\eqref{decomp2} and bimultiplicativity of both parts
of~\eqref{pr}. We note that if $(\nu_1, \nu_2)(f) = (0,0)$ and
$f_0=1$, then $\Res (\frac{df}{f} \wedge \frac{dg}{g}) = \Res ( d
\log f \wedge \frac{dg}{g}) = \Res \, d (\log f \frac{dg}{g}) =0$.
\end{proof}

We have the following proposition.

\begin{prop}   \label{st}
 The two-dimensional Contou-Carr\`{e}re symbol satisfies the Steinberg properties, i.e. $(f, 1-f, g)=1$ for any $f, 1-f, g \in L^2\bbG_m(R)$ (and other analogous equalities are satisfied  from the anti-symmetric property of $(\cdot, \cdot, \cdot)$).
\end{prop}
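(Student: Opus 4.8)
The plan is to work over $\mathbf{Q}\subset R$ throughout, where by Definition~\ref{cc2} the symbol is already tri-multiplicative and anti-symmetric, and to reduce the identity $(f,1-f,g)=1$ to the explicit residue formula~\eqref{2cexp}. First I would exploit tri-multiplicativity in the third slot together with the decomposition~\eqref{decomp2} of $g$, so that it suffices to prove the Steinberg property when $g$ is one of $a\in R^*$, $g=u$, $g=t$, or $g$ unipotent in $\bbM(R)$ or $\bbP(R)$. The scalar case is immediate: since the symbol is alternating, a cyclic permutation gives $(f,1-f,a)=(a,f,1-f)$, and \eqref{cc3} yields $(a,f,1-f)=a^{\nu(f,1-f)}$. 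By Lemma~\ref{lem},
\[
\nu(f,1-f)=\Res\Big(\tfrac{df}{f}\wedge\tfrac{d(1-f)}{1-f}\Big)=\Res\Big(\tfrac{df}{f}\wedge\tfrac{-df}{1-f}\Big)=-\Res\Big(\tfrac{1}{f(1-f)}\,df\wedge df\Big)=0,
\]
so $(f,1-f,a)=1$. This disposes of $g\in R^*$.

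The remaining cases force $f$ (or $1-f$) into the logarithm slot of \eqref{2cexp}, which requires trivial bidegree $(\nu_1,\nu_2)=(0,0)$ and constant term $1$; call such an element \emph{log-able}. Here lies the crux: because $f$ and $1-f$ are coupled, one cannot decompose $f$ via~\eqref{decomp2} and argue factor by factor, unlike in the genuinely bilinear reductions elsewhere. Instead I would run a case analysis on the bidegree $(\nu_1,\nu_2)(f)\in\bbZ^2$ in the lexicographic order, mirroring the three cases $\nu(f)>0,\,=0,\,<0$ of the one-dimensional Proposition. When $(\nu_1,\nu_2)(f)>(0,0)$, the bidegree-$(0,0)$ coefficient $a_{0,0}$ of $f$ is nilpotent, so $1-f$ has bidegree $(0,0)$ with unit constant term $1-a_{0,0}$; splitting off this scalar by tri-multiplicativity (the scalar factor reduced to the already-settled unit case) leaves a log-able factor, to which \eqref{2cexp} applies after an anti-symmetric rearrangement. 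The case $(\nu_1,\nu_2)(f)=(0,0)$ is handled similarly by factoring $f=f_0\tilde f$ with $\tilde f$ log-able and reducing the scalar $f_0$ to the unit case. The case $(\nu_1,\nu_2)(f)<(0,0)$ I would reduce to the first via the inversion identity $1-f^{-1}=-f^{-1}(1-f)$: since $(f^{-1},f^{-1},g)=1$ and $(f^{-1},1-f,g)=(f,1-f,g)^{-1}$, tri-multiplicativity in the second slot gives $(f^{-1},1-f^{-1},g)=(f^{-1},-1,g)\,(f,1-f,g)^{-1}$ with $(\nu_1,\nu_2)(f^{-1})>(0,0)$, and the correction $(f^{-1},-1,g)$ is evaluated using \eqref{cc3}--\eqref{sign}. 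Throughout, the scalar and root-of-unity corrections produced by \eqref{cc3} and \eqref{cc4} must be tracked and shown to cancel.

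Once an argument has been placed in the log slot, write it as $e^{w}$ with $w$ of bidegree $(0,0)$ and \emph{no} $u^{0}t^{0}$-term; the problem becomes the vanishing of a two-dimensional residue, e.g. $\Res\big(w\cdot d\log(1-e^{w})\wedge\frac{dg}{g}\big)=0$. For $g=t$ this amounts to showing that the $u^{-1}t^{0}$-coefficient of the $du$-part of $w\,d\log(1-e^{w})$ vanishes, which follows from the absence of the bidegree-$(0,0)$ term of $w$ together with the controlled pole order of $d\log(1-e^{w})$, exactly as the finite-series computation does in dimension one; the case $g=u$ is symmetric. For unipotent $g$ the form $\frac{dg}{g}=d\log g$ is itself a differential of a log-able element, and one uses $\Res\,d(\cdots)=0$ applied to $d\big(w\,\log(1-e^{w})\,d\log g\big)$ to reduce to the same elementary computation.

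The main obstacle, and the point where this proof is genuinely harder than its one-dimensional model, is twofold: the residue computation in the $u$-direction now involves \emph{infinite} sums, whose convergence and vanishing must be controlled using the continuity established around~\eqref{cont} and the well-definedness of $\exp$ and $\log$ over $\mathbf{Q}$; and the case analysis produces nontrivial scalar factors (from \eqref{cc3}) and sign factors (from \eqref{cc4}--\eqref{sign}) whose cancellation requires careful bookkeeping modulo $2$ with $\nu(\cdot,\cdot)$, in contrast to the purely finite and sign-free manipulations available in dimension one.
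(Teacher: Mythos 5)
Your overall architecture is the paper's own: reduce to $\mathbf{Q}\subset R$, run a case analysis on $(\nu_1,\nu_2)(f)$ relative to $(0,0)$, use the identity $1-f=f\cdot(-1)\cdot(1-f^{-1})$ to handle the negative case, and kill residues by exhibiting exact forms. But two of your steps are concretely wrong. In the case $(\nu_1,\nu_2)(f)<(0,0)$ you assert $(f^{-1},f^{-1},g)=1$. Anti-symmetry only gives $(h,h,g)^{2}=1$, and in fact $(h,h,g)=(-1)^{\nu(h,g)}$ — for instance $(t,t,u)=-1$ by \eqref{cc4}--\eqref{sign} — a fact the paper must prove separately by checking it on the factors of decomposition~\eqref{decomp2}. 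With your value the bookkeeping in that case closes up to $(f,1-f,g)=(-1)^{\nu(f,g)}$ rather than $1$: you need the $(-1)^{\nu(f,g)}$ coming from $(f,f,g)$ to cancel the one coming from $(f,-1,g)$.

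Second, in the case $(\nu_1,\nu_2)(f)>(0,0)$ the scalar you split off from $1-f$ does not ``reduce to the already-settled unit case.'' That settled case rested on $\nu(f,1-f)=0$; here the factor is $(f,(1-f)_0,g)=(1-f)_0^{-\nu(f,g)}$, which is not $1$ in general. Correspondingly, the residue for the log-able part $h=(1-f)/(1-f)_0$ is not zero: by Lemma~\ref{lem} one gets $\Res\bigl(\log h\cdot\frac{df}{f}\wedge\frac{dg}{g}\bigr)=-\log\bigl((1-f)_0\bigr)\,\nu(f,g)$, and only the \emph{product} of the two factors is $1$. The paper avoids this trap by not splitting: $\log(1-f)$ converges because the sub-$(0,0)$ coefficients of $f$ are nilpotent and the rest has positive order, and $\log(1-f)\frac{df}{f}=-d\bigl(f+\frac{f^2}{4}+\cdots\bigr)$ is an exact $1$-form, so wedging with the closed form $\frac{dg}{g}$ gives residue $0$ for \emph{arbitrary} $g$. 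For the same reason your preliminary decomposition of the third argument $g$ into $a$, $u$, $t$ and unipotent pieces, and the coefficient-chasing for $g=t$ and $g=u$, are unnecessary; the $\Res\,d(\cdots)=0$ argument is uniform in $g$.
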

\begin{proof} If $R =R_1 \oplus R_2$, where $R_i $ is a ring, $i=1,2$, then we can prove the Steinberg property separately for elements restricted
to  $R_1$ and to $R_2$. Therefore  without loss of generality we can
assume that $\nu_1(f) \in {\mathbf Z} \subset \bbZ(R)$ and $\nu_2(f)
\in {\mathbf Z} \subset \bbZ(R)$ (and similar conditions for $1-f$).
We consider several cases.
\begin{itemize}
\item Let $(\nu_1, \nu_2)(f) > (0,0)$. Then for $f'= 1-f$ we have
$(\nu_1, \nu_2)(f') = (0,0)$ and $1 - f'_0 \in \n R$ (see
decomposition~\eqref{decomp2} for the definition of $f'_0$).
Therefore using anti-symmetric property it is enough to prove $(f',
f, g) =1$. From Lemma~\ref{lem} it is easy to see that we can
correctly  apply formula~\eqref{2cexp}  to calculate $(f',f,g)$. Now
this case follows from
\begin{multline*}
\log(1-f) \frac{df}{f} \wedge \frac{dg}{g} = -(f + \frac{f^2}{2} +
\ldots) \frac{df}{f} \wedge \frac{dg}{g} =
-(1 + \frac{f}{2} + \ldots) df \wedge \frac{dg}{g} = \\
 = -d(f + \frac{f^2}{4} + \ldots ) \wedge \frac{dg}{g}= d(- (f + \frac{f^2}{4} + \ldots) \frac{dg}{g}) \mbox{,}
\end{multline*}
since $\Res \, d(\cdots) =0$.
\item
 Let $(\nu_1, \nu_2)(f) = (0,0)$ , then $(\nu_1, \nu_2)(1-f) \ge (0,0)$. If $(\nu_1, \nu_2)(1-f) > (0,0)$, then interchanging $f$ and $1-f$ we reduce this case to the previous one. So, we have to consider the case when
  \begin{equation} \label{equ}
 (\nu_1, \nu_2)(f) = (\nu_1, \nu_2)(1-f)=(0,0) \mbox{.}
 \end{equation}
 Let $f = f_0 h $, where $f_0 \in R^*$, $h= f_{-1} f_1$ (see decomposition~\eqref{decomp2}). Then
 $$(f_0h, 1-f_0h, g) = (f_0, 1-f_0h, g) (h, 1-f_0h, g) \mbox{.}$$
 We have that $(f_0, 1-f_0h, g) = f_0 ^{\nu(1-f,g)}=1$, since  $(\nu_1, \nu_2)(1-f)=(0,0)$. From~\eqref{equ} we have that $1-f_0 \in R^*$. Let $h = 1-e$. Then
 \begin{multline*}
 (h, 1-f_0h, g) = (1-e, 1-f_0 + f_0 e, g)= \\
 =\exp \Res \, (-e - \frac{e^2}{2} - \ldots) \cdot \frac{f_0}{1-f_0} \cdot \frac{de}{1 + \frac{f_0}{1-f_0}e}
\wedge \frac{dg}{g} =\\ = \exp \Res \, \frac{f_0}{1-f_0} \cdot (-e -
\frac{e^2}{2} - \ldots) \cdot (1 -  \frac{f_0}{1-f_0}e +
(\frac{f_0}{1-f_0}e)^2 - \ldots) \cdot de
\wedge \frac{dg}{g} = \\
\\ = \exp \Res \, \Phi(e) de  \wedge \frac{dg}{g} = \exp \Res \, d \Psi(e) \wedge \frac{dg}{g} = \exp \Res \, d (\Psi(e)  \wedge \frac{dg}{g}) =1 \mbox{,}
\end{multline*}
 where $\Phi$ and $\Psi$ are some formal series from $\mathbf{Q}[[x]]$.

\item
Let $(\nu_1, \nu_2)(f) < (0,0)$. Then $(\nu_1, \nu_2)(f^{-1}) > 0$.
From the tri-multiplicativity we have
$$
(f,1-f,g)= (f,f,g)(f,-1 +f^{-1},g) \mbox{.}
$$

We claim that $(f,f,g) = (-1)^{\nu(f,g)}$ for any $f,g \in
L^2\bbG_m(R)$. Indeed, from the anti-symmetric and
tri-multiplicative property of $(\cdot, \cdot, \cdot)$ it follows
that one has to verify this equality  only when   $f$ and $g$ are
multipliers from decomposition~\eqref{decomp2}. This can be easily
done.

We have also
$$
(f,-1 +f^{-1},g)= (f,-1,g) (f,1-f^{-1},g)= (-1)^{\nu(f,g)}
(f,1-f^{-1},g) \mbox{,}
$$
$$
(f,1-f^{-1},g)= (f^{-1}, 1-f^{-1},  g)^{-1}=1  \mbox{,}
$$
where the last equality follows from the first case applied to
$f^{-1}$.

Thus in this case we obtained
$$
(f,1-f,g)= (-1)^{\nu(f,g)} \cdot (-1)^{\nu(f,g)} =1 \mbox{.}
$$
\end{itemize}
\end{proof}

\begin{rmk}
By means of  tri-multiplicativity, anti-symmetric property and
property $(f,f, g) = (-1,f,g)$ for $f,g \in L^2\bbG_m(R)$ (which
follow from Steinberg relations) one can reduce formula~\eqref{cc4}
to formula~\eqref{cc3}.
\end{rmk}

\subsection{Case $R \in \B$}
We assume in this subsection that $R$ is any ring from   $\B$.

\begin{prop} \label{deform}
Let $k$ be a field.
\begin{enumerate}
\item \label{as1} Let $R =k$. Then the two-dimensional Contou-Carr\`{e}re symbol coincides with the two-dimensional
 tame symbol.
\item  \label{as2} Let $R = k[\epsilon]/ \epsilon^4$. Then for any $f,g,h \in k((u))((t))$
\begin{equation} \label{res}
(1+\epsilon f, 1+\epsilon g, 1+ \epsilon h) = 1 + \epsilon^3 \Res \,
f dg \wedge dh \mbox{.}
\end{equation}
\end{enumerate}
\end{prop}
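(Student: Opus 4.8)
\emph{Part \ref{as1}.} Since a field lies in $\B$, I would compute the symbol through the explicit formula \eqref{concar} of Definition~\ref{defcon2}. Over $R=k$ the nil-radical vanishes, so in the canonical decomposition \eqref{decomp2} the factor $f_{-1}\in\bbP(k)=1+\frakp(k)$ is trivial and each of $f,g,h$ reduces to $f_0\,u^{\nu_2(f)}t^{\nu_1(f)}f_1$ with $f_1\in\bbM(k)$; thus in \eqref{eq1}--\eqref{eq3} only coefficients indexed by $(i,j)\ge(0,1)$ survive. The plan is then to show that every factor $T(\cdot,\cdot,\cdot)$, $S(\cdot,\cdot)$ and $Q(\cdot,\cdot)$ occurring in \eqref{concar} equals $1$, so that
$$(f,g,h)=(-1)^A\,f_0^{\nu(g,h)}g_0^{\nu(h,f)}h_0^{\nu(f,g)},$$
which is the two-dimensional tame symbol of \cite{OsZh}.

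The vanishing of the deformation factors is a sign computation. Writing each monomial $u^jt^i$ as the lattice vector $(j,i)$, the condition $(i,j)\ge(0,1)$ forces all these vectors into the half-open half-plane of angles in $[0,\pi)$. By Lemma~\ref{lll}, $T=1$ unless the three determinants $p_0,q_0,r_0$ --- which are the pairwise cross products of the vectors attached to $f,g,h$ --- are simultaneously positive or simultaneously negative; but three vectors lying in a common half-open half-plane can never have all three cyclic cross products of one sign. Likewise, by Lemmas~\ref{lemexp1}--\ref{lemexp2}, a factor $S$ or $Q$ is nontrivial only when two of the vectors are collinear with $jl+ik<0$, whereas collinear vectors in this half-plane point in the same direction, giving $jl\ge0$ and $ik\ge0$. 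Hence all deformation factors are $1$, leaving the displayed formula, which I would match with the tame symbol either by citing its explicit form in \cite{OsZh} or by checking that both maps are tri-multiplicative, anti-symmetric and agree on the generators $u,t$ and on constants.

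\emph{Part \ref{as2}.} Here $R=k[\epsilon]/\epsilon^4\in\B$ with $\n R=(\epsilon)$. I would first treat the case $\mathbf{Q}\subset k$ by the analytic formula \eqref{2cexp}. Although $1+\epsilon f$ need not have leading coefficient $1$, that coefficient is $1+\epsilon a_{0,0}\in R^*$ and, by \eqref{cc3}, contributes $(1+\epsilon a_{0,0})^{\nu(1+\epsilon g,\,1+\epsilon h)}=1$ because both remaining arguments have trivial valuations (so the $\Res$ of their logarithmic wedge is $\nu=0$ by Lemma~\ref{lem}); one may therefore legitimately evaluate
$$(1+\epsilon f,1+\epsilon g,1+\epsilon h)=\exp\Res\Big(\log(1+\epsilon f)\cdot\tfrac{d(1+\epsilon g)}{1+\epsilon g}\wedge\tfrac{d(1+\epsilon h)}{1+\epsilon h}\Big).$$
Since $\log(1+\epsilon f)=\epsilon f+O(\epsilon^2)$ and $\tfrac{d(1+\epsilon g)}{1+\epsilon g}=\epsilon\,dg+O(\epsilon^2)$, the integrand equals $\epsilon^3\, f\,dg\wedge dh$ modulo $\epsilon^4=0$; the denominators $\tfrac12,\tfrac13$ sit entirely in the vanishing higher-order terms. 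Taking $\Res$ and then $\exp$ (with $\epsilon^6=0$) yields exactly $1+\epsilon^3\Res f\,dg\wedge dh$.

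For general $k$ the same computation is unavailable, so I would descend from characteristic $0$. Introduce the $\mathbf{Z}$-flat polynomial ring $A=\mathbf{Z}[\{X_{ij},Y_{kl},Z_{mn}\}]$ on the coefficients of $f,g,h$ together with a specialization $A\to k$, and set $\tilde f=\sum X_{ij}u^jt^i$, and similarly $\tilde g,\tilde h$. Both sides of \eqref{res} are functorial in $R\in\B$ (Definition~\ref{defcon2} and Proposition~\ref{prcom}), hence define elements of $A[\epsilon]/\epsilon^4$: the left side a \emph{finite} product by the convergence established after Definition~\ref{defcon2}, the right side a finite sum because $\Res(\tilde f\,d\tilde g\wedge d\tilde h)$ extracts a single coefficient. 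The previous paragraph proves the identity over $(A\otimes\mathbf{Q})[\epsilon]/\epsilon^4$; since $A[\epsilon]/\epsilon^4$ is free, hence torsion-free, over $\mathbf{Z}$, the inclusion into $(A\otimes\mathbf{Q})[\epsilon]/\epsilon^4$ is injective, so the identity already holds over $A[\epsilon]/\epsilon^4$, and specializing along $A\to k$ gives \eqref{res} in every characteristic. The main obstacle is exactly this passage to characteristic $p$: the closed expression for the symbol is available only as the $\exp$--$\Res$ formula over $\mathbf{Q}$-algebras, so the entire weight of the argument rests on the functoriality of the explicit formula \eqref{concar} together with the $\mathbf{Z}$-flatness of $A$, in the same spirit as the uniqueness argument of Lemma~\ref{unique}.
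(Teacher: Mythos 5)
Your proposal is correct, and it is worth comparing the two halves separately. For part~\eqref{as1} you are essentially expanding the paper's one-line citation: over a field the $\bbP$-factor is trivial, and your half-plane argument showing that every $T$, $S$, $Q$ factor in \eqref{concar} equals $1$ (no positive integral relation $p(j,i)+q(l,k)+r(n,m)=0$ can hold among vectors confined to the half-open half-plane $(i,j)\ge(0,1)$, and collinear vectors there satisfy $jl+ik\ge 0$) is exactly the computation hiding behind the reference to the explicit formula of the tame symbol in \cite{OsZh}. For part~\eqref{as2} the characteristic-zero computation coincides with the paper's, and your extra care with the unit $1+\epsilon a_{0,0}$ via \eqref{cc3} and Lemma~\ref{lem} is a legitimate refinement of a step the paper glosses over. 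Where you genuinely diverge is the descent to positive characteristic: the paper invokes continuity and tri-multiplicativity of both sides of \eqref{res} to reduce to monomials $1+\epsilon a_{i,j}u^jt^i$ and then checks the identity against the integral expression of Lemma~\ref{lll}, whereas you lift all coefficients to the torsion-free ring $A[\epsilon]/\epsilon^4$ with $A=\mathbf{Z}[\{X_{ij},Y_{kl},Z_{mn}\}]$ and use injectivity of $A[\epsilon]/\epsilon^4\hookrightarrow (A\otimes\mathbf{Q})[\epsilon]/\epsilon^4$. Your route avoids any monomial-by-monomial verification but leans entirely on the functoriality and finiteness of the explicit formula \eqref{concar} (Proposition~\ref{prcom}), which the paper itself secured by the same kind of lifting in Lemma~\ref{lift}; the paper's route is more hands-on but needs the continuity of both sides of \eqref{res} as an extra input. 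Both are sound.
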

\begin{proof}
Assertion~\ref{as1} follows from the explicit formula for the
two-dimensional tame symbol, see~\cite{Pa0}, \cite[\S~$4$A]{OsZh}.

Assertion~\ref{as2} follows from the following calculaton when
$\mathbf{Q} \subset R$ (see formula~\eqref{2cexp})
\begin{multline*}
(1+\epsilon f, 1+\epsilon g, 1+ \epsilon h) =
\exp \Res \, \log(1 + \epsilon f) d \log (1+\epsilon g) \wedge d \log (1 + \epsilon h) = \\
= \exp \Res \, (\epsilon f - \frac{\epsilon^2 f^2}{2} + \ldots) d
(\epsilon g - \frac{\epsilon^2 g^2}{2}  + \ldots) \wedge
d (\epsilon h - \frac{\epsilon^2 h^2}{2}  + \ldots) = \\
= \exp ( \epsilon^3 \Res f dg \wedge dh) = 1 + \epsilon^3 \Res \, f
dg \wedge dh  \mod \epsilon^4 \mbox{.}
\end{multline*}

If $\mathbf{Q} \nsubseteq R$, then we use the continuity of left and
right hand sides of~\eqref{res}. Therefore it is enough to verify
this equality for elements $f,g,h$ of type $1 + \epsilon a_{i,j} u^j
t^i$. To achieve this goal we use Lemma~\ref{lll}.
\end{proof}

\begin{prop} \label{stgen}
The two-dimensional Contou-Carr\`{e}re symbol constructed by the
explicit formula from Definition~\ref{defcon2} is a
tri-multiplicative map from $L^2\bbG_m(R) \times L^2\bbG_m(R) \times
L^2\bbG_m(R) $ to $R^* $ and it satisfies the Steinberg relations.
\end{prop}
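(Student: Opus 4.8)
The plan is to reduce both assertions to the already-settled case $\mathbf{Q}\subset R$, using the functoriality of the explicit formula established in Proposition~\ref{prcom} together with the lifting device of Lemma~\ref{lift}. Over a $\mathbf{Q}$-algebra the explicit symbol of Definition~\ref{defcon2} coincides with the symbol of Definition~\ref{cc2} (again by Proposition~\ref{prcom}); the latter is tri-multiplicative by construction, and it satisfies the Steinberg relations by Proposition~\ref{st}. So it suffices to transport these two facts from a suitable $\mathbf{Q}$-algebra down to $R$.

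For tri-multiplicativity, by the anti-symmetry from Proposition~\ref{prcom} it is enough to treat one argument, say $(ff',g,h)=(f,g,h)(f',g,h)$. Given $f,f',g,h\in L^2\bbG_m(R)$, apply Lemma~\ref{lift} to the collection $\{f,f',g,h\}$ to obtain rings $S_1\subset S_2$ in $\B$ with $\mathbf{Q}\subset S_2$, a ring homomorphism $\pi\colon S_1\to R$, and lifts $\tilde f,\tilde f',\tilde g,\tilde h\in S_1((u))((t))^*$ mapping to $f,f',g,h$; note that $\tilde f\tilde f'$ maps to $ff'$. Over $S_2$ the symbol is tri-multiplicative, so $(\tilde f\tilde f',\tilde g,\tilde h)=(\tilde f,\tilde g,\tilde h)(\tilde f',\tilde g,\tilde h)$ in $S_2^*$. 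Since the explicit formula is functorial and $S_1\subset S_2$ (whence $S_1^*\hookrightarrow S_2^*$), this identity already holds in $S_1^*$; applying $\pi$ gives the desired relation in $R^*$.

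For the Steinberg relations the same scheme applies, the one extra point being that the lift must preserve the relation: we need a lift $\tilde f$ of $f$ for which both $\tilde f$ and $1-\tilde f$ are invertible over $S_1$ and map to $f$ and $1-f$. First decompose $R=\bigoplus R_i$ so that on each factor $\nu_1,\nu_2$ of $f$ (and of $1-f$) are constant integers; since the symbol and the hypothesis respect this decomposition we may assume these valuations are honest integers. Now lift $f$ and $g$ as in Lemma~\ref{lift}, writing $\tilde f=\sum A_{i,j}u^jt^i$ with $A_{\nu_1(f),\nu_2(f)}$ invertible, the $A_{i,j}$ with $(i,j)<(\nu_1(f),\nu_2(f))$ nilpotent, and the remaining $A_{i,j}$ free (see the decomposition~\eqref{decomp2}). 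Inspecting the three possibilities for $(\nu_1,\nu_2)(f)$ relative to $(0,0)$, one checks that $1-\tilde f$ is automatically invertible except when $(\nu_1,\nu_2)(f)=(0,0)$; in that case its leading coefficient is $1-A_{0,0}$, and it suffices to enlarge the multiplicative set used to build $S_1,S_2$ by the element $1-A_{0,0}$. This keeps $S_1,S_2$ in $\B$ (the nil-radical is still generated by the nilpotent variables), preserves $\mathbf{Q}\subset S_2$ and the map to $R$, and makes $1-\tilde f$ invertible with $1-\tilde f\mapsto 1-f$. Over $S_2\supset\mathbf{Q}$ we have $(\tilde f,1-\tilde f,\tilde g)=1$ by Proposition~\ref{st}; pushing through $S_1^*\hookrightarrow S_2^*$ and then along $\pi$ yields $(f,1-f,g)=1$ in $R^*$.

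The main obstacle is exactly this Steinberg reduction: unlike tri-multiplicativity, a generic lift does not respect the additive constraint $f+(1-f)=1$, so Lemma~\ref{lift} cannot be invoked as a black box. The work lies in producing a lift in which $1-\tilde f$ remains a unit, which is why the case $(\nu_1,\nu_2)(f)=(0,0)$ forces the extra localization at $1-A_{0,0}$. Once this is arranged, everything else is a formal consequence of functoriality and the $\mathbf{Q}$-case.
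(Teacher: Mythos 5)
Your proposal is correct and follows essentially the same route as the paper: lift the data via Lemma~\ref{lift}, invoke Proposition~\ref{prcom} and the $\mathbf{Q}$-case (Definition~\ref{cc2}, Proposition~\ref{st}), and, for the Steinberg relation with $(\nu_1,\nu_2)(f)=(\nu_1,\nu_2)(1-f)=(0,0)$, localize $S_1\subset S_2$ at the multiplicative set generated by $1-A_{0,0}$ so that $1-\tilde f$ becomes a unit — exactly the paper's argument. The one small deviation is the case $(\nu_1,\nu_2)(f)<(0,0)$, which you settle by observing that $1-\tilde f=-\tilde f\,(1-\tilde f^{-1})$ is automatically invertible and lifting directly, whereas the paper instead reduces this case to the first one via $(f,1-f,g)=(f,f,g)(f,-1+f^{-1},g)$ as in Proposition~\ref{st}; both routes are valid.
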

\begin{proof}
First we explain the tri-multiplicativity. Let $f_1, f_2, g,h \in
L^2\bbG_m(R)$. We want to prove, for example, that $(f_1f_2, g,h) =
(f_1,g,h)(f_2,g,h)$. Using Lemma~\ref{lift} we will find the rings
$S_1 \subset S_2$ such that $\mathbf{Q} \subset S_2$ and elements
$\tilde{f_1}, \tilde{f_2}, \tilde{g}, \tilde{h} \in S_1((u))((t))^*
$ which are mapped to the elements $f_1, f_2, g, h$. Then, by
Definition~\ref{cc2} and Proposition~\ref{prcom},
$(\tilde{f_1} \tilde{f_2}, \tilde{g}, \tilde{h}) =
(\tilde{f_1},\tilde{g},\tilde{h})(\tilde{f_2},\tilde{g},\tilde{h})$,
since $\mathbf{Q} \subset S_2$. Hence we obtain the
tri-multiplicativity.

Now we explain the Steinberg property. Let $f, 1-f, h \in L^2\bbG_m(R)$. We
want to prove that $(f,1-f,h)=1$. Without loss of generality we can
assume that $\nu_1(f) \in \mathbf{Z} \subset \bbZ(R)$ and $\nu_2(f)
\in \mathbf{Z} \subset \bbZ(R)$ (and similar conditions for $1-f$).
We consider several cases.

If $(\nu_1, \nu_2)(f) > (0,0)$, then we consider by Lemma~\ref{lift}
the rings $S_1 \subset S_2$ and an element $\tilde{f} \in
S_1((u))((t))^*$ which is mapped to the element $f$. Since $(\nu_1,
\nu_2)(\tilde{f}) > (0,0)$, we have that $1 - \tilde{f} \in
S_1((u))((t))^*$. Therefore we can apply Proposition~\ref{st} to the
ring $S_2((u))((t))^*$ and elements $\tilde{f}, 1-\tilde{f} $. Hence
this case follows.

If $(\nu_1, \nu_2)(f) = (0,0)$ , then $(\nu_1, \nu_2)(1-f) \ge
(0,0)$. If $(\nu_1, \nu_2)(1-f) > (0,0)$, then interchanging $f$ and
$1-f$ we reduce this case to the previous one. So, we have to
consider the case when $
 (\nu_1, \nu_2)(f) = (\nu_1, \nu_2)(1-f)=(0,0) $. By Lemma~\ref{lift} we have the rings $S_1 \subset S_2$ such that $\mathbf{Q} \subset S_2$, and
 $\tilde{f} \in S_1((u))((t))^*$. Let $T$ be a subset of $S_1$  which is mutiplicatively generated by $1-A_{0,0}$ (see the proof of Lemma~\ref{lift} for the definition of the element  $A_{0,0}$, or more exactly $A_{1,0,0}$.) We define the rings $S_1'= T^{-1}S_1$, $S_2'= T^{-1}S_2$. Then $S_1'$ is mapped to $R$, $S_1' \subset S_2'$ and $\mathbf{Q} \subset S_2'$. Moreover, $\tilde{f}, 1-\tilde{f}  \in S_1'((u))((t))^*$. Therefore we can  apply Proposition~\ref{st} to the ring $S_2'((u))((t))^*$. Hence this case follows.

 If $(\nu_1, \nu_2)(f) < (0,0)$, then this case can be reduced to the first case by the same method as in Proposition~\ref{st}.
  \end{proof}

\begin{rmk}
The tri-multiplicativity of two-dimensional Contou-Carr\`{e}re
symbol  will follow also from Corollary~\ref{cor} (of Theorem~\ref{th-main})
later,
 because we know  the tri-multiplicativity for the commutator (the map $C_3$) in a categorical central extension.
 The Steinberg relations will follow also from Corollary~\ref{Steinb} (of Theorem~\ref{K-th}) later, where these relations will be obtained from the product structure in algebraic $K$-theory.
 In this subsection we used elementary methods
to prove these properties.
\end{rmk}

\section{Categorical central extensions} \label{seccentr}
\subsection{Central extensions}
In~\cite{OsZh}, we defined the notion of a central extension of a
group by a Picard groupoid. We need to extend such formalism to any
topos. This is straightforward if we adapt Grothendieck's point of
view of functor of points. Let us briefly indicate how to do this.

Let $\calT$ be any topos. Let $\calP$ be a sheaf (a.k.a a stack) of
Picard groupoids in $\calT$, then it makes sense to talk about a
$\calP$-torsor (see~\cite{Br1}). Namely, a $\calP$-torsor $\calL$ is
a sheaf (a.k.a a stack) of groupoids with an action given by some
(Cartesian) bifunctor $\calP\times_\calT\calL\to\calL$ which
satisfies certain axioms. In particular, for any $U\in\calT$ it
gives rise to a bifunctor $\calP(U)\times\calL(U)\to\calL(U)$.
Moreover, for any $U\in\calT$, $\calL(U)$ is either empty or a
$\calP(U)$-torsor (see~\cite[\S~2C]{OsZh}), and for any $U$, there
is a covering $V\to U$ such that $\calL(V)$ is a $\calP(V)$-torsor.
All $\calP$-torsors form naturally a Picard $2$-stack in
$2$-groupoids in $\calT$, see~\cite[ch.~8]{Br2}. If $A$ is a sheaf
of abelian groups in $\calT$, and $BA$ is the Picard groupoid of
$A$-torsors, then an $BA$-torsor has another name as an $A$-gerbe.
(More precisely, there is a canonical equivalence between the
$2$-stack of $BA$-torsors and the $2$-stack of abelian $A$-gerbes
in~$\calT$, see~\cite[Prop.~2.14]{Br2}.)

Let $G$ be a group in $\calT$ and $\calP$ be a sheaf of Picard
groupoids in $\calT$. Then a central extension of $G$ by $\calP$ is
a rule to assign to every $U\in\calT$ and to every $g\in G(U)$ a
$\calP_U$-torsor $\calL_g$ over $\calT/U$ such that: 1) for any $V
\to U$ the $\calP_V$-torsor $\calL_g |_V$ corresponds to $g |_V \in
G(V)$, 2) for any $U\in\calT$ the $\calP_U$-torsors $\calL_g$
satisfy the properties as in \cite[\S~2E]{OsZh} which are compatible
with restrictions. If $A$ is a sheaf of abelian groups in $\calT$,
and $BA$ is the Picard groupoid of $A$-torsors, then a central
extension of $G$ by $BA$
 was described in~\cite[\S~5.5]{Del2}.

\medskip

Now let $\calL$ be a central extension of $G$ by $\calP$, and assume
that $G$ is abelian. Recall that in \cite{OsZh}, we constructed
certain maps $C_2$ and $C_3$ (generalized commutators), which have obvious generalization to
the sheaf theoretical contents. It means that we have a
bimultiplicative and anti-symmetric morphism
\begin{equation}  \label{gencom2}
C_2^{\calL}:G\times G\to\calP
\end{equation}
and a tri-multiplicative and anti-symmetric morphism
\begin{equation}  \label{gencom3}
C_3^{\calL}:G\times G\times G\to\pi_1(\calP) \mbox{.}
\end{equation}

\subsection{The central extension of $\GL_{\infty,\infty}$}
Now we specify the central extensions that we are considering in the
paper.

(i) Let $\Pic^\bbZ$ be the Picard groupoid of graded lines. It means
that for any commutative ring $R$, $\Pic^\bbZ(R)$ is the category of
graded line bundles over $\spec R$, together with the natural Picard
structure. It forms a sheaf (with respect to the flat topology) of
Picard groupoids over $\Aff$. Proposition~3.6 of~\cite{OsZh} remains
true in this sheaf version.

(ii) Let $\bbV$ be a 2-Tate vector space over a field $k$.
In~\cite{OsZh} we studied the central extension of $\GL(\bbV)$ by
$\Pic^\bbZ$. In the current setting, we need a sheaf version of this
construction.

The first goal to endow the group $\GL(\bbV)$ a structure as a sheaf
of groups over $\Aff$. This would be clear if we can make sense of
family of 2-Tate vector spaces (or 2-Tate $R$-modules). I.e., for any $k$-algebra $R$, some certain ``complete" tensor product $\bbV\hat\otimes_k R$ should be a 2-Tate $R$-module and $\GL(\bbV)(R)$ should be the group of automorphisms of this 2-Tate $R$-module.
However, to keep the size of the paper, we will not try to develop a full theory of 2-Tate $R$-modules here. Instead, let us specialize to the case $\bbV=k((u))((t))$. Then $\bbV\hat\otimes_kR$ should be just $R((u))((t))$, and
we follow the idea of \cite{FZ} to define a group that acts on $R((u))((t))$. More precisely, let
\[\GL_{\infty,\infty}(R)=\gl_\infty(\gl_\infty(R))^*.\]
Here for any ring $A$ (not necessarily commutative nor unital),
$\gl_\infty(A)$ is the algebra of continuous endomorphisms of
$A((t))$ as a \emph{right} $A$-module, where we consider $A$ as a
discrete topological space.

The group $\GL_{\infty,\infty}$ is a sheaf of groups over $\Aff$,
which acts on $R((u))((t))$. Explicitly, the action can be described
as follows.

If we give $A((t))$ the topological basis $\{t^i\}$, then elements
in $\gl_\infty(A)$ could be regarded as
$\infty\times\infty$-matrices $X=(X_{ij})_{i,j\in {\mathbf Z}}$
which act on $A((t))$ by the formula
$$ X(t^j)=\sum\limits_{i\in {\mathbf Z}}X_{ij}t^i. $$
It is easy to see that
\[\gl_\infty(A)=\left\{\begin{array}{l}(X_{ij})_{i,j\in {\mathbf Z}}, \,
X_{ij} \in A \ \mid \ \forall
m\in {\mathbf Z}, \, \exists \ n\in {\mathbf Z}, \\
\mbox{ such that whenever } i<m, j>n, \, X_{ij}=0
\end{array}\right\}.\]
Therefore,
\[\gl_{\infty,\infty}(R)=\left\{\begin{array}{l}
(X_{ij})_{i,j\in {\mathbf Z}}, \, X_{ij}\in\gl_\infty(R) \ \mid \
\forall
m\in {\mathbf Z}, \exists \ n\in {\mathbf Z}, \\
\mbox{ such that whenever } i<m, j>n, \, X_{ij}=0
\end{array}\right\}.\]

From this presentation, it is clear that $\gl_{\infty,\infty}(R)$
acts on $R((u))((t))$ by the following formula. If we represent an
element in $\gl_{\infty,\infty}(R)$ by $X=(X_{ij})_{i,j\in {\mathbf
Z}}$ and $X_{ij}=(X_{ij,mn})_{m,n\in {\mathbf Z}}$. Then
\[X(u^nt^j)=\sum_{m,n\in {\mathbf Z}}X_{ij,mn}u^mt^i.\]
Observe that $\GL_{\infty,\infty}(R)$ acts on $R((u))((t))$ by the
same formula as above.

\begin{rmk} \label{inv} It is possible to give a more invariant definition of the $R$-ring $\gl_{\infty,\infty}(R)$
and the group $\GL_{\infty,\infty}(R)= \gl_{\infty,\infty}(R)^*$.

We define for any integer $n$ an $R$-submodule of $R((u))((t))$
$$\oo_n = t^n  R((u))[[t]] \mbox{.}$$
If $m < n$, then the  $R$-module
 $\oo_m / \oo_n$
is a free  $R((t))$-module, and therefore it is a topological
$R$-module  with the topology induced by open subspaces
$$ E_l = u^l t^m R[[u,t]]  / u^l t^n R[[u,t]]  \subset  \oo_m / \oo_n \mbox{.}
$$

We say that an $R$-linear map $F : R((u))((t)) \to R((u))((t))$ belongs to $\gl_{\infty,\infty}(R)$, if the following conditions hold
\begin{enumerate}
\item \label{i1}
for any integer $n$ there exists an integer $m$ such that $F \oo_{n}
\subset \oo_{m} $,
\item \label{i2}
for any integer
 $m$
there exists an integer $n$ such that $F \oo_n \subset \oo_m $,
\item \label{i3}
for any integer $n_1 < n_2 $ and $m_1 < m_2$ such that $F \oo_{n_1}
\subset \oo_{m_1}$ and $F \oo_{n_2} \subset \oo_{m_2}$ we have that
the induced $R$-linear map
$$   \bar{F} \quad : \quad \oo_{n_1} / \oo_{n_2} \lrto
\oo_{m_1} / \oo_{m_2}
$$
belongs to $\Hom_{cont} (\oo_{n_1} / \oo_{n_2} , \oo_{m_1} /
\oo_{m_2}) \mbox{.} $
\end{enumerate}

If $R=k$ is a field, then $\gl_{\infty,\infty}(R)$ and
$\GL_{\infty,\infty}(R)$ were studied also in~\cite{O3}
and~\cite{O1}.
\end{rmk}

\begin{lem}Let $R((u))((t))^*$ act on $R((u))((t))$ by the multiplication. Then this action induces an embedding $L^2\bbG_m\subset\GL_{\infty,\infty}$.
\end{lem}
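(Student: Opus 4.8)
The plan is to write down the matrix of the multiplication operator $m_f\colon x\mapsto fx$ explicitly in the presentation of $\gl_{\infty,\infty}(R)$ given in the text, read off membership in $\gl_{\infty,\infty}(R)$ from the defining staircase conditions, and then get injectivity by evaluating $m_f$ on the element $1$.

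First I would record the effect of multiplication on the topological basis $\{u^nt^j\}$. Since $f\in R((u))((t))$, there is an integer $i_0$ with $f\in t^{i_0}R((u))[[t]]$; write $f=\sum_{k\ge i_0}f_k(u)\,t^k$ with $f_k\in R((u))$ and $f_k=0$ for $k<i_0$. Then
$$ f\cdot u^nt^j=\sum_i f_{i-j}(u)\,u^n\,t^i, $$
so, comparing with the formula $X(u^nt^j)=\sum_{m,i}X_{ij,mn}u^mt^i$, the operator $m_f$ is represented by the matrix $(X_{ij})$ with $X_{ij}=m_{f_{i-j}}$, the operator of multiplication by the Laurent series $f_{i-j}(u)$ on $R((u))$.

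Next I would check that $m_f$ lies in $\gl_{\infty,\infty}(R)$, in two steps corresponding to the two levels of the definition. At the inner level, each entry $m_{f_{i-j}}$ is multiplication by an element of $R((u))$: its $(p,q)$-entry is the coefficient of $u^{p-q}$ in $f_{i-j}$, which vanishes once $p-q$ drops below the $u$-valuation of $f_{i-j}$, and this is exactly the staircase condition defining $\gl_\infty(R)$. At the outer level, $X_{ij}=m_{f_{i-j}}=0$ whenever $i-j<i_0$; hence, given any $m$, taking $n=m-i_0$ forces $i-j<m-n=i_0$ and therefore $X_{ij}=0$ for all $i<m$, $j>n$, which is precisely the staircase condition defining $\gl_{\infty,\infty}(R)$. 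No interaction between the two levels arises: the outer condition controls the $t$-direction while the inner one is applied entry by entry in the $u$-direction, so the fact that the $u$-valuations of the $f_k$ may tend to $-\infty$ as $k\to+\infty$ causes no trouble.

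Finally, multiplicativity $m_{fg}=m_f\circ m_g$ together with $m_1=\id$ shows that $f\mapsto m_f$ is multiplicative; restricting to units, an invertible $f$ yields an invertible $m_f$ with $m_f^{-1}=m_{f^{-1}}\in\gl_{\infty,\infty}(R)$, so we obtain a homomorphism of sheaves of groups $L^2\bbG_m\to\GL_{\infty,\infty}$. This map is injective on $R$-points since $m_f(1)=f$ recovers $f$ from $m_f$, and since this holds functorially in $R$ it is the desired embedding of sheaves of groups. The only point requiring genuine care is the outer staircase condition, i.e.\ that the $t$-adic lower bound on $f$ propagates to a uniform vanishing pattern of the matrix entries; this is immediate from the matrix $(m_{f_{i-j}})$, but equivalently one may verify conditions \eqref{i1}--\eqref{i3} of Remark~\ref{inv} directly, the continuity \eqref{i3} holding because only the finitely many coefficients $f_k$ with $m_1-(n_2-1)\le k\le (m_2-1)-n_1$ enter each induced map $\oo_{n_1}/\oo_{n_2}\to\oo_{m_1}/\oo_{m_2}$, each such $f_k\in R((u))$ having a finite lower bound on its $u$-valuation.
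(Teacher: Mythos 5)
Your proof is correct; the paper states this lemma without proof, and your argument is exactly the routine verification it leaves implicit: the matrix of $m_f$ is $(X_{ij})=(m_{f_{i-j}})$, the two staircase conditions follow from the $t$-adic lower bound on $f$ and the $u$-adic lower bounds on each $f_k$, and invertibility plus $m_f(1)=f$ give the embedding on units. The only nitpick is the index range in your final sentence (finiteness of the relevant $k$ already follows from $i_0\le k\le m_2-1-n_1$), which does not affect the argument.
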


We recall that there is a natural linear topology on the $R$-module $R((u))((t))$,
see formula~\eqref{top}.

\begin{lem} \label{conti} The natural action of $\GL_{\infty,\infty}(R)$ on $R((u))((t))$ is continuous.
\end{lem}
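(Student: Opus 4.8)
The plan is to fix an element $X\in\GL_{\infty,\infty}(R)$ and to show that the $R$-linear automorphism $v\mapsto Xv$ of $R((u))((t))$ is continuous for the topology~\eqref{top}; since this map is additive, continuity everywhere reduces to continuity at $0$, so it suffices to produce, for every basic open neighbourhood $W=W_{m,\{m_i\}}$ of $0$, an open neighbourhood $W'$ of $0$ with $X(W')\subseteq W$. Throughout I would use the invariant description of $\gl_{\infty,\infty}(R)$ from Remark~\ref{inv}, so that $X$ satisfies conditions~\eqref{i1}--\eqref{i3} relative to the lattices $\oo_n=t^nR((u))[[t]]$, together with the fact that the topology~\eqref{top} is the ind-pro topology, i.e. $R((u))((t))=\varprojlim_n\varinjlim_{m'}\oo_{m'}/\oo_n$, each quotient $\oo_{m'}/\oo_n$ carrying the $R$-module topology with the $E_l$ as a base.

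First I would record two reformulations. On the one hand $W_{m,\{m_i\}}\supseteq\oo_m$ and $W_{m,\{m_i\}}=\pi_m^{-1}(\bar W)$, where $\pi_m\colon R((u))((t))\to R((u))((t))/\oo_m$ is the projection and $\bar W=\pi_m(W_{m,\{m_i\}})$; I would check that $\bar W$ is open by noting that its trace on each piece $\oo_{m'}/\oo_m$ is an $R$-submodule containing $E_l$ for $l=\max_{m'\le i<m}m_i$, hence open. On the other hand, by condition~\eqref{i2} applied to $m$ there is an integer $n$ with $X\oo_n\subseteq\oo_m$, so $X$ descends to an $R$-linear map $\bar X\colon R((u))((t))/\oo_n\to R((u))((t))/\oo_m$ with $\pi_m\circ X=\bar X\circ\pi_n$. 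Consequently $X(W')\subseteq W$ as soon as $\pi_n(W')\subseteq\bar X^{-1}(\bar W)$, and the whole statement reduces to the continuity of $\bar X$: once that is known, $\bar X^{-1}(\bar W)$ is open, and $W':=\pi_n^{-1}(\bar X^{-1}(\bar W))\supseteq\oo_n$ is the desired open neighbourhood.

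To prove that $\bar X$ is continuous I would use that $R((u))((t))/\oo_n=\varinjlim_{m'}\oo_{m'}/\oo_n$ carries the colimit topology, so by the universal property it is enough to check that the restriction of $\bar X$ to each $\oo_{m'}/\oo_n$ (with $m'<n$) is continuous. For fixed $m'$, condition~\eqref{i1} gives an integer $m''<m$ with $X\oo_{m'}\subseteq\oo_{m''}$ (if the integer produced by~\eqref{i1} is $\ge m$ one simply replaces it by $m-1$, using $\oo_{m''}\subseteq\oo_{m-1}$). Then $X$ induces a map $\oo_{m'}/\oo_n\to\oo_{m''}/\oo_m$, and this induced map is continuous precisely by condition~\eqref{i3}, applied with $n_1=m'<n_2=n$ and $m_1=m''<m_2=m$. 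Composing with the continuous inclusion $\oo_{m''}/\oo_m\hookrightarrow R((u))((t))/\oo_m$ shows $\bar X|_{\oo_{m'}/\oo_n}$ is continuous, and the colimit property then finishes the argument.

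The main obstacle is the genuinely two-directional nature of the topology: an element of a neighbourhood such as $W_{n,\{m'_i\}}$ may have $t$-expansion of arbitrarily (though individually boundedly) negative order, so one cannot control $X$ by a single finite truncation. The device resolving this is to separate the two directions, so that condition~\eqref{i2} tames the $t$-adic projective direction by letting us work modulo $\oo_m$, condition~\eqref{i1} together with the colimit presentation tames the inductive direction, and condition~\eqref{i3} supplies uniform continuity in the $u$-adic direction on each finite $t$-band $\oo_{m'}/\oo_m$. The only routine checks that remain are that $\bar W$ is open and that the ind-pro topology agrees with~\eqref{top}, the latter being part of the definition. Alternatively one could argue directly with the matrix entries $X_{ij,pq}$ using the two vanishing conditions defining $\gl_{\infty,\infty}$, but the lattice formulation keeps the bookkeeping minimal.
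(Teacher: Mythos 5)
Your proof is correct. The paper itself offers no argument for this lemma beyond the citation ``Proof is the same as \cite[Lemma~2]{O3}'', and your reduction --- first passing to the quotient $R((u))((t))/\oo_m$ via condition~(2) of Remark~\ref{inv}, then handling each layer $\oo_{m'}/\oo_n$ of the colimit via conditions~(1) and~(3) --- is exactly the lattice-theoretic argument that citation points to, so there is nothing essential to add.
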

Proof is the same as \cite[Lemma~2]{O3}.

\smallskip

However, Remark~\ref{inv} indicates that $R((u))((t))$ has some more delicate structures than just a topological $R$-module, and $\GL_{\infty,\infty}(R)$ preserves this more delicate structure. To make it precise, we first recall that there is a notion of Tate $R$-modules as in \cite[\S 3]{Dr}.

\begin{dfn}(Drinfeld)
Let $R$ be a commutative ring. An elementary Tate $R$-module is a topological $R$-module which is isomorphic to $P\oplus Q^*$, where $P,Q$ are discrete projective $R$-modules, and $Q^*=\Hom(Q,R)$ with the natural topology\footnote{The basis of open neighborhoods of $0 \in Q^*$  consists of annihilators of finite subsets in $Q$.}. A Tate $R$-module is a topological $R$-module which is a topological direct summand of an elementary Tate $R$-module.
\end{dfn}

\begin{dfn} \label{Tate}
A lattice $\bbL\subset R((u))((t))$ is an $R$-submodule such
that there exists some $N\gg 0$ with the properties:  $t^NR((u))[[t]]\subset \bbL\subset
t^{-N}R((u))[[t]]$ and $t^{-N}R((u))[[t]]/\bbL$ (with the induced topology) is a Tate
$R$-module.
\end{dfn}

Note that since a Tate $R$-module is always Hausdorff, a lattice $\bbL$ is a closed $R$-submodule in $R((u))((t))$.
Examples of lattices in $R((u))((t))$ include $t^NR((u))[[t]]$.
If $\bbL$ is a lattice, then for any $N$ such that $\bbL\subset
t^{-N}R((u))[[t]]$, $t^{-N}R((u))[[t]]/\bbL$ is a Tate $R$-module (as it follows from the following lemma).

\begin{lem}\label{split}
Any open continuous surjection\footnote{We note that $M_1 \to M_2$ is an open continuous surjection if and only if the quotient module $M_2$ is endowed with the quotient topology.} $ f : M_1 \to M_2$ between Tate $R$-modules is splittable when $M_1$ has a countable basis of open neighborhoods of $0$.
\end{lem}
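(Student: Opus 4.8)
The statement to prove is that $f$ admits a continuous $R$-linear section $s\colon M_2\to M_1$ (equivalently $M_1\cong\Ker f\oplus M_2$ as topological $R$-modules). The plan is to reduce, using the structure theory of Tate modules from~\cite{Dr}, to the case where $M_2$ is a linearly compact projective module, and then to build $s$ by a successive-approximation argument powered by completeness of $M_1$ and by the countable base.

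First I would split off the discrete part of $M_2$. Choose a coprojective lattice $L\subset M_2$, i.e. an open submodule with $M_2/L$ discrete and projective and $L$ linearly compact (such a lattice exists by~\cite{Dr}, and since $M_2$ has a countable base so does $L$). Because $M_2/L$ is discrete and projective, the composite $M_1\to M_2\to M_2/L$ has an $R$-linear section, automatically continuous since its source is discrete; this yields a topological splitting off of a discrete projective summand and reduces the problem to splitting the open surjection $g:=f|_{f^{-1}(L)}\colon M_1''\to L$, where $M_1'':=f^{-1}(L)$ is again a Tate module with a countable base and $g$ is open and surjective with the same kernel.

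Next I would put $L$ into a convenient form. Writing $L=Q^*$ for a countable discrete projective $Q$ and presenting $Q$ as an increasing union of finitely generated direct summands, dualization gives $L\cong\varprojlim_n P_n$ with $P_n$ finitely generated projective and surjective transition maps; since each sequence $0\to N_n\to P_{n+1}\to P_n\to 0$ splits, I obtain $L\cong\prod_{n\ge1}N_n$ with $N_n$ finitely generated projective and the product topology (so the $L_n:=\prod_{k>n}N_k$ form a base of open submodules). Using that $g$ is open, $\{g(U_m)\}$, for a countable base $\{U_m\}$ of $M_1''$, is a countable base of $L$, so I can interleave the two filtrations, arranging $L_1\supseteq g(U_{a_1})\supseteq L_2\supseteq g(U_{a_2})\supseteq\cdots$. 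The key consequence of interleaving is that each summand $N_n\hookrightarrow L$ lands, for $n$ large, inside any prescribed $g(U_m)$. Finally I would construct the section termwise: since $N_n$ is projective and $g$ restricts to a surjection $U_{b}\to g(U_{b})$ for every $b$, I can lift the inclusion $N_n\hookrightarrow L$ to a map $\tau_n\colon N_n\to U_{b(n)}\subset M_1''$ with $g\tau_n$ equal to that inclusion, where the interleaving lets me take $b(n)\to\infty$. Setting $s((\nu_n)_n)=\sum_n\tau_n(\nu_n)$, completeness of $M_1''$ together with $b(n)\to\infty$ guarantees convergence, $g s=\mathrm{id}_L$ holds by continuity of $g$, and continuity of $s$ follows from $s(L_{N_0})\subset U_m$ once $b(n)\ge m$ for $n>N_0$. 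Combined with the section over the discrete projective quotient, this produces the splitting of $f$.

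I expect the main obstacle to be the convergence/continuity bookkeeping in the last step. A section is necessarily ``large'' — it does not shrink neighborhoods — so one cannot lift into small neighborhoods directly. The point is that although each individual lift $\tau_n$ is not small, one can force $\tau_n(N_n)\subset U_{b(n)}$ with $b(n)\to\infty$ precisely because the summands $N_n$ sit deeper and deeper in $L$; this is where the countable base (to interleave the two filtrations and run the induction) and completeness of $M_1$ (to sum the telescoping series) are both essential. The only structural input that must be imported is the existence of the coprojective lattice and the product decomposition $L\cong\prod_n N_n$, both of which come from the theory in~\cite{Dr}.
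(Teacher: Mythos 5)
Your lifting mechanism in the final step --- choosing lifts of generators that land in a shrinking sequence $U_{b(n)}$ of neighborhoods, with $b(n)\to\infty$ forced by openness of the map and the countable base, and then summing by completeness --- is exactly the engine of the paper's proof, and that part of your argument is sound. The gap is in the structural reductions that precede it. You begin by choosing a coprojective lattice $L\subset M_2$ and splitting off the discrete projective quotient $M_2/L$; together with your identification $L\cong Q^*$ this amounts to assuming that $M_2$ is an \emph{elementary} Tate module. But Drinfeld's theorem provides coprojective lattices (equivalently, an elementary presentation) only \emph{Nisnevich-locally} on $\spec R$; the paper itself invokes this only for artinian local $R$, where connected Nisnevich coverings are trivial, whereas Lemma~\ref{split} is needed for arbitrary commutative $R$ (it is used in Proposition~\ref{lat}). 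The splitting must be produced over $R$ itself, and you do not explain how to descend one from a Nisnevich cover. A second, independent issue is the step $L\cong\prod_n N_n$ with $N_n$ finitely generated projective: this requires the countably generated projective module $Q=L^\vee$ to be an increasing union of finitely generated direct summands, i.e.\ a direct sum of finitely generated projectives, which is not automatic over non-Noetherian rings.

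The paper avoids both issues by never decomposing $M_2$ internally. By the \emph{definition} of a Tate module, $M_2$ is a topological direct summand of $N=N_1\oplus N_2$ with $N_1=\oplus_{i\in I}Re_i$ discrete free and $N_2=\prod_{j\in J}Re_j$ with the product topology; one then lifts the projection $p:N\to M_2$ through $f$ (rather than sectioning $f$ over $M_2$ directly), which reduces everything to lifting the elements $p(e_j)$. Openness of $f$ together with the product topology on $N_2$ shows that for each member $U_l$ of the countable base all but finitely many $j$ satisfy $p(e_j)\in f(U_l)$, so the lifts can be chosen in $U_l$; completeness then makes the resulting map $N_2\to M_1$ well defined and continuous, and restricting the lift $N\to M_1$ to $M_2\subset N$ gives the section. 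If you replace your first two reductions by this one --- which needs no input beyond the definition of a Tate module --- the remainder of your argument goes through essentially unchanged.
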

\begin{proof}
By definition, we can assume that $M_2$ is a topological direct summand of a topological $R$-module $N=N_1 \oplus N_2 $,
where $N_1 = \oplus_{i \in I} R e_i$ is a discrete free $R$-module, and $N_2 = \prod_{j \in J} R e_j$ is endowed with product topology of discrete $R$-modules. Let $p : N \to M_2$ be the projection. It is enough to construct a continuous $R$-module map $g: N \to M_1 $
such that $p = fg$.  Moreover, it is enough to consider two cases: to construct the map $g$ restricted to $N_1$ and restricted to $N_2$.
The first case is obvious since $N_1$ is a discrete free $R$-module.
For the second case we note that $f$ is an open map. Now let $U_1 \supset U_2 \supset  \ldots  $
be a countable  basis of open neighborhoods of $0$ from $M_1$. For any $l \in {\mathbf N}$ there is the minimal finite (or empty) subset
$K_l$ of the set $J$ such that $p(e_j) \subset f(U_l)$ for any $j \in J \setminus K_l$.
For any $j \in K_{l+1} \setminus K_l$ we define an element $g(e_j) \subset U_l$ with the property $fg(e_j)= p(e_j)$.
For any $j \in J \setminus \bigcup_l K_l$ we have that $p(e_j) $ belongs to $\bigcap_l f(U_l)$ which is a zero submodule, since
$f(U_l)$, where $l $ runs over ${\mathbf N}$, is a  basis of open neighborhoods of $0$ in $M_2$. Therefore we put $g(e_j)=0$.
We note that $R$-modules which we consider are complete with the topology. Hence we obtained a well-defined continuous $R$-module map
$g : N_2 \to M_1$ such that $fg = p$.
\end{proof}

We note that from Lemma~\ref{split} we have that if $\bbL$ is a lattice in $ R((u))((t))$, then \linebreak $\bbL/t^m R((u))[[t]]$ is a Tate $R$-module
for any integer $m$ such that $t^m R((u))[[t]]  \subset \bbL$.

\begin{prop}  \label{lat} Let $\bbL$ be a lattice in $R((u))((t))$ and $g \in \GL_{\infty,\infty}(R)$. Then
$g\bbL$ is also a lattice in $R((u))((t))$.
\end{prop}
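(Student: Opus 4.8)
The plan is to verify the two requirements of Definition~\ref{Tate} for $g\bbL$; throughout I write $\oo_n=t^nR((u))[[t]]$. That $g\bbL$ is an $R$-submodule is immediate from the $R$-linearity of $g$, and it is closed because $g$ is a homeomorphism of $R((u))((t))$: since $g\in\GL_{\infty,\infty}(R)$ we also have $g^{-1}\in\GL_{\infty,\infty}(R)$, so by Lemma~\ref{conti} both $g$ and $g^{-1}$ act continuously and $g\bbL$ is the image of the closed submodule $\bbL$ under the homeomorphism $g$. For the sandwiching condition, fix $N$ with $\oo_N\subset\bbL\subset\oo_{-N}$. Applying property~\ref{i2} to $g^{-1}$ with target $\oo_N$ gives an integer $M$ with $g^{-1}\oo_M\subset\oo_N\subset\bbL$, so $\oo_M\subset g\bbL$; applying property~\ref{i1} to $g$ with source $\oo_{-N}$ gives, after enlarging $M$, an inclusion $g\oo_{-N}\subset\oo_{-M}$, so $g\bbL\subset\oo_{-M}$. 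Hence $\oo_M\subset g\bbL\subset\oo_{-M}$, and it remains only to prove that $\oo_{-M}/g\bbL$ is a Tate $R$-module.

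For this I would first reduce to the case of a standard lattice. Because $g$ is a homeomorphism carrying $\bbL$ onto $g\bbL$ and $\oo_{-N}$ onto $g\oo_{-N}$, it induces a topological isomorphism $g\oo_{-N}/g\bbL\cong\oo_{-N}/\bbL$, and the target is a Tate $R$-module by Definition~\ref{Tate}. With $M$ chosen so that $g\oo_{-N}\subset\oo_{-M}$ we have $g\bbL\subset g\oo_{-N}\subset\oo_{-M}$, and the admissible short exact sequence
\[0\to g\oo_{-N}/g\bbL\to\oo_{-M}/g\bbL\to\oo_{-M}/g\oo_{-N}\to 0\]
has Tate sub-object. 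Since the class of Tate $R$-modules is closed under extensions (see~\cite{Dr}), it is enough to show that the quotient $\oo_{-M}/g\oo_{-N}$ is Tate; that is, that $g$ carries the standard lattice $\oo_{-N}$ to a lattice.

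This last point is the heart of the matter, and the main obstacle: in contrast with the previous step, here no Tate module is given to us, and one must manufacture it from the structure of $g$ itself. My plan is to use property~\ref{i3}, which provides continuous $R$-linear maps induced by $g$ between the finite-level subquotients $\oo_{n_1}/\oo_{n_2}$, each of which is a finite direct sum of copies of the elementary Tate module $R((u))$. Applying~\ref{i3} to both $g$ and $g^{-1}$ and matching levels by means of properties~\ref{i1} and~\ref{i2}, one sees that on a cofinal family of levels $g$ and $g^{-1}$ induce mutually inverse topological isomorphisms of elementary Tate modules. Fixing a standard lattice $\oo_c\subset g\oo_{-N}$, these finite-level isomorphisms, together with Lemma~\ref{split} which supplies the needed continuous sections, allow one to construct a continuous $R$-linear projection $\oo_{-M}/\oo_c\to g\oo_{-N}/\oo_c$ splitting the inclusion, thereby realizing $g\oo_{-N}/\oo_c$ as a topological direct summand of the elementary Tate module $\oo_{-M}/\oo_c$. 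Its complementary summand is $\oo_{-M}/g\oo_{-N}$, which is consequently a Tate $R$-module, completing the reduction. The delicate work lies entirely in this comparison of the $g$-image of a standard lattice with the standard lattices via~\ref{i3} and Lemma~\ref{split}; the surrounding steps are formal manipulations with admissible sequences and direct summands.
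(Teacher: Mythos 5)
Your opening reductions are sound: the sandwiching $\oo_M\subset g\bbL\subset\oo_{-M}$ via properties~\eqref{i1}--\eqref{i2} of Remark~\ref{inv}, the closedness of $g\bbL$, and the topological identification $g\oo_{-N}/g\bbL\cong\oo_{-N}/\bbL$ are all correct. But the argument then funnels everything into the claim that $\oo_{-M}/g\oo_{-N}$ is a Tate $R$-module --- i.e.\ that $g$ carries the standard lattice $\oo_{-N}$ to a lattice --- and this is not a simplification: it is the original proposition in the special case $\bbL=\oo_{-N}$, and the sketch you give for it does not go through. First, property~\eqref{i3} of Remark~\ref{inv} only produces continuous maps $\oo_{n_1}/\oo_{n_2}\to\oo_{m_1}/\oo_{m_2}$; these are not isomorphisms, and composing the maps induced by $g$ and $g^{-1}$ gives the natural ``identity-induced'' map between two \emph{different} subquotients, not the identity of one of them. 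So the assertion that $g$ and $g^{-1}$ induce mutually inverse topological isomorphisms on a cofinal family of levels is false. Second, Lemma~\ref{split} cannot supply the section of $\oo_{-M}/\oo_c\to\oo_{-M}/g\oo_{-N}$, because its hypotheses require the target to already be a Tate $R$-module --- which is exactly what you are trying to prove. (The appeal to closure of Tate modules under admissible extensions is also an input not established in the paper, though it is in Drinfeld; that is a minor point by comparison.)

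The missing idea is to use the global homeomorphism $g$ of $K=R((u))((t))$ on the \emph{splitting problem itself} rather than on finite-level subquotients. This is what the paper does: it reduces the required splitting of $\oo_{-N}/\oo_N\to\oo_{-N}/g\bbL$ (via a Cartesian square) to a splitting of $K\to K/g\bbL$; since $g$ is a continuous automorphism with continuous inverse (Lemma~\ref{conti}), a splitting of $K\to K/\bbL$ transports to one of $K\to K/g\bbL$; and $K\to K/\bbL$ splits because $\bbL$ \emph{is} a lattice, so Lemma~\ref{split} applies legitimately to the open surjection $\oo_{-M}/\oo_M\to\oo_{-M}/\bbL$ of Tate modules, and the resulting section extends to $K/\oo_M\to K/\bbL$. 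In other words, $g$ should be used to move the problem from $g\bbL$ back to $\bbL$, where the lattice hypothesis does the work --- not to move it from $\bbL$ to $\oo_{-N}$, which leaves you having to control $g\oo_{-N}$ with no available tool.
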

\begin{proof}
As before, for simplicity, we denote by $\calO_n=t^nR((u))[[t]]$. Let us also denote $R((u))((t))$ by $K$.
By Remark~\ref{inv}, there is some integer $N > 0$ such that
\[\calO_N\subset g \bbL \subset \calO_{-N}.\]
We have the following
exact sequence of $R$-modules:
\begin{equation} \label{seq}
0 \lrto  \frac{g \bbL}{\calO_N}  \lrto
\frac{\calO_{-N}}{\calO_N}  \lrto \frac{\calO_{-N}}{g\bbL} \lrto 0
\end{equation}
Clearly, $\frac{\calO_{-N}}{\calO_N}$ with the induced topology is an elementary Tate $R$-module and therefore it is enough to show that there is a splitting of \eqref{seq} as topological $R$-modules. We consider the following Cartesian square
\[\begin{CD}
\frac{\calO_{-N}}{\calO_N}  @>>> \frac{\calO_{-N}}{g\bbL}\\
@VVV@VVV\\
\frac{K}{\calO_N}  @>>> \frac{K}{g\bbL}
\end{CD}\]
Therefore, it is enough to show the bottom row is splittable as topological $R$-modules. But this will follow if we can show that $K\to \frac{K}{g\bbL}$ admits a splitting $\frac{K}{g\bbL}\to K$ (then $\frac{K}{g\bbL}\to K\to \frac{K}{\calO_N}$ splits of the bottom row). Finally, to see that $K\to \frac{K}{g\bbL}$ admits a splitting, we note that, by Lemma~\ref{conti}, $g$ is a continuous automorphism of $K$ and therefore it is enough to show that $K\to \frac{K}{\bbL}$ admits a splitting.

Now we reverse the above reasoning. We choose an integer $M >  0$ such that $\calO_M\subset \bbL\subset \calO_{-M}$. Then there is an obvious splitting of $K\to \frac{K}{\calO_M}$. Therefore it is enough to find a splitting of $ \frac{K}{\calO_M}\to  \frac{K}{\bbL}$. As $\bbL$ is a lattice, we can find a splitting of $\frac{\calO_{-M}}{\calO_M}\to  \frac{\calO_{-M}}{\bbL}$ by Lemma~\ref{split}, which can be obviously extended to a splitting of $ \frac{K}{\calO_M}\to  \frac{K}{\bbL}$.
\end{proof}

\vspace{0.5cm}

Now if $\bbL,\bbL'$ are two lattices in $R((u))((t))$, we can define
as usual the $\calP ic_R^\bbZ$-torsor $\calD et(\bbL|\bbL')$   over
$\spec R$ (locally in the Nisnevich topology), and therefore we
obtain a central extension of $\GL_{\infty,\infty}$ by $\calP
ic^\bbZ$ similarly to the central extension from~\cite[\S~4C]{OsZh}.
To do this, let $m$ be minimal such that
$$t^{m}R((u))[[t]]\subset \bbL  \qquad \mbox{ and} \qquad t^{m}R((u))[[t]]\subset \bbL' \mbox{.}$$ Then $\bbL/t^{m}R((u))[[t]]$ and
$\bbL'/t^mR((u))[[t]]$ are Tate $R$-modules. For a Tate $R$-module
$M$, let $\calD et(M)$ be the $\calP ic_R^\bbZ$-torsor of
all graded-determinantal theories\footnote{Like in our previous
paper~\cite{OsZh}, we use  the notion of ``a graded-determinantal
theory" and notation $\calD et(M)$ for the $\calP ic_R^\bbZ$-torsor of all graded-determinant theories on $M$. In~\cite[\S~5.2]{Dr} this notion was under the name ``a
determinant theory" and the notation  for  the $\calP ic_R^\bbZ$-torsor of all determinant theories  on $M$ was $\calD et_M $.} on $M$. (By the theorem of
Drinfeld, see~\cite[Th.~3.4]{Dr} and \cite[\S 2.12]{BBE}, there is a Nisnevich covering  $\spec R'
\to \spec R$  such that the category  $\calD et(M) (R')$ is not
empty.) Then we define\footnote{As in~\cite{OsZh} we use the additive notation for the multiplication of two  $\calP ic_R^\bbZ$-torsors.}
\[\calD et(\bbL|\bbL')=\calD et({ \bbL' /  t^{m}R((u))[[t]]}) - \calD et({ \bbL /t^{m}R((u))[[t]]}) \mbox{.}\]

Therefore, we obtain a (categorical) central extension of
$\GL_{\infty,\infty}$ by $\calP ic^\bbZ$:
$$g \mapsto \calD et(\bbL|g\bbL) \mbox{.}$$
(We need to fix a lattice $\bbL$ to construct
such an extension. For example $\bbL = \oo_n$ for some integer $n$.)
 By restriction, we have a
central extension of
 $L^2\bbG_m$ by $\calP ic^\bbZ$.

\subsection{The generalized commutator and the change of local parameters} \label{main}
We constructed a central extension of
 $L^2\bbG_m$ by $\calP ic^\bbZ$ (after fixing some lattice $\bbL$). For such a central extension  there are  generalized commutators
$C_2$ and $C_3$, see formulas~\eqref{gencom2} and~\eqref{gencom3}. Therefore, we have a map
\begin{equation} \label{c3}
C_3 \  :  \ L^2\bbG_m\times L^2\bbG_m\times L^2\bbG_m \lrto \bbG_m
\mbox{.}
\end{equation}

Note that the map $C_3$ in formula~\eqref{c3} does not depend on
the choice of a lattice $\bbL$, which we used to construct a central
extension. Indeed, if $\bbL'$ is another lattice in $R((u))((t))$,
then any object of $\calD et(\bbL| \bbL') (R')$ (which exists after
some Nisnevich covering  $\spec R' \to \spec R$) gives an
isomorphism (over $\spec R'$) of categorical central extensions
constructed by $\bbL$ and by $\bbL'$. Therefore the corresponding
maps $C_3$ coincide, see~\cite[Cor.~2.19]{OsZh}.

\medskip

We recall the definition of the group functor $\Aut$\footnote{The central extension of this group is usually called the algebraic Virasoro group.}. This is the
group functor which associates with every ring $R$ the group of
continuous automorphisms of the $R$-algebra $R((t))$. Then
\[\Aut(R)=\{ t' = \sum a_it^i\in R((t))^*\mid a_1\in R^*, a_i \in \n R \mbox{ if } i<0\},\] and
the action of $\Aut(R)$ on $R((t))$ is given by $t'\in \Aut(R)\mapsto
\phi_{t'}: R((t))\to R((t))$, where
$$\phi_{t'}(\sum a_i t^i) = \sum a_i {t'}^i.$$ This is called  the change of a
local parameter in $R((t))$.

Now we return to the two-dimensional story. There is a question:
what is a well-defined change of local parameters in $R((u))((t))$?
In this case of a two-dimensional local field
$k((u))((t))$, where $k$ is a field, we can define the change of
local parameters as a map: $t \mapsto t'$, $u \mapsto u'$,
$$\phi_{t', u'} \; : \; \sum a_{i,j} u^jt^i \mapsto \sum a_{i,j}
{u'}^j{t'}^i \mbox{,} $$ where $u',t' \in L^2\bbG_m(R) $,
$\nu_1(t')=1$, $(\nu_1, \nu_2)(u')= (0,1)$, $t' \in  tR[[t]]$, $u'
\in \frakm(R)$. By induction, similarly to the case of
two-dimensional local fields (see also the proof of Lemma~\ref{clp}
below), one can prove that $\phi_{t', u'}$ is a well-defined
automorphism of the $R$-algebra $R((u))((t))$. Moreover, it easy to
see, that the set of these automorphisms is a subgroup in the group
of continuous automorphisms of the $R$-algebra  $R((u))((t))$, i.e,
the composition of two change of local parameters of above kind and
the inverse will be again the change of local parameters of above
kind. But this is only the analogy with two-dimensional local
fields, and we did not use that a ring $R$ may contain nilpotent
elements. Therefore we have the following lemma.
\begin{lem} \label{clp}
Let $u',t' \in L^2\bbG_m(R) $, $\nu_1(t')=1$, $(\nu_1, \nu_2)(u')=
(0,1)$, $u' \in  R((u))[[t]]$. Then a map
$$
\phi_{t', u'} \; : \; \sum a_{i,j} u^jt^i \mapsto \sum a_{i,j}
{u'}^j{t'}^i
$$
is a well-defined continuous automorphism of the $R$-algebra
$R((u))((t))$.
\end{lem}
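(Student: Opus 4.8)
The plan is to realize $\phi_{t',u'}$ as the unique continuous extension of the evident substitution homomorphism on Laurent polynomials, and then to invert it by successive approximation. Since $u',t'\in L^2\bbG_m(R)=R((u))((t))^*$ are units, the universal property of the Laurent polynomial ring gives a well-defined $R$-algebra homomorphism $R[u^{\pm1},t^{\pm1}]\to R((u))((t))$ with $u\mapsto u'$, $t\mapsto t'$. As the Laurent polynomials are dense in $R((u))((t))$ for the topology~\eqref{top} (each $f$ can be matched modulo a basic neighbourhood by truncating), it suffices to show this homomorphism is uniformly continuous: it then extends uniquely to a continuous $R$-algebra endomorphism, the extension being computed exactly by $\sum a_{i,j}u^jt^i\mapsto\sum a_{i,j}{u'}^j{t'}^i$ once that double family is shown to be summable. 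After splitting $\spec R$ into its connected components I may assume $\nu_1(t')=1$, $\nu_2(t')=b$, $\nu_1(u')=0$, $\nu_2(u')=1$ are constant integers.

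The heart of the matter is a pair of uniform filtration estimates, in which one must control the nilpotent ``lower'' parts coming from the factor $\bbP(R)=1+\frakp(R)$ in the decomposition~\eqref{decomp2}. Writing that decomposition out gives $t'=c\,u^{b}t\,(1+p)(1+\mu)$ with $c\in R^*$, $\mu\in\frakm(R)$ (hence of non-negative $t$-order, and of positive $u$-order in $t$-degree $0$), and $p\in\frakp(R)$; similarly $u'=c'\,u\,(1+q)(1+\nu)$ with the negative-$u$ part $q$ nilpotent, and $u'\in R((u))[[t]]$ by hypothesis. The decisive point is that $p$ and $q$ are genuinely \emph{nilpotent} elements of $R((u))((t))$ having only finitely many negative powers of $t$ and of $u$; so if $p^{M}=0$ then $(1+p)^{i}=\sum_{0\le k<M}\binom{i}{k}p^{k}$ is a \emph{finite} sum whose terms lie in a fixed band $t^{-D}R((u))[[t]]$ independent of $i$ (and likewise for the inverses). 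Consequently ${t'}^{\,i}$ has $t$-order $\ge i-D$, while ${u'}^{\,j}$ has non-negative $t$-order and $u$-order $\ge j-D'$ as $j\to+\infty$. These bounds show that, for every basic neighbourhood $W_{m,\{m_i\}}$ of~\eqref{top}, all but finitely many monomials $a_{i,j}{u'}^j{t'}^i$ lie in $W_{m,\{m_i\}}$: the $t$-order estimate excludes all but finitely many $i$, and for each surviving $i$ the $u$-order estimate (together with the fact that each coefficient $f_i(u)$ is Laurent, so has only finitely many negative $u$-powers) excludes all but finitely many $j$. This yields both summability (well-definedness) and continuity; additivity and multiplicativity then follow from the corresponding identities on Laurent polynomials and the continuity of multiplication.

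Finally, to see that $\phi_{t',u'}$ is an automorphism I construct a two-sided inverse of the same shape, using that composition of substitutions is contravariant: $\phi_{a,b}\circ\phi_{t',u'}=\phi_{u'(a,b),\,t'(a,b)}$. Thus I must solve the coordinate inversion $u'(\tilde u,\tilde t)=u$, $t'(\tilde u,\tilde t)=t$ for elements $\tilde u,\tilde t\in R((u))((t))$. I do this by successive approximation, starting from the principal parts obtained by inverting the leading monomial substitution $u\mapsto c'u$, $t\mapsto c\,u^{b}t$, and improving the solution one $t$-degree, and one step of the nilpotent filtration, at a time; the iteration converges because each correction is pushed into a strictly higher term of the topology~\eqref{top}, and the nilpotent corrections terminate after finitely many steps precisely because $p,q$ are honestly nilpotent (so no assumption $R\in\B$ is needed). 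One checks that $\tilde u,\tilde t$ again satisfy $\nu_1(\tilde t)=1$, $(\nu_1,\nu_2)(\tilde u)=(0,1)$, $\tilde u\in R((u))[[t]]$, so that $\phi_{\tilde t,\tilde u}$ is a well-defined continuous endomorphism by the previous step; since $\phi_{\tilde t,\tilde u}\circ\phi_{t',u'}$ and $\phi_{t',u'}\circ\phi_{\tilde t,\tilde u}$ fix $u$ and $t$, they equal the identity on the dense subalgebra of Laurent polynomials and hence everywhere by continuity. The main obstacle throughout is exactly the interaction of the two infinite directions with the nilpotent negative parts: one must ensure that the powers $(1+p)^{\pm i}$ do not spread over infinitely many $t$-degrees, and that the approximation for the inverse stabilises without any global nilpotency hypothesis on $\n R$.
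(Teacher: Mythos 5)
Your argument is essentially correct but takes a genuinely different route from the paper's. The paper first performs the substitution $u\mapsto u'$ with $t$ fixed, controlling it by the Taylor expansion $g(\tilde u+\delta,t)=\sum_{k}(D_k g)(\tilde u,t)\,\delta^k$ (applied once with $\delta\in u\cdot\frakm(R)$, where the series converges, and once with $\delta$ nilpotent, where it is a finite sum), then inverts this $u$-substitution degree-by-degree in $t$, and finally factors $\phi_{t',u'}=\phi_{t,u'}\,\phi_{t'(v,t),u}$ so that the $t$-substitution becomes a purely one-dimensional problem over $B=R((u))$; continuity is obtained by checking membership in $\GL_{\infty,\infty}(R)$ and invoking Lemma~\ref{conti}. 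You instead extend the substitution from the dense subring of Laurent polynomials by a uniform-continuity estimate, deriving the bounds on ${u'}^j{t'}^i$ from decomposition~\eqref{decomp2} and the fact that the $\bbP$-components are honestly nilpotent with bounded negative $t$- and $u$-support, and you invert the two-variable substitution simultaneously. Your route makes the topological mechanism and the reason why no hypothesis $R\in\B$ is needed quite transparent; the paper's factorization buys a much cleaner invertibility step, since each factor is a one-variable substitution. Three points in your write-up need repair before it is complete. First, multiplication on $R((u))((t))$ is \emph{not} jointly continuous for topology~\eqref{top} (only separately and sequentially continuous), so "continuity of multiplication" cannot be invoked as stated; multiplicativity of the extension should instead be checked via sequential continuity along approximating sequences of Laurent polynomials, or read off directly from the explicit formula $\sum a_{i,j}{u'}^j{t'}^i$. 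Second, the bound "the $u$-order of ${u'}^j$ is $\ge j-D'$" must be formulated per $t$-degree: the $t^k$-coefficient of ${u'}^j$ has $u$-order $\ge j-C_k$ with $C_k$ growing in $k$, which is exactly what is needed against a neighbourhood $W_{m,\{m_i\}}$ (only finitely many $t$-degrees $k<m$ matter), but it is not a single uniform band in $u$. Third, your simultaneous Newton-type inversion is left at the level of a sketch and is genuinely more delicate than you indicate, since the two equations $u'(\tilde u,\tilde t)=u$ and $t'(\tilde u,\tilde t)=t$ are coupled through $u^{\nu_2(t')}$; the paper's reduction to two successive one-variable inversions is the easier way to make this step rigorous.
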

\begin{proof}
We can change, first, the local parameter $u \mapsto u'$. To see
that for any $f \in R((u))((t))$ the series $\phi_{t,u'}(f)$ is
well-defined, i.e, it converges in the topology of $R((u))((t))$, we
use a Taylor  formula:
 \begin{equation}  \label{Tayl}
 g(\tilde{u} + \delta,t) = \sum_{k \ge 0 } (D_k(g))(\tilde{u},t)) \delta^k  \mbox{,}
 \end{equation}
 where $D_0 \equiv \id $, and for $k \ge 1$ we have  on monomials
 $$D_k (u^j t^i) = \frac{1}{k!}\frac{d^k(u^j)}{d u ^k }t^i= {j \choose k} u^{j-k}t^i
 \mbox{,} \qquad
  {j \choose k}= \frac{j \cdot (j-1) \cdot \ldots \cdot (j-k+1)}{1\cdot 2 \cdot \ldots \cdot k} \mbox{.}$$
   The map $D_k$ is extended to series $g$ from $R((u))((t))$ as an $R$-linear continuous map. We suppose that the series $D_k(g))(\tilde{u},t)$, $k \ge 0$ are already well-defined.  We apply formula~\eqref{Tayl} successively in the following cases: 1) if  $g=D_k(f) $, $\tilde{u}= cu$, $c \in R^*$, $\delta \in u \cdot \frakm (R)= u^2R[[u]] + t R((u))[[t]]$, then the right hand side of~\eqref{Tayl} is a converge series in topology of $R((u))((t))$, and 2) if $g=f$, $\tilde{u} = cu + a $, $c \in R^*$,
  $a \in  u \cdot \frakm (R)$, $\delta = (u' - \tilde{u}) \in \n R((u))((t))$, then the right hand side of~\eqref{Tayl} is a finite sum. It gives us that the series $\phi_{t,u'}(f)$ is well-defined.

  To prove that $\phi_{t,u'}$ is an automorphism of the algebra $R((u))((t))$, we find a series $v \in R((u))[[t]]^*$
  such that the substitution of
  $v$ in $u'$ instead of $u$ is equal $u$, i.e.
  $u'(v,t) =u$. Using the corresponding one-dimensional result for $R((u))$, we can find $v_1 \in R((u))^*$ such that
  $u'(v_1,t) \equiv u \mod t R((u))[[t]]$. Now it is easy to find an element $v_2 = u + b_1(u)t$, $b_1(u) \in R((u))$ such that
  $u'(v_1(v_2,t),t)  \equiv u \mod t^2 R((u))[[t]]$. After some steps we find $v_k = u +b_{k-1}(u)t^k$, $b_{k-1}(u) \in R((u))$ such that
  $u'(v_1(v_2(\ldots v_k)),t) \equiv u \mod t^k R((u))[[t]]$. Then the sequence $w_k=v_1(v_2(\ldots v_k))$ tends to $v$ when $k \to +\infty$.

  Now $\phi_{t',u'}=  \phi_{t,u'} \, \phi_{t'(v,t),u}  $, where $u'(v,t) =v(u',t)=u$. But $\phi_{t'(v,t),u}$ is a well-defined automorphism of the algebra $R((u))((t))$ by the one-dimensional result applied to the ring $B((t))$, where $B= R((u))$. From Remark~\ref{inv} it is easy to see that $\phi_{t'(v,t), u}  $ and
  $\phi_{t,u'}$ are from
$\GL_{\infty,\infty}(R)$. Therefore, by Lemma~\ref{conti} they are
continuous automorphisms of $R((u))((t))$.
  \end{proof}
\begin{rmk} \label{vir}
The composition of two automorphisms of $R((u))((t))$ as in
Lemma~\ref{clp} is not always the automorphism of the same type (as
in Lemma~\ref{clp}), since we demanded $u' \in  R((u))[[t]]$. It
would be natural to withdraw this condition. Then as in the proof of
Lemma~\ref{clp}, we can obtain that the operator $\phi_{t',u'}$ is a
well-defined continuous homomorphism from the $R$-algebra
$R((u))((t))$ to itself. But there is the problem to prove that
$\phi_{t,u'}$ is an automorphism, because there can be infinite many
nilpotent elements in the series for $u'$ (compare with the last
paragraph of Section~\ref{tdt}). If $R \in \B$ (see
Definition~\ref{catB}), then by the similar method as in the end
of~\cite[\S~1]{Mo} it is possible to prove that there is $v \in
L^2\bbG_m(R)$ such that $u'(v,t) =u$. Therefore, in this case,
$\phi_{t',u'}$ is an automorphism of $R((u))((t))$ for any
 $u',t' \in L^2\bbG_m(R) $ with $\nu_1(t')=1$, $(\nu_1, \nu_2)(u')= (0,1)$.
  \end{rmk}

\begin{prop} \label{invar}
The map $C_3$ is invariant under the change of the local parameters
$t$ and $u$. Namely, in conditions of Lemma~\ref{clp}, for any $f ,
g, h \in L^2\bbG_m(R)$ we have that
$$C_3(f,g,h) =
C_3(\phi_{t',u'}(f), \, \phi_{t',u'}(g), \, \phi_{t',u'}(h))
\mbox{.}$$
\end{prop}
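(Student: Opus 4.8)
The plan is to realize the change of parameters $\phi = \phi_{t',u'}$ as conjugation inside $\GL_{\infty,\infty}$ and then exploit the fact that $C_3$ is an intrinsic invariant of the categorical central extension, insensitive both to isomorphisms of central extensions and to the choice of lattice.

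First I would record that $\phi \in \GL_{\infty,\infty}(R)$: by Lemma~\ref{clp}, $\phi$ is a continuous automorphism of the $R$-algebra $R((u))((t))$, and from Remark~\ref{inv} together with Lemma~\ref{conti} one checks the lattice conditions of Remark~\ref{inv}, so $\phi$ lies in $\GL_{\infty,\infty}(R)$ and is invertible there. Writing $m_f$ for the operator of multiplication by $f$, the fact that $\phi$ is an \emph{algebra} automorphism gives the key identity $m_{\phi(f)} = \phi \circ m_f \circ \phi^{-1}$ for every $f \in L^2\bbG_m(R)$; in particular conjugation by $\phi$ preserves the subgroup $L^2\bbG_m \subset \GL_{\infty,\infty}$. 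Thus, under this embedding, the element $\phi_{t',u'}(f)$ is precisely the conjugate $\phi\, m_f\, \phi^{-1}$, so the statement to prove becomes
\[
C_3(\phi\, m_f\, \phi^{-1},\ \phi\, m_g\, \phi^{-1},\ \phi\, m_h\, \phi^{-1}) = C_3(m_f, m_g, m_h).
\]

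Next I would analyze how conjugation interacts with the central extension $\mathcal E_\bbL\colon g \mapsto \calD et(\bbL \mid g\bbL)$. Since $\phi$ maps lattices to lattices (Proposition~\ref{lat}) and, for $\bbL'' \subset \bbL'$, induces an $R$-linear homeomorphism of Tate modules $\bbL'/\bbL'' \xrightarrow{\sim} \phi\bbL'/\phi\bbL''$, it yields a canonical equivalence of $\calP ic^\bbZ$-torsors $\calD et(\bbL' \mid \bbL'') \cong \calD et(\phi\bbL' \mid \phi\bbL'')$. Applying this with $\bbL = \phi\bbL_0$, where $\bbL_0 = \phi^{-1}\bbL$, gives, naturally in $g$, a chain of identifications
\[
\calD et(\bbL \mid \phi g \phi^{-1}\bbL) = \calD et(\phi\bbL_0 \mid \phi g\,\bbL_0) \cong \calD et(\bbL_0 \mid g\bbL_0).
\]
This exhibits an isomorphism of categorical central extensions $c_\phi^*\,\mathcal E_\bbL \cong \mathcal E_{\phi^{-1}\bbL}$, where $c_\phi(g) = \phi g\phi^{-1}$ denotes conjugation.

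Finally I would invoke the functoriality of the generalized commutator. Pullback along the group automorphism $c_\phi$ only relabels arguments, so $C_3^{\,c_\phi^*\mathcal E_\bbL}(f,g,h) = C_3^{\,\mathcal E_\bbL}(c_\phi(f), c_\phi(g), c_\phi(h))$; and since isomorphic central extensions produce the same commutator (\cite[Cor.~2.19]{OsZh}), the displayed isomorphism gives $C_3^{\,\mathcal E_{\phi^{-1}\bbL}} = C_3^{\,c_\phi^*\mathcal E_\bbL}$. Combining these with the lattice-independence of $C_3$ established just before this proposition yields $C_3^{\,\mathcal E_\bbL}(c_\phi(f),c_\phi(g),c_\phi(h)) = C_3^{\,\mathcal E_{\phi^{-1}\bbL}}(f,g,h) = C_3^{\,\mathcal E_\bbL}(f,g,h)$, which is the claim. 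I expect the main obstacle to be making the third step fully rigorous at the categorical level: one must verify that the equivalences $\calD et(\bbL'\mid\bbL'') \cong \calD et(\phi\bbL'\mid\phi\bbL'')$ are compatible with the Picard (monoidal) structure and with the composition and restriction data defining the central extension, so that they assemble into a genuine isomorphism of central extensions to which \cite[Cor.~2.19]{OsZh} applies. By contrast, the operator-theoretic identity $m_{\phi(f)} = \phi m_f \phi^{-1}$ and the passage to $R$-points are routine.
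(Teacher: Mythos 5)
Your proposal is correct and follows essentially the same route as the paper: the paper's (much terser) proof likewise notes that $\phi_{t',u'}\in\GL_{\infty,\infty}(R)$, so that $\phi_{t',u'}(\bbL)$ is again a lattice by Proposition~\ref{lat}, and then appeals to the independence of $C_3$ from the choice of lattice. You have simply made explicit the intermediate steps the paper leaves implicit --- the conjugation identity $m_{\phi(f)}=\phi\, m_f\, \phi^{-1}$ and the resulting isomorphism of central extensions $c_\phi^*\,\mathcal{E}_{\bbL}\cong\mathcal{E}_{\phi^{-1}\bbL}$ to which \cite[Cor.~2.19]{OsZh} applies.
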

\begin{proof}
From Remark~\ref{inv} it is easy to see that $\phi_{t', u'} \in
\GL_{\infty,\infty}(R)$. Therefore, by Proposition~\ref{lat}, if
$\bbL$ is a lattice in $R((u))((t))$, then an $R$-submodule
$\phi_{t', u'} (\bbL)$ is again a lattice in $R((u))((t))$.  Now we
apply the fact that the maps $C_3$ coincide even if they are
constructed by various lattices in $R((u))((t))$.
\end{proof}

\medskip

We will need some lemmas that the property to be invariance under
the change of local parameters can uniquely define some tri-linear
and anti-symmetric map over a field. Let $k$ be a field of
characteristic zero in the following two lemmas. We recall the
following well-known lemma.
\begin{lem} \label{calcu1}
Let $K=k((t))$. Let $\langle\cdot, \cdot\rangle$ be a  pairing
$K\times K \to k$ such that it is continuous\footnote{Here and in Lemma~\ref{calcu2} a continuous map means a map which is continuous in each argument, i.e. when we fix other arguments of the map.}, bilinear, anti-symmetric
and invariant under the change of local parameter in $K$: $t \mapsto
t'$. Then $\langle f,g \rangle= c \cdot \res(f dg)$, where $c \in k$
is fixed, and $f,g$ are any elements from $K$.
\end{lem}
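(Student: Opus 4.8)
The plan is to reduce the statement to a computation of the ``matrix'' of the pairing on the topological basis $\{t^i\}_{i\in\bbZ}$ and to use two explicit families of coordinate changes to pin down all of its entries. First I would set $c_{ij}=\langle t^i,t^j\rangle$ for $i,j\in\bbZ$. Continuity in each argument means that for fixed $g$ the functional $\langle\,\cdot\,,g\rangle$ annihilates some $t^Nk[[t]]$, and likewise for fixed $t^i$; hence for each $j$ one has $c_{ij}=0$ for $i\gg 0$, and for each $i$ one has $c_{ij}=0$ for $j\gg 0$. Writing $f=\sum_i f_it^i$ and $g=\sum_j g_jt^j$, and using bilinearity together with continuity (so that the pairing commutes with the convergent expansions of $f$ and $g$), these vanishing conditions show that
\[\langle f,g\rangle=\sum_{i,j} f_i g_j\, c_{ij}\]
is a finite sum. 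Anti-symmetry gives $c_{ij}=-c_{ji}$. Thus everything is encoded in the $c_{ij}$, and since $\res(t^i\,d(t^j))=j\,\delta_{i+j,0}$, it remains to prove $c_{ij}=c\,j\,\delta_{i+j,0}$ for a single constant $c$.

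For the vanishing off the anti-diagonal I would use the rescaling $t\mapsto \lambda t$ for $\lambda\in k^*$, which is an admissible change of parameter sending $t^i$ to $\lambda^i t^i$. Invariance yields $\lambda^{i+j}c_{ij}=c_{ij}$ for all $\lambda\in k^*$; as $k$ has characteristic zero, $k^*$ is infinite, forcing $c_{ij}=0$ whenever $i+j\neq 0$. Set $d_j:=c_{-j,j}=\langle t^{-j},t^j\rangle$, so that anti-symmetry reads $d_{-j}=-d_j$ and in particular $d_0=0$. To compute $d_j$ I would apply the one-parameter family of changes $t\mapsto t'=t+a\,t^{m+1}$ with $m\geq 1$ and $a\in k$, under which $(t')^i=\sum_{n\geq 0}\binom{i}{n}a^n t^{i+mn}$. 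Expanding $\langle(t')^i,(t')^j\rangle$ by bilinearity and continuity and using the already-established support condition, only finitely many terms survive; choosing $i,j$ with $i+j=-m$ makes $\langle(t')^i,(t')^j\rangle=c_{ij}=0$, while the coefficient of $a^1$ gives $i\,c_{i+m,j}+j\,c_{i,j+m}=0$. Rewriting this in terms of $d$ (with $i=-j-m$) yields the recursion
\[j\,d_{j+m}=(j+m)\,d_j\qquad(j\in\bbZ,\ m\geq 1).\]
Taking $m=1$ and inducting on $j\geq 1$, dividing by the nonzero integer $j$ (legitimate in characteristic zero), gives $d_j=c\,j$ with $c:=d_1$; the relation $d_{-j}=-d_j$ extends this to all $j$.

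Finally, with $c_{ij}=c\,j\,\delta_{i+j,0}$ one computes $\langle f,g\rangle=c\sum_j j\,f_{-j}g_j=c\,\res(f\,dg)$, which is the claim. The main obstacle I anticipate is not any single step but the careful justification that the pairing of the infinite series $(t')^i$ may be expanded term by term and that the coefficients of the resulting power series in $a$ may be equated to zero; this rests on the continuity hypothesis together with the finiteness of the $c_{ij}$, which together guarantee that each such expansion is in fact a finite sum. Everything else is the linear algebra of solving the recursion, for which the characteristic-zero hypothesis (infinitude of $k$ and invertibility of positive integers) is used exactly twice.
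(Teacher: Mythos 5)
Your proof is correct and follows essentially the same route as the paper's: reduce to the monomial values $\langle t^i,t^j\rangle$ via continuity and bilinearity, kill the off-antidiagonal entries with the rescaling $t\mapsto\lambda t$ (the paper just takes $\lambda=2$), and pin down the antidiagonal with a change of parameter of the form $t\mapsto t+t^{m+1}$. The only cosmetic difference is that you extract a general recursion $j\,d_{j+m}=(j+m)\,d_j$ and induct with $m=1$, whereas the paper applies $t'=t+t^n$ once to the vanishing pairing $\langle t^{-n},t\rangle$ and reads off $\langle t^{-n},t^n\rangle=n\langle t^{-1},t\rangle$ directly.
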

\begin{proof}
It is enough to prove that $\langle t^{-n}, t^n \rangle =cn$,
$\langle t^n, t^m \rangle = 0$ if $n+m \ne 0$. If $n+m \ne 0$, then
we take $t'=2t$. Now from $\langle t^n, t^m \rangle = \langle{t'}^n,
{t'}^m \rangle = 2^{n+m}\langle t^n, t^m \rangle $ it follows that
$\langle t^n, t^m \rangle =0$. Hence, $\langle t^{-n}, t \rangle =0$
when $n \ge 2$. Let $t' = t + t^n$ be a new local parameter. Then
${t'}^{-n}= t^{-n}- nt^{-1} + \ldots$. Now we have
$$
0 = \langle t^{-n}, t \rangle = \langle{t'}^{-n}, t' \rangle =
\langle t^{-n} - nt^{-1} + \ldots, \, t + t^n \rangle = \langle
t^{-n}, t^n \rangle -n \langle t^{-1}, t \rangle  \mbox{.}
$$
(We can consider only the finite sum of $\langle \cdot, \cdot
\rangle $ in the above expression due to the continuous property of
$\langle \cdot, \cdot \rangle $.) Thus, $\langle t^{-n}, t^{n}
\rangle = n \langle t^{-1}, t \rangle = nc$.
\end{proof}

Now we consider the case of a two-dimensional local field.
\begin{lem} \label{calcu2}
Let $K = k((u))((t))$. Let $\langle \cdot, \cdot , \cdot \rangle $
be a map $K \times K \times K \to k$ such that it is continuous,
tri-linear, anti-symmetric and
 invariant under the change of local parameters in $K$: $t \mapsto t'$, $u \mapsto u'$. Then $\langle f,g,h \rangle = c \cdot \Res (f dg \wedge dh)$, where
$c \in k$ is fixed, and $f,g,h$ are any elements from $k$.
\end{lem}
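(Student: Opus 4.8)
The plan is to imitate the proof of Lemma~\ref{calcu1}, replacing the single relation $n+m=0$ there by the two relations coming from the two gradings of $K=k((u))((t))$, and to reduce every value to the single normalizing constant
$$
c:=\langle u^{-1}t^{-1},\,u,\,t\rangle .
$$
By continuity and tri-linearity it suffices to evaluate $\langle\cdot,\cdot,\cdot\rangle$ on monomials $u^{j}t^{i}$. The target $c\,\Res(f\,dg\wedge dh)$ evaluated on $f=u^{j_1}t^{i_1}$, $g=u^{j_2}t^{i_2}$, $h=u^{j_3}t^{i_3}$ equals $c\cdot\begin{vmatrix} j_2 & j_3\\ i_2 & i_3\end{vmatrix}$ when $i_1+i_2+i_3=0$ and $j_1+j_2+j_3=0$, and is $0$ otherwise; so the goal becomes the identity
$$
\langle u^{j_1}t^{i_1},\,u^{j_2}t^{i_2},\,u^{j_3}t^{i_3}\rangle = c\cdot\begin{vmatrix} j_2 & j_3\\ i_2 & i_3\end{vmatrix}
$$
on sum-zero triples, together with vanishing otherwise.

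First I would establish the support. The scalings $t\mapsto\alpha t$ and $u\mapsto\beta u$ ($\alpha,\beta\in k^{*}$) are changes of local parameters in the sense of Lemma~\ref{clp}, and they act on monomials by $u^{j}t^{i}\mapsto\alpha^{i}u^{j}t^{i}$ and $u^{j}t^{i}\mapsto\beta^{j}u^{j}t^{i}$. Invariance of the form, together with $k$ being infinite (as $\cha k=0$), then forces $\langle u^{j_1}t^{i_1},u^{j_2}t^{i_2},u^{j_3}t^{i_3}\rangle=0$ unless $i_1+i_2+i_3=0$ and $j_1+j_2+j_3=0$, matching the support of the residue.

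The substantive part is the evaluation on sum-zero triples, and here I would exploit two further changes of local parameters. The first is the multiplicative shear $t\mapsto ut$, $u\mapsto u$, which is admissible by Lemma~\ref{clp} (indeed $\nu_1(ut)=1$) and acts on exponents by $u^{j}t^{i}\mapsto u^{\,i+j}t^{i}$, i.e. $(i,j)\mapsto(i,i+j)$; it preserves the determinant above and lets one move in the $j$-direction. The second is the family of additive substitutions $t\mapsto t+\gamma\,t^{m}$ and $u\mapsto u+\gamma\,t$ ($\gamma\in k$), used exactly as $t\mapsto t+t^{n}$ was used in Lemma~\ref{calcu1}. The mechanism is the same as in the one-dimensional case: one applies such a change to a monomial triple whose \emph{total} exponent fails to be $(0,0)$ (so that the pairing vanishes by the support just proved), expands by tri-linearity, and invokes the support a second time to retain only the finitely many sum-zero monomial triples appearing in the expansion — the continuity hypothesis guaranteeing, as in Lemma~\ref{calcu1}, that only finite sums survive. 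Each such identity is a linear relation among the unknown values. A collinear triple, i.e. one with $\begin{vmatrix} j_2 & j_3\\ i_2 & i_3\end{vmatrix}=0$ (in particular any triple of pure $u$-powers), lies in a one-variable subfield $k((w))$ with $w$ a monomial in the common direction, where the two-form $dg\wedge dh$ vanishes identically; the same relations then force the value to be $0$, which in the directions $(\pm1,\ast)$ and $(0,\ast)$ one sees directly by shearing to a repeated argument and using anti-symmetry.

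The main obstacle is precisely this final reduction: organizing the two-dimensional bookkeeping so that the relations land on the \emph{exact} integer coefficient $\begin{vmatrix} j_2 & j_3\\ i_2 & i_3\end{vmatrix}$, and not merely on some unspecified multiple of $c$, while covering all sign choices and the degenerate cases uniformly. In Lemma~\ref{calcu1} this amounted to the single computation $\langle t^{-n},t^{n}\rangle=n\,\langle t^{-1},t\rangle$; in two dimensions I expect it to require a double induction — say on $\max_a|i_a|$ and then on $\max_a|j_a|$ — using the $t$-raising substitutions to bring the $i$-exponents into $\{-1,0,1\}$ and the shear $t\mapsto ut$ to normalize the $j$-exponents, bottoming out at the triple $(u^{-1}t^{-1},u,t)$ of value $c$. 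Everything outside this computation is a routine lift of the one-dimensional argument.
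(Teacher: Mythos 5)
Your plan coincides in outline with the paper's proof of this lemma: reduce to monomials by continuity and tri-linearity, kill every triple with $i_1+i_2+i_3\ne 0$ or $j_1+j_2+j_3\ne 0$ by invariance under the scalings (the paper uses $t'=2t$, $u'=3u$), then extract linear relations among the remaining values by expanding the invariance under further changes of local parameters, normalizing everything against $\langle u,t,u^{-1}t^{-1}\rangle=c$. So the approach is the right one. The problem is that the step you set aside as ``the main obstacle'' is not a routine lift of Lemma~\ref{calcu1} — it is essentially the entire content of the paper's proof, and your sketch does not carry it out. Concretely, the paper proceeds through the intermediate identities $\langle u^m,u^n,u^l\rangle=0$ (via $t'=t+t^2$), $\langle u^jt^i,t,u^{-j}t^{-i-1}\rangle=jc$ (via $u'=u+u^jt^i$ applied to the vanishing triple $\langle u,t,u^{-j}t^{-i-1}\rangle$), $\langle u,u^lt^k,u^{-l-1}t^{-k}\rangle=kc$ (via $t'=u^{-l}t$ and $t'=t+u^lt^k$), the general case $(i,j),(k,l)\ge(0,1)$ (via $t'=t+u^lt^k$), and two further computations for the negative-index cases (via $u'=u+u^{-j-l}t^{-k}$ and $t'=t+t^{-i-k}$), before invoking anti-symmetry to cover all sign patterns. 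None of these computations, nor a workable substitute for them, appears in your sketch; asserting that a double induction ``should'' land on the exact coefficient $j_2i_3-j_3i_2$ is precisely the claim that needs proof.

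A second, more specific concern: the reparametrizations you explicitly allow yourself, namely $t\mapsto t+\gamma t^m$, $u\mapsto u+\gamma t$ and the shear $t\mapsto ut$, generate a strictly smaller family than what the paper actually uses. Conjugating by shears turns $u\mapsto u+\gamma t$ into $u\mapsto u+\gamma u^a t$ (only $t$-degree one perturbations of $u$) and $t\mapsto t+\gamma t^m$ into $t\mapsto t+\gamma u^{a(m-1)}t^m$ (only $u$-degrees divisible by $m-1$), whereas the paper needs $u'=u+u^jt^i$ and $t'=t+u^lt^k$ for arbitrary admissible $(i,j)$ and $(k,l)$. You have not checked that the relations produced by your smaller family already determine all values, so even the scaffolding of the deferred induction is in doubt. (Your remark that collinear triples vanish because they ``lie in a one-variable subfield'' is likewise only a heuristic — the value is pinned down by the global axioms, not by restriction — though in the paper this case comes out for free from the general computation, which yields the factor $jk-il$.) To complete the argument you should either enlarge the family of substitutions to match the paper's and perform the case analysis, or prove that your restricted family suffices.
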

\begin{proof}
We will consider several cases and reduce them to the case
\begin{equation} \label{f1}
\langle u,t,u^{-1}t^{-1} \rangle =c \mbox{.}
\end{equation}

We note that
\begin{equation}  \label{twist}
\langle u^jt^i, u^lt^k, u^nt^m \rangle =0 \qquad \mbox{if} \qquad
j+l+n \ne 0 \qquad  \mbox{or} \qquad i+k+m \ne 0 \mbox{.}
\end{equation}
(The proof is by twisting $t'=2 t$, $u'= 3u$ as in the
$1$-dimensional case.)

We have to prove that $X = \langle u^jt^i, u^lt^k, u^{-j-l}t^{-i-k}
\rangle = c\cdot (jk-il)$.

We will calculate the following easy case
\begin{equation} \label{trc}
\langle u^m, u^n, u^l \rangle =0 \mbox{.}
\end{equation}
 We consider a new local parameter $t'=t+t^2$. Then ${t'}^{-1} =
t^{-1} -1 + t^2 + \ldots$. We have
\begin{eqnarray*}
0 = \langle t^{-1}u^m, u^n, u^l \rangle = \langle{t'}^{-1}u^m, u^n,
u^l \rangle=
\langle t^{-1}u^m -u^m + tu^m + \ldots,  u^n, u^l\rangle=\\
=\langle t^{-1}u^m, u^n, u^l\rangle - \langle u^m,u^n, u^l\rangle +
\langle tu^m, u^n, u^l\rangle + \ldots \mbox{.}
\end{eqnarray*}
Hence, using the continuous property of $ \langle \cdot, \cdot,
\cdot\rangle$, we have
 $ \langle u^m, u^n, u^l\rangle=0$, since by~\eqref{twist}, $ \langle t^qu^m, u^n,u^l\rangle=0$ for any $q \ne 0 $.

Now we will explain, why
\begin{equation}  \label{f2}
 \langle u^jt^i, t, u^{-j}t^{-i-1}\rangle = j \cdot c
\end{equation}
 when $(i,j) > (0,1)$ (with respect to the lexicographical order
in ${{\mathbf Z}}^2$, see~\eqref{canon}). Then $u'= u+ u^jt^i$ is a
well-defined change of the local parameter $u$. We have
$$0 =  \langle u,t,u^{-j}t^{-i-1}\rangle=  \langle u',t, {u'}^{-j}t^{-i-1}\rangle \mbox{.}$$
We compute ${u'}^{-j}= (u + u^jt^i)^{-j} = u^{-j} -ju^{-1}t^i + d
u^{j-2}t^{2i} + \ldots$, where $d \in {\mathbf Z}$. Therefore, we
have
\begin{eqnarray*}
0=  \langle u + u^jt^i, t, u^{-j}t^{-i-1} - j u^{-1}t^{-1} + du^{j-2}t^{i-1} + \ldots\rangle= \\
=-j  \langle u,t,u^{-1}t^{-1}\rangle +  \langle u^jt^i, t,
u^{-j}t^{-i-1}\rangle = -j\cdot c +  \langle u^jt^i, t,
u^{-j}t^{-i-1}\rangle \mbox{.}
\end{eqnarray*}
Thus, we explained this case.

Analogously, we can obtain that
\begin{equation}  \label{f3}
 \langle u, u^lt^k, u^{-l-1}t^{-k}\rangle =k \cdot c
\end{equation}
 when $k \ge 0$. The case $k=0$ we have proved above.
If $k=1$, then we consider $t'= u^{-l}t $. We obtain
$$
 \langle u, u^lt, u^{-l-1}t^{-1}\rangle =  \langle u, u^l{t'}, u^{-l-1}{t'}^{-1}\rangle=  \langle u, t,
u^{-1}t^{-1}\rangle=c \mbox{.}
$$
Now if $k > 1$, then we consider a well-defined change of the local
parameter $t \mapsto t'= t + u^lt^k$. We have ${t'}^{-k}= t^{-k} - k
u^lt^{-1} + e u^{2l}t^{k-2}$ + \ldots,  where $e \in {\mathbf Z}$.
We insert this expression into the following equality: $ 0=
 \langle u,t,u^{-l-1}t^{-k}\rangle=  \langle u,t', u^{-l-1} {t'}^{-k}\rangle \mbox{.} $ Thus,
using~\eqref{twist}, we obtained this case.

Now we consider the case when $(k,l) \ge (0,1)$ and $(i,j) \ge
(0,1)$. Without loss of generality (using the anti-symmetric
property of $ \langle \cdot, \cdot, \cdot\rangle$) we can assume
that $(k,l) \ge (i,j)$. If $(i,j) = (0,1)$, then this is the
previous case, see formula~\eqref{f3}.
 Therefore we assume that $(i,j) > (0,1)$. If $k=0$,
this was also calculated above. If $k=1$,  then let $t'= u^{-l}t $
be a new local parameter. We have
$$
 \langle u^j t^i, u^lt, u^{-j-l} t^{-i-1}\rangle=  \langle u^j {t'}^i, u^lt', u^{-j-l}
{t'}^{-i-1}\rangle=  \langle u^{j-li}t^i, t, u^{li-j}t^{-i-1}\rangle
\mbox{.}
$$
Since $(i,j-li) >(0,1)$, we have that the last expression was  also
calculated above. Therefore we can assume that $k>1$. There is a
well-defined change of the local parameter $t \mapsto t'$, where  $t'= t +
u^lt^k$.
We have
\begin{eqnarray}
0=  \langle u^j t^i,t,u^{-j-l}t^{-i-k}\rangle=  \langle u^j{t'}^i, t',  {u}^{-j-l} {t'}^{-i-k}\rangle =   \nonumber\\
\label{ntr} =  \langle  u^j (t+u^lt^k)^i, t+u^lt^k , u^{-j-l}( t+ u^lt^k)^{-i-k}\rangle  \mbox{.}
\end{eqnarray}
We calculate now
$(t+u^lt^k)^i = t^i +i u^l t^{i+k-1} + a u^{2l}t^{i+2k-2} + \ldots$, where $a \in
{\mathbf Z}$, and calculate $( t+ u^lt^k)^{-i-k}= t^{-i-k} + (-i-k)u^lt^{-i-1}+b u^{2l} t^{k-i-2} + \ldots$, where $b \in {\mathbf Z}$.
We substitute these expressions into formula~\eqref{ntr}. We obtain
\begin{eqnarray}
0 = \langle  u^jt^i +i u^{l+j} t^{i+k-1} + \ldots, t + u^lt^k, u^{-j-l}t^{-i-k} + (-i-k)u^{-j}t^{-i-1}
+ \ldots   \rangle = \nonumber \\
=  \langle u^jt^i, u^lt^k, u^{-j-l}t^{-i-k}  \rangle  +(-i-k) \langle  u^jt^i, t, u^{-j}t^{-i-1}  \rangle
+i \langle u^{l+j} t^{i+k-1}, t, u^{-j-l}t^{-i-k}   \rangle
 \mbox{.}   \nonumber
\end{eqnarray}
Hence and using formula~\eqref{f2} we obtain
$$
0 = X + (-i-k) \cdot j \cdot c + i \cdot (l+j) \cdot c \mbox{.}
$$
Therefore  we calculated $X = (jk-il)\cdot c$. We finished the proof of case $(k,l)
\ge (0,1)$, $(i,j) \ge (0,1)$.

We calculate now
$$
Z =  \langle u^j, u^lt^k, u^{-j-l}t^{-k}\rangle  \mbox{,}
$$
where $k <  0$. Then $u' = u+ u^{-j-l}t^{-k}$ is a well-defined
change of the local parameter $u$, since $-k >0$. We have $0 =
\langle u^j, u^lt^k, u\rangle = \langle {u'}^j, {u'}^lt^k, u'
\rangle$. We compute ${u'}^j= u^j + j u^{-l-1}t^{-k}+ \ldots$ and
${u'}^l= u^l + l u^{-j-1}t^{-k} + \ldots$. Hence,
\begin{eqnarray*}
0 =   \langle u^j + j u^{-l-1}t^{-k} + \ldots, u^lt^k + l u^{-j-1} + \ldots, u + u^{-j-l}t^{-k}\rangle = \\
= Z + l  \langle u^j, u^{-j-1}, u\rangle + j \langle u^{-l-1}t^{-k},
u^lt^k, u\rangle = Z + j \cdot (-k) \cdot c \mbox{,}
\end{eqnarray*}
where we used formulas~\eqref{trc}, \eqref{f3} and the
anti-symmetric property of $  \langle \cdot, \cdot, \cdot\rangle$.
Hence $Z =j  k c $.

We calculate now $Y=  \langle u^jt^i, u^lt^k,
u^{-j-l}t^{-i-k}\rangle$ when $(i,j)< (0,0)$ and $(k,l)<(0,0)$. Then
$(-i-k, -j-l)>(0,0)$. If $i=0$ or $k=0$, then this is the previous
case, where we calculated $Z$. (If $i=k=0$, then it follows from
formula~\eqref{trc}.) Therefore, we assume that $-k \ge 1$ and $-i
\ge 1$. We consider a well-defined change of local parameters: $t'=
t+ t^{-i-k}$. We have $0 =   \langle u^jt^i, u^lt^k,
u^{-j-l}t\rangle=   \langle u^j {t'}^i, u^l{t'}^k,
u^{-j-l}t'\rangle$. We compute ${t'}^i= t^i + it^{-k-1} + \ldots$
and ${t'}^k= t^k + kt^{-i-1} + \ldots$. Therefore, we have
\begin{eqnarray*}
0=   \langle u^jt^i + iu^jt^{-k-1} + \ldots, u^lt^k + ku^lt^{-i-1}+ \ldots, u^{-j-l}t + u^{-j-l}t^{-i-k}\rangle = \\
= Y + k   \langle u^jt^i, u^lt^{-i-1}, u^{-j-l}t\rangle + i \langle
u^jt^{-k-1}, u^lt^k, u^{-j-l}t\rangle \mbox{.}
\end{eqnarray*}
Since $-k -1\ge 0$ and $-i-1 \ge 0$, we can use the previous cases
and the anti-symmetric property of $  \langle \cdot, \cdot,
\cdot\rangle$. We obtain $Y = -k(j(i-1)-il)c -i(jk-l(-k-1))c=
(kj-li)c$.

Now, using the anti-symmetric property of $  \langle \cdot, \cdot,
\cdot \rangle$, it is easy to see that we have considered all
possible cases.
\end{proof}

\subsection{The generalized commutator and the symbol}  \label{gencat}
We recall that in Sections~\ref{tdt} and~\ref{explicform}  we have explicitly  defined  the
two-dimensional Contou-Carr\`{e}re symbol $L^2\bbG_m(R)  \times
L^2\bbG_m(R) \times L^2\bbG_m(R)  \lrto \bbG_m(R)$ for a ring $R$
  when $\mathbf{Q} \subset R$ or $R \in \B$. Lemma~\ref{unique}, the following important theorem and
  the corollary from this theorem
  show that we can define the two-dimensional Contou-Carr\`{e}re symbol
  for any ring $R$ by means of the map $C_3$. Besides, this theorem greatly generalizes Theorem~4.11 from~\cite{OsZh}.
  We recall that the map $C_3$ is tri-multiplicative and
  anti-symmetric.

\begin{thm} \label{th-main}
Let $R$ be any $\mathbf{Q}$-algebra. Then the map
 $$ C_3 \  :  \ L^2\bbG_m(R)  \times L^2\bbG_m(R)  \times L^2\bbG_m(R) \lrto \bbG_m(R)
 $$
 satisfies properties~\eqref{2cexp}-\eqref{sign}.
\end{thm}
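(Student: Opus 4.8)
The plan is to treat $C_3$ as a tri-multiplicative, anti-symmetric, functorial map which, by Proposition~\ref{invar}, is invariant under the change of local parameters, and to pin down its values by reducing to generators. Two preliminary observations drive everything. First, $C_3$ is continuous in each argument: restricting $C_3(\cdot,g,h)$ to the unipotent parts $\bbM_R$, $\bbP_R$ and composing with $\log$ produces homomorphisms $\frakm_R\to{\bbG_m}_R$ and $\frakp_R\to{\bbG_m}_R$, to which Propositions~\ref{lemma-main} and~\ref{weak version} apply. Second, by the decomposition~\eqref{decomp2} together with the infinite-product decompositions~\eqref{dd1}--\eqref{dd2}, tri-multiplicativity and continuity reduce the verification of~\eqref{2cexp}--\eqref{sign} to evaluating $C_3$ on the three families of generators: the constants $a\in R^*$, the monomials $u^jt^i$, and the elementary factors $1-a\,u^jt^i$. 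I would then check the three defining relations separately.

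For~\eqref{cc3} I would compute $C_3(a,g,h)$ from the central extension directly. A constant $a\in R^*$ acts on $R((u))((t))$ by a scalar, hence preserves every lattice $\bbL$ and admits a transparent lift to the $\calP ic^{\bbZ}$-extension; consequently the iterated commutator $C_3(a,g,h)$ reduces to the action of the scalar $a$ on the graded line $C_2(g,h)$ (see~\eqref{gencom2}), i.e.\ to $a$ raised to its degree. That degree is the index $\nu(g,h)$, which by Lemma~\ref{lem} equals $\Res(\frac{dg}{g}\wedge\frac{dh}{h})$, giving $C_3(a,g,h)=a^{\nu(g,h)}$. For~\eqref{cc4} I would use that over a field $C_3$ is the two-dimensional tame symbol (the field case; see Proposition~\ref{deform}(\ref{as1}) and~\cite{OsZh}); on a triple of monomials the tame symbol equals $(-1)^A$ with $A$ as in~\eqref{sign}, and functoriality (monomials being defined over $\mathbf{Q}$, together with the splitting of $\spec R$ into components on which $\nu_1,\nu_2$ are constant) extends this to every $\mathbf{Q}$-algebra.

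The serious case is~\eqref{2cexp}, which I would attack by linearization. For a monomial $\mu=u^Jt^I$ the assignment $s\mapsto C_3(\exp(s\mu),g,h)$ is a functorial homomorphism from $\bbG_a$ (or its formal completion) to $\bbG_m$ over $\mathbf{Q}$-algebras; since additive polynomials over $\mathbf{Q}$ are linear, it has the form $\exp\bigl(s\,\ell(\mu,g,h)\bigr)$, where $\ell(\mu,g,h)=\frac{d}{ds}\big|_{0}\log C_3(\exp(s\mu),g,h)$ is additive in $\mu$ and, by tri-multiplicativity of $C_3$, additive in $g$ and in $h$. Writing $g=\exp v$, $h=\exp w$ turns $\ell$ into a continuous, tri-linear, anti-symmetric form in $(\mu,v,w)$ which, by Proposition~\ref{invar}, is invariant under the change of local parameters; Lemma~\ref{calcu2}, applied over the field of coefficients, then forces $\ell(\mu,g,h)=c\,\Res\bigl(\mu\,\frac{dg}{g}\wedge\frac{dh}{h}\bigr)$ for a universal constant $c$, and a single direct evaluation of the generalized commutator (consistent with the residue computation of Proposition~\ref{deform}(\ref{as2})) gives $c=1$. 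Exponentiating recovers~\eqref{2cexp} for $f=\exp(s\mu)$, and tri-multiplicativity together with the continuity established above propagates it to all $f$ with $(\nu_1,\nu_2)(f)=(0,0)$, $f_0=1$ and all $g,h$; a universal-coefficient reduction as in Lemma~\ref{lift} then carries the identity from fields to arbitrary $\mathbf{Q}$-algebras.

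The main obstacle is precisely~\eqref{2cexp}: unlike the constant and monomial relations, which are essentially structural, it carries the genuine analytic content of the symbol and forces one to handle infinite products. The delicate point is to legitimately pass from the multiplicative map $C_3$ to the additive trilinear form $\ell$ --- this needs both the continuity coming from Propositions~\ref{lemma-main} and~\ref{weak version} and the invariance of Proposition~\ref{invar} --- and then to invoke the field-level rigidity of Lemma~\ref{calcu2} while correctly fixing the normalization $c=1$ by an independent computation inside the central extension.
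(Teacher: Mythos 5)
Your overall strategy for property~\eqref{2cexp} on the unipotent part is essentially the paper's: reduce by tri-multiplicativity and the continuity statements of Propositions~\ref{lemma-main} and~\ref{weak version} to elementary exponentials $\exp(au^jt^i)$, classify the resulting multiplicative maps as $\exp$ of a trilinear form, identify that form via the rigidity Lemma~\ref{calcu2} (using the invariance from Proposition~\ref{invar}), and fix the universal constant by one explicit commutator computation over $\mathbf{Q}[\ve_1,\ve_2,\ve_3]/(\ve_1^2,\ve_2^2,\ve_3^2)$ using formula~\eqref{comm}. Your treatment of~\eqref{cc3} differs from the paper's, which argues by connectedness of $\bbP$, $\bbM$, $\bbG_m$ together with $\underline{\Hom}_{\mbox{gr}}(\bbG_m,\bbG_m)\simeq\bbZ$ and then computes only $C_3(a,u,t)=a$ via~\eqref{comm}; your route through the degree of the graded line $C_2(g,h)$ is viable in principle but quietly assumes the identity $\deg C_2(g,h)=\nu(g,h)$ over an arbitrary $\mathbf{Q}$-algebra, which is of the same order of difficulty as the cases it is meant to replace and is nowhere established in your outline.

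The genuine gap is that your linearization only covers $g$ and $h$ lying in the image of $\exp$, i.e. in $\bbP\times\bbM$ (times $1+\n$). Property~\eqref{2cexp} must also hold when one or both of $g,h$ is a monomial $u^lt^k$ --- in particular when $g$ or $h$ equals $u$ or $t$ --- and such elements are not of the form $\exp v$ with $v\in\frakm(R)$ or $\frakp(R)$, so writing $g=\exp v$, $h=\exp w$ and invoking Lemma~\ref{calcu2} simply does not reach them. In the paper these cases occupy a substantial portion of the proof (Lemmas~\ref{sim2} and~\ref{sim3}, plus the closing verification that $C_3(f,t,t)=C_3(f,u,u)=C_3(f,u,t)=1$ for $f\in(\bbP\times\bbM)(R)$): one classifies bimultiplicative maps $\bbH_1\times\bbH_2\to\bbG_m$ with $\bbH_i$ equal to $\bbG_a$ or $\widehat{\bbG}_a$, kills all off-diagonal constants by the scaling $t'=2t$, $u'=3u$, and evaluates the diagonal ones by explicit changes of parameter such as $t'=t+u^{-j}t^{-i}$ combined with the reduction~\eqref{comm} to an honest commutator in $R((u))[[t]]/(t+\ve_1)R((u))[[t]]$. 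None of this follows from the trilinear-form argument, and your proposal supplies no mechanism for it; relatedly, the distinction between $\bbG_a$ and $\widehat{\bbG}_a$ (a nontrivial homomorphism $\bbG_a\to\bbG_m$ of group schemes over $\mathbf{Q}$ is impossible, which is exactly what forces several symbols to vanish) is glossed over in your parenthetical ``or its formal completion'' but is doing real work in the paper's Lemma~\ref{hom}.
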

\begin{proof}
During the proof we assume that all the schemes (and ind-schemes) are defined over the field $\mathbf{Q}$. For simplicity of notations, we will omit indication on the field $\mathbf{Q}$.

We will use the following fact. Let $V$ be an (elementary, for simplicity) Tate $R$-module,
and let $\GL(V)$ be the group of automorphisms of $V$ as a Tate $R$-module (i.e. $\GL(V)(R')$ is the group of continuous automorphisms of the Tate $R'$-module $V \hat{\otimes}_R R'$), which is a sheaf
of groups over $\mathbf{Aff}/R$. Then we have a canonical
homomorphism (explicitly defined for an elementary Tate $R'$-module $V \hat{\otimes}_R R'$ by the choice of a coprojective
lattice\footnote{We recall that by definition from~\cite{Dr}, a lattice $L$ in a Tate $R$-module $V$ is an open $R$-submodule
of $V$ such that $L/U$ is finitely generated for any open $R$-module $U \subset L$. A lattice $L$ is called coprojective if $V/L$ is a projective $R$-module.}, after a Nisnevich
covering $\spec R' \to \spec R$), see also~\cite[\S~5.6]{Dr}:
\[\calD et_V: \GL(V)\to\calP ic^\bbZ_R \mbox{.}\]
Let $Z_2\subset\GL(V)$ be the subsheaf of commuting elements, then
we obtain the usual commutator map for $\calD et_V$  (see
Section~$2$D of~\cite{OsZh}):
\[\on{Comm}: Z_2\to \bbG_m.\]
Now let $f,g,h\in\GL_{\infty,\infty}(R)$ be commuting with each
other elements. Assume that $\bbL$ is a lattice in $R((u))((t))$
fixed by $f$ and $g$, and assume that $h\bbL\subset\bbL$. Then $f,g$
induces $\pi_h(f),\pi_h(g)\in \GL(V)$ where $V=\bbL/h\bbL$ and
\begin{equation} \label{comm}
C_3(f,g,h)=\on{Comm}(\pi_h(f),\pi_h(g))^{-1} \mbox{.}
\end{equation}
 Formula~\eqref{comm} can be proved in exactly the same way as in \cite{OsZh} (see the proof of
Theorem~4.11 in \emph{loc. cit.}, in particular Lemma~4.12 and the commutative
diagram~(4-8)).

\medskip

We recall (see formula~\eqref{schdec}) that for the group ind-scheme $L^2\bbG_m$ we have decomposition
\[L^2\bbG_m=\bbP\times\bbZ^2\times \bbG_m \times \bbM.\]

We will use also the following fact.
 Let $\bbS$ and $\bbT$ be any connected group (ind)-subschemes in the ind-scheme $L^2\bbG_m$.
 We show that
 \begin{equation}  \label{groupsch}
 C_3(f,g,h)=1 \mbox{,}
 \end{equation}
 where $f \in \bbS(R)$, $g \in \bbT(R)$, $h \in \bbG_m(R)$.
 Indeed, by varying
 $f\in\bbS(R)$ and  $g\in\bbT(R)$ for any ring $R$, we may regard
$C_3$ as a morphism of  (ind)-schemes
\[\bbS\times \bbT\to \underline{\Hom}_{\mbox{gr}}(\bbG_m,\bbG_m) \simeq\bbZ \mbox{.}\]
The (ind)-scheme $\bbS \times\bbT$ is connected. Therefore the claim is clear, since $C_3(0,g,h)=1$ for any $g$ and $h$ from $L^2\bbG_m(R)$.
We note that by the same reason formula~\eqref{groupsch} remains true if we put in this formula $g= u$ or $g =t$ and $f, h$ as above.

The schemes $\bbP$,  $\bbG_m$ and  $\bbM$ are connected. Indeed, it is clear that $\bbG_m$ and $\bbM$ are connected schemes. Besides, the topological space (over $\spec \mathbf{Q} $) of the scheme $\bbP$ is equal to one point, because if
$f \in L_m \n_n(R)$ (see formula~\eqref{nilind}) then $a_i^{k_{i,n}}=0$, where $i \ge m$  and $k_{i,n}$ depends on $i$ and $n$.)

\medskip

We will use also that if $f$, $g$ and $h$ are elements from $L^2\bbG_m(R)$ such that they preserve a lattice in $R((u))((t))$,
then $C_3(f,g,h)=1$. Indeed, we have directly from the construction that  the categorical central extension restricted to the subgroup generated by elements $f$, $g$ and $h$
is trivial.

\medskip

The proof of the theorem is based on several case by case
inspections. We will use the tri-multiplicative and anti-symmetric property
of the map $C_3$.

\medskip

We will check  property~\eqref{cc4}  for the map $C_3$. Since $u$ and $t$ belong to $\mathbf{Q}((u))((t))^*$, then this property directly  follows
from~\cite[Th.~4.11]{OsZh}.

\medskip

We will check now  property~\eqref{cc3} for the map $C_3$. We will consider several cases.

Let $a \in \bbG_m(R)$, $g$ and $h$ be any elements from $(\bbP \times \bbM \times \bbG_m )(R)$ for any ring $R$. Then
$$C_3(a,g,h)=C_3(g,h,a)=1$$ by formula~\eqref{groupsch}. It satisfies formula~\eqref{cc3}.

Let $a \in \bbG_m(R)$, $g$ be any element from $(\bbP \times \bbM \times \bbG_m )(R)$ for any ring $R$.
Then
$$
C_3(a,g,t)=C_3(g,t,a)=1
$$
by an analog of formula~\eqref{groupsch}. It satisfies formula~\eqref{cc3}.

The case $C_3(a,g,u)=1$ when   $a \in \bbG_m(R)$ and  $g$ is any element from $(\bbP \times \bbM \times \bbG_m )(R)$ for any ring $R$
is analogous to the previous case. It satisfies formula~\eqref{cc3}.

The case $C_3(a,t,t)=C_3(a,u,u)=1$ when  $a \in \bbG_m(R)$ follow from the fact $\Hom(\bbG_m,\bbG_m)\simeq\bbZ $. Therefore we check these equalities on $a \in \mathbf{Q}^* \subset \bbG_m(R)$ and this checking follows from~\cite[Th.~4.11]{OsZh}. It satisfies formula~\eqref{cc3}.

The last case $C_3(a,u,t)=a$ when   $a \in \bbG_m(R)$ follows from formula~\eqref{comm}
for $V=R((u))$ and $h=t$. It satisfies also formula~\eqref{cc3}. We have finally  checked   property~\eqref{cc3} for the map $C_3$.

\medskip

We will check now  property~\eqref{2cexp} for the map $C_3$. We will consider again several cases.

Let $f \in (\bbP \times \bbM) (R) $, $g \in \bbG_m(R)$ and $h $ be any from $L^2\bbG_m(R)$, where $R$ is any ring.
Using the anti-symmetric property of the map $C_3$ we reduce this case to the cases above when we checked the property~\eqref{cc3}.

Now let $f$, $g$ and $h$ be any elements from  $(\bbP \times \bbM) (R) $. Using the tri-multiplicativity of the map $C_3$, we will assume that any of these elements belongs either to $\bbP(R)$ or to $\bbM(R)$.
We recall  that over $\mathbf{Q}$, there are isomorphisms of group ind-schemes (see formulas~\eqref{explog1} and~\eqref{explog}):
\[\exp:\frakm\simeq \bbM, \quad \exp: \frakp\simeq \bbP \mbox{.}\]
Our goal is to prove that
\begin{equation} \label{goal}
C_3(f,g,h)= (f,g,h) \mbox{,}
\end{equation}
 where $(\cdot, \cdot, \cdot)$ is an expression given by formula~\eqref{2cexp}.
By Proposition~\ref{weak version}, it is enough to check formula~\eqref{goal}
on the following elements:
    \begin{equation} \label{variant}
    f=\exp(au^jt^i) \mbox{,} \quad g=\exp(bu^lt^k) \mbox{,} \quad h=\exp(cu^nt^m) \mbox{,}
    \end{equation}
    where $a$, $b$ and $c$ are from $R$.

We note that if $(i,j) > (0,0)$, then $f \in \bbM(R)$. If $(i,j) < (0,0)$, then $f \in \bbP(R)$ and $a \in \n R$. For the sequel it will be convenient for us
to include also variant when $(i,j)= (0,0)$ and $a \in \n R$ (although in this case $f$ is not from $(\bbP \times \bbM)(R)$).
The same is true for $g$ with $(k,l)$ and $h$ with $(m,n)$ (including the cases when $(k,l)=0$ and $(m,n)=0$).

Let us consider, for example,  the variant when indices $(i,j) \le 0$, $(k,l)>0$ and $(m,n) >0$.
We consider a map $\widehat{\bbG}_a\to\bbP \times (1+ \n)$ which is given as  $a\mapsto \exp(au^jt^i)$,
where $\widehat{\bbG}_a=\on{Spf} \mathbf{Q}[[T]]= \n$  is the additive
formal group, the group ind-schemes $1+ \n \stackrel{\exp}{\longleftarrow} \widehat{\bbG}_a$  are isomorphic (the group ind-scheme $1+\n$ corresponds to the index
$(i,j)=(0,0)$). We consider also two maps $\bbG_a\to\bbM$ which are given as $ b\mapsto
\exp(bu^lt^k)$ and $c\mapsto \exp(cu^nt^m)$. By composing these maps
with $C_3$ and $(\cdot, \cdot, \cdot)$, we obtain two
tri-multiplicative morphisms
$$
 \widehat{\bbG}_a\times\bbG_a\times\bbG_a \longrightarrow \bbG_m  \mbox{.}
$$
Analogously, when we consider other variants for indices $(i,j)$, $(k,l)$ and $(m,n)$ from formula~\eqref{variant} we will also obtain
two tri-multiplicative morphisms of the following type:
\begin{equation} \label{trim}
\bbH_1 \times \bbH_2  \times \bbH_3  \longrightarrow \bbG_m  \mbox{,}
\end{equation}
where   the group ind-scheme $\bbH_i$ $(1 \le i \le 3)$ is equal either to $\bbG_a$ or to $\widehat{\bbG}_a$.

\begin{lem} \label{hom}
Let $\phi$ be a tri-multiplicative morphism from $\bbH_1 \times \bbH_2  \times \bbH_3$ to
$\bbG_m$, where the group ind-scheme $\bbH_i$ $(1 \le i \le 3)$ is equal either to $\bbG_a$ or to $\widehat{\bbG}_a$ (over $\mathbf{Q}$).
If $\bbH_i = \bbG_a$ for all $i$, then $\phi=1$. In other cases the set of all such tri-multiplicative morphisms  is isomorphic to the set
$\bbG_a(\mathbf{Q})$ such that if $d \in \mathbf{Q}$ then the morphism $\phi_d$ is given by a formula:
\begin{equation} \label{diff}
\phi_d(a,b,c)= \exp(dabc)  \mbox{,}
\end{equation}
where $a \in \bbH_1(R)$, $b \in \bbH_2(R)$ and $c \in \bbH_3(R)$  (for any ring $R$).
\end{lem}
\begin{proof}
We consider, first, the case when all $\bbH_i = \widehat{\bbG}_a$.
Let $\on{\bf Tri}$ be  the set of all tri-multiplicative morphisms:
$\widehat{\bbG}_a\times \widehat{\bbG}_a\times \widehat{\bbG}_a \longrightarrow \bbG_m$.
We note that $\widehat{\bbG}_a\times \widehat{\bbG}_a\times \widehat{\bbG} = \on{Spf} \mathbf{Q}[[x_1, x_2, x_3]]$
and $\bbG_m = \spec \mathbf{Q}[T,T^{-1}]$. Then any morphism $\phi$ from the ind-scheme  $\on{Spf} \mathbf{Q}[[x_1, x_2, x_3]]$
to the scheme $\bbG_m$ is given by an invertible series
$$p=\phi^*(T) \in \mathbf{Q}[[x_1, x_2, x_3]] \mbox{.}$$
We note that the group law $\widehat{\bbG}_a \times \widehat{\bbG}_a \to \widehat{\bbG}_a $ is given by the map of rings
$\mathbf{Q}[[y]] \to \mathbf{Q}[[x, x']]$, where $y \mapsto x+ x'$. The group law $\bbG_m \times \bbG_m \to \bbG_m$
is given  by the maps of rings $\mathbf{Q}[S,S^{-1}] \to \mathbf{Q}[T,T^{-1}, T', T'^{-1}]$, where $S \mapsto TT'$.
Therefore, using the tri-multiplicativity condition, we obtain that
 the set $\on{\bf Tri}$ is described in the
following way:
\[ \small
\on{\bf Tri}=\left\{p(x_1,x_2, x_3)\in
\mathbf{Q}[[x_1,x_2,x_3]]  \ \left|\ \begin{array}{c}p\in \mathbf{Q}[[x_1,x_2,x_3]]^*\\
p(x_1+x'_1,x_2,x_3)=p(x_1,x_2,x_3)p(x'_1,x_2,x_3)\\
p(x_1,x_2+x'_2,x_3)=p(x_1,x_2,x_3)p(x_1,x'_2,x_3) \\
p(x_1,x_2, x_3 +x'_3) = p(x_1,x_2,x_3) p(x_1,x_2,x'_3)
.\end{array}\right.\right\}\]
We note that from this description  we obtain that the series $p(x_1,x_2,x_3)$ has constant coefficient equal to $1$.
By applying the $\log$-map to the
conditions describing $p(x_1,x_2,x_3)$, one can see\footnote{Indeed, it is enough to prove that if $F(x) \in S[[x]]$ and $F(x+y)=F(x)+F(y)$ for a $\mathbf{Q}$-algebra $S$,
 then $F(x)= c x$ for some $c \in S$. We consider the Taylor formula $F(x+y) = F(x) + F'(x)y + (1/2)F''(x)y^2 + \ldots$. Hence we have that
 $F(y)= F'(x)y + (1/2)F''(x)y^2 + \ldots$. Therefore $F'(x)= c$ for some $c \in S$. Now, to apply induction we consider the ring $S[[x_3]]$,
 where $S=\mathbf{Q}[[x_1,x_2]]$.} by
induction on number of variables that
$$ p(x_1,x_2,x_3)=\exp(dx_1x_2x_3)
$$
 for some
 $d\in {\mathbf Q}$.
 The last formula implies formula~\eqref{diff}.

 The other variants for the group ind-schemes $\bbH_i$ can be done analogously using that
 $\bbG_a \times \widehat{\bbG}_a \times \widehat{\bbG}_a=\on{Spf} \mathbf{Q}[x_1] [[x_2, x_3]]
 $ and $\bbG_a \times \bbG_a \times \widehat{\bbG}_a=\on{Spf} \mathbf{Q}[x_1, x_2][[ x_3]]$. For the case when all $\bbH_i = \bbG_a$
 we note that $\bbG_a \times \bbG_a \times \bbG_a=\spec \mathbf{Q}[x_1, x_2,x_3]$. Therefore the expression $\exp(dx_1x_2x_3)$ belongs   to the ring $\mathbf{Q}[x_1, x_2,x_3]$ if and only if $d=0$.
 \end{proof}

From this lemma we have the following formula for any tri-multiplicative morphism $\phi : \bbU \times \bbU \times \bbU \to \bbG_m$, where $ \bbU =\bbP \times \bbM \times (1+ \n) $:
\begin{equation} \label{coinc}
\phi(\exp(au^jt^i), \exp(bu^lt^k),\exp(cu^nt^m))= \exp (d_{(i,j),(k,l),(m,n)} abc)  \mbox{,}
\end{equation}
where the group ind-schemes $(1+ \n)  \stackrel{\exp}{\longleftarrow} \widehat{\bbG}_a$ are isomorphic, indices  $(i,j)$, $(k,l)$, $(m,n)$ are from $\mathbf{Z} \times \mathbf{Z}$, $a$, $b$ and $c$ are any elements from any ring $R$ (and  $a$ or $b$ or $c$ is a nilpotent element if the corresponding index $(i,j)$ or $(k,l)$
or $(m,n)$  is less or equal to $(0,0)$). Besides, the elements $d_{(i,j),(k,l),(m,n)} \in \mathbf{Q}$ depend only on $\phi $ and indices $(i,j)$, $(k,l)$, $(m,n)$.

Therefore, to prove that two tri-multiplicative morphisms $C_3$ and $(\cdot, \cdot, \cdot)$ composed with $\exp$-maps coincide as morphisms $\bbH_1 \times\bbH_2\times\bbH_3\to\bbG_m$, it is enough to prove that the corresponding elements $d_{(i,j),(k,l),(m,n)} \in {\mathbf Q}$ in formula~\eqref{coinc} coincide for both morphisms. This can be done by the choice of a particular ring $R$ and particular elements $a$, $b$ and $c$
in this ring. We have a lemma.
\begin{lem}\label{sim} Let
$R=\mathbf{Q}[\varepsilon_1,\varepsilon_2,\varepsilon_3]/(\varepsilon_1^2,\varepsilon_2^2,\varepsilon_3^2)$.
Let $f=\exp(\varepsilon_1 u^jt^i)\in L^2\bbG_m(R), g=\exp(\varepsilon_2
u^lt^k)\in   L^2\bbG_m(R)$ and $h=\exp (\varepsilon_3 u^nt^m) \in  L^2\bbG_m(R)$,
then
\begin{equation} \label{eqc3cc}
C_3(f,g,h)=(f,g,h) =
\exp((lm-nk)\delta_{i+k+m,0}\delta_{j+l+n,0}\varepsilon_1
\varepsilon_2 \varepsilon_3)  \mbox{.}
\end{equation}
\end{lem}
\begin{proof}
The second equality in~\eqref{eqc3cc} follows from
formula~\eqref{2cexp} by easy calculations. We note that we have a well
defined map $\langle\cdot, \cdot, \cdot\rangle: K \times K \times K
\to {\mathbf Q}$, where $K ={\mathbf Q}((u))((t))$, which is
defined by  the following
equality (this equality is a consequence of  formula~\eqref{coinc} and the continuous property of the map $C_3$ which follows from Proposition~\ref{lemma-main}):
 \begin{equation} \label{trilin}
 1+\langle p,q,r\rangle\varepsilon_1  \varepsilon_2 \varepsilon_3 =
C_3(1+  p\varepsilon_1, 1+q \varepsilon_2, 1+ r
\varepsilon_3) \mbox{,}
\end{equation}
 where $p,q,r$ are any elements from $K$\footnote{We note that $K$ is the Lie algebra of the group ind-scheme $ \bbU =\bbP \times \bbM \times (1+\n) $. Then by any tri-multiplicative morphism $\phi: \bbU \times \bbU \times \bbU \to \bbG_m$ we can construct a continuous tri-linear map
 $\langle\cdot, \cdot, \cdot\rangle: K \times K \times K
\to {\mathbf Q}$ by a formula analogous to formula~\eqref{trilin}. From the above reasonings we have the fact:
  the morphism $\phi$
 is uniquely defined by the map $\langle\cdot, \cdot, \cdot\rangle$.}.  Since $C_3$ is a tri-multiplicative, continuous, anti-symmetric and invariant under
 the change of local parameters map (see Proposition~\ref{invar}), the map $\langle\cdot, \cdot, \cdot\rangle$ is  tri-linear, continuous, anti-symmetric and invariant
 under the change of
 local parameters.
Now the first formula in~\eqref{eqc3cc} coincides with the last
formula in~\eqref{eqc3cc} by Lemma~\ref{calcu2} (from which it follows that $\langle p,q,r\rangle = c \Res( p dq \wedge dr) $, where $c \in \mathbf{Q}$) and from the
following easy direct calculation of $\langle \cdot , \cdot , \cdot\rangle $ on particular elements $p$, $q$ and $r$:
\begin{eqnarray*}
C_3(1+  u\varepsilon_1, 1+ t \varepsilon_2, 1+  u^{-1}t^{-1}
\varepsilon_3)= C_3(t+u^{-1} \varepsilon_3, 1+  u\varepsilon_1, 1+ t
\varepsilon_2 ) = \\ = \on{Comm}(\pi_{f'}(1+  u\varepsilon_1),
\pi_{f'}( 1+ t \varepsilon_2))= 1 + \varepsilon_1  \varepsilon_2
\varepsilon_3 \mbox{,}
\end{eqnarray*}
where $f'=t+u^{-1} \varepsilon_3 $ and we used formula~\eqref{comm}.
\end{proof}

From Lemma~\ref{sim} we have that
\begin{equation} \label{immed}
d_{(i,j),(k,l),(m,n)} = (lm-nk)\delta_{i+k+m,0}\delta_{j+l+n,0}
\end{equation}
in formula~\eqref{coinc} for both
morphisms $C_3$ and $(\cdot, \cdot, \cdot)$ composed with $\exp$-maps.
Therefore we have   checked\footnote{We note that from formula~\eqref{immed} we immediately obtain, for example,
that $C_3(f,g,h)=1$ if all elements $f$, $g$ and $h$ belong to $\bbP(R)$ or all elements $f$, $g$ and $h$ belong to $\bbM(R)$} the property~\eqref{2cexp}  for elements $f \in (\bbP \times \bbM)(R)$, $g \in (\bbP \times \bbM)(R)$ and $h \in (\bbP \times \bbM)(R)$.

\smallskip

Now we will check property~\eqref{2cexp} when $f$, $g$ are any elements from  $(\bbP \times \bbM) (R) $, and $h$ is fixed and equal either to $t$ or
to $u$. As in the previous case, it will be convenient  for us to consider elements $f$ and $g$  from the bigger group ind-scheme, i.e. $(\bbP \times \bbM \times (1 +\n)) (R) $, where the group ind-schemes $1+\n \stackrel{\exp}{\longleftarrow} \widehat{\bbG}_a$ are isomorphic.
As in the previous case, using Proposition~\ref{weak version}, we can reduce the proof to the analysis of
the following case:
 $f=\exp(au^jt^i)$ and
$g=\exp(bu^lt^k)$,  where $a$, $b$ from the ring $R$, and $a \in \n R$ if $(i,j) \le 0$, $b \in \n R$ if $(k,l) \le 0$ since in this case they belong to $(\bbP \times \widehat{\bbG}_a)(R)$.  We consider the map: $\widehat{\bbG}_a \to \bbP \times (1+\n)$,
$c\mapsto \exp(cu^mt^n)$ if $(n,m) \le 0$,  and the map $\bbG_a\to\bbM$: $c\mapsto
\exp(cu^mt^n)$ if $(n,m) > 0$. Using these maps and restricting the maps $C_3$ and $(\cdot,
\cdot, \cdot)$ to elements $f$ and $g$ as above (under fixed indices $(i,j)$ and $(k,l)$ from the set $\mathbf{Z} \times \mathbf{Z}$) and $h=u$ we obtain two
bimultiplicative maps
\begin{equation} \label{rp}
{\bbH}_1 \times \bbH_2  \longrightarrow \bbG_m \mbox{,}
\end{equation}
where the group ind-scheme $\bbH_i$ ($1 \le i \le 2$) is isomorphic either to $\bbG_a$ or to $\widehat{\bbG}_a$.

Now we have the full analog of Lemma~\ref{hom} with the analogous
proof. If $\bbH_1 =\bbH_2 = \bbG_a$, then any bimultiplicative
morphism of type~\eqref{rp} is equal to $1$. In other cases of the
group ind-schemes $\bbH_i$ the set all such bimultiplicative
morphisms is isomorphic to the set $\bbG_a(\mathbf{Q})$: by an
element $d \in \mathbf{Q}$ we construct the morphism $\phi_d$ of
type~\eqref{rp} with an explicit formula
\begin{equation} \label{two}
\phi_d(a,b)= \exp(dab) \mbox{,}
\end{equation}
where $a \in \bbH_1(R)$ and $b \in \bbH_2(R)$ for any ring $R$.

Now to calculate
that  $d$ coincides for the maps $C_3$ and $(\cdot, \cdot, \cdot)$ composed with $\exp$-maps as above, we
consider the ring $R= \mathbf{Q}[\varepsilon_1, \varepsilon_2]/(\varepsilon_1^2,
\varepsilon_2^2) $ and  $R$-points of group ind-schemes as in~\eqref{rp}.
 We need a lemma.
\begin{lem}\label{sim2} Let
$R=\mathbf{Q}[\varepsilon_1,\varepsilon_2]/(\varepsilon_1^2,\varepsilon_2^2)$.
Let $f=\exp(\varepsilon_1 u^jt^i)\in L^2\bbG_m(R)$, $g=\exp(\varepsilon_2
u^lt^k)\in L^2\bbG_m(R)$, then
\begin{equation} \label{eqc3cc2}
C_3(f,g,u)=(f,g,u) = \exp(i
\delta_{i+k,0}\delta_{j+l,0}\varepsilon_1  \varepsilon_2)  \mbox{.}
\end{equation}
\end{lem}
\begin{proof}
The second equality in~\eqref{eqc3cc2} follows from
formula~\eqref{2cexp}   by easy direct calculations.
 Now it is enough to calculate
$ C_3(1+u^jt^i \ve_1, 1+ u^lt^k \ve_2,u) $. By above reasonings with
bimultiplicative maps this expression has to be equal to $1 + d
\ve_1 \ve_2$, where $d \in {\mathbf Q}$ depends on $i,j,k,l$. We
will compute  this $d$. We note that for any $a, b, c \in {\mathbf
Q}$ it is true an equality (by formula~\eqref{cc3} which we already checked):
$$
C_3(1+bu^jt^i \ve_1, 1+ cu^lt^k \ve_2,a)=1 \mbox{.}
$$
We consider the change of local parameters: $t'= 2t$, $u'=3u$. Since
$C_3$ is invariant under the change of local parameters (by Proposition~\ref{invar}) and $C_3$ is a
tri-multiplicative map, we have
\begin{eqnarray*}
1+ d\ve_1 \ve_2 =  C_3(1+u^jt^i \ve_1, 1+ u^lt^k \ve_2,u)=
C_3(1+{u'}^j{t'}^i \ve_1, 1+ {u'}^l{t'}^k \ve_2,u') = \\ =
C_3((1+u^jt^i \ve_1)^{2^j3^i},  (1+ u^lt^k \ve_2)^{2^l3^k}  , u)
=1+2^{j+l}3^{i+k}d \ve_1 \ve_2 \mbox{.}
\end{eqnarray*}
Hence we have that $d=0$ if $j+l \ne 0$ or $i+k \ne 0$. It means
that we have to compute now $ C_3(1+u^jt^i \ve_1, 1+ u^{-j}t^{-i}
\ve_2,u) $. Without loss of generality we assume that $i \le 0$. If $i=0$, then the last
expression is equal to $1$, since elements $u, 1+u^j, 1+u^{-j}$
preserve the lattice $R((u))[[t]]$. If $i=-1$, then we consider the
change of the local parameter $t' = u^jt$. The map $C_3$ is
invariant under this change. Therefore we obtain
\begin{eqnarray*}
C_3(1+u^jt^{-1} \ve_1, 1+ u^{-j}t \ve_2, u) =
C_3(1+u^j{t'}^{-1} \ve_1, 1+ u^{-j}{t'} \ve_2, u) = \\
= C_3(1+t^{-1} \ve_1, 1+ t \ve_2, u) = C_3(t+ \ve_1, 1+t \ve_2, u)=
1 -\ve_1 \ve_2  \mbox{.}
\end{eqnarray*}
 (Here the last equality is easily calculated in the ring $R((u))[[t]]/ (t+\ve_1)R((u))[[t]]$ by formula~\eqref{comm}.)

Therefore we suppose that $i<-1$. Then $t' = t + u^{-j}t^{-i}$ is a
well-defined change of the local parameter $t$. We have
\begin{eqnarray*}
1= C_3( 1+ u^{j}t^{i} \ve_1,
 1+ t \ve_2,
 u)=C_3(
  1+ {u}^{j}{t'}^{i} \ve_1,
 1+ t' \ve_2,
 u) = \\
= C_3(
  1+ ( u^{j}t^{i} +i t^{-1} + \ldots )\ve_1,
1+ (t+u^{-j}t^{-i})\ve_2,
 u) \mbox{.}
\end{eqnarray*}
Hence, using the continuous property of $C_3$ (see Proposition~\ref{lemma-main}), we obtain that
\begin{eqnarray*}
C_3(
 1+ u^{j}t^{i} \ve_1,
1+u^{-j}t^{-i} \ve_2, u)= C_3(
 1+ t^{-1} \ve_1,
 1+t \ve_2,
u)^{-i} = \\
= C_3(t+ \ve_1, 1+t \ve_2, u)^{-i}    =1 +i \ve_1 \ve_2 \mbox{.}
\end{eqnarray*}
\end{proof}
Now from formula~\eqref{eqc3cc2} we see that $d = i
\delta_{i+k,0}\delta_{j+l,0}$ for both maps $C_3$ and $( \cdot , \cdot ,
\cdot)$ composed with $\exp$-maps on indices $(i,j)$, $(k,l)$ from the set $\mathbf{Z} \times  \mathbf{Z}$. Thus, by above reasonings, we have checked formula~\eqref{2cexp}
when elements $f$ and $g$ are from $(\bbP \times \bbM)(R)$ and $h=u$.

Formula~\eqref{2cexp} with elements $f$ and $g$  from $(\bbP \times \bbM)(R)$ and $h=t$ follows from the same arguments and a lemma.
\begin{lem}\label{sim3} Let
$R=\mathbf{Q}[\varepsilon_1,\varepsilon_2]/(\varepsilon_1^2,\varepsilon_2^2)$.
Let $f=\exp(\varepsilon_1 u^jt^i)\in L^2\bbG_m(R)$, $g=\exp(\varepsilon_2
u^lt^k) \in L^2\bbG_m(R)$, then
$$
C_3(f,g,t)=(f,g,t) = \exp(l
\delta_{i+k,0}\delta_{j+l,0}\varepsilon_1  \varepsilon_2)  \mbox{.}
$$
\end{lem}
\begin{proof}
The second equality follows from formula~\eqref{2cexp}   by easy
direct calculations. Now it is enough to calculate $ C_3(1+u^jt^i
\ve_1, 1+ u^lt^k \ve_2,t) $. By above reasonings with
bimultiplicative maps this expression has to be equal to $1 + d
\ve_1 \ve_2$, where $d \in {\mathbf Q}$ depends on $i,j,k,l$. We
will compute  this $d$. We note that for any $a, b, c \in {\mathbf
Q}$ it is true an equality:
$$
C_3(1+bu^jt^i \ve_1, 1+ cu^lt^k \ve_2,a)=1 \mbox{.}
$$
We consider the change of local parameters: $t'= 2t$, $u'=3u$. Since
$C_3$ is an invariant under the change of local parameters and
tri-multiplicative map, we have
\begin{eqnarray*}
1+ d\ve_1 \ve_2 =  C_3(1+u^jt^i \ve_1, 1+ u^lt^k \ve_2,t)=
C_3(1+{u'}^j{t'}^i \ve_1, 1+ {u'}^l{t'}^k \ve_2,t') = \\ =
C_3((1+u^jt^i \ve_1)^{2^j3^i},  (1+ u^lt^k \ve_2)^{2^l3^k}  , t)
=1+2^{j+l}3^{i+k}d \ve_1 \ve_2 \mbox{.}
\end{eqnarray*}
Hence we have that $d=0$ if $j+l \ne 0$ or $i+k \ne 0$. It means
that we have to compute now $ C_3(1+u^jt^i \ve_1, 1+ u^{-j}t^{-i}
\ve_2,t) $.
 Omitting trivial cases (when by formula~\eqref{comm} the computation is done in $R((u))$), we can suppose that
$(-i,-j) > (0,1)$. Then $u' = u + u^{-j}t^{-i}$ is a well-defined
change of the local parameter $u$. We have
\begin{eqnarray*}
1= C_3( 1+ u^{j}t^{i} \ve_1,
 1+ u \ve_2,
 t)=C_3(
  1+ {u'}^{j}t^{i} \ve_1,
 1+ u' \ve_2,
 t) = \\
= C_3(
  1+ ( u^{j}t^{i} +j u^{-1} + \ldots )\ve_1,
1+ (u+u^{-j}t^{-i})\ve_2,
 t) \mbox{.}
\end{eqnarray*}
Hence, using the continuous property of the map $C_3$, we obtain that
$$
C_3(
 1+ u^{j}t^{i} \ve_1,
1+u^{-j}t^{-i} \ve_2, t)= C_3(
 1+ u^{-1} \ve_1,
 1+u \ve_2,
t)^{-j} = 1 -j \ve_1 \ve_2 \mbox{.}
$$
\end{proof}

At the end we have to verify that $C_3 (f,t,t)=C_3(f,u,u)= C_3(f,u,t)=1$ when $f \in (\bbP \times \bbM)(R)$.
It can be done, for example, in the following way. We fix $g$ and $h$ from the two-element set $\{t,u \}$. Arguing as above
and using the facts $\underline{\Hom}_{\mbox{gr}}(\bbG_a, \bbG_m)= \widehat{\bbG}_a$ and $\underline{\Hom}_{\mbox{gr}}(\widehat{\bbG}_a, \bbG_m)= \bbG_a$
we see that it is enough to prove an equality:
$$
C_3(1 + u^j t^i \ve, g,h)=1 \mbox{.}
$$
when the ring $R=\mathbf{Q}[\ve]/\ve^2$ and $(i,j) \in \mathbf{Z} \times \mathbf{Z} \setminus (0,0)$. If $(i,j) >0$, then this equality follows from the fact $\widehat{\bbG}_a(\mathbf{Q})=1$. If $(i,j) <0$, then  we have that $C_3(1 + u^j t^i \ve, g,h)=1+ d\ve  $ where $d \in \bbG_a(\mathbf{Q})$. We consider a change of local parameters: $u'=2u$,
$t'=3t$. Then we obtain
$$
1+d\ve=C_3(1+ u^jt^i \ve, g,h)= C_3(1+ {u'}^j{t'}^i \ve, g,h)= C_3(1+2^j3^i u^jt^i \ve, g, h)= 1+ 2^j3^i d \ve \mbox{.}
$$
Hence we have $d=0$. We have finished to verify the last case of the theorem.
\end{proof}

\begin{cor} \label{cor} Let $R \in \B$.
The two-dimensional   Contou-Carr\`{e}re  symbol $(\cdot, \cdot, \cdot)$ given by the explicit formula
from Definition~\ref{defcon2} and the map $C_3$ coincide as the maps from $({L^2\bbG_m})^3(R)$ to $\bbG_m(R)$.
\end{cor}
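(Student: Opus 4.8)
The plan is to deduce the corollary from the already-established $\mathbf{Q}$-algebra comparison together with the lifting Lemma~\ref{lift}, in the same spirit as the proof of Proposition~\ref{stgen}. The two maps to be compared are both tri-multiplicative, anti-symmetric and functorial in $R$: the map $C_3$ is a generalized commutator of a central extension of sheaves of groups, hence a morphism of functors $(L^2\bbG_m)^3\to\bbG_m$, and it is recalled to be tri-multiplicative and anti-symmetric just before Theorem~\ref{th-main}; the explicit symbol of Definition~\ref{defcon2} has these properties by Propositions~\ref{stgen} and~\ref{prcom}.

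First I would treat the case $\mathbf{Q}\subset R$. By Theorem~\ref{th-main} the map $C_3$ satisfies properties~\eqref{2cexp}--\eqref{sign}, so by the uniqueness clause of Definition~\ref{cc2} it agrees on all of $(L^2\bbG_m(R))^3$ with the symbol of that definition. On the other hand, Proposition~\ref{prcom} states that for $\mathbf{Q}\subset R$ the explicit symbol of Definition~\ref{defcon2} also coincides with the symbol of Definition~\ref{cc2}. Hence $C_3$ and the explicit symbol agree on every triple whenever $\mathbf{Q}\subset R$.

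For a general $R\in\B$ and arbitrary $f,g,h\in L^2\bbG_m(R)$, I would invoke Lemma~\ref{lift} to produce rings $S_1,S_2\in\B$ with a ring map $S_1\to R$, an inclusion $S_1\subset S_2$ and $\mathbf{Q}\subset S_2$, together with lifts $\tilde f,\tilde g,\tilde h\in S_1((u))((t))^*\subset S_2((u))((t))^*$ mapping to $f,g,h$. Since $\mathbf{Q}\subset S_2$, the previous paragraph applied over $S_2$ yields
\[
C_3(\tilde f,\tilde g,\tilde h)=(\tilde f,\tilde g,\tilde h)
\]
as elements of $S_2^*$. Both sides are a priori defined over $S_1$, giving elements of $S_1^*$; by functoriality of $C_3$ and of the explicit symbol, these $S_1$-values map to the displayed $S_2$-values under $S_1^*\to S_2^*$. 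As $S_1\subset S_2$, this map is injective, so the equality already holds in $S_1^*$. Applying functoriality once more along $S_1\to R$ carries it to $C_3(f,g,h)=(f,g,h)$ in $R^*$, which is the claim.

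The step requiring the most care is the bookkeeping of functoriality across the three rings $R$, $S_1$ and $S_2$: one must be sure that $C_3$ is genuinely a morphism of functors, so that its value over $S_1$ both maps to its value over $R$ and injects into its value over $S_2$. This is exactly the functorial nature of the categorical central extension and of the generalized commutator recalled from~\cite{OsZh}, so no additional analytic input is needed; all the substantive content has been absorbed into Theorem~\ref{th-main}, Proposition~\ref{prcom} and Lemma~\ref{lift}.
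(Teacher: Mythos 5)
Your proposal is correct and follows essentially the same route as the paper: both reduce to the case $\mathbf{Q}\subset R$ via the functoriality of the two maps and the lifting Lemma~\ref{lift}, and then conclude from Theorem~\ref{th-main} together with the last assertion of Proposition~\ref{prcom}. Your version merely spells out more explicitly the injectivity of $S_1^*\to S_2^*$ and the uniqueness clause of Definition~\ref{cc2}, which the paper leaves implicit.
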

\begin{proof}
The map $C_3$ is functorial with respect  to the ring $R$. By
Proposition~\ref{prcom}, the two-dimensional   Contou-Carr\`{e}re
symbol $(\cdot, \cdot , \cdot)$ given by an explicit formula from
Definition~\ref{defcon2} is also functorial with respect to the ring
$R$. Therefore using Lemma~\ref{lift} we can reduce the proof to the
case of a $\mathbf{Q}$-algebra $R$ from $\B$. Now we apply
Theorem~\ref{th-main}  and the end of Proposition~\ref{prcom} to
show that these two maps coincide.
\end{proof}

\section{Reciprocity laws}
In this section we fix a perfect field $k$ and a local finite $k$-algebra $R$.

Let $V$ be a Tate $R$-module. We note that any projective $R$-module is a free.
Since the ring $R$ is an Artinian ring,  any connected Nisnevich covering of $\spec R$ is $\spec R$ itself.
Hence, using Drinfeld's theorem (see~\cite[Th.~3.4]{Dr} and also an explicit exposition in~\cite[\S 2.12]{BBE}), we obtain that $V$ is an elementary Tate $R$-module, i.e. $V= P \oplus Q^*$,
where $P$ and $Q$ are discrete free $R$-modules.

We will use also the following remark.
\begin{rmk} \label{sum}
Let $\phi: M \to N$ be an open continuous surjection between Tate $R$-modules
such that these topological $R$-modules have countable bases of open neighborhoods of $0$. Then by Lemma~\ref{split}  we have topological decomposition
 $M = \Ker \phi \oplus N $ and $\Ker \phi $ is a Tate $R$-module.
We have also  a canonical isomorphism of $\mathcal{P}ic_R^{\bbZ}$-torsors:
$$
{\mathcal D}et({\Ker \phi}) + {\mathcal D}et({N})  \lrto {\mathcal D}et({M})   \mbox{,}
$$
because for any coprojective lattices $L_1$ from $\Ker \phi$  and $L_2$ from  $N$ we have that $L_1 \oplus L_2 $ is a coprojective lattice in $M$ (in other words, we can find a coprojective lattice in $M$ such that its intersection  with $\Ker \phi$ and its image in $N$ will be  coprojective lattices in $\Ker \phi$ and in $N$ correspondingly).
\end{rmk}

\medskip
Now we introduce the norm map which generalizes the usual norm map for extensions of fields.
Let $L \supset K \supset k $ will be  finite extensions of fields. We recall that $R$ is a finite local $k$-algebra and $k$ is a perfect field.  We {\em define} the norm map
\begin{equation}  \label{defnorm}
\Nm_{L/K} \; : \;  (L \otimes_k R)^* \lrto (K \otimes_k R)^*   \mbox{,}  \quad a \mapsto \prod_{i=1}^n \sigma_i(a)  \mbox{,}
\end{equation}
where the $\sigma_i$   are all the isomorphisms of $L$  into the algebraic closure $\bar{K}$   of $K$  fixing the elements of $K$,
and $\sigma_i$ is extended to the ring homomorphism $L \otimes_k R  \to \bar{K} \otimes_k R $ by the natural rule
$b \otimes x \mapsto  \sigma_i(b) \otimes x$ where  $b \in L$, $x \in R$. It is clear that the map $\Nm_{L/K}$ is well-defined\footnote{To see that $\Nm_{L/K} \subset (K \otimes R)^* $ we note that $(\bar{K} \otimes_K R)^{\Gal{\bar{K}/K}} = K \otimes R$.} and this map is a multiplicative map.
Besides, from the corresponding property of isomorphisms of fields we have that for any finite extension $M \supset L$ of fields the  maps
$\Nm_{L/K} \circ \Nm_{M/L}$ and $\Nm_{M/K}$ from $(M \otimes_k R)^*$ to  $(K \otimes R)^*$ coincide.

\smallskip

Let $X$ be a smooth connected algebraic surface over $k$. For any
closed point $x \in X$ let $\ho_x$ be a completion of the local ring
$\oo_x$ at the point $x$. Let $K_x$ be the localization of the ring
$\ho_x$ with respect to the multiplicative system $\oo_x \setminus
0$. For any irreducible curve $C$ on $X$ (in other words, for any
integral one-dimensional subscheme $C$ of $X$) let $K_C$ be a field
which is the completion of the field $k(X)$ with respect to the
discrete valuation given by the curve $C$. By any pair $x \in C$,
where $x$ is a closed point and $C \subset X$ is an irreducible
curve (which contains $x$) we will canonically construct the ring
$K_{x,C}$ (see also details in a survey~\cite{O4}). We consider the
decomposition
 \begin{equation}  \label{ext}
 C \mid_{\spec \ho_x} = \bigcup_{i=1}^s {\mathbf C}_i  \mbox{,}
 \end{equation}
 where every ${\mathbf C}_i$ is an integral one-dimensional subscheme in $\spec \ho_x$. We {\em define}
 \begin{equation}  \label{xC}
 K_{x,C} = \prod_{i=1}^s K_i \mbox{,}
 \end{equation}
 where the field $K_i=K_{x, \mathbf{C}_i}$ is the completion of the field $\Frac \ho_x$ with respect to the discrete valuation given by ${\mathbf C}_i $.

 We consider any one-dimensional integral subscheme ${\mathbf C}  \subset \spec \ho_x$.
We consider
 the field $K= K_{x, \mathbf{C}}$ which is the completion of the field $\Frac \ho_x$ with respect to the discrete valuation given by ${\mathbf C} $.
Let $M$ be the residue field of the discrete valuation field   $K$. Since $\ho_x = k(x)[[v,w]]$ where $k(x)$ is the residue field of the point $x$, by the Weierstrass preparation theorem we have that the field $M$ is a finite extension  of at least one  of the fields: $k(x)((v))$ or  $k(x)((w))$.
Therefore $M$ is a complete discrete valuation field with the residue field $k'$ which is a finite extension of the field $k(x)$.
  Hence
  \begin{equation}  \label{istop}
  K = M((t))=k'((u))((t))
  \end{equation}
  for some $u$ and $t$ from $\Frac \ho_x$, i.e $K$ is a two-dimensional local field. (We used that on $K$ there is the natural topology of inductive and projective limits which extends the topology on $k(x)[[u,v]]$. This topology comes to the  topology of two-dimensional local field under isomorphisms~\eqref{istop}.)

  Let $F $ be a field such that
$k \subset F \subset k(x)$. Let $L$ be a field such that $L \supset k'$ and $L \supset F$ is a finite Galois extension. We have that
$$
k' \otimes_F L = \prod_{l=1}^m L_{\sigma_l}  \mbox{,}
$$
where $L_{\sigma_l}=L$  and the product is taken over all the isomorphisms $\sigma_l$ of $k'$ into $L$  fixing the elements of $F$,
and  $m= [ k' : k]$.  We consider a scheme
$$Y = \spec \ho_x \times_{\spec F} \spec L$$ with a canonical morphism $p$ from this scheme to $\spec \ho_x$. Let $\mathbf T$ be the set of all pairs $y \in D$
where $D$ is a one-dimensional integral subscheme of $Y$ and $y$ is a closed point on $D$  such that $p(y)=x$ and $p(D)= {\mathbf C}$. From  the properties  of complete discrete valuation fields and extensions of valuations we have isomorphisms of rings
\begin{equation}  \label{norm}
\prod_{l=1}^m L_{\sigma_l} ((u))((t))= k'((u))((t)) \otimes_F L = K \otimes_F L = \prod_{\{ y \in D \} \in {\mathbf T}} K_{y,D}  \mbox{,}
\end{equation}
where any $K_{y,D}$ is a two-dimensional local field and it is equal to $L_{\sigma_l}((u))((t))$ for some $l$. We have that the set $\mathbf T$ consists of $m$ elements
and the group $\Gal(L/F)$ acts  by permutations of direct summands in~\eqref{norm} such that this group acts on $L_{\sigma_l}$ as
$\sigma (e_{\sigma_l})=\sigma (e)_{\sigma \sigma_l}$ for $\sigma \in \Gal(L/F)$ and $e_{\sigma_l} \in L_{\sigma_l}$. Besides, we have
that in view of isomorphisms~\eqref{norm} the embedding $K \to K \otimes_F L  : x \mapsto x \otimes 1$ is given as
\begin{equation}  \label{evid}
\sum_{p,q} a_{p,q}u^q t^p   \longmapsto \prod_{l=1}^m  \sum_{p,q} \sigma_l(a_{p,q}) u^q t^p   \quad \mbox{where} \; a_{p,q} \in k'  \mbox{.}
\end{equation}

\smallskip

For any pair $x \in C$ as above, we have in formula~\eqref{xC}  that $K_i=k_i((u_i))((t_i))$, where the field $k_i$ is a finite extension of the field $k(x)$. We consider
$$K_{x,C} \otimes_k R = \prod_{i=1}^s (K_i \otimes_k R) =  \prod_i^{s} (k_i \otimes_k R)((u_i))((t_i))  \mbox{.}$$
Let the map
$(\cdot, \cdot, \cdot)_i :  ((k_i \otimes_k R)((u_i))((t_i))^*)^3   \to (k_i \otimes_k R)^*  $
be the two-dimensional Contou-Carr\`{e}re symbol.
 We {\em define} a map:
$$
(\cdot, \cdot, \cdot )_{x,C} \quad : \quad
(K_{x,C}  \otimes_{k} R)^*  \times (K_{x,C}  \otimes_{k} R)^* \times (K_{x,C}  \otimes_{k} R)^*  \lrto (k(x) \otimes_k R)^*
$$
in the following way\footnote{We note that from \S~\ref{seccentr}  we have that $(\cdot, \cdot, \cdot)$ does not depend on the choice of local parameters $u_i$ and $t_i$.}
\begin{equation} \label{omit}
(f,g,h)_{x,C} = \prod_{i=1}^s  \Nm_{k_i /k(x)} (f_i, g_i, h_i)_i
\end{equation}
where $f= \prod_{i=1}^s f_i$, $f_i \in (K_i \otimes_k R)^*$, and the same notations we take for $g$ and $h$.

We note that for any irreducible curve $C \subset X$ we have the canonical embedding $K_C \hookrightarrow K_{x,C}$ for any closed point $x \in C$.
This embedding induces a map $K_C \otimes_k R \to K_{x,C} \otimes_k R$.

For any closed point $x \in X$ we have the canonical embedding $K_x \hookrightarrow K_{x,C}$ for any irreducible curve $C \subset X$ which contains the point $x$. This embedding induces a map $ K_x \otimes_k  R \to K_{x,C} \otimes_k R  $.

\begin{thm}[Reciprocity laws for the two-dimensional Contou-Carr\`{e}re symbol]  \label{res-law}
Let $X$ be a smooth connected algebraic surface over a perfect field $k$. Let $R$ be a local finite $k$-algebra. The following reciprocity laws are satisfied.
\begin{enumerate}
\item  \label{part1} Let $x$ be a closed point on $X$. Then for any $f$, $g$ and $h$ from $(K_x \otimes_k R)^*$ we have
\begin{equation}  \label{point}
\prod_{C \ni x} (f,g,h)_{x,C} =1  \mbox{,}
\end{equation}
where this product is taken over all irreducible curves $C$ containing the point $x$ on $X$ and  in the product only finitely many  terms are distinct from $1$.
\item \label{part2}
Let $C$ be a projective irreducible curve on $X$. Then for any $f$, $g$ and $h$ from $(K_C \otimes_k R)^*$ we have
\begin{equation}  \label{curve}
\prod_{x \in C} \Nm_{k(x)/k} (f,g,h)_{x,C} =1  \mbox{,}
\end{equation}
where this product is taken over all closed points $x$ on $C$ and  in the product only finitely many  terms are distinct from $1$.
\end{enumerate}
\end{thm}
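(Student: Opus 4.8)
The plan is to deduce both reciprocity laws from the categorical central extension by $\Pic^{\bbZ}$ constructed in Section~\ref{seccentr}, applied not to a single two-dimensional local field but to an adelic $2$-Tate $R$-module assembled from all the $K_{x,C}$ around the fixed point (for part~\eqref{part1}) or along the fixed curve (for part~\eqref{part2}). Since $R$ is a finite local $k$-algebra, it is Artinian and hence $R\in\B$, so Corollary~\ref{cor} identifies each local symbol $(\cdot,\cdot,\cdot)_i$ with the commutator $C_3$ of the determinantal central extension; moreover every Tate $R$-module is elementary and free, as recorded at the start of this section, so the whole formalism of Section~\ref{seccentr}---graded-determinant theories $\Det$, the extension of $\GL_{\infty,\infty}$ by $\Pic^{\bbZ}$, and the tri-multiplicative, anti-symmetric map $C_3$---is available for $R$-coefficients. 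It therefore suffices, in each case, to realize the displayed product of local symbols as one \emph{global} $C_3$ on the adelic object, and then to prove that this global $C_3$ vanishes on the diagonal image of the relevant global field.

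For part~\eqref{part1} I would form the restricted product $V=\prod'_{C\ni x}(K_{x,C}\otimes_k R)$ over the irreducible curves $C\ni x$, taken with respect to the integral subrings; it is a $2$-Tate $(k(x)\otimes_k R)$-module, and $(K_x\otimes_k R)^*$ acts on it by multiplication through the diagonal embedding $K_x\hookrightarrow\prod_C K_{x,C}$ of~\eqref{xC}. By Remark~\ref{sum} the graded-determinant torsor of $V$ is the sum of the local ones, so the commutator $C_3$ of the extension of $\GL(V)$ factors as the product over $C\ni x$ of the local commutators; regarding each factor $K_{x,C}\otimes_k R=\prod_i(k_i\otimes_k R)((u_i))((t_i))$ as a $2$-Tate $(k(x)\otimes_k R)$-module by restriction of scalars inserts exactly the norms $\Nm_{k_i/k(x)}$ of~\eqref{omit}, so this global $C_3$ equals $\prod_{C\ni x}(f,g,h)_{x,C}\in(k(x)\otimes_k R)^*$. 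The reciprocity~\eqref{point} is then the statement that the extension splits over $(K_x\otimes_k R)^*$; this is a purely local fact at $x$, valid because $\ho_x$ is a two-dimensional regular complete local ring, so that the diagonal copy of $\ho_x\otimes_k R$ furnishes a canonical determinantal trivialization over $(K_x\otimes_k R)^*$. It is the $R$-linear categorical form of Parshin's reciprocity around a point, and it forces $C_3=1$.

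Part~\eqref{part2} runs along the same lines with the adelic object $W=\prod'_{x\in C}(K_{x,C}\otimes_k R)$ ranging over the closed points of the projective curve $C$, and with $K_C\otimes_k R$ embedded diagonally through $K_C\hookrightarrow K_{x,C}$. Now $C_3$ on $W$, computed over $R=k\otimes_k R$, reproduces $\prod_{x\in C}\Nm_{k(x)/k}(f,g,h)_{x,C}\in R^*$ once both layers of norms---the inner $\Nm_{k_i/k(x)}$ of~\eqref{omit} and the outer $\Nm_{k(x)/k}$ of~\eqref{curve}---are produced by successive restriction of scalars of the determinant lines from $k_i$ to $k(x)$ to $k$ (the transitivity of these norms matching the transitivity of $\Nm$ noted after~\eqref{defnorm}). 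The decisive input here is the \emph{projectivity} of $C$: exactly as the vanishing of the degree of a principal divisor trivializes the one-dimensional extension over $k(C)^*$ in Theorem~\ref{thres1}, properness of $C$ makes the adelic determinant of $W$ canonically trivial over the diagonal $(K_C\otimes_k R)^*$, whence $C_3=1$ and~\eqref{curve} follows.

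The hardest part will be the two splitting statements, and above all establishing for part~\eqref{part2} that projectivity of $C$ yields a genuinely \emph{monoidal} trivialization of the $\Pic^{\bbZ}$-torsor over $(K_C\otimes_k R)^*$---not merely a trivialization of the underlying central extension of groups---so that the tri-multiplicative obstruction $C_3$ really vanishes; this is where the adelic complex of $X$ tensored with $R$, the finiteness of the cohomology of $\oo_C$ coming from properness, and Drinfeld's graded-determinant theory have to be combined. Checking that restriction of scalars of determinant theories induces precisely the two families of norm maps, and that in each product only finitely many local factors differ from $1$ (because $f,g,h$ are integral units along all but finitely many curves through $x$, respectively at all but finitely many points of $C$), are the remaining routine points.
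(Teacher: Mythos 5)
Your overall strategy---identify the symbol with $C_3$ via Corollary~\ref{cor}, assemble the local rings $K_{x,C}\otimes_k R$ into an adelic object, and derive the product formula from a monoidal trivialization of the resulting $\Pic^{\bbZ}$-central extension over the global group---is exactly the strategy of the paper, which implements it by tensoring the adelic complexes of \cite{OsZh} with $R$ and checking that all the relevant quotients are Tate $R$-modules. However, the two steps you defer as ``the hardest part'' are not just unverified technicalities; as stated, one of your claims is too strong and one step of the argument is missing outright.

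For the law along a curve, properness of $C$ only yields a trivialization of the adelic central extension over $(k(X)\otimes_k R)^*$, not over all of $(K_C\otimes_k R)^*$ as you assert: the cohomological argument requires the acting element to move invertible subsheaves of the constant sheaf $k(X)\otimes_k R$ on $X_R$ (sheaves of the form $g\cdot\oo_{X_R}(D)$), which a general element of $(k(C)\otimes_k R)((t_C))^*$ does not do. The paper closes this by factoring any $d\in(K_C\otimes_k R)^*$ as $d_1d_2$ with $d_1\in(k(X)\otimes_k R)^*$ and $d_2\in 1+t_C^n\cdot(k(C)\otimes_k R)[[t_C]]$, and killing $d_2$ simultaneously at all $x\in C$ by a continuity property of the symbol. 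More seriously, the local factor of the adelic extension at $x\in C$ is built from the ring $B_x\otimes_k R$ (with $B_x$ the inductive limit of the $s_C^{-n}\ho_x$), whereas $(f,g,h)_{x,C}$ is the commutator of the extension built from the lattice $\ho_{x,C}\otimes_k R$ in $K_{x,C}\otimes_k R$; the correct adelic component in the $t_C$-direction is $\da_C((t_C))\otimes_k R$ with the $B_x$-lattices, not the restricted product with respect to the $\ho_{x,C}\otimes_k R$. The paper must prove that these two categorical central extensions are mutually inverse---by exhibiting their sum as the determinant of the cohomology of the complex $\ad_{X,C,x,R}$, which in turn needs the nontrivial lemma that $(B_x\otimes_k R)/g\cdot(B_x\otimes_k R)$ is a Tate $R$-module---before the splitting of the adelic extension translates into $\prod_{x\in C}(f,g,h)_{x,C}=1$. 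Your identification of the global $C_3$ with the displayed product skips this comparison entirely. A smaller point: the paper sidesteps your ``restriction of scalars of determinant theories induces the norm'' claim by base-changing to $\bar k$ at the outset via \eqref{evid}, which avoids having to prove that pushforward of graded-determinant theories along $k_i\supset k(x)\supset k$ realizes \eqref{defnorm}.
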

\begin{proof}
Before to prove  parts~\eqref{part1} and~\eqref{part2} of the theorem we will make some general remarks which will be useful for the proof of both parts.

We note that  formula~\eqref{point} depends only on the two-dimensional local regular ring $\oo_x$.

From formula~\eqref{norm} and the above description of the field $K_{x, \mathbf{C}}$ we have that if an integral one-dimensional subscheme $\mathbf{C} \subset \spec \ho_x$   (for some point $x$)  splits in  $\spec (\ho_x \hat{\otimes}_F \bar{k}) $ (where $k \subset F \subset k(x)$ and the field
$\bar{k}$ is an algebraic closure of $k$), then it splits on the same irreducible  components   over some finite Galois extension $L \supset k$.
Conversely, if $\bar{C} \subset X \otimes_{\spec k} \spec{\bar{k}}$ is an irreducible curve, then $\bar{C}$ is defined over some finite extension $L \supset k$.
 Now every $\mathbf{\bar{C}}_i$ (see formula~\eqref{ext}) which is  defined over $L$ comes from some $\mathbf{\bar{C}} \subset \spec \ho_x$ after the base change given by the field extension from $k(x)$ to $L$.

Using formula~\eqref{evid}, the definition of the norm map  given by formula~\eqref{defnorm} and the functoriality of $(\cdot, \cdot, \cdot)$  we reduce at once the proof of this theorem to the case of an algebraically closed ground  field  if we consider the scheme
$\spec \oo_x \times_{\spec k(x)} \spec{\bar k} $ instead of $\spec \oo_x$ for the proof of formula~\eqref{point} and the scheme
$X \times_{\spec k}  \spec(\bar{k})$ instead of $X$ for the proof of formula~\eqref{curve}. So, we assume that $k= \bar{k}$ is an algebraically closed field. Besides, we can omit the norm maps in formulas~\eqref{omit}   and~\eqref{curve}.

We can assume that $X$ is connected.

By Corollary~\ref{cor} of Theorem~\ref{th-main} we know that $(\cdot, \cdot, \cdot) =C_3$. Therefore we will prove the reciprocity laws~\eqref{point}
and~\eqref{curve} for the map $C_3$. Our strategy to prove these reciprocity laws is to repeat the proof of Theorem~5.3 from~\cite{OsZh},
but to change the scheme $X$ to the scheme $X_R = X \otimes_{\spec k} \spec R$  and consider  local rings and adelic complexes on the scheme $X_R$
which has the same topological space as the scheme $X$.

\smallskip

\eqref{part1}
We will give a sketch of the proof of formula~\eqref{point}.
We fix a point $x \in X$.
 Similarly to~\cite[\S~5B]{OsZh} we look at the scheme
$U_{x,R}= (\spec {\ho_x \otimes_{k} R}) \setminus x$.  For any $f
\in  (\Frac(\ho_x) \otimes_k R)^*$ we consider the coherent subsheaf
$f \cdot (\ho_x\otimes_{k} R)$ of the constant sheaf $\Frac(\ho_x)
\otimes_k R$ on the scheme $\spec {\ho_x \otimes_{k} R}$. The adelic
complex $\ad_{X,x,R}(f \cdot (\ho_x\otimes_{k} R))$ of the
restriction of the sheaf $f \cdot(\ho_x\otimes_{k} R)$ to the scheme
$U_{x,R}$ looks as follows\footnote{For various $f$ the
corresponding adelic complexes $\ad_{X,x,R}(f \cdot
(\ho_x\otimes_{k} R))$  are isomorphic because the sheaves are
isomorphic. We are interested in the position of this adelic complex
inside of the adelic complex of the restriction  of the constant
sheaf $\Frac(\ho_x) \otimes_k R$ to the scheme $U_{x,R}$}:
$$
\da_{X,x,0,R} (f \cdot(\ho_x\otimes_{k} R)) \oplus \da_{X,x,1,R} (f \cdot( \ho_x\otimes_{k} R))
 \lrto \da_{X,x,01,R} (f \cdot(\ho_x\otimes_{k} R))
$$
and this complex is isomorphic to the subcomplex $f \cdot
(\ad_{X,x}( \ho_x) \otimes_k R)$ of the complex $\ad_{X,x}(\Frac
\ho_x) \otimes_k R$, where $\ad_{X,x}(\cdot)$ is the corresponding
adelic complex when $R=k$. Therefore we have
\begin{eqnarray*}
\da_{X,x,0,R} (f \cdot(\ho_x\otimes_{k} R)) =\Frac(\ho_x) \otimes_k
R\\
\da_{X,x,1,R} (f \cdot( \ho_x\otimes_{k} R)) = \prod_{\mathbf{C} \ni
x} f \cdot ( \hat{\oo}_{x, \mathbf{C}} \otimes_k R) \\
\da_{X,x,01,R} (f \cdot( \ho_x\otimes_{k} R)) = {\prod_{\mathbf{C}
\ni x}}' K_{x, \mathbf{C}} \otimes_k R  \mbox{,}
\end{eqnarray*}
where $\mathbf{C}$ runs over all one-dimensional integral subschemes
in $\spec \ho_x$, a ring $\hat{\oo}_{x, \mathbf{C}}$ is the discrete
valuation ring in the two-dimensional local field $K_{x,
\mathbf{C}}$, an expression $f \cdot ( \hat{\oo}_{x, \mathbf{C}}
\otimes_k R)$ is considered inside of the ring $K_{x, \mathbf{C}}
\otimes_k R$, and $\prod'$ is the restricted product with respect to
the rings $\hat{\oo}_{x, \mathbf{C}} \otimes_k R$. Besides,
$$H^0(\ad_{X,x,R}(f \cdot (\ho_x\otimes_{k} R))) = f \cdot
(\ho_x\otimes_{k} R)$$ is a compact Tate $R$-module (i.e. it is dual
to the free discrete $R$-module), and $ H^1(\ad_{X,x,R}(f \cdot
(\ho_x\otimes_{k} R))) $ is isomorphic to $ H^1(\ad_{X,x}( \ho_x ))
\otimes_k R $ which is a discrete Tate $R$-module.

Let $f$ and $g$ are from $(\Frac(\ho_x) \otimes_k R)^*$. For any
$\mathbf{C}$ we have that $f$ and $g$ belong to $(K_{x, \mathbf{C}}
\otimes_k R)^*$. Therefore for any $\mathbf{C}$ there is
$n_{\mathbf{C}} \in \mathbf{N}$ such that inside the ring $K_{x,C}
\otimes_k R$ we have
$$t_{\mathbf{C}}^{n_{\mathbf C}} \ho_{x, \mathbf{C} }
\otimes_k R \subset f \cdot (\ho_{x, \mathbf{C}} \otimes_k R) \qquad
\mbox{and} \qquad t_{\mathbf{C}}^{n_{\mathbf C}} \ho_{x, \mathbf{C}
} \otimes_k R \subset g \cdot (\ho_{x, \mathbf{C}} \otimes_k R)
\mbox{,}
 $$
 where $t_{\mathbf{C}} \in \ho_{x}$ gives an equation $t_{\mathbf{C}}=0$ of
 $\mathbf{C}$ in $\spec \ho_x $. Since we can take $n_{\mathbf{C}}
 =0$ for almost all $\mathbf{C}$, an element $h = \prod\limits_{\mathbf{C} \ni x} t_{\mathbf{C}}^{n_{\mathbf{C}}}$
 from $\ho_x$ is well-defined. Thus, we have constructed the element $h \in (\Frac(\ho_x) \otimes_k
 R)^*$ such that we have the following embeddings of adelic
 complexes
 \begin{eqnarray*}
\ad_{X,x,R}( h \cdot (\ho_x\otimes_{k} R)) \subset \ad_{X,x,R}(f
\cdot (\ho_x\otimes_{k} R)) \\
\ad_{X,x,R}(h \cdot (\ho_x\otimes_{k} R))  \subset \ad_{X,x,R}(g
\cdot (\ho_x\otimes_{k} R))  \mbox{.}
 \end{eqnarray*}
 We note that among such constructed elements $h$ there is a "minimal" element when all the integers $n_{\mathbf{C}}$ are minimal.
We have that
$$\da_{X,x,1,R} (f_1 \cdot( \ho_x\otimes_{k} R)) / \da_{X,x,1,R} (f_2 \cdot( \ho_x\otimes_{k} R))  =
\bigoplus_{C \ni x} (f_1 \cdot (\ho_{x, \mathbf{C}} \otimes_k R)) / (f_2 \cdot (\ho_{x, \mathbf{C}} \otimes_k R))
\mbox{,} $$
where $f_1$ and $f_2$ are from $(\Frac(\ho_x) \otimes_k R)^*$ such that $f_1 \cdot( \ho_x\otimes_{k} R) \supset f_2 \cdot( \ho_x\otimes_{k} R)$
is a Tate $R$-module, because for almost all $\mathbf{C} \ni x$ we have
$$f_1 \cdot (\ho_{x, \mathbf{C}} \otimes_k R) =
f_2 \cdot (\ho_{x, \mathbf{C}} \otimes_k R) = \ho_{x, \mathbf{C}} \otimes_k R \mbox{.}$$

We have that
\begin{multline*}
\mathcal{D}et(\da_{X,x,1,R} (f\cdot( \ho_x\otimes_{k} R))  \mid \da_{X,x,1,R} (g\cdot( \ho_x\otimes_{k} R))) = \\ =
\mathcal{D}et(\da_{X,x,1,R} (g\cdot( \ho_x\otimes_{k} R))  / \da_{X,x,1,R} (h\cdot( \ho_x\otimes_{k} R))) - \\ -
\mathcal{D}et(\da_{X,x,1,R} (f\cdot( \ho_x\otimes_{k} R))  / \da_{X,x,1,R} (h\cdot( \ho_x\otimes_{k} R)))
\end{multline*}
is a well-defined  $\mathcal{P}ic_{R}^{\mathbb{Z}}$-torsor. Using the explicit description of the cohomology groups of the adelic complex given above, we have  that for any $f \in (\Frac(\ho_x) \otimes_k
 R)^*$
\begin{multline*}
\mathcal{D}et(H^*(\ad_{X,x,R}(f \cdot (\ho_x\otimes_{k} R)) )) = \mathcal{D}et(H^0(\ad_{X,x,R}(f \cdot (\ho_x\otimes_{k} R))) ) - \\- \mathcal{D}et(H^1(\ad_{X,x,R}(f \cdot (\ho_x\otimes_{k} R))) )
\end{multline*}
is a well-defined  $\mathcal{P}ic_{R}^{\mathbb{Z}}$-torsor. Using Remark~\ref{sum}  and decompose long exact cohomological sequences into the split-exact short sequences of Tate $R$-modules (when $f\cdot( \ho_x\otimes_{k} R) \subset g\cdot( \ho_x\otimes_{k} R) )$ we obtain that
\begin{multline*}
\mathcal{D}et(\da_{X,x,1,R} (f\cdot( \ho_x\otimes_{k} R))  \mid \da_{X,x,1,R} (g\cdot( \ho_x\otimes_{k} R))) = \\ =
\mathcal{D}et(H^*(\ad_{X,x,R}(g \cdot (\ho_x\otimes_{k} R)) )) - \mathcal{D}et(H^*(\ad_{X,x,R}(f \cdot (\ho_x\otimes_{k} R)) )) \mbox{.}
\end{multline*}
Hence we have a trivialization (given by multiplications on the elements of the group $(\Frac(\ho_x) \otimes_k
 R)^*$) of a categorical central extension
 $$
 f \mapsto  \mathcal{D}et(\da_{X,x,1,R} ( \ho_x\otimes_{k} R)  \mid \da_{X,x,1,R} (f\cdot( \ho_x\otimes_{k} R)))
 $$
over the group $(\Frac(\ho_x) \otimes_k
 R)^*$. From this fact, as in the proof of Theorem~5.3 from~\cite{OsZh} we obtain the reciprocity law around the point $x$ for any elements $f$, $g$ and $h$ from
 the group $(\Frac(\ho_x) \otimes_k R)^*$ when we take in formula~\eqref{point}  the product over all one-dimensional integral subschemes $\mathbf{C}$
 of the scheme $\spec \ho_x$. This product contain only finitely many terms distinct from $1$, because for almost all $\mathbf{C}$ we have that
 the elements $f$, $g$ and $h$ preserve the lattice $\ho_{x, \mathbf{C}} \otimes_k R$ in $K_{x, \mathbf{C}} \otimes_k R$. Therefore the coresponding $C_3(f,g,h)$ is equal to $1$ for such $\mathbf{C}$. To obtain formula~\eqref{point} itself, i.e. when we take elements $f$, $g$ and $h$ from the ring $K_x \otimes_k R$ and when the product in this formula is taken over all irreducible curves $C \subset X$ such that $C \ni x$, we note that
 $$f  \cdot (\ho_{x, \mathbf{C}} \otimes_k R) = g  \cdot (\ho_{x, \mathbf{C}} \otimes_k R) = h  \cdot (\ho_{x, \mathbf{C}} \otimes_k R)=
 \ho_{x, \mathbf{C}} \otimes_k R$$
 when $\mathbf{C}$ is not a formal branch of some irreducible curve $C \subset X$. Therefore $C_3(f,g,h)=1$ for such $\mathbf{C}$. Thus formula~\eqref{point} follows from the previous product formula for elements from the group $(\Frac(\ho_x) \otimes_k R)^*$ and all $\mathbf{C}$.

\smallskip

\eqref{part2}
We will give a sketch of the proof of formula~\eqref{curve}. We fix an irreducible projective curve $C$ on $X$.
At first, we prove formula~\eqref{curve} when elements $f$, $g$ and $h$ are from the group $(k(X) \otimes_k R)^*$.
On the scheme $X_R$ we consider the following set $E$ of invertible subsheaves of the constant sheaf $k(X) \otimes_k R$  on the scheme $X_R$:
$$
\ff \in E     \qquad \mbox{iff} \qquad  \ff = g \cdot \oo_{X_R}(D)
$$
for some $g \in (k(X) \otimes_k R)^*$ and a divisor $D$ on $X$. It is clear that for any subsheaves $\ff$ and $\g$ from the set $E$ there is a sheaf
$\oo_{X_R}(D)$ for some divisor $D$ on $X$ such that inside the sheaf $k(X) \otimes_k R$ we have
$$
\ff \subset \oo_{X_R}(D)  \qquad \mbox{,}  \qquad \g \subset \oo_{X_R}(D) \mbox{.}
$$
Moreover, we can find a "minimal" sheaf with the above property. Using it, we can find also a divisor $G$ on $X$ such that
\begin{equation}  \label{sub}
\ff \supset \oo_{X_R}(G)  \qquad \mbox{,}  \qquad \g \supset \oo_{X_R}(G) \mbox{.}
\end{equation}

Let $J_C$ be the ideal sheaf of the curve $C$ on $X$. Let $J_{C, R}= J_C \otimes_k R$.
For any sheaf $\ff \in E$ we consider a complex
$$
\ad_{X, C, R}(\ff)= \mathop{\lim\limits_{{\lrto}}}_n   \mathop{\lim\limits_{{\longleftarrow}}}_{m >n}  \ad_{(C_R, \oo_{x,R} /J_{C,R}^{m-n})}(
\ff   \otimes_{\oo_{X,R}} J_{C,R}^n/J_{C,R}^m)   \mbox{,}
$$
where $(C_R, \oo_{x,R} /J_{C,R}^{m-n})$ is the scheme with the topological space as the topological space of the scheme $C_R= C \otimes_{\spec k} \spec R$  and the structure sheaf
as the sheaf $\oo_{x,R} /J_{C,R}^{m-n}$, and $\ad_{(C_R, \oo_{x,R} /J_{C,R}^{m-n})}(\cdot)$ is the functor of the adelic complex on this scheme applied to coherent sheaves on it.   We have that the cohomology groups $H^*(\ad_{X, C, R}(\ff))$ are isomorphic
to $\mathop{\lim\limits_{{\lrto}}}\limits_n   \mathop{\lim\limits_{{\longleftarrow}}}\limits_{m >n} H^*(X, \oo(D) \otimes_{\oo_X} J_C^n/ J_C^m) \otimes_k R$ for some divisor $D$ on $X$. Using the case when $R=k$ (see~\cite[\S~5B]{OsZh} and the proof of Proposition~12 from~\cite{O3}),    we have\footnote{It is important that $C$ is a projective curve. Therefore for any $m > n$ we have $\dim_k H^i(X, \oo(D)
\otimes_{\oo_X} J_C^n/ J_C^m) < \infty$ where $i$ is equal to  $1$ or to $2$.} that $H^0(\ad_{X, C, R}(\ff))$  is a Tate $R$-module,
and $\tilde{H}^1(\ad_{X, C, R}(\ff))$ which is the quotient space of ${H}^1(\ad_{X, C, R}(\ff))$ by the closure of $0$ is a Tate $R$-module.
We introduce a $\mathcal{P}ic^{\mathbb{Z}}_R$-module
$$\mathcal{D}et (H^*(\ad_{X, C, R}(\ff))) = \mathcal{D}et (H^0(\ad_{X, C, R}(\ff)))  -  \mathcal{D}et (\tilde{H}^1(\ad_{X, C, R}(\ff)))   \mbox{.}
$$
The adelic complex $\ad_{X, C, R}(\ff)$ looks as follows
$$
\da_{X,C, 0, R }(\ff)  \oplus   \da_{X, C, 1, R}(\ff)  \lrto \da_{X,C, 01, R}(\ff)  \mbox{,}
$$
where
$$
\da_{X,C, 0, R }(\ff) = K_C \otimes_k R \qquad \mbox{,}  \qquad
\da_{X,C, 01, R}(\ff)= \da_C((t_C)) \otimes_k R
$$
 and $t_C$ is a local parameter of the curve $C$
on some open affine subset of $X$, $\da_C$ is the  ring of adeles on the curve $C$. Besides,
$$\da_{X,C, 1, R}(\ff)= (\prod_{x \in C} ((B_x \otimes_{k} R) \otimes_{\oo_{X_R}} \ff)) \cap (\da_C((t_C)) \otimes_k R)
  \mbox{,}$$
where\footnote{Here and later we use the following notation. The
ring $B_x \otimes_k R$ is an $\oo_{x, R}= \oo_x \otimes_k R$-module.
We mean $(B_x \otimes_{k} R) \otimes_{\oo_{X_R}} \ff =(B_x
\otimes_{k} R) \otimes_{\oo_{x,R}} \ff_x $, where $\ff_x$ is a stalk
of the sheaf $\ff$ at the point $x \in X_R$, i.e. $\ff_x$ is an
$\oo_{x,R}$-module.} the intersection is taken inside of the ring
$\prod\limits_{x \in C } K_{x,C} \otimes_k R$, and $B_x$ is the
subring of the ring $K_{x,C}$ given as
$\mathop{\lim\limits_{\lrto}}\limits_{n > 0} s_C^{-n} \ho_x$ for
$s_C \in \oo_x$ which defines the curve $C$ on some local affine set
on $X$ containing the point $x$ (clearly, the ring $B_x$ does not
depend on the choice of $s_C$).

We fix two sheaves $\ff \supset \h$ from the set $E$.
For almost all points $x \in C$ we have
$$((B_x \otimes_{k} R) \otimes_{\oo_{X_R}} \ff) /
((B_x \otimes_{k} R) \otimes_{\oo_{X_R}} \h) =0  \mbox{.}$$
Therefore
$$\da_{X,C, 1, R}(\ff) / \da_{X,C, 1, R}(\h)   = \bigoplus_{x \in C} ((B_x \otimes_{k} R) \otimes_{\oo_{X_R}} \ff) /
((B_x \otimes_{k} R) \otimes_{\oo_{X_R}} \h)$$ is a Tate $R$-module, because for any point  $x \in C$ we have that
$$((B_x \otimes_{k} R) \otimes_{\oo_{X_R}} \ff) /
((B_x \otimes_{k} R) \otimes_{\oo_{X_R}} \h)$$ is a Tate $R$-module as it follows from a lemma.

\begin{lem}
Let $g \in (\Frac \ho_x \otimes_k R)^*$ such that $g \cdot (\ho_x \otimes_k R) \subset \ho_x \otimes_k R$. Then we have that
$(B_x \otimes_k R) / (g \cdot (B_x \otimes_k R)) $ is a Tate $R$-module.
\end{lem}
\begin{proof}
The beginning of the proof is similar to the proof of Proposition~\ref{lat}. Let $K = K_{x,C} \otimes_k R$ and $B_R= B_x \otimes_k R$.
Then the map $K \to K/B_R$ is splittable (i.e. admits a continuous splitting), because the map $K_{x,C} \to K_{x,C} /B_x$ is splittable. The multiplication by $g$ is a continuous automorphism of $K$. Therefore the map $K \to K/gB_R$ is splittable. The restriction of the last splitting gives the splitting of the map $B_R \to B_R/gB_R$.
Thus we have a topological decomposition: $$B_R = (B_R / g B_R) \oplus g B_R  \mbox{.}$$
Since $R$ is a local finite $k$-algebra, there is an element $f \in (\Frac \ho_x)^*$ such that $g \cdot (\ho_x \otimes_k R) \supset
f \cdot (\ho_x \otimes_k R)$ (it is easy to see it directly by localizing the regular ring $\ho_x$ at all prime ideals of height $1$, or one can look at the adelic complex $\ad_{X,x,R}(\cdot)$ considered above and its zero cohomology group). Therefore $g B_R  \supset f B_R$. As before, we prove that
there is a topological decomposition: $g B_R = (g B_R / f B_R) \oplus f B_R$. Therefore we have
$$
B_R = (B_R / g B_R) \oplus (g B_R / f B_R) \oplus f B_R \mbox{.}
$$
Hence we have that the $R$-module $B_R/ gB_R$ is a topological direct summand of the $R$-module $B_R/ f B_R$. Since $f \in (\Frac \ho_x)^*$,
we obtain $B_R /f B_R = (B_x / f B_x) \otimes_k R$. Now $B_x / fB_x$ is a Tate $k$-vector space. Therefore $B_R/f B_R$ is a Tate $R$-module.
Hence, $B_R/ gB_R$ is a Tate $R$-module.
\end{proof}

Now, using Remark~\ref{sum}  (for long cohomological sequences of Tate $R$-modules) we have for any sheaves $\ff$ and $\g$ from $E$
\begin{multline} \label{cohom}
\mathcal{D}et (H^*(\ad_{X, C, R}(\ff))) - \mathcal{D}et (H^*(\ad_{X, C, R}(\g)))  = \\ =
\mathcal{D}et(\da_{X,C, 1, R}(\g) \mid \da_{X,C, 1, R}(\ff))  \mbox{,}
\end{multline}
 where
 \begin{multline*}
 \mathcal{D}et(\da_{X,C, 1, R}(\g) \mid \da_{X,C, 1, R}(\ff)) = \\ =
 \mathcal{D}et(\da_{X,C, 1, R}(\ff) / \da_{X,C, 1, R}(\oo_{X_R}(G))) -
 \mathcal{D}et(\da_{X,C, 1, R}(\g) / \da_{X,C, 1, R}(\oo_{X_R}(G)))
 \end{multline*}
 and the divisor $G$ on $X$ is chosen by formula~\eqref{sub}  with the  "maximality" condition for the sheaf $\oo_{X_R}(G)$.

 From formula~\eqref{cohom} we have that the categorical central extension
 $$
 g \longmapsto \mathcal{D}et(\da_{X,C, 1, R}(\oo_{X_R}) \mid \da_{X,C, 1, R}(g\oo_{X_R}))   \mbox{,}  \quad g \in (k(X) \otimes_k R)^*
 $$
is isomorphic to the trivial categorical central extension over the group $(k(X) \otimes_k R)^*$.
It gives the reciprocity law for any elements $f$, $g$ and $h$ from the group $(k(X) \otimes_k R)^*$ along the curve $C$ on $X$, but for any point
$x \in C$ the local generalized commutator (depending on $3$ commuting elements) has to be constructed from the following categorical cental extension
\begin{equation}  \label{cat1}
d \longmapsto \mathcal{D}et  \left(  (B_x \otimes_{k} R)       \mid       ((B_x \otimes_{k} R) \otimes_{\oo_{X_R}} (d \cdot \oo_{X_R}))  \right)
\mbox{,}  \quad d \in (k(X) \otimes_k R )^* \mbox{.}
\end{equation}
In \S~\ref{gencat} we proved that the two-dimensional Contou-Carr\`{e}re symbol coincides with the generalized  commutator $C_3$ when this commutator
is calculated
from another categorical central extension\footnote{More exactly, in \S~\ref{gencat} we have considered the ring $R((u))((t))$, and the ring $K_{x,C} \otimes_k R$ is a finite direct product of these rings. Therefore the generalized commutator will be the finite product of two-dimensional Contou-Carr\`{e}re symbols calculated for every formal branch of the curve $C$ at the point $x$.}:
\begin{equation}  \label{cat2}
d \longmapsto \mathcal{D}et  \left( (\ho_{x,C} \otimes_k R) \mid  (g  \cdot (\ho_{x,C} \otimes_k R))  \right)  \mbox{,}  \quad d \in (k(X) \otimes_k R )^*  \mbox{,}
\end{equation}
where $\hat{\oo}_{x,C} \otimes_k R = \prod\limits_{i=1}^{s} (\hat{\oo}_{x, \mathbf{C}_i} \otimes_k R)$ (see formula~\eqref{xC})  is a subring in the ring $K_{x,C} \otimes_k R$.

We will show that  categorical central extension~\eqref{cat1} is inverse to  categorical central extension~\eqref{cat2}. From this fact we have that generalized commutator constructed  by central extension~\eqref{cat1} is equal to the minus one power  of the generalized commutator  constructed  by central extension~\eqref{cat2}. Thus we will prove  formula~\eqref{curve} for any elements $f$, $g$ and $h$ from the group $(k(X) \otimes_k R)^*$.

For any point $x \in C$  and any element $d \in (\Frac{\ho_{x}} \otimes_k R)^*$ we consider a
complex $\ad_{X,C,x, R}(d \cdot ( \ho_{x} \otimes_k R))$:
$$
(B_x \otimes_{\ho_x} (d \cdot (\ho_{x} \otimes_k R) ))  \, \oplus \,   (d \cdot (\ho_{x,C} \otimes_k R) )   \lrto K_{x,C} \otimes_k R  \mbox{.}
$$
We have that $H^i(\ad_{X,C,x, R}(d \cdot ( \ho_{x} \otimes_k R))) =
H^i(U_{x,R}, \, d \cdot
 ( \ho_{x} \otimes_k R)  \mid_{U_{x,R}} ) \otimes_k R$ (for $i$ equal to $1$ or to $2$)
is a Tate $R$-module, where (we recall) the scheme $U_{x,R} = \spec
(\ho_x \otimes_k R) \setminus x $. Inside of the ring $\Frac \ho_x
\otimes_k R$ we have an equality for any $(d \in k(X) \otimes_k
R)^*$:
$$B_x \otimes_{\ho_x} (d \cdot (\ho_{x} \otimes_k R) ) = (B_x \otimes_{k} R) \otimes_{\oo_{X_R}} (d \cdot \oo_{X_R})  \mbox{.}$$
Therefore from formula
\begin{multline*}
\hspace{-0.33cm} \mathcal{D}et \left( (B_x \otimes_k R) \mid  (B_x \otimes_{\ho_x} (d \cdot (\ho_{x} \otimes_k R) )) \right) +
\mathcal{D}et \left( (\ho_{x,C} \otimes_k R)  \mid (d \cdot (\ho_{x,C} \otimes_k R) ) \right) \\ =
\mathcal{D}et (H^*(\ad_{X,C,x, R}(d \cdot ( \ho_{x} \otimes_k R)))) - \mathcal{D}et (H^*(\ad_{X,C,x, R}( \ho_{x} \otimes_k R))
\end{multline*}
we obtain that categorical central extension~\eqref{cat1} is inverse to  categorical central extension~\eqref{cat2}.

Now we will obtain formula~\eqref{curve} for any elements $f$, $g$
and $h$ from the group $(K_C \otimes_k R)^*$. We note that $K_C=
k(C)((t_C))$, where $k(C)$ is the field of rational functions on the
curve $C$, and $t_C=0$ is an equation of the curve $C$ on some open
affine subset of $X$. Using that $R$ is an Artinian ring and $k(X)
\otimes_k R$ is dense in $K(C)((t_C)) \otimes_k R$ (when we consider
the discrete topology on the field $k(C)$) it is easy to construct
for any $n \in \mathbf{N}$ and $d \in (K_C \otimes_k R)^*$ elements
$d_1 \in (k(X) \otimes_k R)^*$ and $d_2 \in 1 + t_C^n \cdot (k(C)
\otimes_k R)[[t_C]]$ such that $d = d_1 d_2$. The two-dimensional
Contou-Carr\`{e}re symbol is a tri-multiplicative map. Besides, for
any  elements $f$  and $g$ from the group $(K_C \otimes_k R)^*$
there is\footnote{This kind of continuous property for all points $x
\in C$ is obvious when $\mathbf{Q} \subset R$ from
formula~\eqref{2cexp}. The general case follows  by means of the
lift to the case $\mathbf{Q} \subset R$, see the proof of
Lemma~\ref{lift}.} $n \in \mathbf{N}$ such that $(f,g,e)=1$ for any
element $e \in 1 + t_C^n \cdot (k(C) \otimes_k R)[[t_C]]$ and any
point $x \in C$. Hence we obtain formula~\eqref{curve} in general
case from the case when elements $f$, $g$  and $h$ are from  the
group $(K_C \otimes_k R)^*$.
\end{proof}

\section{Contou-Carr\`{e}re symbols via algebraic $K$-theory}\label{Kapp}
This section briefly discuss the $K$-theoretical approach to the usual and the higher-dimensional Contou-Carr\`{e}re symbols.
This approach develops some ideas suggested to us by one of the editors.

In Remark \ref{n-dim CC}, we indicated that there exists the $n$-dimensional Contou-Carr\`{e}re symbol given by an explicit formula
(when $\mathbf{Q} \subset R $).
It could also be obtained via algebraic $K$-theory for any (commutative) ring $R$ in the following way.

For a (commutative) ring $A$ and an integer $i \ge 0$, let $K_i(A)$ denote its $i$th algebraic $K$-group, as defined by D.~Quillen.
Recall that there is a canonical decomposition
 $K_1(A)= A^* \times SK_1(A)$, and there is the product structure in algebraic $K$-theory (see, e.g.,~\cite[\S~2]{S}):
$$K_1(A) \times \cdots \times K_1(A) \lrto K_{n+1}(A).$$
In addition, for any integer $m \ge 1$ there is the following canonical homomorphism
\begin{equation}  \label{Kato-map}
\partial_m:  K_m(A((t))) \lrto K_{m-1}(A)
\end{equation}
 which was constructed by K.~Kato in~\cite[\S~2.1]{K1}. We briefly review the construction. Let $H$ be the exact category of  $A[[t]]$-modules that are annihilated by some power of $t$ and that admit a resolution of length $1$ by finitely generated projective $A[[t]]$-modules.
Then the
``localization theorem for projective modules" (or localization theory of algebraic $K$-theory for singular varieties, see~\cite{Gr}, \cite[\S~9]{S}) produces a canonical homomorphism
\begin{equation}  \label{tilde}
\tilde{\partial}_m \; : \; K_m(A ((t)))  \lrto K_{m-1}(H)  \mbox{,}
\end{equation}
We claim that any $A[[t]]$-module  from the category $H$ is a finitely generated projective $A$-module (see Propositon~\ref{Mod} below).
 Thus we have an exact functor from  the category $H$ to the category of finitely generated projective $A$-modules, and therefore a homomorphism $K_{m-1}(H) \to K_{m-1}(A)$. The composition of the  homomorphism $\tilde{\partial}_m$ with the last homomorphism   gives the homomorphism
$\partial_m$.

\begin{prop}[Compare with~{\cite[\S~3.3]{G}}] \label{Mod}
Let  $\alpha : M \hookrightarrow N$ be an embedding of finitely generated projective $A[[t]]$-modules such that $\alpha$
becomes an isomorphism after inverting of the element $t$. Then $N/M$ is a finitely generated projective $A$-module.
\end{prop}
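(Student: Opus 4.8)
The plan is to extract the content of $\alpha$ in three stages: first reduce to a torsion situation, then manufacture from $\alpha$ an ``approximate inverse'' and pass to the quotient modulo a power of $t$, and finally upgrade the resulting data to genuine projectivity over $A$. Throughout I write $\Lambda=A[[t]]$ and recall that $t$ is a nonzerodivisor of $\Lambda$. For the first step, since $\alpha$ becomes an isomorphism after inverting $t$, the cokernel $P:=N/\alpha(M)$ is killed by $t^n$ for some $n\ge 0$ (it is finitely generated over $\Lambda$ and each generator is annihilated by a power of $t$); equivalently $t^nN\subseteq\alpha(M)\subseteq N$.

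For the second step I would use that $t^nN\subseteq\alpha(M)$ together with injectivity of $\alpha$: for every $x\in N$ there is a unique $y\in M$ with $\alpha(y)=t^nx$, and setting $\beta(x)=y$ defines a $\Lambda$-linear map $\beta\colon N\to M$ with $\alpha\beta=t^n\,\id_N$ and $\beta\alpha=t^n\,\id_M$. Reducing modulo $t^n$, and writing $\bar M=M/t^nM$, $\bar N=N/t^nN$, $R=\Lambda/t^n\Lambda=A[t]/(t^n)$, a direct computation from $\alpha\beta=\beta\alpha=t^n$ then yields a four-term exact sequence of $R$-modules
\[ 0\lrto P\lrto \bar M\xrightarrow{\ \bar\alpha\ }\bar N\lrto P\lrto 0, \]
where $\on{Coker}(\bar\alpha)=N/\alpha(M)=P$ (as $t^nN\subseteq\alpha(M)$) and $\Ker(\bar\alpha)=\beta(N)/t^nM\cong N/\alpha(M)=P$ (via the isomorphism induced by $\beta$). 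Here $\bar M$ and $\bar N$ are finitely generated projective over $R$, and since $R$ is finite free over $A$ they are finitely generated projective over $A$. In particular $P$, being the cokernel of a map of finitely generated projective $A$-modules, is finitely presented over $A$, and the displayed sequence splices into a $2$-periodic resolution of $P$ by finitely generated projective $A$-modules.

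The third step is to prove $P$ projective; since $P$ is finitely presented it is enough to prove flatness, which may be checked locally on $\spec A$. Localizing at a prime $\mathfrak p$ is harmless here: because $P$ is $t$-power torsion one may realize the localization by localizing the whole sequence at the prime $\mathfrak p+(t)$ of $\Lambda$, the extra units inverted acting invertibly on $t^n$-torsion modules. One then reduces to $A$ local, in which case $\Lambda$ is local, $M\cong\Lambda^r\cong N$ with the same $r$ (equal ranks, since $\alpha$ is invertible after inverting $t$), and $\bar M,\bar N$ are free over the local ring $R$. I would finish by induction on $n$: reduce the matrix of $\alpha$ modulo the maximal ideal of $\Lambda$, clear unit entries by invertible row and column operations to split off an identity block that contributes nothing to $P$, and reduce to a matrix with all entries in the maximal ideal, where the relation $\alpha\beta=\beta\alpha=t^n$ lets one strip a factor of $t$ and apply the inductive hypothesis. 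This shows $P$ is free over the local $A$, hence projective in general, and the argument runs parallel to \cite[\S3.3]{G}.

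The hard part is exactly this third step. Neither the $2$-periodic resolution nor the self-dual four-term sequence of Step~2 suffices on its own to force projectivity: over $A=k[\varepsilon]/(\varepsilon^2)$ there is an exact sequence $0\to k\to A\xrightarrow{\ \varepsilon\ }A\to k\to 0$ with $A$ free of rank one, yet $k$ is not projective over $A$. What excludes such phenomena in our situation is the projectivity of $M$ and $N$ over $\Lambda=A[[t]]$ \emph{together} with $\alpha$ being an isomorphism after inverting $t$; exploiting this genuinely requires the local reduction and the inductive normal-form argument, and one must be careful with non-Noetherian subtleties (notably, $A[[t]]$ need not be flat over $A$, so $M$ and $N$ themselves need not be $A$-flat, and only their reductions $\bar M,\bar N$ are).
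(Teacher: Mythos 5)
Your Steps 1 and 2 are correct: the containment $t^nN\subseteq\alpha(M)$, the auxiliary map $\beta$ with $\alpha\beta=\beta\alpha=t^n$, and the resulting four-term sequence do show that $P=N/\alpha(M)$ is finitely presented over $A$, and you have correctly located the real difficulty in Step 3. But Step 3 is where the proof has to happen, and as written it has a genuine gap. After localizing and clearing unit entries you arrive at a square matrix $\alpha$ over the local ring $A[[t]]$ all of whose entries lie in the maximal ideal $\mathfrak{m}_A+(t)$, and you assert that the relation $\alpha\beta=\beta\alpha=t^n$ then ``lets one strip a factor of $t$''. It does not: an entry can lie in $\mathfrak{m}_A+(t)$ without being divisible by $t$. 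Already for $A=k[\varepsilon]/(\varepsilon^2)$ and the $1\times 1$ matrix $\alpha=(\varepsilon+t)$ one has $(\varepsilon+t)(t-\varepsilon)=t^2$, so this is a legitimate instance with $n=2$, no unit entries, and $\alpha$ not divisible by $t$; your induction has nowhere to go. (The conclusion still holds there, $A[[t]]/(\varepsilon+t)\cong A$, but nothing in your argument produces it.) Attempting to repair this by diagonalizing $\alpha$ runs into precisely the Weierstrass-preparation/elementary-divisor problems over a non-reduced, possibly non-Noetherian base that the proposition is meant to circumvent. There is also a smaller wrinkle in the localization step: $(A[[t]])_{\mathfrak{p}+(t)}$ is not $A_{\mathfrak{p}}[[t]]$, so ``reduce to $A$ local, in which case $\Lambda$ is local'' needs an explicit base change along $A[[t]]\to A_{\mathfrak{p}}[[t]]$ (harmless for cokernels, but it should be said).

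The paper's proof avoids all of this. Adding a common complement to $M$ and $N$, one may assume $N=A[[t]]^s$ is free; choosing $l$ with $t^lN\subseteq M\subseteq N$, one gets an embedding of $A$-modules $N/M\hookrightarrow t^{-l}M/M$, and $t^{-l}M/M\cong M/t^lM$ is a direct summand of the finite free $A$-module $(A[t]/t^l)^r$. It therefore suffices to split this embedding $A$-linearly, and a retraction is obtained by restricting to $t^{-l}M$ an $A$-linear retraction of $N\hookrightarrow t^{-l}N$, which exists because $t^{-l}N/N$ is $A$-free; since the retraction is the identity on $M$, it descends to the quotients. No flatness criterion, no localization, and no induction on $n$ are needed: the only inputs are the chain $t^lN\subseteq M\subseteq N$ and the freeness of $N$. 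I would encourage you to replace Step 3 by this splitting argument.
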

\begin{proof}
 By adding a finitely generated projective $A[[t]]$-module $T$ to $N$ and to $M$
we can assume that $N = A[[t]]^{s}$ for some $s \in  {\mathbf N}$. For some $l \in {\mathbf N}$ we have
$$
t^l N \subset M \subset N   \mbox{.}
$$
Let $P= N/M$. Then we have an embedding $P \hookrightarrow t^{-l}M/M $. We claim that this embedding splits as a map of $A$-modules. From this claim we obtain that $P$ is a finitely generated projective $A$-module, since $t^{-l}M/M$ is a finitely generated projective $A$-module (because we can add to $M$ an $A[[t]]$-module $Q$ such that $M \oplus Q = A[[t]]^r$ for some $r \in {\mathbf N}$). To prove the required splitting, it is enough to show
that an embedding $N \hookrightarrow t^{-l}M$ splits as a map of $A$-modules. The last splitting will follow if we will show that the composed embedding
$$
N \hookrightarrow t^{-l}M  \hookrightarrow t^{-l}N
$$
splits, again as a map of $A$-modules, but this is clear.
\end{proof}

Combining the above facts, one can produce for any (commutative) ring $R$ a map
\begin{multline}\label{n-dim CC K}
(R((t_n))\cdots((t_1))^*)^{n+1}\lrto (K_1(R((t_n))\cdots((t_{1}))))^{n+1} \lrto \\\lrto K_{n+1}(R((t_n))\cdots((t_1)))\stackrel{\partial_2\cdots\partial_{n+1}}{\longrightarrow} K_1(R)\to R^*  \mbox{.}
\end{multline}
By construction and the properties of the product structure in algebraic $K$-theory,  this map is an $n$-multiplicative, anti-symmetric and functorial with respect to $R$ map.
Moreover, map~\eqref{n-dim CC K} satisfies the Steinberg relation, since this relation is satisfied for the product structure in $K$-theory
(see~\cite[\S1 -\S2]{S}).

We believe that the map defined in~\eqref{n-dim CC K} coincides with the one given in Remark~\eqref{n-dim CC} when $\mathbf{Q} \subset R$. We outline the proof of this fact in the case $n=1$ and $n=2$. From the structures of group ind-schemes  $L\mathbb{G}_m$ and $L^2\mathbb{G}_m$ described in Section~\ref{first} and Sections~\ref{dlg} and~\ref{nattop}, the map \eqref{n-dim CC K} for $n=1$ and $2$ will coincide with the usual and the two-dimensional Contou-Carr\`{e}re symbols respectively.
(Note that the case $n=1$ answers a question of \cite[Remark 4.3.7]{KV}.)

\begin{thm}  \label{K-th} Let $R$ be any ring.
For $n=1$ (resp. $n=2$), the map constructed via $K$-theory by expression~\eqref{n-dim CC K} coincides with the Contou-Carr\`{e}re
symbol defined via Lemma-Definition~\ref{ld} (resp. via the  map $C_3$ by formula~\eqref{c3}).
\end{thm}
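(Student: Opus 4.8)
The plan is to show that the map defined in \eqref{n-dim CC K}, which I denote $(\cdot,\dots,\cdot)_K$, satisfies the very properties that characterize the Contou-Carr\`{e}re symbol, and then to invoke the appropriate uniqueness results. As already noted after \eqref{n-dim CC K}, the map $(\cdot,\dots,\cdot)_K$ is multiplicative in each argument, anti-symmetric, functorial in $R$, and satisfies the Steinberg relation, all of these being inherited from the product structure and the boundary maps $\partial_m$ in algebraic $K$-theory. For $n=1$, the symbol is the unique bimultiplicative anti-symmetric map obeying \eqref{cexp}, \eqref{mm}, \eqref{tt} (Lemma-Definition~\ref{ld}), the uniqueness resting on the ind-flatness of $L\bbG_m$ over $\mathbf{Z}$ (Lemma~\ref{decom}); for $n=2$, although $L^2\bbG_m$ is not ind-flat, one has $(L^2\bbG_m)^3\in\calE\calF$ by Lemma~\ref{essen flat}, and Lemma~\ref{unique} supplies the corresponding uniqueness. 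Hence in both cases it suffices to assume $\mathbf{Q}\subset R$ and to verify that $(\cdot,\dots,\cdot)_K$ satisfies the normalizations \eqref{cexp}--\eqref{tt} for $n=1$ and \eqref{2cexp}--\eqref{sign} for $n=2$; coincidence with $C_3$ in the latter case then follows since $C_3$ satisfies \eqref{2cexp}--\eqref{sign} by Theorem~\ref{th-main}.

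For $n=1$, I would use bimultiplicativity, continuity, and the decomposition of Lemma~\ref{decom} to reduce the check to the generators $t$, the constants $a\in R^*$, and the basic elements $1-at^i$. The key structural input is that $\partial_2$ annihilates the image of $K_2(R[[t]])$ in $K_2(R((t)))$ coming from the localization sequence: this immediately yields $(a,1-bt^j)_K=1$ for $j>0$ and, combined with $\partial_2\{a,t\}=a$, gives property \eqref{mm}; property \eqref{tt} follows from the standard $K_2$-identity $\{t,t\}=\{t,-1\}$. The content of \eqref{cexp} is then concentrated in symbols $\{1-at^{-i},\,1-bt^j\}$ with a genuine pole (so $a\in\n R$), which do not lift to $K_2(R[[t]])$; here one computes $\partial_2$ through the localization presentation and Proposition~\ref{Mod}, and matches the outcome with $\exp\res\bigl(\log(1-at^{-i})\,d\log(1-bt^j)\bigr)$, the series $\exp$ and $\log$ being available because $\mathbf{Q}\subset R$.

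For $n=2$ I would mirror the proof of Theorem~\ref{th-main}. Functoriality of $K$-theory makes $(\cdot,\cdot,\cdot)_K$ invariant under the algebra automorphisms $t\mapsto ct$, $u\mapsto c'u$ of $R((u))((t))$, since these fix $R$ and preserve the filtrations defining $\partial_2$ and $\partial_3$. Via the isomorphisms $\exp:\frakm\simeq\bbM$ and $\exp:\frakp\simeq\bbP$ of \eqref{explog1}, \eqref{explog} and Proposition~\ref{weak version}, the map is determined by its values on the generators $\exp(au^jt^i)$, and by the classification in Lemma~\ref{hom} these values have the shape \eqref{coinc} for certain rational coefficients $d^{K}_{(i,j),(k,l),(m,n)}$. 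The scaling invariance forces $d^{K}=0$ unless $i+k+m=0$ and $j+l+n=0$, reproducing the Kronecker deltas of \eqref{immed}. It then remains to pin down one generic value: working over $\mathbf{Q}[\varepsilon_1,\varepsilon_2,\varepsilon_3]/(\varepsilon_1^2,\varepsilon_2^2,\varepsilon_3^2)$ as in Lemma~\ref{sim}, I would evaluate the iterated boundary $\partial_2\partial_3$ on $\{\,1+\varepsilon_1 u,\,1+\varepsilon_2 t,\,1+\varepsilon_3 u^{-1}t^{-1}\,\}$ and verify it equals $1+\varepsilon_1\varepsilon_2\varepsilon_3$, the variants with $h=u$ and $h=t$ being treated exactly as in Lemmas~\ref{sim2} and~\ref{sim3}.

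The main obstacle is precisely these explicit boundary-map computations: $\partial_2$ on $\{1-at^{-i},1-bt^j\}$ for $n=1$, and the iterated $\partial_2\partial_3$ on infinitesimal triple symbols for $n=2$. The difficulty is that over rings with nilpotents, or with dual-number parameters, $\partial_m$ is no longer the naive tame symbol (which would vanish on units) but detects the relative part of the $K$-groups; identifying the relevant torsion modules by means of Proposition~\ref{Mod} and computing their classes through the localization sequence, then recognizing the answer as the residue expression of \eqref{2cexp}, is the technically heaviest point. The appearance of $K_3$ in the two-dimensional case, where an \emph{iterated} boundary must be controlled rather than a single one, is what makes this step delicate; the reductions above are designed precisely so that it need only be carried out for one generic symbol.
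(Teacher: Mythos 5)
Your overall architecture matches the paper's: reduce to $\mathbf{Q}$-algebras via the uniqueness statements (Lemma-Definition~\ref{ld} for $n=1$, Lemma~\ref{unique} together with Theorem~\ref{th-main} for $n=2$), exploit multiplicativity, anti-symmetry and the decompositions of $L\bbG_m$ and $L^2\bbG_m$ to reduce to infinitesimal generators over dual-number rings, and pin down one generic value. That skeleton is correct and is essentially what the paper does.

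However, there is a genuine gap at exactly the point you flag as ``the technically heaviest'': you propose to compute $\partial_2$ on $\{1-at^{-i},1-bt^j\}$ (and the iterated $\partial_2\partial_3$ on triple symbols) ``through the localization presentation and Proposition~\ref{Mod}'' and then match the answer with the residue formula, but you give no method for doing so, and a direct attack on the relative part of $K_2$ over a ring with nilpotents is not carried out anywhere in your argument. The paper's proof is built precisely so that no such computation is ever needed. The two missing ingredients are: (i) Kato's lemma (Lemma~\ref{Kato-lemma}, giving formula~\eqref{reduce}), which says $\partial_{m+1}\{a_1,\dots,a_m,t\}=\{\overline{a_1},\dots,\overline{a_m}\}$ for units $a_i\in A[[t]]^*$, together with its companion~\eqref{reduce1}; and (ii) the invariance of $\partial_m$ under the change of local parameter $t\mapsto t+a_0$ with $a_0\in\n A$ (the second case of Lemma~\ref{inv loc par}), which you never mention --- you only invoke scalings $t\mapsto ct$, $u\mapsto c'u$, and those alone cannot dispose of elements with poles. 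Combining (i) and (ii), the paper writes, e.g., $(1-\varepsilon_1t,1-\varepsilon_2t^{-1})_{K\text{-th}}=(1-\varepsilon_1t,\,t-\varepsilon_2)_{K\text{-th}}$, shifts $t\mapsto t+\varepsilon_2$ to turn the second entry into $t$, and then reads off $1-\varepsilon_1\varepsilon_2$ from~\eqref{reduce}; the $n=2$ computations (the analogues of Lemmas~\ref{sim}--\ref{sim3}) are handled the same way, with the shift $t\mapsto t-u^{-1}\varepsilon_3$ reducing the triple symbol to the already-established $n=1$ case. Without these two tools your plan stalls at the one step that carries all the content, so as written the proposal does not constitute a proof.
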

\begin{proof}
Since  the $K$-theoretical definition by formula~\eqref{n-dim CC K} is functorial with respect to $R$, we have   by
 Lemma-Definition~\ref{ld}   and by
 Lemma~\ref{unique} that it is enough to prove the proposition when $R$ is a $\mathbf{Q}$-algebra. To proceed, we note the following obvious lemma that $\partial_m$ is invariant under some change of local parameter $t$ (compare also
with~\cite[\S~2.1, Lemma~2]{K1}).
\begin{lem} \label{inv loc par}
Let  $f_*: K_m(A((t)))\to K_m(A((t)))$ be the map induced by the automorphism $f: A((t))\to A((t))$ given either by $f(t)=\sum_{i\geq 1} a_it^i$,
where
 $a_1\in A^*$, $a_i \in A \; (i> 1)$, or by $f(t)= a_0 + t$, where $a_0 \in \n A$.  Then $\partial_mf_*=\partial_m: K_m(A((t)))\to K_m(A)$.
\end{lem}

Now, we prove that when $n=1$, the map given by formula~\eqref{n-dim CC K} coincides with the usual Contou-Carr\`{e}re symbol. If $R=k$ is a field, then both definitions equal to the usual tame symbol, because the tame symbol is the boundary map in Milnor $K$-theory and this boundary map coincides
 with the map $\partial_2$ (see, e.g.,~\cite[\S~2.4, Cor.~1]{K1}).
  Formula~\eqref{n-dim CC K} defines  the bimultiplicative and anti-symmetric morphism from $L\mathbb{G}_m \times L\mathbb{G}_m$ to $\mathbb{G}_m$. We denote this morphism as
  $(\cdot , \cdot  )_{K{\rm-th}}$.
  Using the fact that $L\mathbb{G}_m=\widehat{\mathbb{W}}\times\mathbb{Z}\times\mathbb{G}_m\times\mathbb{W}$, and by  similar (but easier) arguments as in the proof of
 Theorem~\ref{th-main}, it remains to calculate the following pairing
$$(\widehat{\mathbb{W}} \times \mathbb{W})  \times    (\widehat{\mathbb{W}}  \times \mathbb{W}) \lrto\mathbb{G}_m$$
  induced by
the morphism $(\cdot , \cdot )_{K{\rm-th}}$. In the sequel, we change $R((t_1))$ to $R((t))$ for simplicity. By an easy analog of part~\ref{pa1} of Proposition~\ref{lemma-main} (applied to the affine group scheme given by a functor
$R \mapsto tR[[t]]$) and by an easy analog of  Lemma~\ref{hom} (see also formula~\eqref{two}), we can assume that $R=\mathbf{Q}[\varepsilon_1,\varepsilon_2]/(\varepsilon_1^2,\varepsilon_2^2)$. But by  Lemma~\ref{inv loc par}, by Lemma~\ref{calcu1} and  similar (but easier) arguments as in the proof of Lemma~\ref{sim}, it remains to show that
\begin{equation}  \label{K-expr}
(1-\varepsilon_1t,1-\varepsilon_2t^{-1})_{K{\rm-th}}=1-\varepsilon_1\varepsilon_2 \mbox{.}
\end{equation}

To check  expression~\eqref{K-expr}  we need the following observation. We recall a fact from~\cite[\S~2.4, Prop.~5]{K1}. For
$a \in K_*(A((t)))$ and  $t \in K_1(A((t)))$, let $\{a,t \}\in K_{*+1}(A((t)))$ denote the product.
\begin{lem} \label{Kato-lemma}
The composite
$$
\alpha \; : \; K_*(A[[t]]) \lrto K_*(A((t)))  \stackrel{a \mapsto \{a,t\}}{\lrto} K_{*+1}(A((t)))  \stackrel{\bigoplus\nolimits_m\tilde{\partial}_m}{\lrto} K_*(H)
$$
coincides with the homomorphism $\beta : K_*(A[[t]])  \to K_*(H)$ induced by the exact functor from the exact category of finitely generated projective $A[[t]]$-modules to the exact category $H$ given as $M \mapsto M/tM$.
\end{lem}
Therefore the composition of $\alpha$ with the homomorphism $K_*(H)  \to K_*(A)$ is the homomorphism $K_*(A[[t]]) \to K_*(A)$ induced by the functor $M \mapsto M /tM$. As a consequence we obtain that
for any $m \ge 1$, for any elements  $a_1, \ldots, a_m\in A[[t]]^*$:
\begin{equation}  \label{reduce}
\partial_{m+1} \{a_1, \ldots, a_m, t   \}= \{\overline{a_1}, \ldots, \overline{a_m} \}   \mbox{,}
\end{equation}
where for any $b \in A[[t]]^*$ we put $\overline{b} \in A^*$ under the homomorphism $A[[t]] \to A$.

Since $ t \mapsto at$, where $a \in A[[t]]^*$, is a well-defined change of local parameter in $A((t))$, from Lemma~\ref{inv loc par}, Formula~\eqref{reduce}
 and using $n$-multiplicativity we obtain also for any elements  $a_1, \ldots, a_{m+1}$ from the group $A[[t]]^*$:
 \begin{equation}  \label{reduce1}
 \partial_{m+1} \{a_1, \ldots, a_{m+1} \} =1 \mbox{.}
 \end{equation}
 (We note that formula~\eqref{reduce1} follows also from the localizing exact sequence (for singular varieties), since the composition
 $K_{m+1}(A[[t]])  \to K_{m+1}(A((t)))  \stackrel{\tilde{\partial}_{m+1}}{\lrto} K_{m}(H) $ is the zero map.)

From formula~\eqref{reduce} we have
$$
(1-\varepsilon_1t,1-\varepsilon_2t^{-1})_{K{\rm-th}}= (1 - \varepsilon_1 t, t)_{K{\rm-th}}^{-1} (1- \varepsilon_1t , t - \varepsilon_2)_{K{\rm-th}}=
(1- \varepsilon_1t , t - \varepsilon_2)_{K{\rm-th}}  \mbox{.}
$$
Using an automorphism of $R[[t]]$ given as  $t \mapsto t + \varepsilon_2$ (which is the change of the local parameter)
and Lemma~\ref{inv loc par} we have
$$
(1- \varepsilon_1t , t - \varepsilon_2)_{K{\rm-th}} = (1 - \varepsilon_1 \varepsilon_2 - \varepsilon_1t, t)_{K{\rm-th}}= 1 - \varepsilon_1 \varepsilon_2  \mbox{.}
$$
 Thus we have checked expression~\eqref{K-expr}. This finishes the proof of the case $n=1$.

\medskip

Now we consider the case $n=2$. Similarly and from Lemma~\ref{inv loc par}, the homomorphism
\begin{equation} \label{partial}
\partial_{m-1}\partial_m   \; : \; K_m(A((u))((t)))\longrightarrow K_{m-2}(A)
 \end{equation}
 is invariant under the change of local parameters (at least) of the following type:
\begin{multline}
\qquad \qquad \qquad \qquad
t\longmapsto \sum_{i\geq 1}a_i t^i \mbox{,} \quad a_1 \in A((u))^* \mbox{,}  \quad
 a_i \in A((u))  \; (i  >1) \mbox{;} \nonumber \\
 u\longmapsto \sum_{i\geq 1} b_iu^i+ gt \mbox{,} \quad b_1 \in A^* \mbox{,} \quad b_i \in A \; (i > 1) \mbox{,} \quad  g \in A((u))[[t]]
 \mbox{.}  \qquad \qquad  \quad
 \end{multline}
We note that from Lemma~\ref{inv loc par} we have that  homomorphism \eqref{partial} is invariant under the following change of the local parameter $t$: $t  \mapsto t + a$, where $a \in \n(A((u)))$. (We will use the last change of local parameter later in an explict calculation.)

The map given by formula~\eqref{n-dim CC K} defines for $n=2$ the  morphism from the ind-scheme $(L^2\bbG_m)^3$, where
 \[L^2\bbG_m=\bbP\times\bbZ^2\times \bbG_m \times \bbM\]
to the scheme $\bbG_m $.
We denote this tri-multiplicative and anti-symmetric morphism as $(\cdot , \cdot , \cdot )_{K{\rm-th}}$, and we will use the notation  $R((u))((t))$ instead of $R((t_2))((t_1))$ in formula~\eqref{n-dim CC K}.
If $R=k$ is a field, then the map $(\cdot , \cdot , \cdot )_{K{\rm-th}}$ coincides with the two-dimensional tame symbol, because the two-dimensional tame symbol
is the composition of two boundary maps in Milnor $K$-theory (see, e.g,~\cite[\S~4A]{OsZh}) and $\partial_m$ restricted to Milnor $K$-theory coincides with the boundary map there (see formula~\eqref{reduce} or~\cite[\S~2.4, Cor.~1]{K1}).

 From Lemma~\ref{unique} and Theorem~\ref{th-main} we see that to check the case $n=2$ it is enough to prove that the map
 $(\cdot , \cdot , \cdot )_{K{\rm-th}}$ satisfies properties~\eqref{2cexp}-\eqref{sign}   for a $\mathbf Q$-algebra $R$.
 Now, using the invariance of the map $(\cdot , \cdot , \cdot )_{K{\rm-th}}$ under the change of local parameters $u$ and $t$  and formulas~\eqref{reduce} and~\eqref{reduce1} we can repeat the proof of Theorem~~\ref{th-main} for the map $(\cdot , \cdot , \cdot )_{K{\rm-th}}$.
 Thus we reduce the proof to calculate some particular cases.

 As in the proof of Lemma~\ref{sim} we have to calculate for the ring \linebreak $R = \mathbf{Q}[\varepsilon_1,\varepsilon_2, \varepsilon_3]/(\varepsilon_1^2,\varepsilon_2^2,
 \varepsilon_3^2)$ the following expression $(1+ u \varepsilon_1, 1 + t \varepsilon_2, 1 + u^{-1}t^{-1} \varepsilon_3 )_{K{\rm-th}}$. We have
 \begin{multline}
 (1+ u \varepsilon_1, 1 + t \varepsilon_2, 1 + u^{-1}t^{-1} \varepsilon_3 )_{K{\rm-th}}=  \nonumber \\ =
 (1+ u \varepsilon_1, 1 + t \varepsilon_2, t )^{-1}_{K{\rm-th}}
 (1+ u \varepsilon_1, 1 + t \varepsilon_2, t + u^{-1} \varepsilon_3 )_{K{\rm-th}}
 \end{multline}
 From formula~\eqref{reduce} we have $(1+ u \varepsilon_1, 1 + t \varepsilon_2, t )_{K{\rm-th}} = 1$. Using an automorphism of the ring $R((u))((t))$ obtained by the change of the local parameter $t$: $t \mapsto t - u^{-1} \varepsilon_3$, and using again formula~\eqref{reduce} we obtain
 \begin{multline}
 (1+ u \varepsilon_1, 1 + t \varepsilon_2, t + u^{-1} \varepsilon_3 )_{K{\rm-th}}=
 (1+ u \varepsilon_1, 1 - u^{-1}\varepsilon_3 \varepsilon_2  + t \varepsilon_2, t  )_{K{\rm-th}} =  \nonumber \\ =
 (1+ u \varepsilon_1, 1 - u^{-1}\varepsilon_3 \varepsilon_2   )_{K{\rm-th}} \mbox{.}
 \end{multline}
 Now using the case $n=1$ of this theorem and an explicit formula~\eqref{multform}   we have that
 $$(1+ u \varepsilon_1, 1 - u^{-1}\varepsilon_3 \varepsilon_2   )_{K{\rm-th}}= 1 +\varepsilon_1 \varepsilon_2 \varepsilon_3 \mbox{.} $$
 Thus, we have checked that $(1+ u \varepsilon_1, 1 + t \varepsilon_2, 1 + u^{-1}t^{-1} \varepsilon_3 )_{K{\rm-th}}=
 1 +\varepsilon_1 \varepsilon_2 \varepsilon_3$.

 As in the proof of Lemma~\ref{sim2} we have the following calculation
 for the ring $R = \mathbf{Q}[\varepsilon_1,\varepsilon_2]/(\varepsilon_1^2,\varepsilon_2^2)$:
 \begin{multline}
 (t+ \ve_1, 1+t \ve_2, u)_{K{\rm-th}}= (1+t \ve_2, u, t+ \ve_1)_{K{\rm-th}}=(1+t \ve_2, u, t+ \ve_1)_{K{\rm-th}}=   \nonumber \\ =
 (1 - \ve_1\ve_2 + t \ve_2,  u, t)_{K{\rm-th}}= (1 - \ve_1\ve_2 ,  u)_{K{\rm-th}}= 1 - \ve_1\ve_2  \mbox{,}
 \end{multline}
 where we used the change of the local parameter $t \mapsto t - \ve_1$ and formula~\eqref{reduce}.

As in the proof of Lemma~\ref{sim3} we have the following calculation
 for the ring $R = \mathbf{Q}[\varepsilon_1,\varepsilon_2]/(\varepsilon_1^2,\varepsilon_2^2)$:
$$
(1 + u^{-1}\ve_1, 1 + u \ve_2, t)_{K{\rm-th}} = (1 + u^{-1}\ve_1, 1 + u \ve_2)_{K{\rm-th}} = 1 + \ve_1 \ve_2  \mbox{.}
$$

The above explicit calculations are the only calculations needed to repeat the proof of Theorem~\ref{th-main} for the case
of the map $(\cdot , \cdot , \cdot )_{K{\rm-th}}$. Thus we have checked the case $n=2$ of Theorem~\ref{K-th}. This finishes the proof of this theorem.
\end{proof}

The following corollary follows from the corresponding property of the product structure in algebraic $K$-theory.
\begin{cor} \label{Steinb}
For any ring $R$ the two-dimensional Contou-Carr\`{e}re symbol defined via the map $C_3$ by formula~\eqref{c3} satisfies the Steinberg relations.
\end{cor}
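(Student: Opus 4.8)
The plan is to deduce the Steinberg relations for $C_3$ directly from Theorem~\ref{K-th} together with the elementary fact that the product structure in algebraic $K$-theory satisfies the Steinberg relation. The whole point of phrasing this as a corollary is that the substantive work has already been carried out: Theorem~\ref{K-th} identifies, \emph{for an arbitrary ring $R$}, the symbol $C_3$ with the map $(\cdot , \cdot , \cdot )_{K{\rm-th}}$ built from the product in $K$-theory and the iterated boundary map $\partial_2\partial_3$ of~\eqref{n-dim CC K}. So it suffices to verify the Steinberg relations for $(\cdot , \cdot , \cdot )_{K{\rm-th}}$, where they are manifest.

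First I would recall the shape of the map~\eqref{n-dim CC K} in the case $n=2$: given $f,g,h\in R((u))((t))^*$, one forms their images in $K_1(R((u))((t)))$, takes the product $\{f,g,h\}\in K_3(R((u))((t)))$, and then applies $\partial_2\partial_3$ followed by the projection $K_1(R)\to R^*$. Now if $f$ and $1-f$ both lie in $R((u))((t))^*$, then the product structure in algebraic $K$-theory already kills the two-fold symbol: $\{f,1-f\}=0$ in $K_2(R((u))((t)))$. Consequently the product $\{f,1-f,h\}$, being obtained from $\{f,1-f\}=0$ by multiplying with the class of $h$, vanishes in $K_3(R((u))((t)))$; and since $\partial_2\partial_3$ and the projection to $R^*$ are group homomorphisms, its image is $1\in R^*$. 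Hence $(f,1-f,h)_{K{\rm-th}}=1$, and all the variants follow from the anti-symmetry of $(\cdot , \cdot , \cdot )_{K{\rm-th}}$.

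Combining this with the identity $(\cdot , \cdot , \cdot )_{K{\rm-th}}=C_3$ from Theorem~\ref{K-th}, valid over any ring $R$, yields the stated relations for $C_3$ with no restriction on $R$. There is essentially no obstacle once Theorem~\ref{K-th} is in hand; the only thing to check is that the hypothesis of the Steinberg relation in $K$-theory---namely that both $f$ and $1-f$ are invertible in the double-loop field---is exactly our standing assumption. I would emphasise that the gain over Propositions~\ref{st} and~\ref{stgen}, where the relation was obtained only for $\mathbf{Q}\subset R$ and for $R\in\B$ respectively, is precisely the removal of all hypotheses on $R$: the $K$-theoretic construction is functorial and defined over an arbitrary commutative ring, so the vanishing of $\{f,1-f\}$ propagates through $\partial_2\partial_3$ with no flatness or nilpotence conditions needed.
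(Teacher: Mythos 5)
Your proposal is correct and follows essentially the same route as the paper: the paper's own justification is precisely that the Steinberg relation holds for the product structure in algebraic $K$-theory (so $\{f,1-f\}=0$ in $K_2$, hence $\{f,1-f,h\}=0$ in $K_3$), and then Theorem~\ref{K-th}, valid over an arbitrary ring $R$, transfers this to $C_3$. Your write-up merely spells out the intermediate steps that the paper leaves implicit.
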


\begin{rmk}  \label{K-th-res}
Let $C$ be  a smooth projective curve  over a perfect field $k$, $R$ is a commutative $k$-algebra and $m \ge 1$ is an integer. There is the following reciprocity law. For a closed point $p$ of $C$ with residue field $k(p)$, let $t_p$ be a local coordinate around $p$. Using the ring homomorphism $k(C)\otimes_kR\to (k(p)\otimes_kR)((t_p))$ we obtain a homomorphism
\begin{equation}  \label{rec}
s_p \, :  \, K_m(k(C)\otimes_kR)\lrto K_m (k(p)\otimes_kR)((t_p))\stackrel{\partial_{p,m}}{\lrto} K_{m-1}(k(p)  \otimes_k R)  \lrto K_{m-1}(R) \mbox{,}
\end{equation}
where the map
 ${\partial}_{p,m }$ is  homomorphism~\eqref{Kato-map}  applied to $K_m((k(p) \otimes_k R)((t_{p})) )$, and the last arrow in~\eqref{res} denotes the tranfer (or pushforward) map.
Then the reciprocity law is: for any $x\in K_m(k(C)\otimes_kR)$ we have that  $s_p(x)$ is nonzero for only finitely many points $p$, and $\sum_{p\in C} s_p(x)=0$.

We briefly explain its proof.
Since $K_m(\cdot)$ commutes with filtered direct limits of rings (see, e.g.,~\cite[Lemma~5.9]{S})
we can assume that $x$ comes from $K_m(U\otimes_k R')$, where $U$ is an affine open subset of $C$ and $R'\subset R$ is a \emph{Noetherian} subring. In the sequel, we will write $R$ instead of  $R'$ for simplicity (so we can suppose  that $R$ is a Noetherian ring).
We consider $C \setminus U = \bigcup_{i=1}^q p_i$, where $p_i$ is a point on $C$.
    Let $j : U \otimes_k R   \hookrightarrow C \otimes_k R$ be the corresponding open embedding.
    We write  the localizing exact sequence for singular varieties
 (see~\cite[Th.~9.1]{S}):
 \begin{equation} \label{exseq}
 \ldots  \lrto K_m(U \otimes_k R)  \stackrel{\partial}{\lrto} K_{m-1} ({\mathcal H}) \stackrel{\alpha}{\lrto} K_{m-1}(C \otimes_k R) \lrto \ldots \mbox{,}
 \end{equation}
 where $\mathcal H $ is an exact category of coherent $\oo_{C \otimes_k R}$-modules ${\mathcal F}$ such that $j^* {\mathcal F} =0$, and
 ${\mathcal F }$ has a resolution of length $1$ by locally free $\oo_{C \otimes_k R}$-modules of finite rank.

 Let $f : C \otimes_k R  \to {\mathop{\rm Spec}} R$ be the projection. Since $f$ is a proper flat morphism, there is a well-defined map (see~\cite[Prop.~5.12]{S})
 $$f_*  \; : \;  K_{m-1} (C \otimes_k R) \to K_{m-1} (R)  \mbox{.}$$
 We consider a commutative diagram
 \begin{equation}  \label{cd}
 \begin{CD}
 K_m(U \otimes_k R)  @>{\partial}>>
K_{m-1}(\h)  @>{\alpha}>>  K_{m-1}(C \otimes_k R) \\
@VVV
@VVV@VV{f_*}V\\
\bigoplus\limits_{i=1}^q K_{m}( (k(p_i) \otimes_k R)((t_{p_i})))
@>{\bigoplus\limits_{i=1}^q  \tilde{\partial}_{p_i,m }}>>
  \bigoplus\limits_{i=1}^q K_{m-1} (H_{p_i}) @>{\beta}>>   K_{m-1}(R)  \mbox{,}
\end{CD}
\end{equation}
where for any $1 \le i \le q $, $H_{p_i}$ is the exact category attached to $(k(p_i) \otimes_k R)((t_{p_i}))$ and constructed as $H$ at the beginning of Section~\ref{Kapp}, and the map
 $\tilde{\partial}_{p_i,m }$ is  homomorphism~\eqref{tilde}  applied to $K_m((k(p_i) \otimes_k R)((t_{p_i})) )$.
 The map $\beta$ in this diagram is given as sum over $i$ of composition of maps $K_{m-1} (H_{p_i}) \to K_{m - 1} (k(p_i) \otimes_k R)$ and $K_{m - 1} (k(p_i) \otimes_k R) \to K_{m-1}(R)$.

Using $\alpha \circ \partial =0$ in sequence~\eqref{exseq} and commutative diagram~\eqref{cd} we obtain the reciprocity law for elements from
the group $K_m(U \otimes_k R)$.

When $m=2$, using the product structure in algebraic $K$-theory and Theorem~\ref{K-th}, from this reciprocity law we derive the reciprocity law on the curve $C$ over a perfect field $k$ for the usual Contou-Carr\`{e}re symbol over a $k$-algebra $R$ (compare with Theorem~\ref{thres1}).
\end{rmk}

\begin{rmk}
Let $R$ be a finite $k$-algebra.
We  now give a short explanation how from the above reciprocity law  on a projective curve (see Remark~\ref{K-th-res}) and Theorem~\ref{K-th} it is possible to obtain the reciprocity law
 for the two-dimensional   Contou-Carr\`{e}re symbol along a projective curve $C$ on a smooth algebraic surface $X$ over a perfect field $k$
 (see formula~\eqref{curve}). Previously, we proved this reciprocity law in Theorem~\eqref{res-law} by means of categorical central extensions and semilocal adelic complexes on $X$.

 We recall that the field $K_C = k(C)((t_C))$. Therefore $K_C \otimes_k R= (k(C) \otimes_k R) ((t_C))$.
 We suppose for simplicity that $C$ is a smooth curve. (If $C$ is not smooth then one has to work with the normalization of $C$.) Then for any
 point $x \in C$ we have that $K_{x,C}=k(x)((u))((t_C))$ is a two-dimensional local field, where $k(x)((u))$ is the completion of the field $k(C)$
 with respect to the discrete  valuation given by the point $x$.  Besides, $K_{x,C} \otimes_k R =(k(x)((u))\otimes_k R)((t_C))$   We have a commutative diagram
  $$
  \begin{CD}
K_3((k(C) \otimes_k R) ((t_C)))  @>{\partial_{C,3}}>>  K_{2}(k(C) \otimes_k R) \\
@VVV@VVV\\
K_3((k(x)((u)) \otimes_k R )((t_C)) ) @>{\partial_{x,C,3}}>>   K_{2}(k(x)((u)) \otimes_k R )  \mbox{.}
\end{CD}
$$
This diagram and Theorem~\ref{K-th} allow us to reduce the reciprocity law along $C$ on $X$
for the two-dimensional   Contou-Carr\`{e}re symbol
for any elements $f$, $g$ and $h$ from $(K_C \otimes_k R)^*$
to the reciprocity law on $C$ (which we considered in Remark~\ref{K-th-res}) for the element  $\partial_{C,3}  \{ f,g,h \}$ in $K_{2}(k(C) \otimes_k R)$. It is useful also to note that the homomorphism $(k(x) \otimes_k R)^*  \to R^*$ obtained from the transfer homomorphism
$K_1(k(x) \otimes_k R ) \to K_1(R)  $ coincides with the norm map.

We note that it is not clear for us  how it would be possible to apply the localizing exact sequence for singular varieties
to obtain via algebraic $K$-theory the reciprocity law   for the two-dimensional   Contou-Carr\`{e}re symbol around a point on a smooth algebraic surface (see formula~\eqref{point}), which is another reciprocity law obtained in Theorem~\ref{res-law}!
\end{rmk}

\section{The two-dimensional Contou-Carr\`{e}re symbol and two-dimensional class field theory} \label{cft}
Two-dimensional class field theory was developed by A.N.~Parshin,
K.~Kato and others, see~\cite{Pa0, Pa4, Pa} and~\cite{K1, KaS, Ka}.

By Proposition~\ref{deform}, the two-dimensional Contou-Carr\`{e}re symbol coincides with the two-dimensional tame symbol when the ground ring $R$ is equal to a field $k$. The two-dimensional tame symbol
was used in the local two-dimensional class field theory for the field $\df_q((u))((t))$ to describe the generalization of the Kummer duality and, consequently, Kummer extensions of the field  $\df_q((u))((t))$ where $\df_q$ is a finite field and $q=p^n$ for some prime $p$, see~\cite[\S~3.1]{Pa}.

We will construct some  one-parametric deformation of the two-dimensional tame symbol. It will be given as the  two-dimensional Contou-Carr\`{e}re symbol over some Artinian ring. From this deformation we will obtain the local symbol\footnote{The relation between one-dimensional Contou-Carr\`{e}re symbol and the Witt symbol for a usual one-dimensional local field $k((t))$ was noticed in~\cite[\S~4.3]{AP}.} which was  used by Parshin in~\cite[\S~3.1-3.2]{Pa}   to obtain the generalization of the
Artin-Schreier-Witt duality for the two-dimensional local field $\df_q((u))((t))$. The generalization of the Artin-Schreier-Witt duality describes abelian extensions of exponent $p^m$ of the field
$\df_q((u))((t))$.

\medskip

Let $R$ be any commutative ring.
Let $S$ be the set of positive integers which is  closed under passage to divisors. We denote by $W_S(R)$ the ring of (big) Witt
vectors\footnote{This is a ring scheme.},
i.e. $W_S(R)= \{ (x_i)_{i \in S} \}$ where  $x_i \in R$. We will need the ghost (or auxiliary) coordinates which are defined  as
$x (i) = \sum\limits_{d | i} d x_d^{i/d}$ where $i \in S$. The addition and multiplication in ghost coordinates of Witt vectors are coordinate-wise,
but in usual coordinates (i.e. in coordinates $x_i$) the addition and multiplication are given by some universal polynomials with integer coefficients in the variables $x_i$.    We have an equality in the ring ${\mathbf Q}[[s]]$:
\begin{equation}  \label{log-for}
-\log \prod_{i=1}^{\infty} (1 - x_i s^i)= \sum_{l=1}^{\infty} x(l) s^l/l  \mbox{.}
\end{equation}
For any positive integer $n$ we denote by $W_n(R)$ the truncated Witt vectors, i.e. in our previous notation  $W_n(R) = W_{\{1, \ldots, n\}}(R)$.
The additive group of the ring $W_n(R)$ is isomorphic to the group of invertible elements of the following kind $\{1+ r_1s + \ldots + r_i s^i + \ldots +
r_ns^n \}$
in the ring $R[s]/s^{n+1}$ by means of the map
\begin{equation} \label{wittmap}
(x_i)_{1 \le i \le n} \longmapsto \prod_{i=1}^n (1-x_is^i) \mod s^{n+1}  \mbox{.}
\end{equation}

\begin{rmk}
An obvious generalization of the map~\eqref{wittmap} gives an isomorphism between the additive group
of the ring $\mathop{\mathop{\lim}\limits_{\longleftarrow}}\limits_{n \ge 1} W_n(R)$ and the group ${\mathbb{W}}(R)$
introduced in~\S~\ref{first}.
\end{rmk}

We denote $W^p_n (R) = W_{\{1,p, \ldots, p^{n-1} \}}(R)$. The ring scheme $W^p_n$ is a natural quotient of the ring scheme $W_{p^{n-1}}$.
We denote\footnote{We note that usually when $p$ is fixed, the rings $W^p_n(R)$ and $W^p(R)$ are denoted as $W_n(R)$ and $W(R)$.} $W^p(R) = \mathop{\mathop{\lim}\limits_{\longleftarrow}}\limits_{n \ge 1} W_n^p(R)$.
\medskip

We fix a positive integer $n$ and a perfect field $k$. Let $R = k[s]/s^{n+1}$. Let $(\cdot, \cdot, \cdot )$ be the two-dimensional
Contou-Carr\`{e}re symbol: $(R((u))((t))^*)^3 \to R^*$. We define a tri-multiplicative map:
\begin{equation}  \label{witt}
k((u))((t))^* \times k((u))((t))^* \times W_n(k((u))((t)))  \lrto W_n(k)  \mbox{,}
\end{equation}
where on $W_n(k((u))((t)))$ and on $W_n(k)$ we consider only the additive structure (so "multiplicativity" means "with respect to additive structure of these rings"). Let map~\eqref{witt} be denoted by
$$
(g_1, g_2 \mid y_1, \ldots, y_n]  \;  \in \;  W_n(k)  \mbox{,}
$$
where $g_i \in k((u))((t))^*$  and  $(y_1, \ldots, y_n) \in W_n(k((u))((t)))$. Then this map is defined as follows:
\begin{equation}  \label{symbol}
\prod_{i=1}^n (1 -  (g_1,g_2 \mid y_1, \ldots, y_n]_i  \cdot s^i)  \mod s^{n+1}  = (\prod_{i=1}^n (1-y_is^i) \, , g_1, g_2)  \mbox{.}
\end{equation}

\medskip

If $ \cha k =p$, and we consider only non-zero coordinates such as $(y_1, y_p, \ldots, y_{p^{m-1}})$, then  we will show that the image of the expression
$(g_1, g_2 \mid y_1, \ldots, y_{p^{m-1}} ]$  in  the group $W^p_{m}(k)$ is equal to  the generalization  of the Witt symbol given by Parshin in~\cite[\S3]{Pa}\footnote{A.~N.~Parshin considered only the case $k=\df_q$. Besides, we have to compose the above expression with the trace map from $W^p_m(\df_q)$ to $W^p_m(\df_p)= \mathbf{Z}/p^m \mathbf{Z}$ to obtain the symbol for the generalization of the Witt duality,
see~\cite[\S~3, Prop.~7]{Pa}}. Indeed, we consider the field $\Frac W^p(k)$.
We have that $\mathbf{Q} \subset \Frac W^p(k)$  and $W^p(k)/ p W^p(k) =k$. We choose some lifts $\widetilde{y_i} \mbox{,} \, \widetilde{g_j}  \in W^p(k)((u))((t))$ of elements
$y_i \mbox{,} \, g_j \in k((u))((t))$. Using formulas~\eqref{2cexp}   and~\eqref{log-for} we have
\begin{multline*}
-\log (\prod_{i=1}^{p^{m-1}} (1- \widetilde{y_i} s^i), \widetilde{g_1}, \widetilde{g_2})=
-\Res( \log \prod_{i=1}^{p^{m-1}}(1-\widetilde{y_i}s^i) \frac{d \widetilde{g_1}}{\widetilde{g_1}} \wedge \frac{d\widetilde{g_2}}{\widetilde{g_2}}) = \\ =
\Res (\sum_{i=1}^{p^{m-1}} \frac{\widetilde{y}(i) s^i}{i} \frac{d \widetilde{g_1}}{\widetilde{g_1}} \wedge \frac{d\widetilde{g_2}}{\widetilde{g_2}}) =
\sum_{i=1}^{p^{m-1}} \Res (\widetilde{y}(i)  \frac{d \widetilde{g_1}}{\widetilde{g_1}} \wedge \frac{d\widetilde{g_2}}{\widetilde{g_2}}) \frac{s^i}{i}  \mbox{.}
\end{multline*}
Using formulas~\eqref{log-for} and~\eqref{symbol} we obtain
\begin{equation}  \label{parshin1}
(\widetilde{g_1}, \widetilde{g_2}  \mid \widetilde{y_1}, \ldots , \widetilde{y_{p^{m-1}}}](i) = \Res ( \widetilde{y}(i) \frac{d \widetilde{g_1}}{\widetilde{g_1}} \wedge \frac{d\widetilde{g_2}}{\widetilde{g_2}})   \mbox{.}
\end{equation}
Besides, we have
\begin{equation} \label{parshin2}
(g_1, g_2 \mid y_1, \ldots, y_{p^{m-1}} ]_i = (\widetilde{g_1}, \widetilde{g_2}  \mid \widetilde{y_1}, \ldots , \widetilde{y_{p^{m-1}}}]_i \mod p
 \mbox{.}
\end{equation}
Now if $i =1, p, \ldots, p^{m-1}$ and $k=\df_q$, then formulas~\eqref{parshin1}-\eqref{parshin2} coincide with Parshin's Definition~5 from~\cite[\S~3.3]{Pa}.

We note that we have just constructed the following map when $\cha k =p$:
\begin{equation}  \label{witt2}
k((u))((t))^* \times k((u))((t))^* \times W_m^p(k((u))((t)))  \lrto W_m^p(k)  \mbox{.}
\end{equation}
From formula~\eqref{parshin1} we have that map~\eqref{witt2} is additive with respect to the groups $W_m^p(k((u))((t)))$ and $W_m^p(k)$ (and, consequently, is tri-multiplicative with respect to all arguments like formula~\eqref{witt}), since  if $i =1, p, \ldots, p^{m-1}$, then passage to the ghost coordinates and to the usual coordinates depends only on these integers.

\begin{rmk}
Clearly, the reciprocity laws which were proved for the two-dimensional
Contou-Carr\`{e}re symbol in Theorem~\ref{res-law}  imply analogous reciprocity laws for \linebreak  maps~\eqref{witt} and~\eqref{witt2} such that the norm map $\Nm_{k''/k'}:
(k'' \otimes_k R)^* \to (k' \otimes_k R)^* $ is converted to the trace map  $\Tr_{k''/k'} : W_n(k'') \to W_n(k')$ for any finite extensions of fields
$k'' \supset k' \supset k$.

We can interpret the reciprocity laws for the two-dimensional tame symbol and for map~\eqref{witt2} as follows. Let $X$ be a smooth projective algebraic surface over a finite field $\df_q$. Then, according to~\cite{Pa4}, some  $K_2$-adelic object $K_{2,\da_X}$ and a reciprocity map $\theta: K_{2, \da_X}  \to \Gal(\df_q(X)^{\rm ab} / \df_q(X)) $ should exist such that
$$K_{2,\da_X} = \mathop{\prod\nolimits'}\limits_{x \in \bf{C}} K_2(K_{x,\bf{C}}) \subset \mathop{\prod}\limits_{x \in \bf{C}} K_2(K_{x,\bf{C}})$$
is a complicated "restricted" product and $$\theta= \mathop{\sum}\limits_{x \in \bf{C}} \theta_{x, \bf{C}} \mbox{,}$$ where $\theta_{x,\bf{C}} : K_2(K_{x, \bf{C}}) \to
\Gal(K_{x,\bf{C}}^{\rm ab}/ K_{x, \bf{C}})$ is the local reciprocity map from two-dimensional local class field theory (for any field $L $ by $L^{\rm ab}$ we denote its maximal abelian extension and $x \in \bf{C}$ runs over all points $x \in X$ and all formal branches $\bf C$ of all irreducible curves on $X$ which contain a point $x$).

For any point $x \in  X$ the ring $K_2^M(K_x)$ is diagonally mapped to the ring $K_{2, \da_X}$ (via  maps $K_2^M(K_x) \to K_2(K_{x,\bf{C}})$ for all ${\bf{C}} \ni x$ and we put
 $1 \in K_2(K_{y, \bf F})$ for all pairs $ y \in \bf F$ such that $y \ne x$). For any irreducible curve $C$ on $X$ the ring $K_2(K_C)$ is also diagonally mapped to the ring $K_{2, \da_X}$ (via maps $K_2(K_C) \to K_2(K_{x,\bf{C}})$ for all $x \in {\bf{C}} $ and we put
 $1 \in K_2(K_{y, \bf F})$ for all pairs $ y \in \bf F$ such that ${\bf F}$ is not a formal branch of $C$). Let an extension $N \supset \df_q(X) $ be either maximal Kummer extension (i.e. the union of all finite Galois extensions of exponent $q-1$ contained in $\df_q(X)^{\rm ab}$) or the maximal abelian $p$-extension (i.e the union of all finite Galois  $p$-extensions contained in $\df_q(X)^{\rm ab}$). Let
$\gamma : \Gal(\df_q(X)^{\rm ab} / \df_q(X)) \to \Gal(N/ \df_q(X) )$ be the natural quotient map. Then, similar to the case of algebraic curves over finite fields, we obtain from reciprocity laws
that $$ \gamma \circ \theta \, (K_2^M(K_x))=0 \qquad \mbox{and } \qquad \gamma \circ \theta \, (K_2(K_C))=0$$ for any $x \in X$ and $C \subset X$.
\end{rmk}

\begin{rmk}
We obtained reciprocity laws on an algebraic surface  for the Parshin generalization of the Witt symbol as the consequence of our reciprocity laws
for the two-dimensional  Contou-Carr\`{e}re symbol. We note that earlier K.~Kato and S.~Saito obtained  by another methods similar reciprocity laws for the generalization of the Witt symbol, see Lemma~4 and Lemma~5 from~\cite[Ch.~III]{KaS}. We explain the relation between two generalizations of the Witt symbol.
Let $k$ be a perfect field, $\cha k = p$. Let $G$ be a commutative smooth connected algebraic group over $k$. The authors constructed
in~\cite{KaS} some local symbol map:
\begin{equation}  \label{Kato}
  k((u))((t))^* \times k((u))((t))^*  \times G(k((u))((t))) \lrto G(k)
\end{equation}
and proved the two-dimensional reciprocity laws for this symbol map. When $G=W_m^p$, then map~\eqref{Kato} is the generalization of the Witt symbol.
We note that the construction for this symbol map given in~\cite{KaS} was very indirect. An explicit easy formula for map~\eqref{Kato} (when
$G =W_m^p $) can be found in~\cite[\S~7.1]{KR}. From this formula one can easily see that map~\eqref{Kato} (when
$G =W_m^p $) coincides with map~\eqref{witt2}, which is described by formulas~\eqref{parshin1}-\eqref{parshin2}.
\end{rmk}

\bigskip

\noindent Denis Osipov, \\
Steklov Mathematical Institute, Gubkina str. 8,
Moscow, 119991, Russia \\
E-mail address:  ${d}_{-} osipov@mi.ras.ru$

\bigskip
\noindent Xinwen Zhu, \\
Department of Mathematics, Northwestern University, 2033 Sheridan Road, Evanston, IL 60208, USA \\
E-mail address:  $xinwenz@math.northwestern.edu$

\end{document}